\documentclass[11pt]{amsart}

\usepackage{amsthm}
\usepackage{amsfonts}
\usepackage{amsmath}
\usepackage{amssymb}
\usepackage{mathrsfs}
\usepackage{enumitem}
\usepackage{fancyhdr}
\usepackage{tikz}
\usepackage{tikz-cd}
\usepackage{hyperref}
\usepackage{lmodern}

\usetikzlibrary{arrows,decorations.markings}

\usepackage[numbers]{natbib}

\usepackage{mathtools}

\setlist[description]{leftmargin=\parindent,labelindent=\parindent}

\usepackage{graphicx}
\graphicspath{ {images/} }

\usepackage[colorinlistoftodos]{todonotes} 

\newtheorem{innercustomthm}{Assumption}
\newenvironment{customthm}[1]
  {\renewcommand\theinnercustomthm{#1}\innercustomthm}
  {\endinnercustomthm}

\newtheorem{theorem}{Theorem}
\newtheorem{lemma}[theorem]{Lemma}
\newtheorem{proposition}[theorem]{Proposition}
\newtheorem{corollary}[theorem]{Corollary}
\newtheorem{conjecture}[theorem]{Conjecture}
\numberwithin{theorem}{section}
\numberwithin{equation}{section}

\theoremstyle{definition}
\newtheorem{definition}[theorem]{Definition}

\newtheorem{remark}[theorem]{Remark}

\newtheorem{assumption}[theorem]{Assumption}

\title{On intrinsic homological mirror symmetry for toric degenerations}
\author{Shaoyun Bai \and Sebastian Haney}

\DeclareMathOperator\Hom{Hom}

\DeclareMathOperator\Ext{Ext}
\DeclareMathOperator\End{End}

\DeclareMathOperator\id{id}

\DeclareMathOperator\Arg{Arg}

\DeclareMathOperator\Log{Log}

\DeclareMathOperator\pt{pt}
\DeclareMathOperator\Coh{Coh}

\DeclareMathOperator\ev{ev}

\DeclareMathOperator\ind{ind}
\DeclareMathOperator\sing{sing}
\DeclareMathOperator\Proj{Proj}
\DeclareMathOperator\Fuk{Fuk}
\DeclareMathOperator\Perf{Perf}
\DeclareMathOperator\Spec{Spec}
\DeclareMathOperator\Sk{\mathsf{Sk}}
\DeclareMathOperator\SR{\mathsf{SR}}
\DeclareMathOperator\Diff{Diff}

\DeclareMathOperator\Mod{Mod}
\DeclareMathOperator\PSS{\mathsf{PSS}}
\DeclareMathOperator\SSP{\mathsf{SSP}}
\DeclareMathOperator\res{\mathsf{res}}
\DeclareMathOperator\Ver{\mathsf{Vert}}
\DeclareMathOperator\Edge{\mathsf{Edge}}
\DeclareMathOperator\Leaf{\mathsf{Leaf}}
\DeclareMathOperator\cont{\mathsf{con}}
\DeclareMathOperator\ord{\mathsf{ord}}
\DeclareMathOperator\plog{plog}

\DeclareMathOperator\Bad{\mathsf{Bad}}

\allowdisplaybreaks
\begin{document}
\begin{abstract}
This paper studies the Floer-theoretic aspects of homological mirror symmetry inspired by proposals of Perutz and Siebert and the Gross--Siebert intrinsic mirror symmetry program. Given a maximally unipotent degeneration of smooth projective Calabi--Yau manifolds over the punctured disk, we construct a ring using the fixed point Floer cohomology groups of the iterates of the monodromy of the degeneration equipped with the pair of pants product.  Under the assumption that this ring is commutative, we can consider a candidate mirror family defined by the relative Proj construction. Further assuming that a smooth fiber $X_t$ contains a so-called \textit{tropical Lagrangian section}, we construct a fully faithful embedding from the derived category of perfect complexes on our candidate mirror family into the Fukaya category of $X_t$. We verify both of these assumptions for certain Batyrev--Borisov toric degenerations, as well as some toric degenerations of Calabi--Yau threefolds coming from the Gross--Siebert reconstruction algorithm. These two geometric hypotheses are both phrased to support the general study of mirror symmetry for maximally unipotent degenerations of Calabi--Yau manifolds, largely reducing the symplectic inputs for proving homological mirror symmetry to the problem of constructing tropical Lagrangian sections.
\end{abstract}
\maketitle
\tableofcontents
\section{Introduction}
Kontsevich's homological mirror symmetry conjecture, as stated in~\cite{KontsevichHMS}, says that for certain symplectic Calabi--Yau manifolds $(X,\omega)$, meaning that $c_1(X) = 0$, for which there is a(n unspecified) \textit{mirror} complex algebraic variety $\check{X}$, one should expect that the derived Fukaya category of $X$ is equivalent to the derived category of coherent sheaves on the mirror. The original cases in which mirror symmetry phenomena were observed in string theory involved Calabi--Yau manifolds which arise as smooth fibers of \textit{maximally unipotent degenerations}~\cite{GreenePlesser, COGP}. Such Calabi--Yau manifolds are also said to live \textit{near a large complex structure limit point}~\cite[Definition 16.25]{GrossJoyceHuybrechts}, and indeed this was classically considered a precondition for the existence of a mirror. Based on string-theoretic heuristics, mirror symmetry was expected to manifest as an involutive duality between a Calabi--Yau manifold near the large complex structure limit, viewed as an algebraic variety (the B-model), and a mirror Calabi--Yau manifold equipped with a so-called large K{\"a}hler class, viewed as a symplectic manifold (the A-model.)

Some of the first broad mathematical constructions of mirror pairs were the constructions of Calabi--Yau hypersurfaces and complete intersections in toric Fano varieties by Batyrev~\cite{Batyrev} and Batyrev--Borisov~\cite{BatyrevBorisov}, respectively. The Strominger--Yau--Zaslow (SYZ) conjecture proposes a far-reaching generalization of this picture, positing that mirror pairs of Calabi--Yau manifolds should carry dual special Lagrangian torus fibrations over smooth manifolds with dual (singular) integral affine structures~\cite{SYZ}. To circumvent some considerable analytic challenges presented by the SYZ conjecture, which have been partially addressed for non-Archimedean versions of the conjecture in~\cite{KS06, LiSYZ} \textit{inter alia}, Gross and Siebert proposed an algebro-geometric interpretation of SYZ mirror symmetry, leading to a very general construction of candidate mirror pairs~\cite{GSrealaffine} as \textit{toric degenerations}, initiating what is now called the \textit{Gross--Siebert program}.

Partly inspired by a conjectural picture of homological mirror symmetry for toric degenerations in the context of their program~\cite{GHS}, Gross and Siebert went on to formulate an essentially completely general \textit{intrinsic mirror} construction in~\cite{GSintrinsicmirrors}, which associates a candidate \textit{intrinsic mirror family} to any maximally unipotent degeneration of Calabi--Yau manifolds, without recourse to toric geometry or Lagrangian torus fibrations. A key technical innovation required to pose the intrinsic mirror construction was the introduction of punctured Gromov--Witten invariants in the work of Abramovich--Chen--Gross--Siebert~\cite{ACGS2}. Gross and Siebert~\cite{GSintrinsicmirrors} associate to a log smooth degeneration $\mathscr{X}\to\mathbb{D}$ of smooth projective Calabi--Yau manifolds over the disk a $\Bbbk[\![T]\!]$-algebra\footnote{This is obtained by applying a change of coefficients to the ring constructed in~\cite{GSintrinsicmirrors}, which is initially defined over a larger base.}, where $\Bbbk$ is a field of characteristic zero, called the \textit{intrinsic mirror ring} that we denote $R_{\mathsf{GS}}$, and form the candidate mirror space
\begin{align}\label{GSintrinsicmirrorfamily}
    \check{\mathcal{X}}_{\mathsf{GS}}\coloneqq\underline{\Proj}_{\Bbbk[\![T]\!]}R_{\mathsf{GS}}
\end{align}
by a relative Proj construction. The structure coefficients of this ring are given by punctured Gromov--Witten invariants, and thus the intrinsic mirror family is defined entirely by A-model considerations.

Concurrently, a great deal of progress has been made towards proving homological mirror symmetry for special cases of the Batyrev--Borisov construction~\cite{SeiQuartic, SheridanHypersurfaces, SheridanSmith, GHHPS} and for K3 surfaces~\cite{HackingKeating}. This latter development uses heuristics from the Gross--Siebert program. So far, these approaches all either use the ambient toric geometry in a nontrivial way, or they rely on the existence of a global (singular) Lagrangian torus fibration. In particular, it is not clear that these techniques will be available, in their most potent incarnations, to study of homological mirror symmetry for intrinsic mirror pairs, or even for higher-dimensional toric degenerations.

In several talks over a number of years, Bernd Siebert has outlined an elegant strategy, being developed with Tim Perutz, for proving homological mirror symmetry between smooth fibers of maximally unipotent degenerations (viewed on the A-side), and intrinsic mirror families thought of as projective varieties over a Novikov field. One of the main goals of this program is to provide a Floer-theoretic description of the intrinsic mirror ring, as described in the following conjecture.
\begin{conjecture}[Perutz--Siebert]\label{GSringisomorphism}
    Let $R_{\mathsf{GS}}$ denote the intrinsic mirror ring associated to a polarized maximally unipotent degeneration of smooth Calabi--Yau manifolds $\mathcal{X}^{\circ}\to\mathbb{D}^{\circ}$ over the punctured disk $\mathbb{D}^{\circ}$, which has a semistable extension $\mathscr{X}\to\mathbb{D}$ over the disk. If $X_t$ is a smooth fiber of $\mathcal{X}^{\circ}$ and $\phi\colon X_t\to X_t$ denotes the symplectic monodromy of this degeneration, then there is an isomorphism
    \begin{align*}
        R_{\mathsf{GS}}\cong R_{\phi}\coloneqq\bigoplus_{d=0}^{\infty} HF^0(X_t,\phi^d)
    \end{align*}
    where the direct sum on the right-hand side is of fixed point Floer cohomology groups, and $R_{\mathsf{GS}}$ is associated to $\mathscr{X}\to\mathbb{D}$.\footnote{The definition of $R_{\phi}$ is reviewed more carefully \S{\ref{mainsection}}, \S{\ref{fixedpointfloersection}}, and \S{\ref{enhancedfixedpointfloersection}}. In the last of these sections we explain how to view $R_{\phi}$ is a module over the ring of formal power series.}
\end{conjecture}
The method proposed by Perutz and Siebert for proving this conjecture would involve constructing a symplectic version of the punctured Gromov--Witten invariants, which is the subject of ongoing research.

In this paper, we initiate a study of the Floer-theoretic aspects of Perutz and Siebert's program. Our aim is to separate the salient problems in Lagrangian Floer theory and fixed point Floer cohomology from the hard, and important, problem of constructing punctured invariants for symplectic manifolds, allowing for the further study of homological mirror symmetry by relatively elementary methods. One of our main results is the following.
\begin{theorem}\label{toric-degeneration-hms-thm}
    Let $\mathcal{X}\to\mathbb{D}$ be a toric degeneration whose total space has strongly semi-simple singularities (see~\cite[Definition 2.10]{Arguzreal}), and whose central fiber has \textit{positive real gluing data} (see Theorem~\ref{lagrangianpositivereallocus}.) If $\mathcal{X}$ also admits a suitable resolution of singularities (see Assumption~\ref{toricdegenerationresolutionassumption}), then $R_{\phi}$ is commutative and finitely-generated, and there is an $A_{\infty}$-functor
    \begin{align}\label{mirrorinclusion0}
        \Perf(\underline{\Proj}_{\Bbbk[\![T]\!]}R_{\phi})\otimes_{\Bbbk[\![T]\!]}\Lambda_{\Bbbk}\to\Fuk(X_t)
    \end{align}
    between functors over the universal Novikov field $\Lambda_{\Bbbk}$, where the right hand side denotes the (relative) Fukaya category of a smooth fiber of the degeneration. This functor is a quasi-equivalence onto its image.
\end{theorem}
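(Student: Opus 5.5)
The plan is to reduce Theorem~\ref{toric-degeneration-hms-thm} to a general criterion, which I expect to be the paper's main abstract result, and then verify the two hypotheses of that criterion for the toric degenerations in question. The criterion should say: if $R_{\phi}$ is commutative and finitely generated over $\Bbbk[\![T]\!]$, and $X_t$ contains a \emph{tropical Lagrangian section} $L$ (a Lagrangian such that $\phi^d(L)$ meets $L$ cleanly in the expected way), then there is a fully faithful functor from $\Perf(\underline{\Proj}_{\Bbbk[\![T]\!]}R_{\phi})\otimes\Lambda_{\Bbbk}$ into $\Fuk(X_t)$ sending $\mathcal{O}(d)$ to $\phi^{d}(L)$, or to $\phi^{-d}(L)$ depending on conventions.

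\textbf{Step 1: the abstract embedding.} Use a closed–open map $HF^0(X_t,\phi^d)\to HF^0(L,\phi^d(L))$, obtained by acting on the unit of $HF(L,L)$. Show that it is an isomorphism of graded pieces, compatible with the pair of pants product on one side and the triangle product (via $\phi$-equivariance) on the other. For a tropical section, $HF(L,\phi^d(L))$ should be concentrated in degree $0$ for $d\ge 0$, with a basis of intersection points. These play the role of integral points and theta functions.

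Next, form the $A_\infty$-subcategory on $\{\phi^{d}(L)\}_{d\in\mathbb{Z}}$. Vanishing of higher Floer cohomology makes it formal. It is then a directed graded-ring category whose morphisms recover $R_{\phi}$ in positive degrees. Because $\mathcal{O}(d)$ for $d\ll 0$ does not generate, I will use the standard argument instead: the sequence $\mathcal{O}(d)$, $d\in\mathbb{Z}$, is ample, so it split-generates $\Perf$ of the Proj (Orlov/Seidel style). Hence the formal category determines $\Perf(\underline{\Proj}R_{\phi})$, by Serre's theorem identifying $\Hom(\mathcal{O}(i),\mathcal{O}(j))=(R_\phi)_{j-i}$ for $j-i$ large with vanishing higher cohomology. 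The remaining small range $j-i$ is handled by truncating (Veronese) $R_\phi$, which does not change the Proj. Passing to twisted complexes and idempotent completion then gives a quasi-equivalence onto the image.

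\textbf{Step 2: commutativity and finite generation.} For the toric degeneration, use the resolution from Assumption~\ref{toricdegenerationresolutionassumption} to get a semistable model. Compute $HF^0(X_t,\phi^d)$ via a McLean-type spectral sequence or a log PSS identification: generators are integral points of $d\cdot B$ on the dual intersection complex. The product should be the theta-function product, which is commutative since it is a count of broken lines with symmetric tropical data. Alternatively, deduce commutativity directly from the isomorphism with $\bigoplus_d HF^0(L,\phi^d L)$. On that side the product counts triangles whose boundary lies on three sections, and I would check that these are symmetric under swapping factors. Finite generation then follows from the identification with a graded ring whose Proj is the Gross–Siebert mirror degeneration, and this is finitely generated by polarization.

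\textbf{Step 3: the tropical Lagrangian section.} This is the main obstacle. Use Theorem~\ref{lagrangianpositivereallocus}: positive real gluing data produce a positive real Lagrangian in the central fiber. Strongly semi-simple singularities (via Arg\"uz's real structure) let this locus deform to a Lagrangian section $L\subset X_t$ that avoids the discriminant, with $\phi^d(L)\cap L$ transverse and indexed by $\frac1d$-integral points. The difficulty is controlling holomorphic discs so that $L$ is unobstructed and the Floer differential vanishes. I would handle this with an action/degree argument: all intersection points have degree $0$ (grading from the real structure), so differentials vanish for degree reasons. I would also use the real involution to exclude disc bubbling, since the Maslov index is zero and disc counts cancel in pairs.
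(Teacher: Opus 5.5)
Your overall reduction matches the paper's: an abstract embedding theorem under commutativity of $R_\phi$ plus a tropical Lagrangian section, followed by verification of both for the positive real locus. However, Step~1 breaks down at the claim that the full subcategory on $\{\phi^d(L)\}_{d\in\mathbb{Z}}$ is directed and formal. It is neither. For $d\le 0$ one has $HF^n(L,\phi^d(L))\cong HF^0(L,\phi^{-d}(L))^{\vee}\neq 0$, mirroring $H^n(\check{\mathcal{X}},\mathcal{L}^{\otimes d})\cong H^0(\check{\mathcal{X}},\mathcal{L}^{\otimes -d})^{\vee}$ on a Calabi--Yau. So morphisms go in both directions, and degree counting permits an operation $\mathfrak{m}_{n+2}$ with $n+1$ inputs of degree $0$ and one input of degree $n$, landing in degree $0$. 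On the B-side this operation is forced to be nonzero: for the Koszul resolution~\eqref{koszulsequence}, $\mathfrak{m}_{n+2}(\alpha_1,\ldots,\alpha_{n+1},\beta)=\pm\id_{\mathcal{O}}$, which is what encodes exactness of the Koszul complex. By Polishchuk's theorem \cite{Polishchuk}, the algebra $\bigoplus_{d\ge0}H^0(\mathcal{L}^{\otimes d})\oplus\bigoplus_{d\le0}H^n(\mathcal{L}^{\otimes d})$ carries a unique nontrivial $A_\infty$-structure up to equivalence and rescaling. $\Perf$ realizes that structure, not the formal one, so a formal model on the Fukaya side produces the wrong category. A Veronese truncation does not cure this: it never removes the degree-$n$ morphisms, since line bundles on a Calabi--Yau are not exceptional ($\Ext^n(\mathcal{L},\mathcal{L})\cong H^n(\mathcal{O})\neq 0$). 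Moreover, knowing the graded ring $R_\phi$ determines $\Proj R_\phi$ but not an $A_\infty$-functor.

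The paper supplies two missing ingredients. (a) It shows that $H^i(\check{\mathcal{X}},\mathcal{L}^{\otimes d})$ vanishes for $0<i<n$ and is concentrated in degree $0$ or $n$ according to the sign of $d$. This uses Lemma~\ref{slclemma} (the central fiber $\Proj\SR(\mathcal{X}^{\circ})$ is Cohen--Macaulay with trivial dualizing sheaf), the identification of $H^*(\mathcal{O})$ with the cohomology of the skeleton, Serre vanishing after replacing $\phi$ by a power, and semicontinuity. (b) It proves that $\mathfrak{m}_{n+2}\neq 0$ on $\mathcal{A}_\phi$ (Lemma~\ref{local-a-infinity-computation}). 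This is done near each $0$-stratum of $\mathbf{D}$, where the $\phi^d(L)$ are graphs of linear one-forms in $(\mathbb{C}^*)^n$ and the relevant $(n+3)$-gons reduce to Morse flow trees as in Abouzaid's computation for $\mathbb{P}^n$, summed over all $0$-strata. Polishchuk's uniqueness then matches the two $A_\infty$-structures.

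Step~2 also leans on input that is not available. Identifying the product with the theta-function product, or $R_\phi$ with the Gross--Siebert ring, amounts to Conjecture~\ref{GSringisomorphism}. The paper instead obtains finite generation from $R_\phi\otimes_{\Bbbk[\![T]\!]}\Bbbk\cong\SR(\mathcal{X}_0)$ via the low-energy log PSS map (Theorem~\ref{stanleyreisnerisom}). It obtains commutativity from the anti-symplectic involution $\iota$ (Lemma~\ref{commutativeassumption--verified}). The involution reverses the orientation of discs, and $\iota\circ\phi^d(L)$ is Hamiltonian isotopic to $\phi^{-d}(L)$. Applying $\iota$ and then $\phi^{d_1+d_2}$ therefore matches triangles labelled $(L,\phi^{d_1}(L),\phi^{d_1+d_2}(L))$ with those labelled $(L,\phi^{d_2}(L),\phi^{d_1+d_2}(L))$. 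Your \emph{symmetry under swapping factors} needs precisely this mechanism, and you only invoke the involution later, for unobstructedness.
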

This can be interpreted as proving a half of homological symmetry. It is not \textit{a priori} clear that the mirror family $\underline{\Proj}_{\Bbbk[\![T]\!]}R_{\phi}$ is smooth, but some evidence that it is smooth in the case of simple toric degenerations, based on Conjecture~\ref{GSringisomorphism}, can be found in e.g.~\cite{GScanonicalwall, Goncharov}. For this reason, we cannot yet apply the standard generation criteria~\cite{AbouzaidGeneration, PerutzSheridanCore, GanGeneration, Sanda, SheridanOC, AFOOO}, but we expect that this gap can eventually be bridged, at least for simple toric degenerations.
\begin{corollary}\label{bb3corollary}
    There is an inclusion of $A_{\infty}$-categories as in~\eqref{mirrorinclusion0} for toric degenerations of the following types (on the A-side)
    \begin{itemize}
        \item[(i)] Maximal partial crepant projective (MPCP) Batyrev--Borisov degenerations constructed by Gross~\cite{GrossBB} (with strongly semi-simple singularities~\cite[Definition 2.10]{Arguzreal}), and;
        \item[(ii)] toric degenerations over the disk of relative dimension $3$ satisfying Assumption 6.1 of ~\cite{Goncharov}.
    \end{itemize}
\end{corollary}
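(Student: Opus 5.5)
The plan is to deduce Corollary~\ref{bb3corollary} directly from Theorem~\ref{toric-degeneration-hms-thm}. For each of the two classes of toric degenerations $\mathcal{X}\to\mathbb{D}$ I will check the three hypotheses of that theorem:
\begin{enumerate}
\item[(a)] the total space has strongly semi-simple singularities in the sense of~\cite[Definition 2.10]{Arguzreal};
\item[(b)] the central fiber carries positive real gluing data, as in Theorem~\ref{lagrangianpositivereallocus};
\item[(c)] $\mathcal{X}$ admits a resolution as in Assumption~\ref{toricdegenerationresolutionassumption}.
\end{enumerate}
Once these are verified, the theorem gives commutativity and finite generation of $R_\phi$. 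It also produces the functor~\eqref{mirrorinclusion0} as a quasi-equivalence onto its image, which is exactly the claimed inclusion.

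\emph{Case (i).} Hypothesis (a) is built into the statement, since we restrict to MPCP Batyrev--Borisov degenerations with strongly semi-simple singularities. For (b), I would use Gross's construction~\cite{GrossBB}. The central fiber is a union of toric varieties glued torically along toric strata coming from the boundary of a reflexive polytope (or a nef partition). The gluing data is therefore trivial, and trivial gluing data is in particular positive real.

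For (c), I would resolve the total space by a toric, or toroidal, small or crepant partial resolution. This is induced by a regular refinement of the fan or polyhedral decomposition, and the maximal projective crepant partial (MPCP) choice gives a projective one. I would then check the specific conditions of Assumption~\ref{toricdegenerationresolutionassumption}: that the singularities remaining after the MPCP step are of the strongly semi-simple local models, and that these admit the required local resolutions, presumably modeled on~\cite{Arguzreal}. The key input here is that a strongly semi-simple local model is a product of standard conifold-type or $A_n$-type toric singularities, whose small resolutions are explicit.

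\emph{Case (ii).} These are the relative dimension $3$ degenerations produced by the Gross--Siebert reconstruction algorithm under~\cite[Assumption 6.1]{Goncharov}. I expect that assumption to include, or to imply, simplicity of the affine structure and hence strongly semi-simple singularities, which gives (a). It should also supply positivity of the gluing data, since the reconstruction is run starting from real positive (for instance trivial) open gluing data; this gives (b). For (c), in dimension $3$ the local singularities of a simple toric degeneration are known local models (conifold-type along codimension-two strata), and one can resolve them by small resolutions compatible with the global structure, as done in the construction of Lagrangian sections in related work.

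The main obstacle will be (c) in both cases. The resolution must be chosen globally and consistently, so that it remains compatible with the positive real locus used to build the tropical Lagrangian section, rather than merely existing locally. My first attempt would be to take the resolution induced by a coherent (projective) refinement of the intersection complex and check, stratum by stratum, that it realizes the local resolutions demanded by Assumption~\ref{toricdegenerationresolutionassumption}.
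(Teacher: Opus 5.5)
Your reduction is the same as the paper's: apply Theorem~\ref{toric-degeneration-hms-thm} once you have checked strong semi-simplicity, positive real gluing data, and Assumption~\ref{toricdegenerationresolutionassumption}. Your argument for positive real gluing data in case (i) (trivial gluing data, as in Corollary~\ref{BBcorollary}) also matches.

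The genuine gap is step (c). It is the only substantive input, and you leave it as an unresolved ``main obstacle.'' Your sketch also aims at the wrong target. Assumption~\ref{toricdegenerationresolutionassumption} does not ask for local small resolutions of the singular local models, nor for compatibility with the positive real locus. It asks for a global log smooth, minimal (crepant) log Calabi--Yau model $\mathcal{X}'\to\mathbb{D}$ whose central fiber has dual complex subdividing the intersection complex $B$. That is exactly what identifies $\Sk(\mathcal{X}^{\circ})$ with $B$ and makes Definition~\ref{tropicallagrangiandefinition}(ii) checkable.

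A refinement of the polyhedral decomposition only handles the toroidal part of $\mathcal{X}$. The real difficulty sits along the non-toroidal locus $\mathcal{Z}$, locally $uv=t^{\kappa}f_{\rho,x}$. There, small resolutions need not be projective or globally consistent, and nothing in your sketch controls minimality or the resulting dual complex. Note also that MPCP refers to Batyrev's partial desingularization of the ambient toric variety. In the paper its role is to guarantee simplicity of the intersection complex via~\cite{GrossBB}; it is not a resolution of the total space of the degeneration.

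The paper does not construct these resolutions at all. Proposition~\ref{resolutionassumption--verified} imports them: in case (i) from the proof of Theorem 1.1 of~\cite{Yamamoto}, which identifies the essential skeleton with Gross's intersection complex, and in case (ii) from Goncharov's strongly admissible resolutions~\cite[Definition 6.11 and Proposition 6.15(3)]{Goncharov}. Without these results, or a genuine construction replacing them, the corollary does not follow.

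Two smaller points in case (ii). First, simplicity is part of Goncharov's Assumption 6.1. It yields strong semi-simplicity only because the fibers are $3$-dimensional (see Remark~\ref{stronglysemisimpleremark}), so your ``hence'' needs that dimension restriction. Second, the paper does not derive positive real gluing data from Goncharov's assumption. It is an extra standing hypothesis: Proposition~\ref{resolutionassumption--verified} assumes it, and the text says the result covers degenerations satisfying both Goncharov's assumptions and the paper's own. Your claim that it ``should'' follow because the reconstruction starts from real positive data is unsupported, so state it as a hypothesis instead.
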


\begin{remark}
The resolutions of singularities we need were already considered in the literature: by Yamamoto~\cite{Yamamoto} in the Batyrev--Borisov case and by Goncharov~\cite{Goncharov} in the threefold case. These results suggest that our methods can be applied to a much wider class of toric degenerations by carefully constructing resolutions of singularities for their total spaces, and we are very hopeful that this can be feasibly carried out. The notion of \textit{strong semi-simple singularities} is introduced in~\cite{Arguzreal}, and is a strengthening of the familiar notion of simplicity~\cite[Definition 1.60]{GS1} for the intersection complex of a toric degeneration. This constraint is presumably removable (see Remark~\ref{stronglysemisimpleremark}.) If one restricts to degenerations with three-dimensional fibers, then simple toric degenerations are automatically strongly semi-simple, as pointed out in~\cite{Arguzreal}.
\end{remark}

In the cases considered by Ganatra--Hanlon--Hicks--Pomerleano--Sheridan~\cite{GHHPS}, we can show that the mirror spaces we construct recover the Batyrev mirror space, providing further evidence that Gross and Siebert's intrinsic mirror family, as well as its analogue in fixed point Floer cohomology, define the expected mirror space.
\begin{theorem}\label{comparingmirrors}
    Let $\mathcal{X}_{\Delta}\to\mathbb{D}$ be a Batyrev degeneration of relative dimension at least $3$ of hypersurfaces in the toric variety associated to the reflexive polytope $\Delta$. Suppose that the fan $\Sigma^*$ dual to the polar dual $\Delta^*$ of $\Delta$ is smooth, that the MPCS~\cite[Definition 1.3]{GHHPS} and connectedness~\cite[Definition 1.6]{GHHPS} conditions hold, and that the ground field $\Bbbk$ has characteristic as specified in~\cite[Definition 1.4]{GHHPS}. Then, after base-changing to the Novikov \textbf{field}, we have that
    \begin{align*}
        \mathcal{X}_{\Delta^*}\cong\underline{\Proj}_{\Bbbk[\![T]\!]}R_{\phi}
    \end{align*}
    where $\mathcal{X}_{\Delta^*}$ is the Batyrev mirror space specified in~\cite[\S{1.3} and Conjecture A]{GHHPS}.
\end{theorem}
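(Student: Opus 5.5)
The plan is to identify both sides with the Proj of the same graded ring, using the homological mirror symmetry equivalence of Ganatra--Hanlon--Hicks--Pomerleano--Sheridan as the bridge.

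Under the stated hypotheses (smooth $\Sigma^*$, MPCS, connectedness, characteristic assumption), \cite{GHHPS} provides a quasi-equivalence
\[
D^b\Coh(\mathcal{X}_{\Delta^*,\Lambda})\simeq D^\pi\Fuk(X_t).
\]
This equivalence sends the structure sheaf $\mathcal{O}$ to a Lagrangian section $L_0$ of the tropical fibration. It also intertwines tensoring by the ample line bundle $\mathcal{O}(1)$, which is pulled back from the toric polarization, with the action of the monodromy $\phi$; concretely, $\phi^d(L_0)$ corresponds to $\mathcal{O}(d)$. Because the mirror is projective, it is recovered as
\[
\mathcal{X}_{\Delta^*}\cong \Proj \bigoplus_{d\geq 0} H^0(\mathcal{O}(d)).
\]
So the theorem reduces to producing a graded ring isomorphism
\[
\bigoplus_d H^0(\mathcal{O}(d)) \cong \bigoplus_d HF^0(L_0,\phi^d L_0) \cong R_\phi\otimes\Lambda ,
\]
compatible with products.

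The key steps, in order, are as follows.

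\begin{enumerate}
\item Check that the \cite{GHHPS} equivalence sends $\mathcal{O}(d)$ to $\phi^d(L_0)$ compatibly with the module structures. On the B-side, tensoring by $\mathcal{O}(1)$ is an autoequivalence; on the A-side, $\phi$ acts, and since it is the monodromy of the degeneration it is the Dehn-twist-type autoequivalence mirror to the polarization. The identification of multiplication maps $\Hom(\mathcal{O},\mathcal{O}(a))\otimes\Hom(\mathcal{O}(a),\mathcal{O}(a+b))\to\Hom(\mathcal{O},\mathcal{O}(a+b))$ with Floer products then comes for free from functoriality.

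\item Relate the Lagrangian Floer groups $HF^0(L_0,\phi^dL_0)$ to the fixed point groups $HF^0(X_t,\phi^d)$. Here I would use the closed--open type map from the earlier sections: the map $HF^*(\phi^d)\to HF^*(L_0,\phi^d L_0)$ obtained by acting on $L_0$, which is the mechanism behind the fully faithful embedding in Theorem~\ref{toric-degeneration-hms-thm}. Showing it is an isomorphism in degree $0$ and intertwines pair-of-pants products with triangle products would be the content of the tropical Lagrangian section hypothesis. For the Batyrev case, $L_0$ should be verified to be a tropical Lagrangian section, for instance using positive real loci as in Theorem~\ref{lagrangianpositivereallocus}. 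This gives $R_\phi\otimes\Lambda\cong \bigoplus_d HF^0(L_0,\phi^dL_0)$ as rings.

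\item Compose the two isomorphisms and take Proj after base change to the Novikov field. Base change commutes with Proj, and Proj is insensitive to finitely many low degrees, so truncation issues disappear.
\end{enumerate}

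The main obstacle is step 1: matching the GHHPS mirror functor's treatment of the polarization with the symplectic monodromy $\phi$. GHHPS build their functor through different models (a relative Fukaya category and toric mirror constructions), so one must check that their $\mathcal{O}(1)$ really corresponds to $\phi$ up to a shift and a Novikov rescaling, and that $L_0$ matches $\mathcal{O}$. I would first try to deduce this from the fact that both the autoequivalence and $\phi$ are determined by their action on the generator and its graded endomorphisms. If that fails, I would compute $HF^0(L_0,\phi^dL_0)$ directly as lattice points of $d\Delta$ by counting intersection points in the tropical picture. Those points give bases matching $H^0(\mathcal{O}(d))$, and I would then compare the structure constants, which are theta-function type products on both sides.
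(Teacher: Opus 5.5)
Your overall architecture matches the paper's. You use \cite{GHHPS} to identify $\underline{\Proj}_{\Bbbk[\![T]\!]}\bigoplus_d HF^0(L,\psi^d(L))$ with $\mathcal{X}_{\Delta^*}$, where the $\psi^d(L)$ are the GHHPS Lagrangians mirror to $\mathcal{O}(d)$. You then match these with the monodromy iterates $\phi^d(L)$ of the positive real locus $L$, and pass to $R_\phi$ through the twisted closed--open isomorphism (Proposition~\ref{closedopenisomorphism} and Lemma~\ref{tcoringhomlemma}).

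However, the step you flag as the main obstacle is where essentially all the content lies, and neither of your proposed ways of handling it works.
\begin{itemize}
\item Your opening paragraph asserts that the GHHPS equivalence intertwines $-\otimes\mathcal{O}(1)$ with $\phi$. This cannot be read off from \cite{GHHPS}: their mirrors to $\mathcal{O}(d)$ come from the ambient toric geometry, not from the monodromy of the degeneration.
\item Your first idea, that both autoequivalences are determined by their action on the generator and its endomorphisms, is circular. The unknown is precisely the object $\phi(L)$, i.e.\ the action of $\phi$ on the generator.
\item Your fallback is to compute $HF^0(L,\phi^d(L))$ together with all triangle products over the Novikov field. That would require controlling every higher-energy holomorphic triangle in a closed Calabi--Yau. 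The paper's machinery only controls the special-fiber ring (Theorem~\ref{stanleyreisnerisom}), so this amounts to redoing the hard part of \cite{GHHPS} rather than importing it.
\end{itemize}

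The missing ingredient is a geometric comparison of $\phi$ with the GHHPS symplectomorphism, which the paper obtains from Abouzaid's tailoring \cite{AbouzaidCoordinateRing}.
\begin{enumerate}
\item One deforms $X_t$ through real hypersurfaces $M_{s,t}$ lying over a small neighborhood of the tropical hypersurface. This is where smoothness of $\Sigma^*$ enters, via \cite[Proposition 4.2]{AbouzaidCoordinateRing}; note that your proposal never uses that hypothesis.
\item The ambient fiberwise translation carries $M_{s,t}$ to another hypersurface in the same class. A $C^1$-small isotopy back, with trivial flux and hence Hamiltonian, defines an automorphism $\psi$ of $M_{s,t}$.
\item The iterates of $\psi$ applied to $L$ (one of the spheres of \cite[\S{4.3}]{GHHPS}) give the GHHPS mirrors of $\mathcal{O}(d)$.
\item On the arbitrarily large region where $M_{s,t}$ carries a Lagrangian torus fibration, $\phi$ and $\psi$ are both fiberwise translations and agree. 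So $\phi^d(L)$ and $\psi^d(L)$ are $C^1$-close Lagrangian rational homology spheres, and therefore isomorphic objects.
\end{enumerate}
Only after this identification does your appeal to functoriality for matching the products become available.
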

We remark that the proof of this theorem relies on the main result of~\cite{GHHPS}, so we do not claim an independent proof of homological mirror symmetry in these cases. One can, however, take this as evidence for the equivalence of the mirror constructions in~\cite{GSrealaffine} and~\cite{GSintrinsicmirrors} in some situations, along the lines of~\cite{GScanonicalwall, Goncharov}.

The next section, which is both an extended introduction and the conceptual heart of the paper, details a version of the Perutz--Siebert proposal with our modifications. Along the way, we will also summarize the technical contributions in this paper. We have opted to phrase our argument as the construction of a mirror functor for an arbitrary smooth fiber of a polarized maximally unipotent degeneration of Calabi--Yau manifolds, contingent on two geometric hypotheses, Assumptions~\ref{commutativeassumption} and~\ref{lagrangianassumption}. The first of these assumptions asserts that $R_{\phi}$ is commutative, and the second posits the existence of a certain Lagrangian submanifold out of which one can build mirrors to line bundles in the Fukaya category.

We hope that both of these working assumptions are stated in a way that supports further investigation, and we expect that they can be verified very generally in relatively short order. We are currently working to verify Assumption~\ref{commutativeassumption} for any maximally unipotent degeneration in joint work in-progress with Roman Krutowski. We expect that this would amount to a proof of Conjecture~\ref{GSringisomorphism} with the intrinsic mirror ring replaced by a version based on Gromov--Witten theory for symplectic orbifolds, inspired by the work of Tseng and You~\cite{TsengYou}. On the other hand, we are also working to verify Assumption~\ref{lagrangianassumption}, and to investigate the smoothness of our mirror family, for maximally unipotent degenerations of Calabi--Yau threefolds: the original case of mirror symmetry. 

\subsection*{Acknowledgments}
The first-named author was supported by the NSF standard grant DMS-2404843 and CAREER grant DMS-2540393. He thanks Daniel Pomerleano for encouragements and illuminating discussions. The second-named author was supported by the NSF MSPRF award DMS-2502860. He thanks Denis Auroux, Sheel Ganatra, and Roman Krutowski for helpful discussions. Both authors thank Paul Hacking for a discussion about~\cite{HackingKeating}. We also thank Roman Krutowski for comments on a version of this paper.

\section{Constructing a mirror functor}\label{mainsection}
Let $\mathbb{D}$ denote a disk in $\mathbb{C}$ centered at the origin, and let $\mathbb{D}^{\circ}$ denote the punctured disk $\mathbb{D}\setminus\lbrace 0\rbrace$. Consider a relatively minimal, maximally unipotent family of smooth closed Calabi--Yau $n$-folds $\pi^{\circ}\colon\mathcal{X}^{\circ}\to\mathbb{D}^{\circ}$. We assume that the fibers $X_t\coloneqq\pi^{-1}(t)$ are Calabi--Yau manifolds in the strict sense, meaning that they are simply connected, that $c_1(X_t) = 0$, and that $h^{i,0}(X_t) = 0$ for all $0<i<n$. We further assume that $\mathcal{X}^{\circ}$ is \textit{polarized}, i.e. equipped with a relatively ample line bundle
\begin{align}\label{polarizationassumption}
    \mathcal{E}\to\mathcal{X}^{\circ} \,.
\end{align}

In this section, we will outline a proof of homological mirror symmetry for a smooth fiber of such a family, modulo generation of the Fukaya category, equipped with a K\"{a}hler form determined by $\mathcal{E}$, over a ground field $\Bbbk$, by way of an extended introduction. Our proof assumes the existence of a suitable Lagrangian $\Bbbk$-homology sphere in a smooth fiber $X_t$, and thus applies to prove homological mirror symmetry for $X_t$ over ground fields of characteristic away from a set of primes $\Bad(\mathcal{X}^{\circ})$ defined in Definition~\ref{tropicallagrangiandefinition} which depends only on $\mathcal{X}^{\circ}$. We call such a Lagrangian submanifold $X_t$ a \textit{tropical Lagrangian section}. For toric degenerations defined over $\mathbb{R}$ (whose central fibers satisfy a certain cohomological constraint and whose total spaces admit sufficiently well-behaved resolutions) such a tropical Lagrangian section is given by the \textit{positive real locus} (see Theorem~\ref{lagrangianpositivereallocus}). This encompasses the case of toric degenerations of Batyrev--Borisov complete intersections.

We construct an \textit{intrinsic} mirror family using the fixed-point Floer cohomology groups of the iterates of the monodromy of $\mathcal{X}^{\circ}$. A version of the proof strategy outlined below was first envisioned by Perutz and Siebert.\footnote{Notably, Perutz and Siebert suggested studying the ring $R_{\phi}$ in this setting, and they anticipated the appeal to~\cite{Polishchuk}.} Our results apply without the assumption that our mirror family is smooth, although this means we cannot appeal to standard generation criteria~\cite{AbouzaidGeneration, PerutzSheridanCore, GanGeneration, Sanda, SheridanOC, AFOOO}. We also work directly with a ring defined using fixed-point Floer cohomology, at the expense of introducing Assumption~\ref{commutativeassumption} below.

The restrictions $\mathcal{E}_t$ of $\mathcal{E}$ to the fibers $X_t$ determine a locally constant family $\kappa$ of K{\"a}hler classes $\kappa_t\in H^2(X_t;\mathbb{R})$~\cite[\S{3.3.1}]{VoisinBook}. There is a smoothly varying family of K\"{a}hler forms $\omega_t\in\Omega^{1,1}(X_t)$, for $t\in\mathbb{D}^{\circ}$, representing the K\"{a}hler classes $\kappa_t$~\cite[\S{9.3.3}]{VoisinBook}. By Moser's theorem, the resulting family of symplectic manifolds is locally trivial. Thus, there is a monodromy representation
\begin{align}\label{symplecticmonodromyrep}
\pi_1(\mathbb{D}^{\circ},t)\to\pi_0\operatorname{Symp}(X_t,\omega_t) \,.
\end{align}
If $\mathcal{X}^{\circ}\to\mathbb{D}^{\circ}$ is equipped with the Ehresmann connection determined by the symplectic forms $\omega_t$, then these symplectomorphisms are obtained by parallel transport~\cite[Lemma 7.2]{SeiThesis}.
\begin{definition}
Let $\phi\colon X_t\to X_t$ denote a representative of the class in $\pi_0\operatorname{Symp}(X_t,\omega_t)$ corresponding to the image of a standard generator of $\pi_1(\mathbb{D}^{\circ},t)$ in~\eqref{symplecticmonodromyrep}.
\end{definition}

Consider the degree $0$ fixed-point Floer cohomology groups $HF^0(X_t,\phi^d)$ associated to the symplectomorphisms $\phi^d\colon X_t\to X_t$. We will review the construction of fixed-point Floer cohomology in the Calabi--Yau setting in \S{\ref{fixedpointfloersection}}. These groups are \textit{a priori} modules over a version of the Novikov field~\cite[\S{4}]{HoferSalamonNovikov}. Following~\cite{SeiThesis}, we will usually take this to be the \textit{universal Novikov field}
\begin{align}\label{universalnovikovfield}
\Lambda_{\Bbbk}\coloneqq\left\lbrace\sum_{i=0}^{\infty}a_i T^{\lambda_i}\mid a_i\in\Bbbk,\,\lambda_i\in\mathbb{R}\text{ and, }\lim\lambda_i = \infty\right\rbrace
\end{align}
with coefficients in $\Bbbk$. We write $\Lambda$ instead of $\Lambda_{\Bbbk}$ when the ground field is clear.

We will show that $HF^0(X_t,\phi^d)$ is naturally defined -- as an invariant of some \textit{snc model} of $\mathcal{X}^{\circ}$ -- with coefficients in the ring of Laurent series $\Bbbk[\![T]\!]$ (see \S{\ref{semistablereductionsection}} and \S{\ref{enhancedfixedpointfloersection}} below.) There is a pair of pants product
\begin{align}\label{pairofpantsproduct}
\star\colon HF^0(X_t,\phi^{d_0})\widehat{\otimes}_{\Bbbk[\![T]\!]} HF^0(X_t,\phi^{d_1})\to HF^0(X_t,\phi^{d_0+d_1})
\end{align}
for any $d_0,d_1\in\mathbb{Z}_{\geq0}$.\footnote{We will only consider the degree $0$ fixed point Floer cohomology for nonnegative powers of $\phi$, though it is a standard fact that the product is defined in all degrees and for all powers of $\phi$ more generally.} The grading on Floer cohomology is determined by a trivialization of the canonical bundle on $\mathcal{X}^{\circ}$ (which we give more details about below in Remark~\ref{CYformremark}.)
\begin{definition}
Define the $\Bbbk[\![T]\!]$-algebra $R_{\phi}$ by
\begin{align}\label{mirrorring}
R_{\phi}\coloneqq \bigoplus_{d\geq0}HF^0(X_t,\phi^d)
\end{align}
with multiplication given by~\eqref{pairofpantsproduct}. (Here the fixed-point Floer cohomology groups refer to the versions described in \S{\ref{enhancedfixedpointfloersection}}.)
\end{definition}
We would like to define the mirror to $\mathcal{X}^{\circ}$ by applying the relative Proj construction to $R_{\phi}$ over $\Bbbk[\![T]\!]$, but it is not immediately clear that this makes sense. As a starting point for our discussion, we will assume in this paper that:
\begin{customthm}{C}\label{commutativeassumption}
$R_{\phi}$ is a commutative ring.
\end{customthm}
We will show below that $R_{\phi}$ is also finitely-generated over $\Bbbk[\![T]\!]$ (Theorem~\ref{stanleyreisnerisom} and Remark~\ref{centralfiberrmk}.)
\begin{remark}
The discussion in~\cite[\S{5.4}]{PascaleffMonoidal} shows that Assumption~\ref{commutativeassumption} is necessary. In the important special case where $\mathcal{X}^{\circ}$ is (the complement of the central fiber in) a toric degeneration defined over $\mathbb{R}$, we will use the real involution to check that Assumption~\ref{commutativeassumption} holds. This is sufficient for studying, say, toric degenerations of Batyrev--Borisov complete intersections. In forthcoming work with Roman Krutowski, we intend to show that $R_{\phi}$ is commutative for any maximally unipotent degeneration $\mathcal{X}^{\circ}\to\mathbb{D}^{\circ}$ by comparing it to a version of the intrinsic mirror ring of~\cite{GSintrinsicmirrors} defined using orbifold Gromov--Witten invariants following~\cite{TsengYou} in the symplectic category.
\end{remark}
\begin{definition}
Define the \textit{(intrinsic) mirror family}
\begin{align}\label{mirrorfamily}
\check{\mathcal{X}}\coloneqq\underline{\mathbf{Proj}}_{\Bbbk[\![T]\!]}R_{\phi}
\end{align}
over $\operatorname{Spec}\Bbbk[\![T]\!]$ (where $R_\phi$ is thought of as a quasi-coherent $\mathcal{O}_{\operatorname{Spec}\Bbbk[\![T]\!]}$-module.)
\end{definition}
\begin{remark}
    Defining $R_{\phi}$ over $\Bbbk[\![T]\!]$ means that the mirror family $\mathcal{\check{X}}$ has a $\Bbbk[\![T]\!]$-model, and hence comes with a central fiber $\check{X}_0$. Working with the special fiber instead of $\check{\mathcal{X}}$ is helpful in some places, though we emphasize that the correct statement of homological mirror symmetry should be an equivalence of categories of the Novikov field.
\end{remark}

To state homological mirror symmetry, we must also decide on a version of the Fukaya category to use. For a degeneration of elliptic curves or K3 surfaces, the full Fukaya category $\Fuk(X_t)$ of a smooth fiber $X_t$ can be constructed using classical techniques (see~\cite[(8c)]{SeiQuartic} and the references therein for the case of K3 surfaces.) In higher dimensions, we choose to use the (algebro-geometric) relative Fukaya category of~\cite[\S{1.2}]{PerutzSheridan}, mainly because it is the easiest version of the Fukaya category of a closed Calabi--Yau manifold to construct rigorously, and because it cleanly allows for the treatment of infinitely many Lagrangian submanifolds as objects. The construction of the relative Fukaya category depends on a choice of system of divisors $\mathbf{E}_t$ in $X_t$ to be specified below (see Lemma~\ref{divisorconstruction1}.) We can view the relative Fukaya category $\Fuk(X_t,\mathbf{E}_t)$ as a $\Lambda_{\Bbbk}$-linear (uncurved) $A_{\infty}$-category~\cite[\S{1.4}]{PerutzSheridan}. The following restates part of of Kontsevich's homological mirror symemtry conjecture~\cite{KontsevichHMS} using our mirror.
\begin{theorem}\label{mirrorinjectionthm}
Let $\mathcal{X}^{\circ}\to\mathbb{D}^{\circ}$ be a maximally unipotent degeneration of strict Calabi--Yau $n$-folds as above, and suppose that $\Bbbk$ is a field whose characteristic is not contained in the set of bad primes $\Bad(\mathcal{X}^{\circ})$ (Definition~\ref{tropicallagrangiandefinition}.) Suppose that Assumotions~\ref{commutativeassumption} and~\ref{lagrangianassumption} below are satisfied. Then there is a fully faithful embedding
\begin{align}
\Perf_{dg}(\check{\mathcal{X}})\otimes_{\Bbbk[\![T]\!]}\Lambda_{\Bbbk}\to\Perf\Fuk(X_t,\mathbf{E}_t)
\end{align}
of $\Lambda_{\Bbbk}$-linear $A_{\infty}$-categories, for any $t\in\mathbb{D}^{\circ}$ close to the origin, between the category of perfect modules over the relative Fukaya category $\Fuk(X_t,\mathbf{E}_t)$ and (a dg-enhancement of) the derived category of perfect complexes on $\check{\mathcal{X}}$. When $n = 1,2$, we can replace the relative Fukaya category with the full Fukaya category $\Fuk(X_t)$.
\end{theorem}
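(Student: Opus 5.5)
Following the Perutz--Siebert strategy, we construct the functor on an ample sequence of line bundles and invoke Polishchuk's description of a projective scheme through its twisted coordinate algebra~\cite{Polishchuk}. Let $L\subset X_t$ be the tropical Lagrangian section of Assumption~\ref{lagrangianassumption}, and set $L_d\coloneqq\phi^d(L)$ for $d\in\mathbb{Z}$. These are the candidate mirrors of the powers $\mathcal{O}(d)$ on $\check{\mathcal{X}}$. Since $L$ is a $\Bbbk$-homology sphere and $\operatorname{char}\Bbbk\notin\Bad(\mathcal{X}^{\circ})$, it is spin and graded and has no obstructing disks relative to $\mathbf{E}_t$ (we choose $\mathbf{E}_t$ via Lemma~\ref{divisorconstruction1}). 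So $L$ and all its monodromy images are objects of $\Fuk(X_t,\mathbf{E}_t)$, with $HF^*(L,L)\cong H^*(S^n;\Lambda)$.

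\textbf{Step 1: open--closed comparison.} We identify $HF^*(L_i,L_j)\cong HF^*(L,\phi^{j-i}L)$ with fixed point Floer cohomology. The closed--open map (or a Lefschetz-type "fixed points of $\phi^d$ near $L$" argument) gives $HF^*(X_t,\phi^d)\to HF^*(L,\phi^d(L))$. We show it is an isomorphism in degree $0$, and that $HF^{>0}(L,\phi^d L)=0$ for $d\gg0$ (Serre-vanishing analogue). The intended mechanism is the tropical section property: the intersection points of $L$ with $\phi^d(L)$ correspond to $\frac1d$-integral points of the dual intersection complex, all in degree $0$ for $d\ge 1$. These match the generators of $HF^0(X_t,\phi^d)$ coming from the snc model (\S\ref{enhancedfixedpointfloersection}). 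We must also check that the pair of pants product is intertwined with the triangle product $HF^0(L_0,L_{d_0})\otimes HF^0(L_{d_0},L_{d_0+d_1})\to HF^0(L_0,L_{d_0+d_1})$, which is the standard compatibility of $\mathcal{CO}$ with products. The outcome is that the $\mathbb{Z}$-algebra $\bigoplus_{i\le j}HF^0(L_i,L_j)$ is the Veronese-type $\mathbb{Z}$-algebra of $R_\phi$. Commutativity (Assumption~\ref{commutativeassumption}) ensures it is the $\mathbb{Z}$-algebra of an honest graded commutative ring, and finite generation comes from Theorem~\ref{stanleyreisnerisom}.

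\textbf{Step 2: formality and Polishchuk.} Let $\mathcal{A}$ be the full $A_\infty$-subcategory on $\{L_d\}_{d\ge d_0}$ for $d_0$ large. Directedness and degree-$0$ concentration force $\mu^k=0$ for $k\neq2$, so $\mathcal{A}$ is formal and equals the graded $\mathbb{Z}$-algebra above. On the B-side, $\{\mathcal{O}(d)\}_{d\ge d_0}$ on $\check{\mathcal{X}}$ (over $\Bbbk[\![T]\!]$, then base changed to $\Lambda$) has the same morphism algebra, because $\check{\mathcal{X}}=\underline{\Proj}R_\phi$ and Serre vanishing holds for $d_0\gg0$. Since $\mathcal{O}(1)$ is relatively ample, Polishchuk's theorem (or Orlov's ample-sequence theorem) shows these line bundles split-generate $\Perf(\check{\mathcal{X}})$. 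Hence $\Perf_{dg}(\check{\mathcal{X}})\otimes\Lambda\simeq\Perf(\mathcal{A})\hookrightarrow\Perf\Fuk(X_t,\mathbf{E}_t)$ fully faithfully, where the last inclusion is the Yoneda embedding of a full subcategory. Base change along $\Bbbk[\![T]\!]\to\Lambda$ commutes with these constructions because everything is finitely generated. For $n=1,2$ the same argument runs in the full Fukaya category, since unobstructedness there is classical.

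\textbf{Main obstacle.} I expect Step 1 to be the hardest: showing that the closed--open map $HF^0(X_t,\phi^d)\to HF^0(L,\phi^dL)$ is an isomorphism, defined integrally over $\Bbbk[\![T]\!]$ rather than only over $\Lambda$, together with vanishing of higher $HF$ for large $d$. My first approach is an action filtration by $T$-adic valuation. The generators on both sides come from strata of the snc model, and I would show the map is upper triangular with unit leading terms, coming from constant and small sections. This reduces the claim to a comparison at the central fiber $\check X_0$, where both sides are Stanley--Reisner-type rings (Remark~\ref{centralfiberrmk}). The isomorphism then lifts by completeness.
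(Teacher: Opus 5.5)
Your Step~1 essentially follows the paper's route (the twisted closed--open map of Proposition~\ref{closedopenisomorphism}). Step~2, however, has a genuine gap: the full subcategory on $\{L_d\}_{d\ge d_0}$ is not directed, is not concentrated in degree $0$, and its $A_\infty$-structure is not formal.

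Since $L$ is a $\Bbbk$-homology sphere, $HF^*(L_i,L_i)\cong H^*(L;\Lambda)$ has a nonzero degree-$n$ part. The Calabi--Yau pairing also gives nonzero backward morphisms $HF^n(L_i,L_j)\cong HF^0(L_j,L_i)^\vee$ for $i>j$. On the mirror these are $\Ext^n(\mathcal{O}(i),\mathcal{O}(j))=H^n(\check{\mathcal{X}},\mathcal{O}(j-i))$. Degree counting then permits an operation $\mathfrak{m}_{n+2}$ with $n+1$ forward degree-$0$ inputs and one backward degree-$n$ input. On the B-side this operation is nonzero: for the Koszul resolution one has $\mathfrak{m}_{n+2}(\alpha_1,\ldots,\alpha_{n+1},\beta)=\pm\id_{\mathcal{O}}$ \cite[Lemma 3.4]{Polishchuk}.

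This operation is exactly what makes the Koszul twisted complex $T$ on $\mathcal{O}(d_0),\ldots,\mathcal{O}(d_0+n+1)$ acyclic. In any model where it vanishes, $\id_{\mathcal{O}(d_0)}$ survives as a nonzero class in $H^0\Hom(T,\mathcal{O}(d_0))$, so $T\neq 0$ there. This applies both to your formal model and to the genuinely directed category with only forward morphisms. By contrast, $T\simeq 0$ in $\Perf(\check{\mathcal{X}})$.

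Split-generation by the $\mathcal{O}(d)$ is fine. But $\Perf(\check{\mathcal{X}})\simeq\Perf(\mathcal{A})$ holds only when $\mathcal{A}$ carries the full $A_\infty$-endomorphism structure, including backward morphisms and higher products. Moreover, the directed category is not a full subcategory of $\Fuk(X_t,\mathbf{E}_t)$, so your Yoneda-embedding step does not apply to it. Matching the $\mathbb{Z}$-algebra $\bigoplus_{i\le j}HF^0(L_i,L_j)$ with the coordinate ring of $R_\phi$ therefore does not determine the functor.

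What is missing is the second half of the paper's argument. One works with $\mathcal{A}_\phi=\bigoplus_{d\ge0}HF^0(L,\phi^dL)\oplus\bigoplus_{d\le0}HF^n(L,\phi^dL)$, with the $\phi$-twisted operations. One identifies it, as an associative algebra, with $\bigoplus_{d\ge0}H^0(\check{\mathcal{X}},\mathcal{L}^{\otimes d})\oplus\bigoplus_{d\le0}H^n(\check{\mathcal{X}},\mathcal{L}^{\otimes d})$.

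This identification requires $H^i(\check{\mathcal{X}},\mathcal{L}^{\otimes d})=0$ for $0<i<n$ and \emph{all} $d\in\mathbb{Z}$. The paper obtains this from semicontinuity, Koll\'ar's identification of $H^i(\mathcal{O}_{\check{X}_0})$ with the cohomology of the skeleton, Serre vanishing after replacing $\phi$ by a power, and Serre duality via Lemma~\ref{slclemma} (Cohen--Macaulay with trivial dualizing sheaf). This is also where $\operatorname{char}\Bbbk\notin\Bad(\mathcal{X}^{\circ})$ genuinely enters: the matching A-side vanishing is $H^i(L;\Bbbk)=0$ for $0<i<n$.

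Polishchuk's theorem then says this algebra carries a unique \emph{nontrivial} $A_\infty$-structure up to rescaling. The decisive step is therefore to show that the Fukaya structure is not the trivial one. The paper does this in Lemma~\ref{local-a-infinity-computation}, computing $\mathfrak{m}_{n+2}(\alpha_1,\ldots,\alpha_{n+1},\beta)=\pm\id_L$ via Morse flow trees in the local toric models near the $0$-strata, mirroring the Koszul resolution. You locate the main difficulty in the degree-$0$ comparison, but without this nontriviality computation your argument would embed the wrong category.
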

\begin{corollary}
    When $\check{\mathcal{X}}^{\circ}$ is smooth, the fully faithful inclusion of Theorem~\eqref{mirrorinjectionthm} induces a quasi-equivalence
    \[D^b\Coh(\check{\mathcal{X}}) = \Perf_{dg}(\check{\mathcal{X}})\simeq\Fuk(X_t,\mathbf{E}_t) \,.\]
    In other words, one has homological mirror symmetry between $X_t$ and $\check{\mathcal{X}}$.
\end{corollary}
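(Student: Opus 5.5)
The corollary should follow from Theorem~\ref{mirrorinjectionthm} once we know that the image of the embedding split-generates $\Perf\Fuk(X_t,\mathbf{E}_t)$. Full faithfulness is already given, so essential surjectivity is the only thing to prove. The argument has three steps, in this order.

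\emph{Step 1: the B-side.} Since $\check{\mathcal{X}}=\underline{\mathbf{Proj}}_{\Bbbk[\![T]\!]}R_{\phi}$ is projective over $\Bbbk[\![T]\!]$, the twisting sheaves $\mathcal{O}(d)$ for $d\in\mathbb{Z}$ (indeed any consecutive window of $n+1$ of them) split-generate $\Perf(\check{\mathcal{X}})$, by the usual ample-sequence argument \cite{Polishchuk}. After base change to $\Lambda_{\Bbbk}$ the generic fibre $\check{\mathcal{X}}_{\Lambda}$ is assumed smooth, so $\Perf = D^b\Coh$ there. This is the identification $D^b\Coh(\check{\mathcal{X}})=\Perf_{dg}(\check{\mathcal{X}})$ in the statement, read over the Novikov field. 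Under the functor of Theorem~\ref{mirrorinjectionthm}, the sheaves $\mathcal{O}(d)$ go to the Lagrangians $\phi^d(L)$, where $L$ is the tropical Lagrangian section of Assumption~\ref{lagrangianassumption}.

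\emph{Step 2: the A-side.} I will show that the objects $\phi^d(L)$ split-generate $\Fuk(X_t,\mathbf{E}_t)$. I would use the Abouzaid--Ganatra/Sheridan generation criterion \cite{AbouzaidGeneration, GanGeneration, SheridanOC}, in the Perutz--Sheridan form \cite{PerutzSheridanCore}. This criterion says that a full subcategory $\mathcal{A}$ split-generates if the open--closed map $\mathcal{OC}\colon HH_{*}(\mathcal{A})\to QH^{*}(X_t)$ hits the unit, or equivalently, for a Calabi--Yau, if $HH_*(\mathcal{A})$ has the full dimension $\dim_{\Lambda}H^*(X_t;\Lambda)$. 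On the B-side, smoothness and HKR give
\[
HH_*(\Perf(\check{\mathcal{X}}_{\Lambda}))\cong\bigoplus_{p,q}H^q(\check{\mathcal{X}}_\Lambda,\Omega^p).
\]
The functor is fully faithful, so $HH_*(\mathcal{A})$ is this group.

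\emph{Step 3: counting dimensions.} It remains to check that $\sum_{p,q}h^{p,q}(\check{\mathcal{X}}_\Lambda)=\sum_k b_k(X_t)$. I expect this to be the main obstacle. My first approach is to compute through the central fibre $\check{X}_0$, which Theorem~\ref{stanleyreisnerisom} identifies as a Stanley--Reisner-type scheme. Its Hodge numbers are governed by the dual intersection complex, and I would compare them with the weight/monodromy filtration on $H^*(X_t)$ coming from the limiting mixed Hodge structure of the maximally unipotent degeneration. I do not expect a direct count to suffice. The fallback is Ganatra's nondegeneracy criterion, which needs only that $\mathcal{OC}$ is injective on a nonzero part. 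Since the category $\mathcal{A}$ is smooth and proper, it is homologically smooth, and weak Calabi--Yau structures then make $\mathcal{OC}$ an isomorphism onto a summand of $QH^*(X_t)$. If that summand contains the unit, generation follows by \cite{SheridanOC}.

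Once generation is established, the fully faithful functor is essentially surjective on split-closures. This gives the quasi-equivalence and hence homological mirror symmetry between $X_t$ and $\check{\mathcal{X}}$.
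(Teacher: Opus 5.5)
Your route is the paper's. Its proof is a single line: smoothness of the mirror plus Ganatra's automatic generation theorem \cite[Theorem A]{GanGeneration}, using the open--closed map for the relative Fukaya category constructed in \cite{SheridanOC}.

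As written, though, you never verify the hypothesis that makes the criterion apply. Step 3 is left as an expected obstacle, and the fallback ends with ``if that summand contains the unit, generation follows'' without showing that it does. The missing observation is short.

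\emph{$QH^*$ is local.} Since $c_1(X_t)=0$, the quantum product preserves degree. Since $X_t$ is connected, $QH^*(X_t;\Lambda_{\Bbbk})$ is $\mathbb{Z}$-graded with $QH^0=\Lambda_{\Bbbk}\cdot 1$, and $QH^{>0}$ is nilpotent because degrees are bounded by $2n$. So $QH^*$ is a local ring whose only idempotents are $0$ and $1$.

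\emph{The image of $\mathcal{OC}$ is cut out by an idempotent.} The map $\mathcal{OC}$ is $QH^*$-linear, so its image $I$ is an ideal. It intertwines the Mukai pairing on $HH_*(\mathcal{A})$, which is nondegenerate because $\mathcal{A}$ is smooth and proper, with the Poincar\'e pairing. Hence $\mathcal{OC}$ is injective and the Poincar\'e pairing is nondegenerate on $I$. By invariance of the Frobenius pairing, $I^{\perp}$ is also an ideal, $QH^*=I\oplus I^{\perp}$, and $I\star I^{\perp}=0$. Therefore $I=e\star QH^*$ for an idempotent $e$.

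\emph{Conclusion.} The ideal $I$ is nonzero: for instance, $\mathcal{OC}$ sends the nonzero class in $HF^n(L,L)$ to a nonzero multiple of the point class. So $e=1$, $\mathcal{OC}$ hits the unit, and Abouzaid's criterion \cite{AbouzaidGeneration} gives split-generation. This is what the citation of Ganatra's theorem packages in the Calabi--Yau case, and no rank count is needed.

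I would drop Step 3 altogether. The identity $\sum_{p,q}h^{p,q}(\check{\mathcal{X}}_{\Lambda})=\sum_k b_k(X_t)$ is Hodge-number mirror symmetry. That is normally obtained as a consequence of homological mirror symmetry rather than used as an input, and it cannot simply be read off from the singular special fibre $\check{X}_0$. Moreover, the HKR identification you use in Step 2 requires $\operatorname{char}\Bbbk=0$, or at least larger than the dimension. The statement, however, is meant to hold for any characteristic outside $\Bad(\mathcal{X}^{\circ})$.
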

\begin{proof}
    This is immediate from the smoothness assumption and~\cite[Theorem A]{GanGeneration}. The open-closed map for the relative Fukaya category, on which the generation criteria depend, was recently constructed in~\cite{SheridanOC}.
\end{proof}

\begin{remark}[Smoothness]
Instead of stating homological mirror symmetry using $D^b\Coh$ on the B-side, as is standard in this setting, we use $\Perf$ since we have not shown that our mirror family is smooth. Comparing with the example of~\cite[Appendix C]{Pomerleano}, one should not expect that $\mathcal{\check{X}}$ will always be smooth, even after base-changing to $\Lambda_{\Bbbk}$. In situations where it is not smooth, it is reasonable to speculate that the mirror to $X_t$ might be a (non-commutative) crepant resolution of our mirror space, if one already believes that the Fukaya category of $X_t$ should be homologically smooth.

We expect, however, that it will very often be the case that $\check{\mathcal{X}}$ is smooth, in which case we will have that $\Perf(\check{\mathcal{X}}) = D^b\Coh(\check{\mathcal{X}})$. It is a folklore conjecture~\cite{GSinvitation} that the class of (smooth) toric degenerations is the largest class of maximally unipotent Calabi--Yau degenerations which is closed under taking mirrors, which informs the prediction that $\mathcal{\check{X}}$ will be smooth in this case in view of the conjectural compatibility between the intrinsic mirror construction and the toric degeneration mirror construction. In general, the mirror construction of~\cite{GSintrinsicmirrors}, as well as our own mirror construction, are not expected to be involutive. Note that the mirror degenerations~\eqref{mirrorfamily} are of a special form: they are defined over $\mathbb{Z}$, and their central fibers are unions of projective spaces (by Remark~\ref{centralfiberrmk}.) 
\end{remark}

\begin{remark}[Models for Fukaya categories]
It is natural to ask whether we can replace the relative Fukaya category in Theorem~\ref{mirrorinjectionthm} with the full Fukaya category. A proposal for defining the Fukaya category of a general compact symplectic manifold, albeit with only a \textit{finite} collection of Lagrangian submanifolds as objects, was put forward in~\cite{AFOOO}. In principle, any compact embedded Lagrangian submanifold of $X_t$ should give rise to a perfect module over $\Fuk(X_t,\mathbf{E}_t)$, assuming we are given a suitable construction of the Fukaya category. In particular, it should be possible for us to replace $\Fuk(X_t,\mathbf{E}_t)$ with any reasonable version of the full Fukaya category, up to a base change of the mirror. In fact, given a such a reasonable version of the full Fukaya category, our argument could be phrased without reference to the relative Fukaya category at all, since it does not use versality in the style of~\cite{SheridanVersalityCDM}. We will not, however, attempt to settle these foundational issues in the present work.
\end{remark}
Although we are not able to show that the functor of Theorem~\ref{mirrorinjectionthm} is an equivalence, and indeed the results of~\cite{Pomerleano} indicate that this is probably too much to hope for, we can construct a left inverse for this functor. This will be carried out in Appendix~\ref{onesidedappendix}. From this discussion, we obtain the following corollary, which is a natural analogue of~\cite[Theorem 1.1(c)]{Pomerleano}.
\begin{proposition}\label{finitelygeneratedprop}
    Let $K\subset X_t\setminus\mathbf{E}_t$ be any Lagrangian submanifold. Then the $R_{\phi}$-module
    \begin{align*}
    \bigoplus_{d=0}^{\infty} HF^0(\phi^d(L),K)
    \end{align*}
    (determined by the Yoneda module construction~\cite[(1l)]{SeiBook}) is finitely-generated.
\end{proposition}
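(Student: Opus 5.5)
The plan is to use the left inverse constructed in Appendix~\ref{onesidedappendix} to identify the module $\bigoplus_{d\ge0}HF^0(\phi^d(L),K)$ with the graded module of global sections of a coherent sheaf on $\check{\mathcal{X}}$. The argument mirrors the proof of~\cite[Theorem 1.1(c)]{Pomerleano}.

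Here $L$ denotes the tropical Lagrangian section of Assumption~\ref{lagrangianassumption}. The embedding of Theorem~\ref{mirrorinjectionthm} sends the twisting sheaf $\mathcal{O}(d)$ on $\check{\mathcal{X}}$ to $\phi^d(L)$, and the ring $R_{\phi}$ acts on $\bigoplus_d HF^0(\phi^d(L),K)$ through the closed–open action of $HF^0(X_t,\phi^{d})$ on morphism groups between the iterates $\phi^{d'}(L)$. This action is compatible with the identification $\bigoplus_d HF^0(\phi^d(L),L)\cong R_\phi$ in degree $0$.

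The steps are as follows.

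\begin{enumerate}
\item Since $K\subset X_t\setminus\mathbf{E}_t$, it defines an object of $\Fuk(X_t,\mathbf{E}_t)$. Its Yoneda module restricted along the functor of Theorem~\ref{mirrorinjectionthm} gives a module $\mathcal{Y}_K$ over $\Perf_{dg}(\check{\mathcal{X}})\otimes\Lambda$.
\item The left inverse from Appendix~\ref{onesidedappendix} is built from the graded module $\bigoplus_d CF^*(\phi^d(L),-)$ over the $A_\infty$-algebra of the $\phi^d(L)$, followed by the Proj/Serre construction. Applied to $K$, it produces a perfect complex $\mathcal{F}_K$ on $\check{\mathcal{X}}_\Lambda$ with
\[
\Hom(\mathcal{O}(-d),\mathcal{F}_K)\simeq CF(\phi^d(L),K)
\]
for all $d$, compatibly with the $R_\phi$-action.
\item Taking $H^0$, the module in question becomes $\bigoplus_d H^0\bigl(\mathbb{H}^0(\check{\mathcal{X}},\mathcal{F}_K(d))\bigr)$. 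Since $\check{\mathcal{X}}$ is projective over the base, the hypercohomology spectral sequence and Serre vanishing show that for $d\gg0$ this agrees with $\Gamma(\mathcal{H}^0(\mathcal{F}_K)(d))$. By the Serre correspondence, that is a finitely generated graded module over $R_\phi$ in high degrees.
\item Each individual group $HF^0(\phi^d(L),K)$ is finite-dimensional. Hence the low-degree part contributes finitely many further generators, and the whole module is finitely generated.
\end{enumerate}

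If Floer groups are taken over $\Bbbk[\![T]\!]$ rather than $\Lambda$, I would first argue over $\Lambda$, where the Noetherian graded ring $R_\phi\otimes\Lambda$ is finitely generated by Theorem~\ref{stanleyreisnerisom}. I would then descend via the $\Bbbk[\![T]\!]$-model and Nakayama.

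The main obstacle is the second step: showing that the $R_\phi$-module structure coming from the pair-of-pants/closed–open action agrees with the module structure on $\bigoplus_d\Gamma(\mathcal{F}_K(d))$. This requires that the pieces $CF(\phi^{d}(L),\phi^{d'}(L))$ assemble into an $A_\infty$-structure formal enough to match the homogeneous coordinate ring. I would attack it as in Polishchuk's reconstruction~\cite{Polishchuk}: first use the degree-$0$ concentration of $HF^*(\phi^d(L),\phi^{d'}(L))$ for $d'\ge d$, which follows from the tropical Lagrangian section property. Then apply Polishchuk's theorem to the resulting $\mathbb{Z}$-algebra, which identifies modules over it with quasi-coherent sheaves on $\underline{\Proj}\,R_\phi$. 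Finite generation then follows from the standard ampleness statement for $\mathbb{Z}$-algebras in that framework.
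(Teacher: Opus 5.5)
Your overall route matches the paper's. You restrict the Yoneda module of $K$ to the subcategory of the $\phi^d(L)$, use Appendix~\ref{onesidedappendix} to see that it is a perfect complex $\mathcal{F}_K$ on $\check{\mathcal{X}}$, and conclude by Serre's correspondence; your steps 3--4 just unpack that last citation. However, step 2 as you state it cannot hold, because the two gradings run in opposite directions. By Proposition~\ref{closedopenisomorphism}, $HF^0(\phi^{a}(L),\phi^{b}(L))\cong HF^0(L,\phi^{b-a}(L))$ vanishes unless $b\ge a$, and $R_\phi\cong\bigoplus_{e\ge0}HF^0(L,\phi^e(L))$. In particular $\bigoplus_{d\ge0}HF^0(\phi^d(L),L)=HF^0(L,L)$, not $R_\phi$ as you claim. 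Consequently a degree-$e$ element of $R_\phi$ acts on the Yoneda module by precomposition with a class in $HF^0(\phi^{d-e}(L),\phi^{d}(L))$. So it maps $HF^0(\phi^d(L),K)$ to $HF^0(\phi^{d-e}(L),K)$, i.e.\ it \emph{lowers} $d$. The same happens if you act through $\mathcal{CO}_{\phi^e}$ into $HF^0(K,\phi^e(K))$ and identify $HF^0(\phi^d(L),\phi^e(K))\cong HF^0(\phi^{d-e}(L),K)$. On $\bigoplus_d\Gamma(\mathcal{F}_K(d))$, by contrast, sections of $\mathcal{O}(e)$ raise $d$. With the convention you and the paper declare, $\mathcal{L}^{\otimes d}\mapsto\phi^d(L)$, the correct identification is
\[
HF^0(\phi^d(L),K)\cong\Hom(\mathcal{L}^{\otimes d},\mathcal{F}_K)=\mathbb{H}^0\bigl(\check{\mathcal{X}},\mathcal{F}_K\otimes\mathcal{L}^{\otimes -d}\bigr).
\]
Your silent switch from $\mathcal{O}(d)$ in the preamble to $\mathcal{O}(-d)$ in step 2 is exactly where this is lost.

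This breaks step 3. For $\mathbb{H}^0(\mathcal{F}_K(-d))$ with $d\to+\infty$, neither Serre vanishing nor Serre's correspondence says anything. Finite generation can genuinely fail in that direction on the B-side: for a skyscraper at a smooth point the resulting module behaves like $\Bbbk[t^{-1}]$ over $\Bbbk[t]$. Once the signs are fixed, your steps 2--4 prove finite generation of $\bigoplus_{d\ge0}HF^0(\phi^{-d}(L),K)\cong\bigoplus_{d\ge0}\mathbb{H}^0(\mathcal{F}_K(d))$. Dually, they prove it for $\bigoplus_{d\ge0}HF^0(K,\phi^d(L))\cong\bigoplus_{d\ge0}\mathbb{H}^0(\mathcal{F}_K^{\vee}(d))$. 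You should say explicitly that you are proving one of these normalizations, which is the one under which the paper's one-line appeal to Serre makes sense. You should not assert an $R_\phi$-compatible isomorphism that reverses the grading.

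Conversely, what you call the main obstacle is not one. The module over the $\mathcal{L}^{\otimes d}$-subcategory is by definition the pullback of the Yoneda module of $K$ along the $A_\infty$-functor of Theorem~\ref{mirrorinjectionthm}. That functor already incorporates Polishchuk's uniqueness (Lemma~\ref{local-a-infinity-computation}). The passage to quasi-coherent sheaves is the Schwede--Shipley/To\"en step of the appendix, not a $\mathbb{Z}$-algebra reconstruction from~\cite{Polishchuk}.
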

Again, this result should apply in any reasonable model for the full Fukaya category, meaning that the restriction to $K\subset X_t\setminus\mathbf{E}_t$ is presumably removable. Amusingly, we can prove this result without considering holomorphic curves at all, instead relying on a result from the theory of stable model categories due to Schwede and Shipley~\cite[Theorem 3.1.1]{SS03} (Theorem~\ref{schwedeshiplythm} below.) One would otherwise expect a proof of this result to require a detailed analysis of the multiplication map on $\Fuk(X_t,\mathbf{E}_t)$ (cf.~\cite[Theorem 1.1(c)]{Pomerleano}.)

The rest of this section outlines a proof of Theorem~\ref{mirrorinjectionthm} under Assumption~\ref{commutativeassumption}, and an additional assumption on the existence of certain Lagrangian submanifolds (Assumption~\ref{lagrangianassumption}), which we will introduce below.

\subsection{Semistable reduction}\label{semistablereductionsection}
Although the symplectic monodromy~\eqref{symplecticmonodromyrep} is defined for the \textit{smooth} family $\mathcal{X}^{\circ}$, we can appeal to the \textit{semistable reduction theorem} to obtain detailed local models for (some power of) $\phi$ by considering an \textit{snc} model $\mathscr{X}\to\mathbb{D}$ for the degeneration. This will be our main tool for computing the various Floer-theoretic invariants associated with the pair $(X_t,\phi)$.

The semistable reduction theorem was first proved for $1$-dimensional bases in the algebraic and complex analytic categories in~\cite{KKMS}, and for general bases in the complex analytic category in~\cite{EH}. Recall from~\cite[p. 7668]{EH} that a map
\begin{align*}
p\colon X\to B
\end{align*}
of smooth complex analytic varieties, where $\dim X = n+1$ and $\dim B = 1$, is \textit{semistable} if there exist reduced divisors $\Delta_{X}\subset X$ and $\Delta_B\subset B$ such that $p$ is locally -- in the complex analytic topology -- of the form
\begin{align}\label{semistablelocaldescription}
p(x_1,\ldots,x_{n+1}) &=  t \,;\\
t &= \prod_{j = 1}^{k} x_j \nonumber
\end{align}
for some $k\leq n$, where $x_1,\ldots,x_{n+1}$ and $t$ are local coordinates in which $\Delta_Y = \lbrace x_1\cdots x_{\ell} = 0\rbrace$ for some $\ell\geq k$ and $\Delta_B = \lbrace t = 0\rbrace$.

Returning to our more specialized situation, by~\cite[Prop. 1.8(1)]{GScanonicalwall} we can choose, possibly after a base change branched at the origin, a dlt relatively minimal model $\pi'\colon(\mathcal{X}',X_0')\to\mathbb{D}$, where $X_0'$ is the fiber of $\mathcal{X}'$ over $0\in\mathbb{D}$ and $\mathcal{X}'\setminus X_0'$ is (the base-change of) $\mathcal{X}^{\circ}$. A \textit{semistable reduction} $\tilde{\pi}\colon\mathscr{X}\to\widetilde{\mathbb{D}}$ of $\pi'\colon\mathcal{X}'\to\mathbb{D}$ is a semistable map (with respect to the the central fiber $\mathbf{D} = \mathscr{X}_0=\Delta_{\mathscr{X}}\subset\widetilde{X}$ and the origin $\lbrace 0\rbrace=\Delta_{\widetilde{\mathbb{D}}}\subset\widetilde{\mathbb{D}}$) for which the following diagram commutes
\begin{equation}\label{semistablereductiondiagram}
  \begin{tikzcd}
    \mathscr{X}\arrow{d}{\tilde{\pi}} \arrow{r} & \mathcal{X}'\arrow{d}{\pi'} \\
    \widetilde{\mathbb{D}}\arrow{r} & \mathbb{D}
  \end{tikzcd}
\end{equation}
and such that
\begin{itemize}
\item[$\bullet$] $\widetilde{\mathbb{D}}\to\mathbb{D}$ is a covering map branched along the origin;
\item[$\bullet$] the central fiber $\mathbf{D}$ of $\mathscr{X}$ is a reduced simple normal crossings divisor;
\item[$\bullet$] $\mathscr{X}\to \mathcal{X}\times_{\mathbb{D}}\widetilde{\mathbb{D}}$ is an isomorphism when restricted to $\widetilde{\mathbb{D}}\setminus\lbrace 0\rbrace$, and (consequently);
\item[$\bullet$] $\tilde{\pi}$ is semistable.
\end{itemize}

It will be convenient for us to use the following slightly sharper version of semistable reduction.
\begin{proposition}[{\cite[Proposition 1.8(2)]{GScanonicalwall}}]\label{gsresolutionmodel}
Let $\mathcal{X}^{\circ}\to\mathbb{D}^{\circ}$ be a (maximally unipotent) family of Calabi--Yau manifolds as above, and let $\mathcal{X}'\to\mathbb{D}$ be a dlt minimal model with central fiber $X_0'$. Then there is a resolution of singularities $p\colon(\mathscr{X},\mathbf{D})\to(\mathcal{X}',X_0')$, where $\mathbf{D} = \mathscr{X}_0$ is the fiber over $0$, and an expression
\begin{align}\label{KXexpression}
K_{\mathscr{X}}+\mathbf{D}\equiv\sum_{i\in R} a_i D_i
\end{align}
where $\mathbf{D}$ is a reduced simple normal crossings divisor, which contains a $0$-stratum, and the sum is over the set $R$ of irreducible components $D_i$ of $\mathbf{D}$. The coefficients $a_i$ are always nonnegative, and are nonzero precisely when $D_i$ is exceptional for $p$. In particular $\mathscr{X}\to\mathbb{D}$ is semistable, and it is also projective. The components $D_i$ with $a_i = 0$ are called the \textbf{good} components of the central fiber, and the set of good components is denoted $S_0$.
\end{proposition}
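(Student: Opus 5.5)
This is a cited result (Gross--Siebert, Prop. 1.8(2)), so the plan is to reconstruct its proof from the minimal model program and resolution of singularities.

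\emph{Step 1: choose the resolution.} Start from the dlt relatively minimal model $\pi'\colon(\mathcal{X}',X_0')\to\mathbb{D}$, in which $K_{\mathcal{X}'}+X_0'\equiv 0$ relatively over $\mathbb{D}$. Relative triviality holds because the generic fibre is Calabi--Yau and $X_0'$ is a fibre, hence $\pi'$-linearly trivial. Since $(\mathcal{X}',X_0')$ is dlt, there is a big open set $U\subset\mathcal{X}'$ on which the pair is snc. Apply Szabó's or Hironaka's log resolution to $(\mathcal{X}',X_0')$, requiring it to be an isomorphism over the snc locus $U$. This gives $p\colon\mathscr{X}\to\mathcal{X}'$ with $\mathscr{X}$ smooth and $p^{-1}(X_0')_{\mathrm{red}}$ an snc divisor.

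\emph{Step 2: make the fibre reduced.} The central fibre $p^*X_0'$ may be non-reduced along exceptional divisors. To fix this, perform a base change $\widetilde{\mathbb{D}}\to\mathbb{D}$ branched at $0$, of order divisible by all multiplicities, followed by normalisation. By the toroidal semistable reduction of KKMS (Kempf--Knudsen--Mumford--Saint-Donat), a further toroidal resolution yields a semistable $\mathscr{X}\to\widetilde{\mathbb{D}}$ with reduced snc central fibre $\mathbf{D}$. Projectivity is preserved throughout, since we only use projective blowups and finite base changes of a projective family.

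\emph{Step 3: the discrepancy formula.} Write $K_{\mathscr{X}}+\mathbf{D}=p^*(K_{\mathcal{X}'}+X_0')+\sum a_iD_i$. The left side's log discrepancies $a_i=a(D_i,\mathcal{X}',X_0')$ are $\ge 0$ because the pair is log canonical; dlt suffices here. Base change is log-crepant for log canonical pairs, so this survives Step 2. The key point is that $a_i=0$ exactly when $D_i$ is a non-exceptional component, i.e.\ a strict transform of a component of $X_0'$. Indeed, for a dlt pair, every exceptional divisor with log discrepancy $0$ must have centre in the non-snc locus. Since our resolution is an isomorphism over the snc locus, and can be chosen so that its exceptional divisors lying over the non-snc locus have positive discrepancy, this follows from the dlt definition. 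Combined with $K_{\mathcal{X}'}+X_0'\equiv0$, this gives \eqref{KXexpression}.

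\emph{Step 4: the $0$-stratum.} Maximal unipotency means the dual complex of $X_0'$ has dimension $n$, so $X_0'$ has a $0$-stratum. The strict transforms of the good components meet at that point, and since $p$ is an isomorphism there, $\mathbf{D}$ contains a $0$-stratum.

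I expect the main obstacle to be the equivalence between $a_i=0$ and non-exceptionality in Step 3, and keeping it intact through the base change of Step 2. The dlt hypothesis forces positive discrepancy only for divisors centred on the non-snc locus. Exceptional divisors from the toroidal modifications, however, can be centred on snc strata, where log discrepancy $0$ is possible, for example blowups of strata of $X_0'$. To rule these out, I would arrange that the toroidal resolution subdivides cones without adding new rays in the snc locus, which is possible in the unimodular case. Alternatively, I would absorb such divisors by declaring them non-exceptional, replacing $\mathcal{X}'$ by a suitable dlt modification.
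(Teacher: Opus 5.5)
\paragraph{The gap is in Step~2.} It breaks this route entirely rather than being a technicality. Once you base change by $t=s^e$ \emph{after} fixing $\mathcal{X}'$, your $\mathscr{X}$ maps to the normalized base change $\mathcal{X}''=(\mathcal{X}'\times_{\mathbb{D}}\widetilde{\mathbb{D}})^{\nu}$. This is finite of degree $e$ over $\mathcal{X}'$, so $p$ is no longer a resolution of $\mathcal{X}'$, and $\mathcal{X}''$ is not dlt.

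Concretely, maximal unipotency forces $X_0'$ to have a double locus. At a point of the snc locus where $\mathcal{X}'$ is $x_1x_2=t$, the normalized base change is $x_1x_2=s^e$, a transversal $A_{e-1}$-singularity along a codimension-two locus.
\begin{itemize}
\item Every resolution of $\mathcal{X}''$ must extract the $e-1$ divisors of the minimal resolution of this transversal singularity. These are the toric valuations with $(\mathrm{ord}\,x_1,\mathrm{ord}\,x_2)=(j,e-j)$, $0<j<e$.
\item They have $\mathrm{ord}\,s=1$, so they are reduced components of the new central fibre.
\item As toric valuations of the $A_{e-1}$-factor with its full toric boundary $\{s=0\}$, they have log discrepancy $0$.
\end{itemize}
Hence every semistable model dominating $\mathcal{X}''$ has exceptional components with $a_i=0$. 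The clause ``$a_i\neq0$ precisely when $D_i$ is exceptional'' therefore fails for \emph{every} choice of toroidal resolution, not just a careless one.

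Neither of your repairs works. Fix~(a) cannot be arranged, because after base change the cones over the snc strata are never unimodular. Fix~(b) changes the model the statement is about. Moreover, a dlt modification produced by an MMP is only a birational contraction of $\mathscr{X}$, so turning it into a morphism from a semistable model reintroduces the same reducedness problem.

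Separately, the dlt fact in Step~3 is stated backwards. Dlt means every divisor whose centre lies in the non-snc locus has \emph{positive} log discrepancy. Divisors of log discrepancy $0$ are centred on lc centres, whose generic points lie in the snc locus.

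\paragraph{What does work, and what is missing.} Without Step~2 your argument is sound. If $p^*X_0'$ were already reduced, take a log resolution that is an isomorphism over the snc locus. Then $a_i>0$ exactly on the exceptional components, since their centres lie in the non-snc locus, and $a_i=0$ on strict transforms. A $0$-stratum survives because the $0$-strata of $X_0'$ are lc centres and hence lie in the snc locus.

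So the real content is a semistable resolution of the \emph{fixed} $\mathcal{X}'$ that extracts no lc places, with no base change after $\mathcal{X}'$ is chosen. As written, Steps~1--3 prove only a weaker statement: some semistable model over $\widetilde{\mathbb{D}}$ dominating $\mathcal{X}''$ has all $a_i\ge 0$ (by log crepancy and log canonicity), with the lc places as its good components.

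The paper does not reconstruct the missing step either. There, all base change happens upstream: $\mathcal{X}'$ is taken from \cite[Proposition 1.8(1)]{GScanonicalwall} ``possibly after a base change''. The semistable resolution with the stated properties is then imported from \cite[Proposition 1.8(2)]{GScanonicalwall}. The accompanying remark attributes $a_i\ge0$ to the discussion after Definition~1.5 there, semistability and projectivity to the hypotheses of Gross--Siebert's relative case, and the $0$-stratum to maximal unipotency. To finish your proof you would have to supply, or cite, exactly that reduced-fibre resolution.
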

\begin{remark}
We remark that in~\cite[Proposition 1.8]{GScanonicalwall} Gross and Siebert prove the existence of resolutions satisfying stronger conditions than we have stated above. The nonnegativity of the coefficients $a_i$ follows from the discussion following Definition 1.5 of~\cite{GScanonicalwall}. The semistability and projectivity of $\mathscr{X}\to\mathbb{D}$ are implicit in Gross and Siebert's statement of this result; they are part of the general requirements of the so-called relative case in~\cite[\S{1.1}]{GScanonicalwall}. The existence of a $0$-stratum in $\mathbf{D}$ follows from maximal unipotency, which Gross and Siebert use to verify Assumption 1.1(1) of~\cite{GScanonicalwall}.
\end{remark}
\begin{remark}\label{CYformremark}
Since $\mathbf{D}$ supports the canonical class, there is a unique (up to scaling) meromorphic $n$-form whose divisor of zeros and poles is $-\mathbf{D}$. This restricts to a nowhere-vanishing relative Calabi--Yau form on $\mathcal{X}^{\circ}$, which we use to obtain the grading of fixed point Floer cohomology implicit in~\eqref{mirrorring}.
\end{remark}

Using the resolution of Proposition~\ref{gsresolutionmodel}, we can describe the \textit{essential skeleton} $\Sk(\mathcal{X}^{\circ})$ of $\mathcal{X}^{\circ}$. Recall that for any snc model $\mathscr{X}$ of $\mathcal{X}^{\circ}$, there is an associated \textit{skeleton} $\Sk(\mathscr{X})$ which is a finite simplicial space canonically homeomorphic to the dual complex of the central fiber $\mathbf{D} = \mathscr{X}_0$~\cite[\S{3}]{MustataNicaise}. By applying further blowups to the total space, we can guarantee that all strata of the central fiber are irreducible, hence connected, so that this is a simplicial complex in the ordinary sense. The essential skeleton of $\mathcal{X}^{\circ}$ is a certain simplicial subspace of $\Sk(\mathscr{X})$ first defined by Kontsevich--Soibelman~\cite[\S{6.6}]{KS06}, under the assumption that $\mathcal{X}^{\circ}$ has trivial canonical sheaf, and generalized by Mustata--Nicaise~\cite{MustataNicaise}. By~\cite[4.6.2]{MustataNicaise}, $\Sk(\mathcal{X}^{\circ})$ is independent of the choice of $\mathscr{X}$. On the other hand, Nicaise--Xu show that $\Sk(\mathcal{X}^{\circ})$ is equal to the skeleton of any minimal dlt model of $\mathcal{X}^{\circ}$~\cite[Theorem 3.3.3]{NicaiseXu}. Combining these observations with Proposition~\ref{gsresolutionmodel}, we obtain the following.
\begin{corollary}
The essential skeleton $\Sk(\mathcal{X}^{\circ})$ is a simplicial subspace of $\Sk(\mathscr{X})$ which corresponds to the dual complex of good components, under the canonical homeomorphism from $\Sk(\mathscr{X})$ to the dual complex of $\mathbf{D}$.
\end{corollary}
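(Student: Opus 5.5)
The corollary follows by combining three facts already recorded in the text.

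First, fix the resolution $p\colon(\mathscr{X},\mathbf{D})\to(\mathcal{X}',X_0')$ of Proposition~\ref{gsresolutionmodel}. After further blowups at strata of the central fiber we may assume all strata of $\mathbf{D}$ are irreducible. These blowups keep $\mathscr{X}$ an snc model, and they should keep the good components good: new exceptional divisors over strata acquire coefficient $\geq 0$ in the discrepancy expression~\eqref{KXexpression}, while the strict transforms of existing components keep their coefficients. With this, $\Sk(\mathscr{X})$ is an honest simplicial complex, canonically homeomorphic to the dual complex of $\mathbf{D}$ by~\cite[\S{3}]{MustataNicaise}.

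Second, I would identify the essential skeleton using the Mustata--Nicaise weight function. By~\cite[4.6.2]{MustataNicaise}, $\Sk(\mathcal{X}^{\circ})$ may be computed on any snc model. On $\mathscr{X}$, their description says it is the union of the closed faces of $\Sk(\mathscr{X})$ spanned by strata $D_{i_0}\cap\cdots\cap D_{i_k}$ on which the weight of the Calabi--Yau form of Remark~\ref{CYformremark} is minimal. Since $K_{\mathscr{X}}+\mathbf{D}\equiv\sum a_iD_i$ and the form has divisor $-\mathbf{D}$, the weight of the divisorial point $D_i$ is (up to normalization by multiplicities, all equal to $1$ by semistability) $a_i+1$. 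The weight is affine on each face. Because $a_i\geq 0$, with equality exactly for good components, the minimum value is $1$, and it is attained on a face exactly when every vertex of that face lies in $S_0$. The minimum is actually attained, because some $a_i=0$: non-exceptional components exist, since $p$ is birational onto $\mathcal{X}'$ whose central fiber is nonempty. So $\Sk(\mathcal{X}^{\circ})$ is the full subcomplex of $\Sk(\mathscr{X})$ spanned by the good vertices. Under the homeomorphism, this full subcomplex is precisely the dual complex of the good components: simplices are nonempty intersections of good components.

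Third, as a consistency check, Nicaise--Xu~\cite[Theorem 3.3.3]{NicaiseXu} identify $\Sk(\mathcal{X}^{\circ})$ with the dual complex of the dlt minimal model $\mathcal{X}'$. The good components are exactly the strict transforms of components of $X_0'$, so the two descriptions agree. One caveat is that dlt strata of $X_0'$ correspond to intersections of their strict transforms only over the snc locus of $\mathcal{X}'$, where $p$ is an isomorphism. This check could replace the weight-function argument.

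The main obstacle is the second step: matching the normalization of the Mustata--Nicaise weight function with the coefficients $a_i$. Concretely, one must check that the weight on the face spanned by a stratum is the minimum over its vertices, rather than merely the value at a single vertex, so that only faces with all vertices good are essential. I would verify this directly from the monomial-point formula $\mathrm{wt}(v)=\sum_j \alpha_j(a_{i_j}+1)$ with $\sum_j\alpha_j=1$. This formula makes the weight affine on each face, so it equals its minimum value $1$ exactly on faces whose vertices all have $a_{i_j}=0$.
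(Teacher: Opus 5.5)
Your argument is correct, but its main step is not the paper's. The paper computes no weights. It combines model-independence of the essential skeleton \cite[4.6.2]{MustataNicaise} with Nicaise--Xu's identification of $\Sk(\mathcal{X}^{\circ})$ with the skeleton of the minimal dlt model $\mathcal{X}'$ \cite[Theorem 3.3.3]{NicaiseXu}. It then uses Proposition~\ref{gsresolutionmodel} to identify the good components with the strict transforms of the components of $X_0'$. In other words, your ``consistency check'' is the paper's entire proof.

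Your weight-function route instead works on $\mathscr{X}$ itself, with the nonnegativity of the $a_i$ (vanishing exactly on good components) as the essential input. What it buys is that it directly outputs the full subcomplex on the good vertices. So every nonempty intersection of good components of $\mathbf{D}$ is automatically a face of $\Sk(\mathcal{X}^{\circ})$. You never have to match dlt strata of $X_0'$ with intersections of strict transforms, which is exactly the caveat you flag and which the paper's route leaves implicit. The paper's route is shorter given the citations. It also keeps the link with the dlt minimal model, which is the form in which the skeleton is used later for toric degenerations.

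One slip in your first step. Blowing up a stratum $D_J$ with $|J|\geq 2$ creates an exceptional divisor $E$ of multiplicity $|J|$ in the central fiber, so the new model is snc but not semistable. Moreover $E$ has coefficient $a_E=\sum_{j\in J}a_j$, which is zero over a good stratum even though $E$ is exceptional. The conclusion is unaffected, since the weight of $E$ is $\mu_E/N_E$, the average of the vertex weights of $J$. But you should either keep track of the multiplicities $N_i$, or skip the blowups altogether. The Mustata--Nicaise skeleton of an snc model is already a $\Delta$-complex with one face for each connected component of each stratum.
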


\subsection{Floer cohomology rings}
We can form the Stanley--Reisner ring of the simplicial complex $\Sk(\mathcal{X}_0)$, which we denote by $\SR(\mathcal{X}_0)$. Using a version of Ganatra--Pomerleano's log-PSS map~\cite{GP1, GP2}, we will show that $R_{\phi}$ recovers $\SR(\mathcal{X}_0)$ at low energy.
\begin{theorem}\label{stanleyreisnerisom}
There is a natural isomorphism
\begin{align}
\SR(\mathcal{X}_0)\cong R_{\phi}\otimes_{\Bbbk[\![T]\!]}\Bbbk \,.
\end{align}
\end{theorem}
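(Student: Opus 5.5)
We work throughout with the snc model $\mathscr{X}\to\mathbb{D}$ of Proposition~\ref{gsresolutionmodel}, with central fiber $\mathbf{D}=\bigcup_{i\in R}D_i$, and with a symplectic monodromy $\phi$ chosen in the standard way near $\mathbf{D}$. Away from a neighborhood of $\mathbf{D}$, $\phi$ is the identity. Near a stratum $D_I=\bigcap_{i\in I}D_i$ it is a fibered Dehn twist, rotating the normal angular coordinates $\theta_i$ subject to $\sum_{i\in I}\theta_i=\arg t$.

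\emph{Step 1: fixed points of $\phi^d$.} For $d\geq 1$, the fixed locus of a suitable perturbation of $\phi^d$ breaks into Morse--Bott families. There is one family for each stratum $D_I$ and each vector $\mathbf{w}=(w_i)_{i\in I}\in\mathbb{Z}_{>0}^I$ with $\sum_i w_i=d$. Each family is a torus bundle over the open stratum $D_I^\circ$, recording the winding of the orbit around each $D_i$. For $d=0$ we get $H^*(X_t)$.

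\emph{Step 2: log-PSS and the energy filtration.} We define the $\Bbbk[\![T]\!]$-lattice of \S\ref{enhancedfixedpointfloersection}, weighting generators by action, so that $T$ measures action and energy above the minimum allowed by topology. Following Ganatra--Pomerleano~\cite{GP1, GP2}, we build a log-PSS map. Its source is the degree-$0$ log cohomology generated by classes $\alpha\,t^{\mathbf{w}}$, where $\mathbf{w}$ is a multiplicity vector on a stratum $D_I$ and $\alpha\in H^*(S_{\mathbf{w}}D_I^\circ)$ is a class on the corresponding torus bundle.

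The grading comes from the Calabi--Yau form of Remark~\ref{CYformremark}, which has a pole of order $1+a_i$ along $D_i$. With this grading, the class $1\cdot t^{\mathbf{w}}$ has Conley--Zehnder index $0$ exactly when every $D_i$ with $i\in I$ is good ($a_i=0$). Otherwise the index is shifted by $2\sum_i a_iw_i>0$. Assuming the $h^{i,0}=0$ and connectivity hypotheses kill the higher-degree contributions from $H^*(D_I^\circ)$ in degree $0$, the degree-$0$ part of the $E_1$-page modulo $T$ has the following basis:
\begin{itemize}
\item the unit in $HF^0(X_t,\mathrm{id})$;
\item one generator $\theta_{\mathbf{w}}$ for each simplex $I$ of the dual complex of good components and each $\mathbf{w}\in\mathbb{Z}_{>0}^I$.
\end{itemize}
These are exactly the monomials of $\SR(\mathcal{X}_0)$, graded by total degree $|\mathbf{w}|=d$.

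\emph{Step 3: degeneration of the spectral sequence modulo $T$.} We then show that the Floer differential is zero modulo $T$ in degree $0$. The argument is by index and action: a low-energy cylinder would have to lie in a neighborhood of $\mathbf{D}$ where the model is toric. For a suitable Hamiltonian and almost complex structure, the only rigid such cylinders are the Morse--Bott ones inside the torus bundles. Their counts come from the torus fibers, and the chosen generators are the fundamental classes of those fibers.

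\emph{Step 4: the product.} We compute the pair of pants product modulo $T$ by a neck-stretching or log-PSS compatibility argument, as in the product computation of \cite{GP2}. The expected answer is
\begin{align*}
\theta_{\mathbf{w}}\star\theta_{\mathbf{w}'}=\theta_{\mathbf{w}+\mathbf{w}'}+O(T)
\end{align*}
when $I\cup I'$ spans a simplex of the good dual complex, and $O(T)$ otherwise. In the second case the two orbits lie near disjoint strata, so any pair of pants joining them must leave the neighborhood of $\mathbf{D}$ and has positive energy.

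These products give a surjective ring map $\SR(\mathcal{X}_0)\to R_\phi\otimes_{\Bbbk[\![T]\!]}\Bbbk$, sending $x_i$ to $\theta_{e_i}$. Comparing the bases from Step 2 in each degree $d$ shows the map is injective. Naturality comes from the canonical choice of orbit generators.

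\emph{Main obstacle.} I expect the hardest part to be Step 4 together with Step 3, namely controlling the product and differential at low energy using only the local snc models. What I would try first is an adaptation of the Ganatra--Pomerleano argument. It uses a log-PSS map that is multiplicative up to higher-order terms in $T$. It also uses the fact that the good components span exactly the essential skeleton: the Corollary preceding the theorem identifies this with the dual complex of good components, so the Stanley--Reisner ring is independent of the model.
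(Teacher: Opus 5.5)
Your overall architecture matches the paper's: Morse--Bott orbit families near good strata, the grading $2\sum_i a_iv_i$, and a low-energy log-PSS map in the style of Ganatra--Pomerleano. However, Step~3 does not do what its title says. Showing that the differential vanishes modulo $T$ on degree-$0$ generators does not identify $R_\phi\otimes_{\Bbbk[\![T]\!]}\Bbbk$ with the degree-$0$ cohomology of the low-energy complex. Take $C=CF^*_{\Bbbk[\![T]\!]}(X_t,\phi^d;\Bbbk[\![T]\!])$, which lives in degrees $\geq 0$. The sequence $0\to C\xrightarrow{T}C\to C/TC\to 0$ gives $0\to H^0(C)/T H^0(C)\to HF^0_{low}(X_t,\phi^d;\Bbbk)\to\ker(T\colon H^1(C)\to H^1(C))\to 0$. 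Now suppose a degree-$0$ generator satisfies $d\nu=T^k\nu'+\cdots$ with $k\geq 1$, coming from a positive-energy cylinder into a degree-$1$ orbit, for instance an $H^1$-class of a torus family near a neighbouring stratum. Such a $\nu$ is a class in $HF^0_{low}$ that need not lift to $R_\phi$. Your local toric argument only sees low-energy cylinders, so it cannot rule this out.

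The paper closes this gap with the \emph{full} log-PSS classes (Proposition~\ref{PSS-classes-survive}). In characteristic $0$, the weighted counts $\PSS^{\log}(\theta^{\mathbf{v}})$ (with the factor $1/\ell(\tilde\nu)!$ coming from the stabilizing divisor) are genuine cocycles over $\Bbbk[\![T]\!]$. This holds because, by Lemma~\ref{logPSScompactness}, the only boundary of the one-dimensional log-PSS moduli spaces is cylinder breaking. Sphere bubbles, including those sinking into $\mathbf{D}$, are excluded by the log compactification. The leading terms of these cocycles are the low-energy classes, so every degree-$0$ low-energy class lifts. The degree-$0$ differential therefore vanishes over $\mathbb{Q}[\![T]\!]$, and the paper transports this to $\mathbb{Z}[\![T]\!]$ and hence to any $\Bbbk$. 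You introduce a log-PSS map in Step~2 but never use it for this, and positive characteristic is not addressed.

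Second, Step~4 carries all the remaining content and is only asserted. You need the products of the $\theta_{e_i}$ to hit each $\theta_{\mathbf{w}}$ with a \emph{unit} coefficient. Without that, $x_i\mapsto\theta_{e_i}$ need not be surjective, and the basis comparison gives nothing. Knowing that log-PSS is multiplicative up to $O(T)$ does not supply this either, since it does not show that the image of a monomial is nonzero.

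The paper never computes pairs of pants between Morse--Bott families. It uses the elements $\PSS^{\log}_{low}(\theta^{\mathbf{v}})$ and proves two things about them.
\begin{itemize}
\item Multiplicativity: it identifies $\PSS\star\PSS$ with a count of planes with two tangency points (Lemma~\ref{lemma:pss-low-energy-ring-1}). It then collides the two points and glues in a constant sphere bubble (Lemma~\ref{lemma:pss-low-energy-ring-2}). This also gives the vanishing for non-faces directly from the Stanley--Reisner product.
\item Injectivity: it builds a log SSP map with $\SSP^{\log}_{low}\circ\PSS^{\log}_{low}=\mathrm{id}$ (Proposition~\ref{lowenergylogPSSinjection}). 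This uses the monotonicity confinement of Lemma~\ref{confinementlemma} and a count, equal to $1$, of fiber-class spheres in the clutched family over $\mathbb{CP}^1$.
\end{itemize}
Injectivity together with the dimension count of Lemma~\ref{PSS-domain-codomain-abstract-iso} then gives the ring isomorphism. Your plan needs some replacement for the injectivity step.

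A minor point: higher classes of the torus families are excluded from degree $0$ simply because they sit in positive degree. The $h^{i,0}=0$ hypothesis plays no role there.
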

Under Assumption~\ref{commutativeassumption}, which guarantees that~\eqref{mirrorfamily} is well-defined, this says that the special fiber $\check{X}_0$ of the mirror family $\check{\mathcal{X}}$ is $\Proj\SR(\mathcal{X}^{\circ})$.
\begin{remark}\label{centralfiberrmk}
Since the Stanley--Reisner ring $\SR(\mathcal{X}^{\circ})$ is finitely-generated, it follows that $R_{\phi}$ is finitely-generated.

Moreover, this description allows us to describe the irreducible components of $\check{X}_0$ combinatorially: the irreducible components of $\check{X}_0$ correspond to the maximal cells of $\Sk{\mathcal{X}^{\circ}}$, all of which are simplices, and the Stanley--Reisner ring of an $n$-simplex is a copy of $\mathbb{P}^n$. Thus $\check{X}_0$ is a union of projective spaces along their toric boundaries (cf.~\cite[Theorem 1.7 and Remark 1.9]{MillerSturmfels}.)
\end{remark}

The log-PSS map of~\cite{GP1,GP2} is defined for log Calabi--Yau pairs $(X,D)$, using the results of Farajzadeh-Tehrani--McLean--Zinger~\cite{MTZ} to construct a \textit{symplectic regularization} for the symplectic structure near $\mathbf{D}$. Roughly, this means that we replace the symplectic form on $X$ with an isotopic one that has a certain standard form in carefully chosen tubular neighborhoods of the intersection strata of $\mathbf{D}$. This is itself based on arguments of Seidel~\cite{SeiBiased} and McLean~\cite{McLeanGrowth} which allow one to symplectically isotope $D$ so that its components intersect each other orthogonally with respect to the symplectic form on $X$. Another antecedent to this circle of ideas is Seidel's proof of homological mirror symmetry for the quartic K3 surface in~\cite[(7c)]{SeiQuartic}, where a similar, but more local, regularization argument is used to find convenient local models for the symplectic monodromy of the Dwork pencil.

To help explain how these ideas will manifest in our situation, we introduce some terminology from~\cite{BoucksomJonsson}. Set $D_I\coloneqq \bigcap_{i\in I}D_i$ for any subset $I\subset S_0$ of the set of irreducible components of the central fiber $\mathbf{D}$ of $\mathscr{X}$. Assuming without loss of generality that each $D_I$ is connected, write $D_I^{\circ} \coloneqq D_I\setminus\bigcup_{i\in S_0\setminus I}D_i$. Let $U\subset\mathscr{X}$ be an open subset intersecting $\mathbf{D}$ nontrivially and let $x = (x_1,\ldots,x_{n+1})$ be local coordinates on $U$. Following~\cite[\S{2.2}]{BoucksomJonsson}, we say that the pair $(U,x)$ is \textit{adapted} to $\mathbf{D}$ if the following conditions hold.

\begin{itemize}
\item[(i)] If ${D}_i$ for $i\in I\subset S_0$ are the irreducible components of $\mathbf{D}$ intersecting $U$, where then their intersection with $U$, if it is nonempty, is given by
\begin{align*}
U\cap \bigcap_{i\in I}D_i  = U\cap D_I^{\circ} \,.
\end{align*}

\item[(ii)] Locally, $\lbrace x_j = 0\rbrace = \mathbf{D}_{i_j}$ with $|x_j|<1$, where $I = \lbrace i_1,\ldots,i_k\rbrace$
\end{itemize}
In this situation, we call $D_I$ the \textit{stratum} of $U$. This stratum corresponds to a face of the dual complex of $\mathbf{D}$. Conversely, let $U_I$ denote a tubular neighborhood of $D_I$ in $\mathscr{X}$, which is a union of adapted charts with stratum $D_I$. We call $U_Y$ an \textit{adapted tubular neighborhood} of $D_I$. In general, given two adapted charts $(U,x)$ and $(U',x')$ with the same stratum, we have that $x_j = u_j x_j'$ on $U\cap U'$, where $u_i\neq0$ is a holomorphic function for all $i = 1,\ldots,k$, possibly after a change of indices. It is easy to see that $\mathscr{X}\to\mathbb{D}$ is locally of the form $x_1\cdots x_k$ in such an adapted chart.

Let $\Omega$ denote the symplectic form on $\mathscr{X}$. Any $\Omega$-regularization of $\mathbf{D}$, in the sense of Definition~\ref{Omega-regularization-definition} below, consists of tubular neighborhoods $U_I$ of $D_I$ which be written as a union of adapted charts for the strata $D_I$ of $\mathbf{D}$. The results of~\cite{MTZ}, recalled in Theorem~\ref{MTZthm}, guarantee the existence of a suitable regularization for $\mathbf{D}$. We would also hope for this regularization, with the modified symplectic form $\Omega_1$, to be suitably compatible with the symplectic monodromy~\eqref{symplecticmonodromyrep}.

More basically, if $\pi\colon\mathscr{X}\to\mathbb{D}$ denotes a family as in Proposition~\ref{gsresolutionmodel} as before, one would hope to make sense of symplectic parallel transport along curves in the base of this family with respect to the \textit{regularized} symplectic form $\Omega_1$. This does not make sense \textit{a priori} (cf.~\cite[Lemma 7.2 and Remark 7.3]{SeiThesis}), but after slightly perturbing the projection map to obtain a symplectic fibration $\pi_{reg}\colon(\mathscr{X},\Omega_1)\to\mathbb{D}$ following~\cite{McLeanLogCanonical}, we can construct a parallel transport automorphism $\phi_{reg}$ of one of the fibers of this perturbed family similar to~\eqref{symplecticmonodromyrep}. Most importantly, unwinding the definitions shows that the new projection map $\pi_{reg}$ is still locally described by a product of defining equations for a nearby stratum of $\mathbf{D}$, per~\eqref{compatiblewithregularizationdefinitingequation}. This procedure for modifying the fibration is reviewed in \S{\ref{symplectic-regularization-section}}. 
  
Although a generic fiber of $\pi_{reg}$ will be symplectically deformation equivalent to a fiber of $\pi$, it seems difficult to show that $\phi_{reg}$ and $\phi$ are isotopic, as pointed out in a similar context in~\cite{BobadillaPelka}. Indeed, since we only show that the mapping tori of $\phi$ and $\phi_{reg}$ are deformation equivalent, we can at best hope that they induce conjugate auto-equivalences of the Fukaya category.\footnote{Mirror symmetry heuristics suggest -- assuming the mirror is smooth -- that $\phi$ (viewed as an autoequivalence of the Fukaya category) is mirror to tensoring with a line bundle if and only if $\phi_{reg}$ is mirror to tensoring with a line bundle~\cite[\S{3}]{BondalOrlov}. Strictly speaking, our arguments below only prove that this mirror symmetry interpretation is true for $\phi_{reg}$.} The deformation equivalence of mapping tori means that it suffices, for the purpose of computing $R_{\phi}$, to study the fixed point Floer cohomology of powers of $\phi_{reg}$. When studying Lagrangian Floer theory below, we will prefer to work with $\phi_{reg}$ instead of $\phi$, since the former will admit nice local models in the neighborhoods of the $\Omega$-regularization. For that reason, we will replace $\phi$ with $\phi_{reg}$ and $\Omega$ with its regularization for the rest of this section, and in all of our discussions of Lagrangian Floer theory (cf. Assumption~\ref{completedfamilyassumption}) without further comment. In practice, this substitution is unproblematic.

The theory of regularizations developed in~\cite{MTZ, McLeanLogCanonical} tells us that -- near the large complex structure limit -- the action of the (regularized) monodromy $\phi\colon X_t\to X_t$ is locally described by `translations of SYZ fibers,' following~\cite{SeiQuartic}. More precisely, if we consider $\mathbb{C}^{n+1}$ with the map $\pi\colon\mathbb{C}^{n+1}\to\mathbb{C}$ given by~\eqref{semistablelocaldescription}, then the unique horizontal lift of the vector field $-it\partial_t$ is
\begin{align}\label{horzlift}
-\frac{i}{\frac{1}{|x_1|^2}+\cdots+\frac{1}{|x_k|^2}}\left(\frac{1}{\bar{x}_1},\ldots,\frac{1}{\bar{x}_k},0,\ldots,0\right)
\end{align}
For a point $t\in\mathbb{D}^{\circ}$, the fiber of $\pi$ is $(\mathbb{C}^*)^{k-1}\times\mathbb{C}^{n-k+1}\cong\lbrace z_1\cdots z_k = t\rbrace\subset\mathbb{C}^{n+1}$. There is a natural Lagrangian fibration $(\mathbb{C}^*)^{k-1}\times\mathbb{C}^{n-k+1}\to\mathbb{R}^n$ whose fibers are copies of $T^{k-1}\times\mathbb{R}^{n-k+1}$ given by
\begin{align}\label{SYZfibrationnearlowerstrata}
\lbrace\log|x_1| = r_1 \,,\ldots, \log|x_k| = r_k \text{ and }\mathrm{Re}(x_{k+1}) = r_{k+1}\,,\ldots,\,\mathrm{Re}(x_{n+1}) = r_{n+1}\rbrace
\end{align}
Examining~\eqref{horzlift} shows that parallel transport along a circle in the base $\mathbb{C}$ acts on $\pi^{-1}(t)$ by a translation in the $T^k$-direction. In each regularized neighborhood, $\phi$ will be approximately of this form. We remark that near the zero strata of $\mathbf{D}$, we can use Darboux's theorem to isotope the symplectic form to the standard one, with respect to the coordinates $(x_1,\ldots,x_{n+1})$, and so in these charts we can take the monodromy to be literally given by the flow of~\eqref{horzlift}.\footnote{This simple instance of regularization will suffice for all arguments involving Lagrangian Floer theory carried out in this section.}

The isomorphism of Theorem~\ref{stanleyreisnerisom} is proved at the level of $\Bbbk$-modules in Lemma~\ref{PSS-domain-codomain-abstract-iso}, using the observation that the orbits of $\phi\colon X_t\to X_t$ form Morse--Bott families diffeomorphic to tori. Of course, there will also be families of orbits near the strata of the central fiber that are not intersections of good components, but by our choice of resolution in Proposition~\ref{gsresolutionmodel}, these orbits will not contribute to the degree $0$ fixed point Floer cohomology groups for grading reasons (cf.~\cite[Conjecture 1.28]{GSintrinsicmirrors}.)

We continue under the assumption that $X_t$ contains a Lagrangian submanifold that is compatible with these Morse--Bott families in the following sense. Associated to any snc model $\mathscr{X}\to\mathbb{D}$ and any point $t\in\mathbb{D}$ sufficiently close to the origin there is a \textit{troplicalization} map
\begin{align}\label{adaptedLogmap}
    \Log_t\colon X_t\to\Sk(\mathscr{X})
\end{align}
defined using adapted tubular neighborhoods in~\cite[Proposition 2.1]{BoucksomJonsson}. In an adapted chart, the tropicalization map is locally of the form
\begin{align*}
\Log_{\mathcal{X}}(x_0,\ldots,x_n)\coloneqq\frac{1}{\log|t|^{-1}}(\log|x_0|,\ldots,\log|x_{k-1}|)
\end{align*}
where $k$ is the dimension of the associated stratum. The codomain of the tropicalization map restricted to this chart is the face $\sigma_I$ of $\Sk(\mathscr{X})$ corresponding to the associated stratum $D_I$ of $\mathbf{D}$. The next definition roughly singles out Lagrangian sections of the tropicalization map.
\begin{definition}\label{tropicallagrangiandefinition}
Fix an $\Omega$-regularizaton $\mathfrak{R}$ for $(\mathscr{X},\mathbf{D})$ as in Definition~\ref{Omega-regularization-definition}, and let $t\in\mathbb{D}^{\circ}$ be a point close to the origin. We say that a Lagrangian rational homology sphere $L\subset X_t$ is a \textit{tropical Lagrangian section} if the following hold.
\begin{itemize}
    \item[(i)] Near any point in the intersection $L\cap U_I$, where $U_I$ is the tubular neighborhood of any \textit{good} stratum $D_I$ in the regularization $\mathfrak{R}$, for any $I\subset S_0$, there is an adapted chart in which $L$ restricts to a product
    \[ L\cong L_{(\mathbb{C}^*)^{k-1}}\times L_{\mathbb{C}^{n-k+1}}\subset(\mathbb{C}^*)^{k-1}\times\mathbb{C}^{n-k+1}\]
    of Lagrangian sections with respect to the natural Lagrangian fibrations, so in particular $L$ is locally a section of the fibration~\eqref{SYZfibrationnearlowerstrata}.

    \item[(ii)] The log map~\eqref{adaptedLogmap} restricted to $L$ is a homotopy equivalence onto the essential skeleton $\Sk(\mathcal{X}^{\circ})$, and, moreover, in each of the adapted charts of item (i) it locally looks like the composition
    \[L_{(\mathbb{C}^*)^{k-1}}\times L_{\mathbb{C}^{n-k+1}}\to L_{(\mathbb{C}^*)^{k-1}} \xrightarrow{\Log_t}\sigma_I \]
    where the first map is projection onto the first factor and the second map identifies $L_{(\mathbb{C}^*)^{k-1}}$ with a portion of the cell $\sigma_I$ of $\Sk(\mathcal{X}^{\circ})$.
\end{itemize}
Let $\Bad(\mathcal{X}^{\circ})$ denote the set of primes $p$ such that $H^*(L;\mathbb{Z}) = H^*(\Sk(\mathcal{X}^{\circ});\mathbb{Z})$ has nontrivial $p$-torsion.
\end{definition}
Note that condition (ii) of this definition implies that the set of bad primes $\Bad(\mathcal{X}^{\circ})$ depends on $\mathcal{X}^{\circ}$. The imposition that $L$ is a rational homology sphere is also reduntant by~\cite[Prop. 31 and Par. 32]{KollarXu}.) We only discuss homological mirror symmetry over fields $\Bbbk$ for which $\operatorname{char}\Bbbk\not\in\Bad(\mathcal{X}^{\circ})$. This is because non-vanishing middle-dimensional cohomology in $H^*(L;\Bbbk)$ would prevent us from appealing to the main results of~\cite{Polishchuk} below.

\begin{customthm}{L}\label{lagrangianassumption}
$X_t$ contains a tropical Lagrangian section $L$ for some $t\in\mathbb{D}^{\circ}$.
\end{customthm}
\begin{remark}\label{divisorremark}
We will show in Lemma~\ref{divisorconstruction1} that given any tropical Lagrangian section $L$, the Lagrangian submanifolds $\phi^d(L)$ for all $d\in\mathbb{Z}$ are objects of the relative Fukaya category with respect to a certain system of divisors $\mathbf{E}_t$ in $X_t$ in the sense of~\cite[Definition 1.1]{PerutzSheridan}. We will prove that a subdomain of $X_t\setminus \mathbf{E}_t$ is fixed by the action of $\phi$. Since $L$ is a $\Bbbk$-homology sphere, it is unobstructed over $\Lambda_{\Bbbk}$ by~\cite[Corollary 3.8.18]{FOOOI} and its Floer cohomology is isomorphic to its ordinary cohomology (at the level of vector spaces) by~\cite[Theorem D]{FOOOI}.
\end{remark}

Under Assumptions~\ref{commutativeassumption} and~\ref{lagrangianassumption}, we will construct an $A_{\infty}$-functor\footnote{Throughout, we will replace both $A_{\infty}$-categories with $A_{\infty}$-minimal models.}
\begin{align}\label{mirrorinjection}
\Perf_{dg}(\check{\mathcal{X}})\to\Perf\Fuk(X_t,\mathbf{E}_t)
\end{align}
which sends $\mathcal{O}_{\check{\mathcal{X}}}$ to the tropical Lagrangian section $L$, and which sends the powers $\mathcal{L}^{\otimes d}$ of a relatively ample line bundle $\mathcal{L}$ on $\check{\mathcal{X}}$ to the monodromy iterates $\phi^d(L)$ for $d\in\mathbb{Z}$. More precisely, the line bundle $\mathcal{L}$ is the one arising from the relative Proj construction.

Recall from~\cite[Theorem 4]{Orlov} that the powers of $\mathcal{L}$ split-generated $\Perf(\check{\mathcal{X}})$, meaning that it suffices to define this functor on the full $A_{\infty}$-subcategory of $\Perf_{dg}(\check{\mathcal{X}})$ with objects $\mathcal{L}^{\otimes d}$. To show that such a functor is well-defined, we must first show that the Floer cohomology groups between the Lagrangians $\phi^d(L)$ coincide with the Ext groups between the perfect complexes $\mathcal{L}^{\otimes d}$. This is essentially immediate from the definition of a tropical Lagrangian section and the local form~\eqref{horzlift} of the monodromy vector field after semistable reduction.

In more detail, we will construct a (length zero) \textit{twisted closed-open map}
\begin{align}\label{twistedclosedopenfirstmention}
\mathcal{CO}_{\phi^d}\colon CF^*(X_t,\phi^d)\to CF^*(L,\phi^d(L)) \,.
\end{align}
When $d = 0$, this is just the ordinary length zero closed-open map. These are assembled into a total map
\begin{align}
\mathcal{CO}_+^0\coloneqq\bigoplus_{d=0}^{\infty}\mathcal{CO}^0_{\phi^d}\colon\bigoplus_{d=0}^{\infty}CF^0(X_t,\phi^d)\to\bigoplus_{d=0}^{\infty}CF^0(L,\phi^d(L))
\end{align}
in degree $0$.

Using the fact that $L$ is a tropical Lagrangian section, we will prove the following by considering a low-energy limit of the energy spectral sequence.
\begin{proposition}\label{closedopenisomorphism}
For any tropical Lagrangian section $L$, the map $[\mathcal{CO}_+^0]$ is an isomorphism of Floer cohomology groups of degree $0$. In particular, $HF^0(L,\phi^d(L))$ is isomorphic to $H^0(\check{X}_0,\mathcal{L}_0^{\otimes d})$.

Moreover, the Floer cohomology groups $HF^*(L,\phi^d(L))$ vanish unless $* = 0$ and $d\geq0$ or $* = n$ and $d\leq 0$.
\end{proposition}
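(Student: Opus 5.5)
The plan is to compute both sides of $[\mathcal{CO}^0_+]$ in the low-energy limit, as Morse--Bott complexes indexed by the same combinatorial data, and then to show that the map is an isomorphism on the $E_1$ pages of the energy spectral sequences. For the closed side, Lemma~\ref{PSS-domain-codomain-abstract-iso} and the proof of Theorem~\ref{stanleyreisnerisom} identify $HF^0(X_t,\phi^d)$ at low energy with the degree $d$ part of $\SR(\mathcal{X}_0)$. A basis is given by the Morse--Bott torus families of fixed points of $\phi^d$ lying over the points of $\frac{1}{d}\mathbb{Z}$-lattice in the cells $\sigma_I$ of $\Sk(\mathcal{X}^{\circ})$, each contributing its fundamental (degree-$0$) class; families over bad strata are discarded for grading reasons. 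For the open side, I will use condition (i) of Definition~\ref{tropicallagrangiandefinition} together with the local model~\eqref{horzlift}. In an adapted chart with good stratum $D_I$, $L$ is a section of~\eqref{SYZfibrationnearlowerstrata}, and $\phi^d$ translates the torus coordinates by an amount that is linear in the tropical coordinate with slope $d$. Hence $L\cap\phi^d(L)$, after a small Hamiltonian perturbation in the $\mathbb{C}^{n-k+1}$ factor, consists of one transverse point over each point of $\Log_t(L)\cap\frac1d\mathbb{Z}$-lattice in each cell. This is the familiar picture of intersection points as integral points of $d$ times the skeleton.

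With these local computations, the intersection points of $L$ and $\phi^d(L)$ biject with the fixed-point families, and hence with the monomials of degree $d$ in $\SR(\mathcal{X}_0)$. Each of these points is cut out as the graph of a differential of a convex-type (for $d>0$) local function. This gives Maslov index $0$ when $d>0$ and index $n$ when $d<0$, by the sign flip of the Hessian; for $d=0$ we use $HF^*(L,L)\cong H^*(L)=H^*(S^n)$ from Remark~\ref{divisorremark}. That yields the vanishing statement once the low-energy differential vanishes: all generators of a fixed $\phi^d(L)$ sit in a single degree, so the differential vanishes and the spectral sequence degenerates. Since the sum is finite in each degree, convergence over $\Bbbk[\![T]\!]$ or $\Lambda$ is unproblematic. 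Here condition (ii), that $\Log_t|_L$ is a homotopy equivalence onto the skeleton, is what guarantees that $L$ meets every cell exactly once. It thus rules out extra sheets and intersection points.

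For the map itself, I will compute the leading-order term of $\mathcal{CO}^0_{\phi^d}$ on each Morse--Bott family. The relevant twisted half-cylinders with boundary on $L$, one output on $L\cap\phi^d(L)$ and one asymptotic fixed-point torus should, at minimal energy, be constant maps to the point where the torus family meets $L$. Each torus family meets $L$ exactly once, transversely in the torus direction, because $L$ is a section. The leading term is therefore a diagonal matrix with entries $\pm1$, which makes it an isomorphism on $E_1$ and hence an isomorphism after completion. Identifying $H^0(\check X_0,\mathcal{L}_0^{\otimes d})$ with the degree $d$ part of the Stanley--Reisner ring (Remark~\ref{centralfiberrmk}) then gives the second claim.

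The main obstacle is this last step. I need to show that, for energy below the action gaps, the only contributions are the constant configurations, and that they are regular with index matching the Morse--Bott correction. I would first try a local maximum principle in the regularized charts, using monotonicity plus the action filtration, to confine low-energy curves to a single adapted chart. There the problem becomes an explicit toric computation as in Seidel's quartic argument.
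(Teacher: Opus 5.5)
Your proposal is correct and follows essentially the same route as the paper. Both arguments take the low-energy limit of the energy spectral sequence, using the local product models for $L$ and the fiberwise-translation models for $\phi$ in the regularized charts, the concentration of $CF^*(L,\phi^d(L))$ in a single degree after the perturbation \eqref{explicithamiltonianperturbation}, and the Morse--Bott comparison with the closed-string side from Lemma~\ref{PSS-domain-codomain-abstract-iso}. The paper's written proof is terser than yours: it gets the $d<0$ case from the Calabi--Yau duality $HF^0(L,\phi^d(L))\cong HF^n(L,\phi^{-d}(L))^\vee$ stated after the proposition rather than from a Hessian sign flip, and it never spells out the confinement and regularity of the constant leading-order configurations, which you correctly identify as the main remaining step.
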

Additionally, we have that $HF^0(L,\phi^d(L))\cong HF^n(\phi^d(L),L)^\vee\cong HF^n(L,\phi^{-d}(L))^{\vee}$ for all $d\in\mathbb{Z}$, using the $\phi$-action and weak proper Calabi--Yau structure on $\Fuk(X_t,\mathbf{E}_t)$. The vanishing of all middle-dimensional cohomology groups, over $\Lambda_{\Bbbk}$, follows from the condition that $L$ is a $\Bbbk$-homology $n$-sphere and  an elementary degree computation.

\subsection{Matching $A_{\infty}$-structures}
Consider the full subcategory of $\Fuk(X_t,D_t)$ whose objects are $\lbrace\phi^d(L)\rbrace_{d\in\mathbb{Z}}$, and whose $A_{\infty}$-operations are denoted by $\lbrace\mathfrak{m}_k\rbrace_{k\geq1}$. Since $\phi^d(L)\neq L$ for all $d\in\mathbb{Z}$, as one can see from~\eqref{horzlift}, we can choose perturbation data in the construction of $\Fuk(X_t,D_t)$ so that this category carries an action of the group $\mathbb{Z}\cong\langle\phi\rangle\subset\pi_0\mathrm{Symp(X_t,\omega_t)}$ by strict $A_{\infty}$-autoequivalences (cf.~\cite[(10b)]{SeiBook} and Remark~\ref{divisorremark}.) Using this, we can construct an $A_{\infty}$-algebra
\begin{align*}
\mathcal{A}_{\phi}\coloneqq\bigoplus_{d\geq0} HF^0(L,\phi^d(L))\oplus\bigoplus_{d\leq0} HF^n(L,\phi^d(L))
\end{align*}
where the $A_{\infty}$-operations are given by
\begin{align*}
\mathfrak{m}_k^{\mathcal{A}}(a_1,\ldots,a_k) = \mathfrak{m}_k(\phi^{\ell_2+\cdots+\ell_k}(a_1),\ldots,\phi^{\ell_k}(a_{k-1}),a_k)
\end{align*}
for a sequence of elements $a_{i}\in HF^*(L,\phi^{\ell_i}(L))$.

Clearly the $A_{\infty}$-structure on $\mathcal{A}_{\phi}$ completely determines the $A_{\infty}$-structure on the full subcategory of $\Fuk(X_t,\mathbf{E}_t)$ split-generated by $\lbrace\phi^d(L)\rbrace_{d\in\mathbb{Z}}$, and vice-versa. Proposition~\ref{closedopenisomorphism} shows that $A_{\phi}$ is isomorphic, as an associative algebra, to
\begin{align}\label{coordinate-algebra}
\bigoplus_{d\geq0} H^0(\check{\mathcal{X}},\mathcal{L}^{\otimes d})\oplus\bigoplus_{d\leq0} H^n(\check{\mathcal{X}},\mathcal{L}^{\otimes d}) \,.
\end{align}
To construct the functor~\eqref{mirrorinjection}, we also need to show that the middle-dimensional cohomology groups $H^*(\check{\mathcal{X}},\mathcal{L}^{\otimes d})$ vanish. Since the rank of sheaf cohomology is upper semi-continuous, it suffices to consider this question for the line bundles $\mathcal{L}_0^{\otimes d}$ on the central fiber $\check{X}_0 = \Proj\SR(\mathcal{X}^{\circ})$ of the mirror family, which are obtained by restricting $\mathcal{L}^{\otimes d}$. For $d = 0$, we have that $H^i(\check{X}_0,\mathcal{O}_{\check{X_0}})$ is isomorphic to the ordinary cohomology of $\Sk(\mathcal{X}^{\circ})$ with coefficients in $\Bbbk$ by~\cite[Lemma 3.63]{KollarBook}\footnote{Although this result is only written over $\mathbb{C}$ in~\cite{KollarBook}, it is clear that the proof given there works over any field.}, meaning that it vanishes when $0<i<n$. By Serre's vanishing theorem~\cite[III.5.3(ii)]{Hartshorne}, there is some $n_0>0$ such that the cohomology groups $H^i(\check{X}_0,\mathcal{L}^{\otimes d})$ vanish for all $d\geq n_0$ and $i>0$. By replacing $\phi$ with $\phi^{n_0}$, we can take $n_0 = 1$.

The next lemma allows us to apply Serre duality for Cohen--Macaulay varieties to conclude that the cohomology groups $H^*(\check{X}_0,\mathcal{L}_0^{\otimes d})$ vanish unless $* = 0$ and $d\geq0$ or $* = n$ and $d\leq 0$.
\begin{lemma}\label{slclemma}
The variety $\check{X}_0$ is Cohen--Macaulay with semi log-canonical singularities and trivial canonical bundle.
\end{lemma}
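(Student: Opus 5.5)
By Theorem~\ref{stanleyreisnerisom} and Remark~\ref{centralfiberrmk}, $\check{X}_0=\Proj\SR(\mathcal{X}^{\circ})$ is the Stanley--Reisner scheme of the simplicial complex $\Delta\coloneqq\Sk(\mathcal{X}^{\circ})$. It is a union of copies of $\mathbb{P}^{\dim\sigma}$, one for each maximal simplex $\sigma$, glued along toric strata exactly as the faces are glued in $\Delta$. The plan is to deduce each of the three properties from combinatorial facts about $\Delta$. The needed input is that $\Delta$ is a pseudomanifold of dimension $n$ whose geometric realization has the $\Bbbk$-homology of $S^n$; in fact it is a $\Bbbk$-homology manifold.

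\textbf{Step 1: combinatorics of $\Delta$.} By Kollár--Xu~\cite{KollarXu}, as already invoked after Definition~\ref{tropicallagrangiandefinition}, the essential skeleton of a maximally unipotent Calabi--Yau degeneration is a pure $n$-dimensional pseudomanifold and a rational homology sphere. Maximal unipotency gives a $0$-stratum, hence purity of dimension $n$. To get $\Bbbk$-coefficients I would use the hypothesis $\operatorname{char}\Bbbk\notin\Bad(\mathcal{X}^{\circ})$, which excludes torsion in $H^*(\Delta;\mathbb{Z})$. I also need the link of every face $\tau$ to be a $\Bbbk$-homology sphere of dimension $n-\dim\tau-1$. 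For this I would apply the same reasoning to the dlt stratum $D_\tau$: it is a log Calabi--Yau pair (adjunction from $K_{\mathscr{X}}+\mathbf{D}\equiv 0$ on good components, Proposition~\ref{gsresolutionmodel}), and its dual complex is $\mathrm{lk}(\tau)$. Kollár--Xu's results on dual complexes of dlt log CY pairs then make the link a rational homology sphere, or at least show it has vanishing intermediate cohomology; torsion is controlled after enlarging $\Bad$ if necessary. \emph{This is the main obstacle}, since the torsion-freeness of links is not obviously encoded in the definition of $\Bad(\mathcal{X}^{\circ})$.

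\textbf{Step 2: Cohen--Macaulay.} By Reisner's criterion, $\SR(\Delta)\otimes\Bbbk$ is Cohen--Macaulay if and only if $\tilde{H}_i(\mathrm{lk}\,\tau;\Bbbk)=0$ for all faces $\tau$ (including the empty face) and all $i<\dim\mathrm{lk}\,\tau$. Step 1 gives exactly this, so the affine cone, and hence $\check{X}_0=\Proj$, is Cohen--Macaulay. The properties that make $\Delta$ Gorenstein$^*$ (homology sphere links, including the whole complex) give more: by Stanley's theorem the ring $\SR(\Delta)$ is Gorenstein with $a$-invariant $0$, so $\omega_{\SR}\cong\SR(\Delta)$ with no degree shift. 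Hence $\omega_{\check{X}_0}\cong\mathcal{O}_{\check{X}_0}$, which is the triviality of the canonical bundle. An alternative check: $\check{X}_0$ is a union of toric varieties glued along boundaries, and the residues of $dz/z$ forms glue to a nowhere-vanishing section of $\omega$, with orientability of $\Delta$ over $\Bbbk$ fixing the signs.

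\textbf{Step 3: slc.} $\check{X}_0$ is $S_2$ (by Step 2) and has normal crossings in codimension one, since codimension-one faces lie in exactly two maximal simplices by the pseudomanifold property. Its normalization is $\bigsqcup\mathbb{P}^{\dim\sigma}$ with conductor the toric boundary, and $(\mathbb{P}^m,\partial)$ is log canonical. Together with $K_{\check{X}_0}\sim 0$ being Cartier, this gives semi log-canonical singularities by definition (cf.~\cite{KollarBook}).
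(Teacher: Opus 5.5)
Your proposal follows the paper's proof: Cohen--Macaulayness comes from Reisner's criterion, after identifying the links of faces with dual complexes of the log Calabi--Yau strata $(W,\Diff_W^*)$ and invoking Koll\'ar--Xu; semi log-canonicity comes from the toric pairs on the normalization; and triviality of $\omega$ comes from an orientation of $\Sk(\mathcal{X}^{\circ})$, where your Stanley Gorenstein$^*$ argument repackages the paper's residue gluing on the locus meeting at most two components plus $S_2$ extension, which you also list as an alternative. The torsion issue you flag in Step 1 is real and the paper's proof does not resolve it either, since it uses only \emph{rational} homology of the links, so its Reisner step as written only works when $\operatorname{char}\Bbbk=0$ or when the links have no $\operatorname{char}\Bbbk$-torsion (which $\Bad(\mathcal{X}^{\circ})$, defined via $H^*(\Sk(\mathcal{X}^{\circ});\mathbb{Z})$ alone, does not guarantee).
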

The analogous result in the (absolute) log Calabi--Yau setting appears in Oldfield's thesis~\cite{Oldfield}. For the reader's convenience, we will prove Lemma~\ref{slclemma} in Appendix~\ref{slcappendix} with combinatorial methods. Strictly speaking, we will never use the fact that $\check{X}_0$ has slc singularities, but it follows almost immediately from the proofs of the other two parts of Lemma~\ref{slclemma}. Note, however, that it is known that the conclusion of the Kodaira vanishing theorem holds for such varieties over $\mathbb{C}$ by~\cite[Corollary 6.6]{kodairaslc}, meaning that when $\Bbbk = \mathbb{C}$ we do not have to pass to a subsequence of tropical Lagrangian sections.

With the above understood, we can appeal to the work of Polishchuk~\cite{Polishchuk} to show that the assignment~\eqref{mirrorinjection} respects $A_{\infty}$-structures. By Theorem 1.1 of \textit{op. cit.}, there is a unique nontrivial isomorphism class of $A_{\infty}$-structure on the $\Bbbk$-algebra
\begin{align}\label{centralcoordinatealgebramirror}
\bigoplus_{d\geq0} H^0(\check{\mathcal{X}},\mathcal{L}^{\otimes d})\oplus\bigoplus_{d\leq0} H^n(\mathcal{\check{X}},\mathcal{L}^{\otimes d})
\end{align}
up to scaling.\footnote{Note that Polishchuk's results do not require that $\check{\mathcal{X}}$ be smooth; only that the relevant middle-dimensional cohomology groups vanish.} 

In more detail, there is a canonical class of $A_{\infty}$-structures on the derived category $D^+(\mathcal{C})$ of bounded below complexes of any abelian category with enough injectives. Since it will be useful for organizing our computations of $A_{\infty}$-structures, we describe this canonical class following~\cite[\S{3.2}]{Polishchuk}. Suppose that we have an exact sequence
\begin{align*}
0\to\mathcal{F}_1\xrightarrow{\alpha_1}\mathcal{F}_2\to\cdots\xrightarrow{\alpha_{n+1}}\mathcal{F}_{n+1}\to 0
\end{align*}
in $\mathcal{C}$, where $n\geq1$. This sequence determines an extension class $\beta\in\Ext^{n}(\mathcal{F}_{n+1},\mathcal{F}_1)$, under the standard correspondence between extensions and classes in $\Ext^n$-groups (e.g.~\cite[Lemma 13.27.5]{StacksProject}). If $\Ext^{j-i-1}(\mathcal{F}_j,\mathcal{F}_i) = 0$ whenever $0\leq i<j\leq n$, then by~\cite[Lemma 3.4]{Polishchuk}, it follows that
\begin{align*}
\mathfrak{m}_{n+2}(\alpha_1,\ldots,\alpha_{n+1},\beta) = \pm\id_{\mathcal{O}} \,.
\end{align*}
for any $A_{\infty}$-structure $\lbrace\mathfrak{m}_k\rbrace_{k\in\mathbb{Z}_{\geq0}}$ from the canonical class.

Polishchuk~\cite[Theorem 1.1(1)]{Polishchuk} shows that the natural $A_{\infty}$-structure on $\Perf_{dg}(\check{\mathcal{X}})$ is induced from the canonical class, in the special case $\mathcal{C} = \Perf(\check{\mathcal{X}})$, as follows. Replacing $\mathcal{L}$ with a positive power if necessary, we can choose sections $s_1,\ldots,s_{n+1}\in H^0(\mathcal{L})$ with no common zeros. This means that they give rise to a Koszul resolution
\begin{align}\label{koszulsequence}
0\to\mathcal{O}\xrightarrow{\alpha_1}\mathcal{O}^{\oplus(n+1)}\otimes_{\mathcal{O}}\mathcal{L}\xrightarrow{\alpha_2}\mathcal{O}^{\oplus\binom{n+1}{2}}\otimes_{\mathcal{O}}\mathcal{L}^{\otimes 2}\rightarrow\cdots\xrightarrow{\alpha_{n+1}}\mathcal{L}^{\otimes(n+1)}\rightarrow 0
\end{align}
of the structure sheaf $\mathcal{O} = \mathcal{O}_{\check{\mathcal{X}}}$, where all morphisms are determined by the sections $s_i$ (e.g. $\alpha_1 = s_1\oplus\cdots\oplus s_{n+1}$.) If $\beta\in\Ext^{n}(\mathcal{L}^{\otimes(n+1)},\mathcal{O})$ represents the Yoneda extension class, \cite[Lemma 3.4]{Polishchuk} shows that
\begin{align*}
\mathfrak{m}_{n+2}(\alpha_1,\ldots,\alpha_{n+1},\beta) = \pm\id_{\mathcal{O}} \,.
\end{align*}
We can find a corresponding nontrivial value of $\mathfrak{m}^{\mathcal{A}}_{n+2}$ on $\mathcal{A}_{\phi}$ using Morse-theoretic methods. We interpret this nontrivial $A_{\infty}$-operation as a count of (low energy) Morse flow trees. This, together with~\cite[Theorem 1.1(1)]{Polishchuk}, shows that $\mathcal{A}_{\phi}$ is $A_{\infty}$-isomorphic to~\eqref{centralcoordinatealgebramirror}, and in turn that there is an $A_{\infty}$-functor as in~\eqref{mirrorinjection}.

Let $x_I$ denote a $0$-stratum of the central fiber $\mathbf{D}$ and choose an adapted tubular neighborhoods $U_I$ centered at $x_I$. In this chart, the map $\mathscr{X}\to\mathbb{D}$ looks like
\[ (x_1,\ldots,x_{n+1})\mapsto x_1\cdots x_{n+1}\]
as we have discussed, and, by~\cite[Lemma 1.7]{SeiLES}, we can assume that the symplectic form on $\mathscr{X}$ looks like the standard K{\"a}hler form with respect to these coordinates. This allows us to, locally in $X_t\cap U_I$, identify $\phi$ with the return map associated to~\eqref{horzlift}, which is given by fiberwise translations of the standard Lagrangian $T^n$-fibration on $(\mathbb{C}^*)^n\to\mathbb{R}^n$ given by the log map
\begin{align*}
\Log_t\colon(\mathbb{C}^*)^n&\to\mathbb{R}^n \\
(z_1,\ldots,z_n)&\mapsto(\log_t|z_1|,\ldots,\log_t|z_n|)
\end{align*}
(cf.~\eqref{SYZfibrationnearlowerstrata}.) We can also identify $\phi^d(L)\cap U_{I}$ with a section this fibration by Definition~\ref{tropicallagrangiandefinition}. For definiteness, we can  
\begin{align}\label{localgraphs}
(y_1,\ldots,y_n)\mapsto-\sum_{i=1}^n dN\cdot y_i\frac{\partial}{\partial y_i}
\end{align}
where $N>0$ is an integer.

If we instead consider an irreducible component $D_i$ of $\mathbf{D}$, and let $U_i$ denote the associated reguarlized neighborhood, then we no longer have a Lagrangian torus fibration on $U_i\cap X_t$, but the Definition~\ref{tropicallagrangiandefinition}(ii) tells us that $\phi^d(L)\cap U_i$ is a Lagrangian $n$-disk in $X_t\cap U_i$. Since the family $\mathscr{X}\to\mathbb{D}$ is smooth when restricted to $U_i$, the monodromy with respect to the regularized symplectic form is isotopic to the identity in this chart. This allows us to Hamiltonian isotope (finitely many of) the Lagrangians $\phi^d(L)$ with $d>0$ so that they each intersect $L$ in exactly one point. We can also assume that this finite collection of Lagrangians, when restricted to $U_i$, all lie in a Weinstein neighborhood $W_i\subset U_i\cap X_t$ of $L$. Summarizing, we have shown that
\begin{lemma}\label{locallagrangianintersections}
Suppose that $x_I$ is a $0$-stratum of $\mathbf{D}$ contained in the intersection of $n+1$ (distinct) $n$-strata denoted $D_i$ for $i = 1,\ldots,n+1$. Then there is an open subdomain $W_I$ of $X_t$ containing the open subsets $X_t\cap U_I$ and $X_t\cap W_{i}$, where $W_i$ is a Weinstein neighborhood of the Lagrangian disk $L\cap U_i$ as above. Moreover, we can apply Hamiltonian isotopies to the Lagrangian submanifolds $\phi^d(L)$, for $d = 0,\ldots,n+1$, so that they are locally given by the graphs of~\eqref{localgraphs}, in each of the subdomains $W_I$, for all such $0$-strata, simultaneously.
\end{lemma}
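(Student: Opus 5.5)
The plan is to assemble the local models already set up in this section and then glue them. Near each $0$-stratum $x_I$ I will use \cite[Lemma 1.7]{SeiLES} to make the symplectic form on $U_I$ standard Kähler in the adapted coordinates $(x_1,\ldots,x_{n+1})$. In that chart $X_t\cap U_I$ is an open piece of $(\mathbb{C}^*)^n$ carrying the fibration $\Log_t$, and $\phi$ is literally the time-one return map of the horizontal lift~\eqref{horzlift}. Since $\phi$ acts by fiberwise translation in the $T^n$-directions, Definition~\ref{tropicallagrangiandefinition}(i) makes $L\cap U_I$ a Lagrangian section. Hence each $\phi^d(L)\cap U_I$ is the section obtained by $d$-fold translation.

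Lagrangian sections of $T^*T^n$-type fibrations are graphs of closed $1$-forms over the base. On a contractible piece of the base they are graphs of exact forms, and exact graphs can be moved by compactly supported Hamiltonian isotopies. So I will isotope $L\cap U_I$ to the zero section and $\phi^d(L)\cap U_I$ to the linear graph~\eqref{localgraphs}. The integer $N$ is read off from the translation amount of the monodromy, possibly after replacing $\phi$ by a power as was already permitted.

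Next I handle the top-dimensional good components $D_i$, $i=1,\ldots,n+1$, meeting at $x_I$. On $U_i$ the family is smooth and the regularized monodromy is isotopic to the identity. By Definition~\ref{tropicallagrangiandefinition}(ii), $L\cap U_i$ is a Lagrangian $n$-disk. Then $\phi^d(L)\cap U_i$ is $C^1$-close to it, and so lies in a Weinstein neighborhood $W_i$ as the graph of a closed, hence exact, $1$-form on the disk. I will interpolate between these primitives and the quadratic primitives $-\tfrac{dN}{2}\sum y_i^2$ of~\eqref{localgraphs} on the overlap $U_I\cap W_i$ using a cutoff function in the base. This produces a single Hamiltonian isotopy on $W_I := (X_t\cap U_I)\cup\bigcup_i W_i$ that moves the $\phi^d(L)$ to the required graphs for $d=0,\ldots,n+1$.

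Finally, to treat all $0$-strata simultaneously, I will shrink the regularized neighborhoods so that the domains $W_I$ for distinct $0$-strata are pairwise disjoint. Then the compactly supported isotopies can be performed independently. I expect the main obstacle to be the interpolation step, where I have to make sure the cutoffs do not create new intersection points. My plan there is to choose the cutoff so that, on the overlap, the interpolated primitive remains strictly monotone along the radial direction of the disk $L\cap U_i$. Then the only critical point of the difference of primitives, which is the only intersection with $L$, stays at the origin of the chart.
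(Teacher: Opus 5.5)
Your local steps match the paper's argument. At $x_I$ you use Seidel's Lemma~1.7 so that $\phi$ is the fibrewise translation \eqref{horzlift}, and Definition~\ref{tropicallagrangiandefinition}(i) to see the $\phi^d(L)\cap U_I$ as sections. On $U_i$ you use that the monodromy is isotopic to the identity to place $\phi^d(L)\cap U_i$ in a Weinstein neighbourhood $W_i$ and isotope it to meet $L$ once.

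The gap is in your last step. With $W_I=(X_t\cap U_I)\cup\bigcup_{i\in I}W_i$, as the statement requires, the sets $W_I$ can never be made pairwise disjoint. The reason is that $W_i$ is a Weinstein neighbourhood of the single disk $L\cap U_i$, which is attached to the component $D_i$ and not to $x_I$, and every component contains several $0$-strata. Indeed, $\Sk(\mathcal{X}^{\circ})$ is a pseudomanifold: every $(n-1)$-face lies in exactly two $n$-simplices, as the paper uses in Appendix~\ref{slcappendix}. So the $0$-strata $x_I,x_{I'}$ of two adjacent $n$-simplices share $n$ components, and $W_I\cap W_{I'}\supset X_t\cap W_i\neq\emptyset$ for each shared $i$. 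Consequently the isotopies you build separately on each $W_I$ all act on the same region $W_i$ and cannot be run independently. Composing them would in general undo the normal form achieved by the earlier ones, and could create extra intersection points with $L$ inside $W_i$.

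The fix is to organise the construction by the cover $\{W_i\}_i\cup\{X_t\cap U_I\}_I$ rather than by the $W_I$.
\begin{itemize}
\item Make one choice on each $W_i$, once per component $D_i$. Isotope $\phi^d(L)\cap W_i$ to the graph of an exact form whose primitive has a single nondegenerate critical point and decreases towards every corner of the disk $L\cap U_i$, i.e.\ towards each $0$-stratum lying on $D_i$.
\item Then interpolate with the quadratic model \eqref{localgraphs} only in the overlaps $X_t\cap U_I\cap W_i$. Unlike the $W_I$, these overlaps are pairwise disjoint for distinct $I$ once the $U_I$ are small, so your cutoff argument can be carried out independently there.
\item Check that this single choice on $W_i$ is compatible with the model coming from every simplex $\sigma_I$ containing the vertex $v_i$. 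It is: in the normalisation of Remark~\ref{momentimageremark}, the graphs of $-dN\,y$ and $-dN(y-e_j)$ differ by an integral covector, so the model is centred at, and decreasing away from, each vertex.
\end{itemize}
This per-component treatment is how the paper gets the simultaneous statement: the unique intersection point in $W_i$ is arranged once for $D_i$, not once for each $0$-stratum.
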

We remind the reader that $\mathbf{D}$ contains a $0$-stratum since $\mathcal{X}^{\circ}$ is a maximally unipotent degeneration by Proposition~\ref{gsresolutionmodel}.
\begin{remark}\label{momentimageremark}
One can think of this subdomain $W_I$ as a subset of $(\mathbb{C}^*)^n$ as follows. Starting with the standard $n$-simplex in $\mathbb{R}^n$, one first takes Darboux balls $U_1,\ldots,U_{n+1}\subset(\mathbb{C}^*)^n$ which project to open disks around the vertices of the $n$-simplex. For definiteness, we can take the vertices of the simplex to be the origin and the unit standard basis vectors. We can then choose a contractible open subset of the simplex which is an inward of the top stratum of the simplex, and let $U\subset(\mathbb{C}^*)^n$ denote its preimage under $\Log_t$. Without loss of generality, we assume that the union
\[ W\coloneqq U\cup U_1\cup\cdots\cup U_n\]
is connected. It is then clear that we can identify $W_I$ with $W$ in such a way that $U$ is taken to $U_I$ and each $U_i$ is taken to $W_i$, where we can replace $U$ with a smaller open subset $U'$ (for which $\Log_t(U')$ has boundary further away from the middle strata of the simplex) if necessary.

We remark that the simplex in this discussion should be thought of as a simplex in the essential skeleton $\Sk(\mathcal{X}^{\circ})$. To clear up a potential confusion, our arguments below are not being applied to the complement of all middle-dimensional strata in $\Sk(\mathcal{X}^{\circ})$ at once, but only to one of the neighborhoods $W_I$ at a time.
\end{remark}
Viewing $W_I\cong W$ as an open subdomain of $(\mathbb{C}^*)^n$, we can apply the results of~\cite{AbouzaidTropical}. We claim that the Lagrangian disks $\phi^d(L)\cap W_I\subset W$ are open subsets of the Lagrangian disks constructed in~\cite{AbouzaidCoordinateRing}. The Lagrangian disks in $(\mathbb{C}^*)^n$ containing $\phi^d(L)$ in question, denoted $T_d$ for in this discussion, are small perturbations of portions of the graphs of~\eqref{localgraphs} thought of as Lagrangians submanifolds with boundary in $(\mathbb{C}^*)^n$~\cite[\S{5}]{AbouzaidCoordinateRing}. We can assume without loss of generality that $L\cap W_I$ (and $T_0$) can be identified with a subset of the positive real locus in $(\mathbb{C}^*)^n$.

The arguments of~\cite{AbouzaidTropical}, which are similar to~\cite{FOMorse, KSMorse}, show that the $A_{\infty}$-operations on (the full subcategory generated by these Lagrangian sections of) the Fukaya category are determined by counting Morse flow trees in the base $\mathbb{R}^n$. With this fact, we can compute the higher $A_{\infty}$-operation in our setting using the intrinsic description of the canonical class of $A_{\infty}$-structures.
\begin{lemma}\label{local-a-infinity-computation}
The $A_{\infty}$-structure on $\mathcal{A}_{\phi}$ inherited from the Fukaya category is nontrivial, i.e. it is $A_{\infty}$-isomorphic to the natural $A_{\infty}$-structure on~\eqref{coordinate-algebra}.
\end{lemma}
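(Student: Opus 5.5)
The plan is to reduce the statement to Polishchuk's uniqueness theorem. By Proposition~\ref{closedopenisomorphism}, the cohomology algebra of $\mathcal{A}_{\phi}$ is isomorphic to~\eqref{coordinate-algebra}. Lemma~\ref{slclemma}, together with Serre vanishing (after replacing $\phi$ by $\phi^{n_0}$), gives the vanishing of the middle-dimensional cohomology needed to apply~\cite[Theorem 1.1]{Polishchuk}. Under these hypotheses, the minimal $A_{\infty}$-structures on this algebra fall into only two isomorphism classes, up to rescaling: the formal (trivial) one, and the one from the canonical class. So it suffices to exhibit a single nonvanishing higher operation of the form predicted by~\cite[Lemma 3.4]{Polishchuk}. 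Concretely, I would show
\[\mathfrak{m}^{\mathcal{A}}_{n+2}(\alpha_1,\ldots,\alpha_{n+1},\beta)=\pm\,\mathrm{id}_L\neq 0\]
for a Koszul-type sequence of degree $0$ elements $\alpha_i\in HF^0(\phi^{i-1}(L),\phi^{i}(L))$ and the dual class $\beta\in HF^n(\phi^{n+1}(L),L)$. Rescaling is absorbed by the scalings $a\mapsto \lambda^{d}a$ on the graded pieces.

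To choose the $\alpha_i$, I would fix a $0$-stratum $x_I$ and use the identification $HF^0(L,\phi(L))\cong H^0(\check{X}_0,\mathcal{L}_0)$. Its basis elements correspond to the vertices of $\Sk(\mathcal{X}^{\circ})$, and in $W_I$ they are the intersection points of $L$ with $\phi(L)$ lying over the $n+1$ vertices of the simplex $\sigma_I$ (Lemma~\ref{locallagrangianintersections}, Remark~\ref{momentimageremark}). I would take $s_1,\ldots,s_{n+1}$ to be the $n+1$ vertex sections of $\sigma_I$, completed by the other vertex sections so that they have no common zero. The $\alpha_j$ are then the corresponding Koszul components. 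On the Floer side these are sums of intersection points of the translated graphs~\eqref{localgraphs}, and $\beta$ is the top class dual to $\mathrm{id}_L$ under the weak Calabi--Yau pairing. Because the Koszul complex is exact and the required $\Ext$-vanishing holds, any $A_{\infty}$-structure in the canonical class satisfies Polishchuk's identity. So it is enough to verify that the Fukaya-side $\mathfrak{m}_{n+2}$ has a nonzero coefficient on $\mathrm{id}_L$.

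To compute this coefficient, I would work in $W_I\cong W\subset(\mathbb{C}^*)^n$ and invoke~\cite{AbouzaidTropical,AbouzaidCoordinateRing}. There, polygons with boundary on the sections $T_0,\ldots,T_{n+1}$ and small energy correspond to gradient flow trees in the simplex for the differences of the quadratic potentials $-\tfrac{dN}{2}|y|^2$. The key count is the unique rigid tree: it takes one input at each vertex intersection point and the top-degree input $\beta$ localized near the barycenter, and has output the unit. This is essentially the computation in Abouzaid's proof of HMS for toric varieties / $\mathbb{P}^n$, where $\mathfrak{m}_{n+2}$ reproduces the Koszul relation. I would show that this tree counts with sign $\pm1$ and that its energy is the minimal one. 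I would then use the energy filtration (the low-energy limit $t\to0$) to argue that no other contributions, including those leaving $W_I$, can cancel it in the leading order term. Since we only need nontriviality in the isomorphism class over $\Lambda_{\Bbbk}$, a nonzero leading coefficient suffices.

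I expect the main obstacle to be this localization step. We must justify that the relevant holomorphic polygons stay inside $W_I$ and match Abouzaid's model there, even though the Lagrangians only look like the graphs~\eqref{localgraphs} near $U_I$ and near the Weinstein neighborhoods $W_i$. My first attempt would be a monotonicity/energy argument: curves leaving $W_I$ have energy bounded below by a constant of order $\log|t|^{-1}$, which exceeds the action of the local tree. A secondary worry is matching $\beta$, a class of degree $n$, with a minimum/maximum point in the local model. I would address this using the Calabi--Yau pairing, together with the vanishing of middle-degree cohomology that holds since $\operatorname{char}\Bbbk\notin\Bad(\mathcal{X}^{\circ})$.
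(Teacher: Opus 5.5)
Your proposal follows essentially the same route as the paper. Polishchuk's dichotomy reduces the lemma to the nonvanishing of $\mathfrak{m}_{n+2}(\alpha_1,\ldots,\alpha_{n+1},\beta)$, which is then read off from Abouzaid's Morse-flow-tree model in $W_I$ (vertices for the $\alpha_i$, barycenter for $\beta$) once other polygons are excluded: the paper does this via exactness in $X_t\setminus\mathbf{E}_t$ and area, you via monotonicity.

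The point to sharpen is globality, which the paper handles by summing the local generators over all $0$-strata. First, $\beta$ has to be the Yoneda class of the Koszul complex, which is a signed combination of barycenter generators over all $n$-cells; a class ``dual to $\id_L$'' or supported at a single barycenter does not even satisfy $\beta\circ\alpha_{n+1}=0$. Second, the tree at $\sigma_I$ only outputs the index-$0$ critical point attached to $\sigma_I$, so you should add that a degree-$0$ cocycle in the Morse model of $CF^*(L,L)$ with nonzero coefficient there is a nonzero multiple of $\id_L$.
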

\begin{proof}
Using Lemma~\ref{locallagrangianintersections}, as explained above, we can identify certain open subsets of the Lagrangians $\phi^d(L)$ with corresponding open subsets of the graphs of~\eqref{localgraphs} inside $(\mathbb{C}^*)^n$, up to Hamiltonian isotopy. As before, these Lagrangian submanifolds are denoted $T_0,\ldots,T_{n+1}\subset T^{2n}$. These graphs are mirror to powers of an ample line bundle under the mirror functor of~\cite[Theorem 1.2]{AbouzaidTropical}. The fact that the derived category of $\mathbb{P}^n$ is in the canonical class~\cite[Lemma 3.4 and Theorem 1.1(1)]{Polishchuk}, implies the nonvanishing of a certain $A_{\infty}$-operation with $n+2$-inputs involving the Lagrangians $T_d$. Explicitly, one has that
\begin{align*}
\mathfrak{m}_{n+2}(\alpha_1,\ldots,\alpha_{n+1},\beta) = \pm\id_{T_0} \,
\end{align*}
where
\begin{align*}
\alpha_i\in HF^0(T_i^{\binom{n+1}{i-1}},T_{i+1}^{\binom{n+1}{i}})
\end{align*}
(with the exponent denoting a direct sum of objects in the Fukaya category) and
\begin{align*}
\beta\in HF^n(T_{n+1},T_0) \,.
\end{align*}
The proof of~\cite[Theorem 1.2]{AbouzaidTropical} shows that the higher $A_{\infty}$-operations can all be interpreted as counts of Morse flow trees. These lift to holomorphic $(n+3)$-gons in $\mathbb{C}^n$ with Lagrangian labels chosen from the Lagrangians $T_i$ which contribute to this $A_{\infty}$-operation.

We claim that after applying Hamiltonian isotopies to $T_0,\ldots,T_{n+1}$, the relevant $(n+3)$-gons will be regular and will lie in the image of the subdomain $W_I$. To see this, note that locally in $W_I$, we can identify $W_I\cap T_i$ (which we recall can be identified with a subdomain of $\mathbb{C}^n = T^*\mathbb{R}^n)$ with the graph of an exact $1$-form $df_i$ in $T^*\mathbb{R}^n$, where we can also assume that $T_0$ lifts to an open subset of the zero-section of $T^*\mathbb{R}^n$. The primitives $f_i$ can, without loss of generality, be assumed to have index $0$ critical points at the points in $T_0$ corresponding to the vertices of the simplex used to describe $W_I$ in Remark~\ref{momentimageremark}. We can now take a limit $J_t$ as $t\to 0$ of almost complex structures, as explained in~\cite{FOMorse}, which makes the $J_t$-holomorphic curves almost linear in the cotangent fibers, to obtain the correspondence between Morse flow trees and holomorphic polygons in this setting. Having made these identifications, the generators $\alpha_i$ will correspond to vertices of the simplex of Remark~\ref{momentimageremark}, and the generator $\beta$ will correspond to the barycenter.

The result of Lemma~\ref{locallagrangianintersections} implies that the holomorphic $(n+3)$-gons constructed this way give rise to (regular) holomorphic $(n+3)$-gons in $X_t$ by standard arguments~\cite{FOMorse}. Set
\begin{align}\label{total-input-generators}
\alpha_i\in HF^0(\phi^i(L)^{\binom{n+1}{i-1}},\phi^{i+1}(L)^{\binom{n+1}{i}})
\end{align}
and
\begin{align}\label{total-output-generators}
\beta\in HF^n(\phi^{n+1}(L),L)
\end{align}
to be the sums of the corresponding Floer generators corresponding to all $0$-strata of $\mathbf{D}$. It remains to show that
\begin{align}\label{actualainfinityoperation}
\mathfrak{m}_{n+2}(\alpha_1,\ldots,\alpha_{n+1},\beta) = \pm\id_L\in HF^0(L,L)
\end{align}
in $X_t\setminus\mathbf{E}_t$. First note that the only disks corresponding to~\eqref{actualainfinityoperation} are the ones constructed above, by the exactness of $\phi^d(L)\subset X_t\setminus\mathbf{E}_t$ and area considerations.

Finally, recalling that the chain complex $CF^*(L,L)$ is defined in~\cite{PerutzSheridan} using a Hamiltonian perturbation of $L$, we can take this perturbation to be supported in a Weinstein neighborhood of $L$, where it is given by the graph of a $C^2$-small Morse function on $L$. Specifically, the dual complex to $\Sk(\mathcal{X}^{\circ})$ determines a Morse function on $L$ whose $0$-cells correspond to the $n$-cells of $\Sk(\mathcal{X}^{\circ})$, and the unit in $HF^0(L)$ is represented by the sum of these generators. Then, in view of~\cite{FOMorse}, the $A_\infty$ structural coefficients again reduce to counts of Morse flow trees in $X_t\setminus\mathbf{E}_t$, therefore the computation in the proof of~\cite[Theorem 1.2]{AbouzaidCoordinateRing} implies the desired statement. We have shown that the $A_{\infty}$-product on the left-hand side of~\eqref{actualainfinityoperation} is nonzero, and by~\cite[Remarks. 1.]{Polishchuk} this completes the proof.
\end{proof}
\begin{remark}
    Although the identification of holomorphic disks to determine the $A_{\infty}$-operations took place locally near a single $n$-simplex of $\Sk(\mathcal{X}^{\circ})$, we produced an $A_{\infty}$-operation with inputs~\eqref{total-input-generators} and output~\eqref{total-output-generators} defined as a sum over all vertices and over all barycenters of $n$-cells, respectively. This is consistent with the fact that, in the mirror, the central fiber of~\eqref{mirrorfamily} is a union of projective spaces. Standard facts in toric geometry imply that, if we view the $n$-simplex as the Newton polytope of $\mathbb{P}^n_{\Bbbk}$, the vertices and barycenters will correspond to sections of ample line bundles. Summing over all of these sections appropriately gives us a Koszul resolution of the form~\eqref{koszulsequence}, again because the derived category of $\mathbb{P}^n_{\Bbbk}$ is in Polishchuk's canonical class.

    If we consider an ample line bundle defined over the entire central fiber $\check{X}_0$, then a section of this bundle will of course restrict to a section of an ample line bundle on each irreducible component. This is the reason we have to describe the mirror Koszul resolution more globally in~\eqref{total-input-generators} and~\eqref{total-output-generators} than our local computation might, at first glance, seem to require.
\end{remark}
\begin{proof}[Proof of Theorem~\ref{mirrorinjectionthm}]
    The algebra isomorphism
    \[ \bigoplus_{d\geq0} H^0(\check{\mathcal{X}},\mathcal{L}^{\otimes d})\oplus\bigoplus_{d\leq0} H^n(\check{\mathcal{X}},\mathcal{L}^{\otimes d}) \cong \bigoplus_{d\geq0} HF^0(L,\phi^d(L))\oplus\bigoplus_{d\leq0} H^n(L,\phi^d(L))\]
    over $\Lambda_{\Bbbk}$, combined with Lemma~\ref{local-a-infinity-computation} completes the proof.
\end{proof}

In what remains of this paper, we will provide the details pertaining to the construction and basic properties of $\check{\mathcal{X}}$ which were omitted from the discussion so far, and we will show that Assumptions~\ref{commutativeassumption} and~\ref{lagrangianassumption} all hold for certain toric degenerations.

\subsection{Applicability and comparisons with other approaches}
Unfortunately, we cannot yet show that Theorem~\ref{mirrorinjectionthm} applies to all simple toric degenerations. This is because we rely on the existence of a nice resolution of singularities in order to compute the essential skeleton of a toric degeneration (see Assumption~\ref{toricdegenerationresolutionassumption} below.) The existence of this resolution is, at least in principle, only necessary to show to verify Assumption~\ref{lagrangianassumption}. The construction of Lagrangian submanifolds in a smooth fiber of a toric degeneration $\mathcal{X}\to\mathbb{D}$ that we rely on comes from of Arg{\"u}z~\cite{Arguzreal}. Under certain conditions on the gluing data, of a toric degeneration, the results of \textit{op. cit.} show that the real locus of a real toric degeneration contains a connected component homeomorphic to the intersection complex~\cite[Example 1.13]{GSrealaffine} of the central fiber of $\mathcal{X}$. Assumption~\ref{toricdegenerationresolutionassumption} allows us to identify the essential skeleton of $\mathcal{X}^{\circ} = \mathcal{X}\mid_{\mathbb{D}^{\circ}}$ with the intersection complex, which we need to check Definition~\ref{tropicallagrangiandefinition}(ii).

Let us also note that our Theorem~\ref{toric-degeneration-hms-thm} has nontrivial overlap with known cases of homological mirror symmetry for closed Calabi--Yau manifolds. It is instructive to compare this result with analogous results for the relative Fukaya category in these settings. In~\cite{SeiQuartic, SheridanHypersurfaces, SheridanSmith, GHHPS}, relative Fukaya categories are defined using a certain normal crossings divisor, the complement of which are (very) affine varieties with extra symmetries. These symmetries make it possible to control the deformation space of the Fukaya category of the divisor complement, allowing one to apply versality results (cf.~\cite{SheridanVersality}.) In contrast to these works, we use an essentially arbitrary smooth divisor to define the relative Fukaya category, mainly as an auxiliary tool to prove transversality results.

For the purpose of illustration, we consider the following slightly non-rigorous example. If $N_t$ denotes a normal crossings divisor, as used in one of the works cited above, and we take $D_t$ to be a smoothing of this divisor, then in general $X_t\setminus N_t$ is a Liouville subdomain of $X_t\setminus D_t$. More precisely, one obtains $X_t\setminus D_t$ by performing Weinstein handle attachments on $X_t\setminus N_t$. For appropriate choices of $N_t$ and $D_t$, the tropical Lagrangian $L$ should lie in the complement of both divisors, and thus they define objects of both relative Fukaya categories $\Fuk(X_t,D_t)$ and $\Fuk(X_t,N_t)$ (assuming for the purpose of this informal discussion that the former relative Fukaya category is well-defined.)

If we tensor both categories with the Novikov field $\Lambda_{\Bbbk}$, then local nature of our Floer-theoretic computations show that the quasi-isomorphism type of $\mathcal{A}_{\phi}$ is unaffected by this change. On the other hand, one can consider both \textit{relative} Fukaya categories $\Fuk(X_t,D_t)$ and $\Fuk(X_t,N_t)$ as Fukaya categories over the ring of Laurent series $\Bbbk[\![T]\!]$ by~\cite[\S{1.4}]{PerutzSheridan}. By changing coefficients to $\Bbbk[\![T]\!]$ this way, we can produce two different versions of $\mathcal{A}_{\phi}$ defined over $\Bbbk$, and they will be non-isomorphic. Algebraically, this is a symptom of the elementary fact that two non-isomorphic $\Bbbk[\![T]\!]$-algebras can be isomorphic over $\Bbbk(\!(T)\!)$. Geometrically, this corresponds to looking at two families with the same general fiber but different special fibers. The local computation given in the proof of Lemma~\ref{local-a-infinity-computation} shows that both versions of $\mathcal{A}_{\phi}$ over $\Bbbk$ belong to the canonical class, as expected by~\cite[Lemma 3.4]{Polishchuk}, since both $A_{\infty}$-algebras correspond to projective $\Bbbk$-varieties.

We should not expect that the Fukaya category (of compact Lagrangians) $\Fuk(X_t\setminus D_t)$ will be split-generated by $\lbrace\phi^d(L)\rbrace_{d\in\mathbb{Z}}$, as the process of attaching Weinstein handles may create new compact Lagrangians in $X_t\setminus D_t$ that are not contained in $X_t\setminus N_t$. Ultimately, these new Lagrangian submanifolds should become Floer-theoretically trivial in $X_t$ once the divisor is re-inserted. Using the well-known philosophy that different choices of divisor should correspond to different large K{\"a}hler limits, we do expect that for some (unspecified) `correct' choice of system of divisors $D_t'\subset X_t$, the Fukaya category of compact Lagrangians in $\Fuk(X_t\setminus D_t')$ will be split-generated by the tropical Lagrangians. By a correct system of divisors, we mean the one that corresponds to the large volume limit correspoding to $\Proj(R_{\phi}\otimes\Bbbk)$ under mirror symmetry. Since we do not have any method for identifying such a system of divisors in the generality studied in this section, we only state (partial) versions of homological mirror symmetry over the Novikov field.

To summarize, these considerations suggest that removing the (essentially arbitrarily chosen) system of divisors $\mathbf{E}_t$ from $X_t$ corresponds to approaching a different large complex structure limit point in the complex moduli space of the mirror than the versions in~\cite{SeiQuartic, SheridanHypersurfaces, SheridanSmith, GHHPS} by deleting the systems of divisors in their respective settings. In the case of toric degenerations, the special form of the central fibers of~\eqref{mirrorfamily} indicate that this is most likely the case. A similar phenomenon is present in~\cite{HackingKeating}, as Hacking has explained to us, where the authors use smooth divisors to set up the relative Fukaya category. We thank Denis Auroux and Paul Hacking for discussions about this point.

\subsection{Organization}
What remains of this paper is divided into three parts. The first part sets up general Floer-theoretic machinery that applies to any maximally unipotent degeneration. In \S{\ref{fixedpointfloersection}}, we review fixed point Floer cohomology following Seidel's thesis~\cite{SeiThesis}, and we state some technically convenient variations on the definition. The next section \S{\ref{adaptedtubularneighborhoodssection}} reviews the theory of regularizations developed in~\cite{MTZ, McLeanGrowth, McLeanLogCanonical, TZsmooth}, and contains our basic dynamical computations for fixed point Floer cohomology, e.g., Lemma~\ref{orbitsinstandardneighborhoods}. We develop the appropriate analogues of the (low energy) log-PSS maps of Ganatra--Pomerleano~\cite{GP1, GP2} and Pomerleano~\cite{Pomerleano} in \S{\ref{logPSSmaps-section}}, leading up to a proof of Theorem~\ref{stanleyreisnerisom}. Setting up the moduli spaces we need for this argument relies on the work of Farajzadeh-Tehrani in~\cite{Tehrani, Tehrani2}. In \S{\ref{relfuksection}}, we recall Perutz and Sheridan's construction of the relative Fukaya category~\cite{PerutzSheridan}, and adapt the construction of the twisted closed-open map given by Jeffs--Yao--Zhao in~\cite{JYZ}.
This section also proves Proposition~\ref{closedopenisomorphism}.

In the next part, we apply the general machinery detailed above to toric degenerations. Section~\ref{toricdegeneration-review-section} mostly summarizes results of Gross and Siebert~\cite{GS1, GS2, GSrealaffine}, Ruddat and Siebert~\cite{RuddatSiebert}, and Arg{\"u}z~\cite{Arguzreal} in a convenient form for us. That said, our description of the positive real locus in Theorem~\ref{lagrangianpositivereallocus} has not explicitly been given in the literature as far as we know. Finally, we finish the proof of Theorem~\ref{toric-degeneration-hms-thm} in \S{\ref{toric-degeneration-hms-section}}. The third part of the paper consists of two appendices, the first of which provides the proof of Proposition~\ref{finitelygeneratedprop} and also constructs a one-sided inverse mirror functor, while the second appendix contains the proof of Lemma~\ref{slclemma}, which describes the singularities of the central fiber of the mirror family, following~\cite{Oldfield}.
 
\part{Floer theory}

\section{Review of fixed point Floer cohomology}\label{fixedpointfloersection}
In this section, we will review the construction of fixed point Floer cohomology, and describe the perturbations of the symplectic form on a semistable degeneration $\mathscr{X}\to\mathbb{D}$ as obtained from, say, Proposition~\ref{gsresolutionmodel}. Though our construction of fixed point Fleor cohomology applies more generally, we will focus our attention on the setting of \S{\ref{mainsection}} for simplicity. When the particular choice of fiber is immaterial, we will let $X$, as opposed to $X_t$ for $t\in\mathbb{D}^{\circ}$, denote some smooth fiber of  a degeneration $\mathcal{X}^{\circ}\to\mathbb{D}^{\circ}$.

The fixed point Floer cohomology groups of a symplectomorphism $\phi\colon X\to X$ of a (closed) symplectic manifold $(X,\omega)$ were first introduced in\cite{DostoglouSalamon}, generalizing Floer's original work~\cite{Floer} on Hamiltonian diffeomorphisms, which can be interpreted as the special case $\phi = \id$. A different construction of fixed point Floer cohomology in terms of mapping tori was given by Seidel in~\cite{SeiThesis}. We will follow Seidel's approach, since it appears more natural for the purpose of studying~\eqref{mirrorfamily}. Interpreting the differentials and products on fixed point Floer cochain complexes as sections of symplectic fiber bundles allows us to more easily realize them as curves in the total space of an snc model $\widetilde{\mathcal{X}}\to\mathbb{D}$ for $\mathcal{X}^{\circ}\to\mathbb{D}^{\circ}$.

Our treatment of fixed point Floer cohomology closely follows the relevant parts of~\cite{SeiThesis}. Although that work is written specifically for symplectic $4$-manifolds, it is clear that the methods generalize to symplectic Calabi--Yau manifolds of any dimension when combined with standard arguments, so we will omit most technical details from our summary.

Let $\phi\colon X\to X$ denote the symplectic monodromy~\eqref{symplecticmonodromyrep} of the family $\mathcal{X}^{\circ}\to\mathbb{D}^{\circ}$. We can form the mapping torus of $\phi$, denoted $T_{\phi}$. This is a symplectic fiber bundle over $S^1$, in the sense of~\cite[Definition 7.1]{SeiThesis}, which carries a closed $2$-form $\Theta_{\phi}$ descending from $\omega$ on $\mathbb{R}\times X$. Clearly, $T_{\phi}$ is isomorphic to $\mathcal{X}^{\circ} |_{\partial \mathbb{D}}$, as a symplectic fiber bundle.

Given a function $H\in C^{\infty}(T_{\phi},\mathbb{R})$, we can perturb the form $\Theta_{\phi}$ by setting
\begin{align}\label{hamiltonianperturbationsmappingtori}
\Theta_{\phi,H}\coloneqq\Theta_{\phi}-d(Hdt)
\end{align}
where $dt$ is the pullback of the standard $1$-form on $S^1$. Let $(\phi^t_H)_{t\in\mathbb{R}}$ denote the flow of the pullback of $H$ to $\mathbb{R}\times X$. Then there is a natural identification
\begin{align}\label{perturbedmappingtorusidentification}
(T_\phi,\Theta_{\phi,H})\cong (T_{\phi\circ\phi^H_1},\Theta_{\phi\circ\phi^H_1})
\end{align}
under which $\Theta_{\phi,H}$ and $\Theta_\phi$ coincide on each fiber of the bundle projection $\pi\colon T_{\phi}\to S^1$.

Consider the horizontal subbundle $TT_{\phi}^h\subset TT_{\phi}$ of the tangent bundle defined to be the complement, with respect to $\Theta_{\phi}$, of the vertical subbundle $TT_{\phi}^v\coloneqq\ker(D\pi)$. Let $\mathcal{H}(T_{\phi},\Theta_{\phi})$ denote the space of horizontal sections of $T_{\phi}$, i.e. the sections $\nu\colon S^1\to T_{\phi}$ of $\pi$ for which the image of $D\nu(v)$ is contained in $TT_\phi^h$. After precomposing $\phi$ with a generic Hamiltonian perturbation, all of the fixed points of $\phi$ are nondegenerate, and hence that $(T_{\phi},\Theta_{\phi})$ is nondegenerate in the sense of~\cite[p. 39]{SeiThesis}. Note that sections of $T_{\phi}$ correspond to maps $u\colon\mathbb{R}\to X$ for which $u(t) = \phi(u(t+1))$, and that horizontal sections correspond to constant maps to $X$ satisfying this condition. A horizontal section is nondegenerate if and only if the corresponding point of $\phi$ is a nondegenerate fixed point. This implies that the set $\mathcal{H}(T_{\phi},\Theta_{\phi})$ is finite when $T_{\phi}$ is nondegenerate. When discussing fixed point Floer cohomology, we will assume that $\phi$ is nondegenerate unless explicitly stated otherwise. We typically omit Hamiltonian perturbations of symplectic mapping tori from our notation.

The fixed point Floer cochain group is defined to be
\begin{align*}
CF^k(X,\phi)\coloneqq\bigoplus_{\substack{\nu\in\mathcal{H}(T_{\phi},\Theta_{\phi})\\ \mu(\nu) = k}}o_{\nu}
\end{align*}
where $\mu(\nu)$ denotes the Maslov number of $\nu$ (cf.~\cite[p. 589]{DostoglouSalamon}) and $o_{\nu}$ is the orientation line associated to $\nu$, which is a rank $1$ free $\Lambda$-module~\cite{FloerHoferSalamon}, where $\Lambda$ is the univeral Novikov field over $\Bbbk$~\eqref{universalnovikovfield}. We are using cohomological grading conventions, in contrast with~\cite{SeiThesis}.

The differential on the fixed point Floer cochain complex will count pseudoholomorphic sections of a fiber bundle 
\[ (E_{\phi},\Omega_{\phi}) = \mathbb{R}\times(T_{\phi},\Theta_{\phi})\to C\]
over the infinite cylinder $C \coloneqq \mathbb{R}\times S^1$, where the \textit{fiberwise} symplectic form $\Omega_{\phi}$ is obtained by pulling back $\Theta_{\phi}$ under the projection $E_{\phi}\to T_{\phi}$. The almost complex structures on $E_{\phi}$ which we consider are of the type defined in~\cite[Definition 8.6]{SeiThesis}.
\begin{definition}\label{fibrationsacss}
Let $\mathcal{J}(T_{\phi},\Theta_{\phi})$ denote the space of almost complex structures $J$ on $E_{\phi}$ such that:
\begin{itemize}
\item[(i)] The projection map $E_{\phi}\to C$ is $(J,j)$-holomorphic, where $j$ is the standard almost complex structure on $C$.
\item[(ii)] $J$ is $\mathbb{R}$-invariant.
\item[(iii)] $\Omega_{\phi}(\cdot,J\cdot)$ is a symmetric bilinear form on the tangent bundle $TE_{\phi}$ which is positive-definite on the vertical bundle $TE_{\phi}^v$.
\end{itemize}
\end{definition}
Any almost complex structure $J\in\mathcal{J}(T_{\phi},\Theta_{\phi})$ preserves the splitting 
\[TE_{\phi} = TE^v_{\phi}\oplus TE^h_{\phi}\]
of the tangent bundle into its vertical and horizontal components, by condition (i). Said differently, such an almost complex structure can be written as a direct sum of (tame) almost complex structures
\begin{align}\label{fibrationacssplitting}
    J\coloneqq J^{v}\oplus J^h
\end{align}
on the vertical and horizontal bundles. The horizontal almost complex structure $J^h$ is just the horizontal lift of the standard complex structure on the cylinder, and by (ii) the vertical almost complex structure $J^v$ is an $S^1$-family of tame almost complex structures on $X$ satisfying the $\phi$-twisted periodic condition.

Let $\mathcal{M}(E_{\phi},J)$ denote the space of $J$-holomorphic sections $C\to T_{\phi}$, for $J\in\mathcal{J}(T_{\phi},\Theta_{\phi})$. Given such a $J$, suppose that $\sigma$ is a $J$-holomorphic section of $T_{\phi}$ with positive limit $\nu^+\in\mathcal{H}(T_{\phi},\Theta_{\phi})$ and negative limit $\nu^{-}\in\mathcal{H}(T_{\phi},\Theta_{\phi})$, as in~\cite[(8.2)]{SeiThesis}.  Let
\begin{align*}
\lambda(\sigma) = \int_C\sigma^*\Omega_{\phi}
\end{align*}
denote the \textit{energy}. The pullback $(\nu^{\pm})^*\Theta_{\phi}$ vanishes, since it is a $2$-form on $S^1$, so $\lambda(\sigma)$ is finite by the decay condition in~\cite[(8.2)]{SeiThesis}.

Fix a pair of horizontal sections $\nu^{\pm}\in\mathcal{H}(T_{\phi},\Theta_{\phi})$ with index difference satisfying $\mu(\nu^{+})-\mu(\nu^{-}) = k$, and let $\pi_2(\nu^{-},\nu^{+})$ denote the set of homotopy classes of sections $C\to E_{\phi}$ with positive limit $\nu^{+}$ and negative limit $\nu^{-}$. We can consider the subspaces
\begin{align}\label{cylindersectionmodulispaces}
\widetilde{\mathcal{M}}(E_{\phi},J;B)
\end{align}
of $\mathcal{M}(E_{\phi},J)$ consisting of all sections in the class $B \in \pi_2(\nu^{-},\nu^{+})$. Each of these subspaces carry a natural $\mathbb{R}$-action by translations in the domains $C$. By~\cite{SalamonZehnder}, the virtual dimension of~\eqref{cylindersectionmodulispaces} is given by
\begin{align*}
\ind(\sigma) = \mu(\nu^{+})-\mu(\nu^{-})\,.
\end{align*}
Although~\cite{SeiThesis} indexes the moduli spaces of cylinders by their index and energy, it is clear that this data is determined by $B$. Let $\lambda(B)$ denote the energy of any $u\colon C\to E_{\phi}$ such that $[u] = B\in\pi_2(\nu^{-},\nu^{+})$, and define the index $\ind(B)$ similarly. Let $\mathcal{M}(E_{\phi},J;B)$ denote the quotient $\widetilde{\mathcal{M}}(E_{\phi},J;B)/\mathbb{R}$. The relevant version of Gromov compactness for sections of $T_{\phi}$ appears in~\cite[Theorem 12.7]{SeiThesis}. This is essentially just the Gromov compactness theorem for Floer cylinders applied to $E_{\phi}$ equipped with the symplectic form
\begin{align}\label{symplecticformfromfiberwise}
    \Omega_{\phi}'\coloneqq\Omega_{\phi}+c(ds\wedge dt)
\end{align}
for some constant $c>0$, and where $(s,t)\in\mathbb{R}\times S^1 = C$. For a given $J\in\mathcal{J}(T_{\phi},\Theta_{\phi})$, we can choose $c$ so that $\Omega_{\phi}'$ tames $J$. Although~\cite{SeiThesis} is only written for symplectic $4$-manifolds, it is clear that the compactness result holds in any dimension. Note that since $X$ is Calabi--Yau, all sphere bubbles have Chern number $0$, so the conditions of~\cite[Theorem 12.7]{SeiThesis} are satisfied. In this setting, all sphere bubbles are contained in a single fiber of $T_{\phi}$~\cite[Definition 11.9]{SeiThesis}. Combined with a gluing theorem~\cite[Theorem 12.5]{SeiThesis}, this means that the spaces $\mathcal{M}(E_{\phi},J;B)$ have natural Gromov compactifications by broken $J$-holomorphic sections~\cite[Definition 12.1]{SeiThesis}, with sphere bubbles attached.

We achieve transversality for these spaces of sections by combining the techniques used to construct Hamiltonian Floer cohomology for Calabi--Yau manifolds in~\cite{HoferSalamonNovikov} with the transversality results for sections in~\cite[\S{11}]{SeiThesis}. The only moduli spaces relevant to the definition of fixed point Floer cohomology are those of virtual dimension $\leq 1$. This means that by choosing $J$ generically, we can avoid sphere bubbles for dimension reasons; since the set of points lying on a $J$-holomorphic sphere with $c_1 = 0$ forms a codimension $4$ subspace of $X$, roughly speaking. See in particular~\cite[Proposition 14.2]{SeiThesis}.

Continuing with the definition of Floer cohomology, we define the differential
\begin{align*}
d\colon CF^*(X,\phi)\to CF^{*+1}(X,\phi)
\end{align*}
to be
\begin{align}\label{fixedpointfloerdifferentialdefinition}
d\coloneqq\sum_{\substack{B\in\pi_2(\nu^{-},\nu^{+}) \\ \ind(B) = 1}} \sum_{u\in\mathcal{M}(E_{\phi},J;B)}T^{\lambda(B)}\sigma_u \,.
\end{align}
Here, $\sigma_u\colon o_{\nu^{-}}\to o_{\nu^{+}}$ denotes the isomorphism of orientation lines associated to the element $u\in\mathcal{M}(E_{\phi},J;B)$. The fact that $d^2 = 0$ follows from the gluing theorem for sections.
\begin{definition}\label{fixedpointfloercohomologydefinition}
The \textit{fixed point Floer cohomology} $HF^*(X,\phi)$ groups of $(X,\phi)$ are defined to be the cohomology groups of the chain complex $(CF^*(X,\phi),d)$.
\end{definition}
It is a standard fact that the cohomology groups $HF^*(X,\phi)$ are independent of the Hamiltonian perturbations and almost complex structures chosen in their construction, up to isomorphism. Moreover, these groups are invariant under deformations of symplectic structure in the sense described on~\cite[p. 34]{SeiThesis}.

\subsection{Pseudoholomorphic multisections and domain-dependent complex structures}\label{multiplecoverfixedpointfloer}
To define fixed point Floer cohomology groups $HF^*(X,\phi^d)$, where $d\neq0$, we can apply the construction given above to $(T_{\phi^d},\Theta_{\phi^d})$. There is an alternative way to construct these groups by counting pseudoholomorphic \textit{multisections} of $E_{\phi}\to C$. We can avoid equivariant transversality problems by introducing domain-dependent almost complex structures as follows.

First note that the symplectic fiber bundle $(T_{\phi^d},\Theta_{\phi^d})$ is naturally the pullback of $(T_{\phi},\Theta_{\phi})$ under the covering map
\begin{align*}
    p_d\colon S^1 &\to S^1 \\
    z &\mapsto z^d \,.
\end{align*}
There is also a natural correspondence of horizontal sections of these bundles. Incorporating Hamiltonian perturbations into this description, recall that if $H\in C^{\infty}(T_{\phi},\mathbb{R})$ is a smooth function then the perturbed torus bundle $(T_{\phi},\Theta_{\phi,H})$ is naturally identified with the symplectic mapping torus of $\phi\circ\phi_1^H$~\eqref{perturbedmappingtorusidentification}. Thus the horizontal sections of
\[(T_{(\phi\circ\phi^H_1)^d},\Theta_{(\phi\circ\phi^H_1)^d})\]
correspond to $d$-fold covers of horizontal sections of $T_{\phi}$ under the natural map
\[T_{\phi^d} = p_d^*T_{\phi}\to T_{\phi} \,.\]
Similar remarks apply to $E_{\phi^d}$ and $E_{\phi}$.

Suppose that $J\in\mathcal{J}(T_{\phi^d},\Theta_{\phi^d})$, and recall from~\eqref{fibrationacssplitting} that this determines an $S^1$-family of tame almost complex structures on $X$. Under the natural map $E_{\phi^d}\to E_{\phi}$, any $J$-holomorphic section $\tilde{u}$ of $E_{\phi^d}\to C$ corresponds to a $d$-fold multisection $u$ of $E_{\phi}$. This multisection is easily seen to the pseudoholomorphic with respect to a domain-dependent almost complex structure on $E_{\phi}$ whose values lie in $\mathcal{J}(T_{\phi},\Theta_{\phi})$. Conversely, given any pseudoholomorphic cylinder (with respect to a domain-dependent almost complex structure of this type) $\tilde{u}\colon C\to E_{\phi}$, the composition of this map with the bundle projection is an ordinary holomorphic map from the cylinder to itself, which must be a covering map by the Riemann--Hurwitz formula.

Thus the fixed point Floer complex of $\phi^d$, as we have defined it, can be canonically identified with chain complexes whose differential is defined by counting pseudoholomorphic cylinders in $E_{\phi}$. This perspective seems essential for proving Theorem~\ref{stanleyreisnerisom}, whereas it is more convenient to work with Floer complex defined along the lines of~\cite{SeiThesis} when studying the twisted closed-open map~\eqref{twistedclosedopenfirstmention}.

\subsection{The pair of pants product}
A straightforward application of the general results of~\cite{SeiThesis} on counting sections allows us to define a product~\eqref{pairofpantsproduct} on fixed point Floer cohomology. Some details of this are, for example, described in~\cite{SalamonProduct}, where a symplectic fiber bundle over the pair of pants $\mathbb{CP}^1\setminus\lbrace 0,1,\infty\rbrace$ which has holonomy $\phi^{-d_0}$ and $\phi^{-d_1}$ about the punctures at $0$ and $1$, and holonomy $\phi^{d_0+d_1}$ about the puncture at $\infty$, is constructed. In the definition of the product, the punctures at $0$ and $1$ are treated as negative punctures, and the puncture at $\infty$ is treated as a positive puncture. We will provide a slightly different construction of this fiber bundle than the one in~\cite{SalamonProduct} which allows us to naturally realize pseudoholomorphic sections contributing to~\eqref{pairofpantsproduct} as pseudoholomorphic pairs of pants in $\mathcal{X}^{\circ}\cong E_{\phi}$, with respect to a domain-dependent almost complex structure.

Define the bundle $E_{\phi^{d_0},\phi^{d_1}}\to\mathbb{CP}^1\setminus\lbrace 0,1,\infty\rbrace$ as the pullback bundle $p^*E_{\phi}$ under the branched covering map
\begin{align*}
p\colon\mathbb{CP}^1\setminus\lbrace 0,1,\infty\rbrace&\to\mathbb{C}^* \\
z&\mapsto z^{d_0}(z-1)^{d_1} \,.
\end{align*}
This is a symplectic fiber bundle, equipped with a fiberwise nondegenerate $2$-form $\Omega_{\phi^{d_0},\phi^{d_1}}$, in the sense of~\cite[Definition 7.1]{SeiThesis}, although the pullback symplectic form $p^*\Omega_{\phi}$ is not a symplectic form on the total space.

We can choose negative cylindrical ends
\begin{align*}
\epsilon_{i}\colon\mathbb{R}_{<0}\times S^1\to\mathbb{CP}^1\setminus\lbrace 0,1,\infty\rbrace
\end{align*}
for $i\in\lbrace 0,1\rbrace$ given by
\begin{align*}
\lim_{s\to-\infty}\epsilon_i(s,t) = i
\end{align*}
near the negative punctures and a positive cylindrical end
\begin{align*}
\epsilon_{\infty}\colon\mathbb{R}_{>0}\times S^1\to\mathbb{CP}^1\setminus\lbrace 0,1,\infty\rbrace
\end{align*}
such that
\begin{align*}
\lim_{s\to\infty}\epsilon_{\infty}(s,t) = \infty \,.
\end{align*}

In these cylindrical ends, we have that $\epsilon_i^*E_{\phi^{d_0},\phi^{d_1}}$ is isomorphic to $E_{\phi^{d_i}}\mid_{\mathbb{R}_{<0}\times S^1}$ for $i = 0,1$, and that $\epsilon_{\infty}^*E_{\phi^{d_0},\phi^{d_1}}$ isomorphic to $E_{\phi^{d_0+d_1}}\mid_{\mathbb{R}_{>0}\times S^1}$ as symplectic fiber bundles.
\begin{definition}\label{fibrationsacsspants}
Let $\mathcal{J}(E_{\phi^{d_0},\phi^{d_1}},\Omega_{\phi^{d_0},\phi^{d_1}})$ denote the space of almost complex structure $J$ on $E_{\phi^{d_0},\phi^{d_1}}$ such that:
\begin{itemize}
\item[(i)] The projection map $E_{\phi^{d_0},\phi^{d_1}}\to\mathbb{CP}^1\setminus\lbrace 0,1,\infty\rbrace$ is $(J,j)$-holomorphic, where $j$ is the standard almost complex structure on $\mathbb{CP}^1\setminus\lbrace 0,1,\infty\rbrace$.
\item[(ii)] $\Omega_{\phi^{d_0},\phi^{d_1}}(\cdot,J\cdot)$ is a symmetric bilinear form on the tangent bundle $TE_{\phi^{d_0},\phi^{d_1}}$ which is positive-definite on the vertical bundle $TE_{\phi^{d_0},\phi^{d_1}}^v$.
\item[(iii)] In each cylindrical end, $J$ restricts to an almost complex structure in $\mathcal{J}(T_{\phi^{d_i}},\Theta_{\phi^{d_i}})$ (in the negative cylindrical ends) or in $\mathcal{J}(T_{\phi^{d_0+d_1}},\Theta_{\phi^{d_0+d_1}})$ (in the positive end.) In particular, $J$ is translation-invariant in the cylindrical ends.
\end{itemize}
\end{definition}
Choose a generic $J\in \mathcal{J}(E_{\phi^{d_0},\phi^{d_1}})$. For fixed generators $x_i\in\mathcal{H}(T_{\phi^{d_i}},\Theta_{\phi^{d_i}})$, where $i = 0,1$, and $x_{\infty}\in\mathcal{H}(T_{\phi^{d_0+d_1}},\Theta_{\phi^{d_0+d_1}})$, let $\pi_2(x_0,x_1;x_{\infty})$ denote the set of homotopy classes of secions of $E_{\phi^{d_0},\phi^{d_1}}$ asymptotic to the horizontal section $x_i$ near the puncture $i\in\lbrace 0,1,\infty\rbrace$. As explained in~\cite[\S{4}]{SeiPants}, we need to use Hamiltonian perturbation terms to achieve transverality for pseudoholomorphic pairs of pants. These take the form of a perturbation term
\[ Y_z\in\Omega^1(\mathbb{CP}^1\setminus\lbrace 0,1,\infty\rbrace, T(E_{\phi}))\]
which we can take to be constant along the cylindrical ends (cf.~\cite[\S{8}]{ShelukhinZhao}.)

For any $B\in\pi_2(x_0,x_1;x_{\infty})$, we denote its energy by $\lambda(B)\in\Lambda$ and its index by $\ind(B)\in\mathbb{Z}$. Let $\mathcal{M}(E_{\phi^{d_0},\phi^{d_1}},J;B)$ denote the moduli space of inhomgeneous sections $u$ of $E_{\phi^{d_0},\phi^{d_1}}$, meaning that
\[ (Du-Y_z)\circ J = j\circ(Du-Y_z) \]
where $j$ denotes the almost complex structure on the pair of pants and $Y_z$ denotes the value of $Y$ at $z\in\mathbb{CP}^1\setminus\lbrace 0,1,\infty\rbrace$, such that $[u] = B$. We can show that the moduli spaces with $\ind(B)\leq 1$ are transversely cut out for generic almost complex structures.

We define the pair of pants product
\begin{align}\label{pantsproducthchainlevel}
\star\colon CF^*(X,\phi^{d_0})\widehat{\otimes}_{\Lambda_0} CF^*(X,\phi^{d_1})\to CF^*(X,\phi^{d_0+d_1}) \\
-\star -\coloneqq\sum_{\substack{B\in\pi_2(x_0,x_1;x_{\infty}) \\ \ind(B) = 1}}\sum_{u\in\mathcal{M}(E_{\phi^{d_0},\phi^{d_1}},J;B)}\sigma_u(-,-)
\end{align}
where $\sigma_u\colon o_{x_0}\otimes o_{x_1}\to o_{x_{\infty}}$ denotes the isomorphism of orientation lines associated to any $u\in\mathcal{M}(E_{\phi^{d_0},\phi^{d_1}},J;B)$. The moduli spaces ${M}(E_{\phi^{d_0},\phi^{d_1}},J;B)$ of virtual dimension $1$ also admit Gromov compactifications by broken pseudoholomorphic sections with sphere bubbles which project to constant maps in the base. This is a consequence of the standard Gromov compactness theorem, where we equip $E_{\phi^{d_0},\phi^{d_1}}$ with a symplectic form analogous to~\eqref{symplecticformfromfiberwise} that tames the given almost complex structure $J$. By enumerating the boundary strata of these moduli spaces one shows that that the pair of pants product descends to cohomology, obtaining the product~\eqref{pairofpantsproduct}. Similar considerations imply the following.
\begin{lemma}
The ring $R_{\phi}$~\eqref{mirrorring} is associative.
\end{lemma}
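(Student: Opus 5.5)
The plan is the standard degeneration argument for associativity of the pair of pants product, run with the fiber bundles over punctured spheres built by pulling back $E_{\phi}$. Fix $d_0,d_1,d_2\geq 0$ and consider the four-punctured sphere $S=\mathbb{CP}^1\setminus\lbrace 0,1,\lambda,\infty\rbrace$. Over it take the bundle $E_{\phi^{d_0},\phi^{d_1},\phi^{d_2}}\coloneqq p_\lambda^*E_{\phi}$, where
\[
p_\lambda(z)=z^{d_0}(z-1)^{d_1}(z-\lambda)^{d_2}.
\]
This bundle has holonomy $\phi^{d_i}$ around the three negative punctures and $\phi^{d_0+d_1+d_2}$ around the positive puncture at $\infty$.

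We equip this bundle with almost complex structures satisfying the analogues of (i)--(iii) in Definition~\ref{fibrationsacsspants}, together with Hamiltonian perturbation one-forms $Y$ that are constant on the cylindrical ends. We then let $\lambda$ vary in $\mathbb{CP}^1\setminus\lbrace 0,1,\infty\rbrace$. Along a path from $\lambda\to 0$ to $\lambda\to 1$ we get a one-parameter family of such data, and we extend the Floer data to the endpoints compatibly with gluing.

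We define a chain map
\[
h\colon CF^*(X,\phi^{d_0})\otimes CF^*(X,\phi^{d_1})\otimes CF^*(X,\phi^{d_2})\to CF^{*-1}(X,\phi^{d_0+d_1+d_2})
\]
by counting rigid elements of the parametrized moduli space of inhomogeneous sections. Each element is weighted by $T^{\lambda(B)}$ and the orientation-line isomorphisms, exactly as in~\eqref{pantsproducthchainlevel}.

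Next we examine the boundary of the one-dimensional parametrized moduli spaces. Gromov compactness again comes from taming by a form analogous to~\eqref{symplecticformfromfiberwise}. As $\lambda\to 0$ the domain degenerates into two pairs of pants glued along a neck. The covering $p_\lambda$ degenerates compatibly: near the neck it looks like $z^{d_0}(z-\lambda)^{d_2}$, which rescales to the pants map for $(d_0,d_2)$, composed with the pants map for $(d_0+d_2,d_1)$. The bundle over the glued surface is therefore the glued bundle. The same happens as $\lambda\to 1$, giving the other bracketing. Since the punctures at $0$ and $\lambda$ carry $\phi^{d_0}$ and $\phi^{d_2}$, we order the inputs so that the two ends give $(x_0\star x_2)\star x_1$ and $x_0\star(x_1\star x_2)$. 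Alternatively, we place the moving puncture so that the ends give $(x_0\star x_1)\star x_2$ and $x_0\star(x_1\star x_2)$ directly.

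The remaining boundary strata are Floer cylinder breakings at the four ends, which together give $dh\pm h d$. Sphere bubbles are excluded as before. Bubbles lie in fibers and have Chern number $0$, so for generic $J$ they sweep out a set of codimension at least $4$ and are avoided in families of dimension $\leq 1$, as in~\cite[Proposition 14.2]{SeiThesis}. The gluing theorem~\cite[Theorem 12.5]{SeiThesis}, adapted to bundles over nodal domains, identifies the two degenerate ends with the composed products. Hence
\[
(x_0\star x_1)\star x_2-x_0\star(x_1\star x_2)=dh+hd,
\]
which gives associativity on cohomology. The energy weights are additive under gluing because energy is topological in the class $B$, so the identity holds with Novikov coefficients. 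It is also compatible with the $\Bbbk[\![T]\!]$-model of \S\ref{enhancedfixedpointfloersection}, since all moduli spaces live in the fixed snc model.

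The main obstacle is making the degeneration of the pulled-back bundle rigorous. The family of branched covers $p_\lambda$ has critical points that collide with punctures as $\lambda\to 0,1$. We therefore need the induced fiberwise forms and the Floer data on the cylindrical ends to converge to the glued data, uniformly enough for compactness and gluing to apply. The first thing to try is to give up pulling back literally near the neck and to interpolate instead. Using the identification~\eqref{perturbedmappingtorusidentification}, any two choices of bundle data with the same holonomy classes differ by Hamiltonian perturbations. So we can modify the family, away from the ends, into a standard gluing family without changing the counts up to chain homotopy. Gluing then reduces to Seidel's setup~\cite{SeiThesis} in each case.
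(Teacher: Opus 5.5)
Your proposal is correct and follows essentially the same route as the paper. The paper likewise argues by Gromov compactness for one-dimensional moduli spaces of sections of bundles over a sphere with three negative punctures and one positive puncture, built by pulling back $E_{\phi}$ along branched covers, with almost complex structures modeled on those used for cylinders and pairs of pants. Your account is more detailed than the paper's short sketch in two places: the rescaling of $p_\lambda$ near the collision, and the choice of path in the parameter space so that the two ends are $(x_0\star x_1)\star x_2$ and $x_0\star(x_1\star x_2)$ without assuming commutativity.
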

\begin{proof}
This is a standard argument using Gromov compactness applied to $1$-dimensional moduli spaces of sections of symplectic fiber bundles over a sphere with three negative punctures and one positive puncture. We can construct the symplectic fiber bundles involved by pulling back $E_{\phi}$ under a branched covering map of the bases, and we define these moduli spaces using almost complex structures analogous, in the obvious way, to those in Definitions~\ref{fibrationsacss} and~\ref{fibrationsacsspants}.
\end{proof}
We finish this section with a key technical lemma in which we interpret~\eqref{pantsproducthchainlevel} as a count of curves in $E_{\phi}$. This will be used in the proof of Theorem~\ref{stanleyreisnerisom}.

\begin{lemma}\label{pantsinthedegenerationlemma}
Any pseudoholomorphic section of $E_{\phi^{d_0},\phi^{d_1}}$, with respect to an almost complex structure as in Definition~\ref{fibrationsacsspants}, can be identified with a pseudoholomorphic pair of pants in $E_{\phi}$, defined with respect to a domain-dependent almost complex structure on $\mathbb{CP}^1\setminus\lbrace 0,1,\infty\rbrace$ valued in $\mathcal{J}(E_{\phi})$. Conversely, any such pseudoholomorphic pair of pants in $E_{\phi}$ which is asymptotic to $x_{\infty}\in $ can be realized as the image of a section of some $E_{\phi^{d_0},\phi^{d_1}}$.
\end{lemma}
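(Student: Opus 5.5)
The plan is to copy the argument already given for multisections of $E_{\phi}$ in \S\ref{multiplecoverfixedpointfloer}, now over the pair of pants. By construction $E_{\phi^{d_0},\phi^{d_1}}=p^*E_{\phi}$, where $p(z)=z^{d_0}(z-1)^{d_1}$ and $\mathbb{C}^*$ is identified with the cylinder $C$. So there is a tautological bundle map
\[\tilde p\colon E_{\phi^{d_0},\phi^{d_1}}\to E_{\phi}\]
covering $p$, and it is a fiberwise isomorphism. For the forward direction, take a $J$-holomorphic section $u$ of $E_{\phi^{d_0},\phi^{d_1}}$ and set $v\coloneqq\tilde p\circ u$, a map from $\mathbb{CP}^1\setminus\lbrace0,1,\infty\rbrace$ into $E_{\phi}$. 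By Definition~\ref{fibrationsacsspants}(i), $J$ splits as $J^v\oplus J^h$ as in~\eqref{fibrationacssplitting}. Here $J^h$ is the horizontal lift of $j$ on the pair of pants and $J^v$ is a family of tame vertical structures parametrized by $z$.

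At an unbranched point $z$, $dp$ is a holomorphic isomorphism onto a neighbourhood of $p(z)$. Pushing $J$ forward by $\tilde p$ near the fiber over $z$ gives an almost complex structure $J_z$ on $E_{\phi}$ whose vertical part is $J^v_z$ and whose horizontal part is the lift of the standard structure on $C$. This $J_z$ lies in $\mathcal{J}(E_{\phi})$ except for $\mathbb{R}$-invariance, which can only be asked for on the cylindrical ends. The same holds for the inhomogeneous term $Y_z$. The map $v$ then satisfies $(Dv-\tilde p_*Y_z)\circ J_z=j\circ(Dv-\tilde p_*Y_z)$, so it is a pair of pants that is pseudoholomorphic for the domain-dependent family $z\mapsto J_z$. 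Definition~\ref{fibrationsacsspants}(iii) gives the behaviour on the cylindrical ends: there the map $p$ restricts to the covers $z\mapsto z^{d_i}$ (up to a holomorphic change of coordinate), so the asymptotics of $v$ are the $d_i$-fold covered horizontal sections of $T_{\phi}$ identified in \S\ref{multiplecoverfixedpointfloer}.

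At the $d_0+d_1-1$ branch points of $p$ (the critical point $z=d_0/(d_0+d_1)$, together with the branching at $0$ and $1$ if one compactifies), the definition $J_z$ depends on $z$ in the domain, not on the image point. It is therefore well defined even where $p$ fails to be a local diffeomorphism, since $J_z$ is just $J^v_z$ plus the fixed horizontal lift. This is the reason to use domain-dependent structures.

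For the converse, take a pair of pants $v\colon\mathbb{CP}^1\setminus\lbrace0,1,\infty\rbrace\to E_{\phi}$ that is pseudoholomorphic for a domain-dependent family valued in $\mathcal{J}(E_{\phi})$ and is asymptotic to horizontal sections. Every $J_z$ makes $E_{\phi}\to C$ holomorphic, so $q\coloneqq\pi\circ v$ is a holomorphic map from the thrice-punctured sphere to $\mathbb{C}^*$. Its asymptotics at the three ends force finite winding numbers $d_0,d_1,d_\infty$. I would argue the following:
\begin{itemize}
\item Since $q$ has no zeros or poles in the interior and meromorphic behaviour at the punctures, it extends to a rational function with divisor supported on $\lbrace0,1,\infty\rbrace$.
\item Such a function is $c\,z^{d_0}(z-1)^{d_1}$.
\item Then $d_\infty=d_0+d_1$, consistent with the output lying in $HF(X,\phi^{d_0+d_1})$.
\end{itemize}
The constant $c$ can be absorbed by rotation in $\mathbb{C}^*$, or equivalently in the trivialization of $E_{\phi}$. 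Then $v$ factors through $p^*E_{\phi}=E_{\phi^{d_0},\phi^{d_1}}$ as $u(z)=(z,v(z))$, and $u$ is pseudoholomorphic for the pulled-back $J$. This pulled-back $J$ satisfies Definition~\ref{fibrationsacsspants} precisely when the family $J_z$ is translation invariant on the ends.

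I expect the main obstacle to be this converse step, and specifically showing that $q$ is meromorphic at the punctures rather than essentially singular. I would get this from the finite energy and exponential decay of $v$ towards horizontal sections. Pulled back through the cylindrical ends, that decay means $\log q$ differs from a linear function $d_i(s+it)$ by a bounded, decaying term, which rules out an essential singularity. A secondary point is bookkeeping: the energies and indices have to match under $\tilde p$, so that the identification respects the moduli spaces $\mathcal{M}(E_{\phi^{d_0},\phi^{d_1}},J;B)$. This should follow from $\Omega_{\phi^{d_0},\phi^{d_1}}=\tilde p^*\Omega_{\phi}$.
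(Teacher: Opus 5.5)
Your proposal is correct. The forward direction is the paper's argument: the canonical map $p^*E_{\phi}\to E_{\phi}$ from the pullback, together with the splitting~\eqref{fibrationacssplitting} of $J$ into vertical and horizontal parts.

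For the converse, the routes differ. The paper invokes Riemann--Hurwitz to say that the projection of the pair of pants to $C$ is a multisection with a single branch point. You instead show that $q=\pi\circ v$ extends meromorphically over the punctures. Convergence of the angular part to a $d_i$-fold covered horizontal section bounds $\mathrm{Im}\log(q/z^{d_i})$, which excludes an essential singularity. You then read off $q=c\,z^{d_0}(z-1)^{d_1}$ from the divisor.

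Your version supplies what the paper leaves implicit. Riemann--Hurwitz presupposes exactly this meromorphic extension, and it only counts ramification. The divisor computation gives the factorization through $p$ directly and forces $d_\infty=d_0+d_1$. It also explains the word ``some'' in the statement: the constant $c$ cannot be removed by reparametrizing the domain, but it is absorbed by the translation symmetry of $E_\phi=\mathbb{R}\times T_\phi$ over the cylinder. The paper's route is shorter but takes these points for granted.

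One small correction. You say the pushed-forward $J_z$ lies in $\mathcal{J}(E_\phi)$ ``except for $\mathbb{R}$-invariance,'' but this exception is unnecessary. Only the restriction of $J_z$ to the fiber over $p(z)$ enters the equation at $v(z)$. You can therefore extend $J^v_z$ to an $\mathbb{R}$-invariant, $\phi$-twisted periodic family of compatible vertical structures, plus the horizontal lift of $j$. Then each $J_z$ genuinely lies in $\mathcal{J}(T_\phi,\Theta_\phi)$, as the lemma asserts. Translation invariance of $z\mapsto J_z$ on the ends is a separate condition. You correctly identify it as what is needed for the pulled-back structure to satisfy Definition~\ref{fibrationsacsspants}(iii).
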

\begin{proof}
    The forward direction is trivial, since there is a canonical map $E_{\phi^{d_0},\phi^{d_1}}\to E_{\phi}$ coming from the pullback bundle construction. For the converse, note that any pseudoholomorphic pair of pants asymptotic to the generators $x_i\in\mathcal{H}(T_{\phi^{d_i}},\Theta_{\phi^{d_i}})$, where $i = 0,1$, and $x_{\infty}\in\mathcal{H}(T_{\phi^{d_0+d_1}},\Theta_{\phi^{d_0+d_1}})$, thought of as multiple covers of sections in $\mathcal{H}(T_{\phi},\Theta_{\phi})$, must be a multisection of $C$ with a single branch point, by the Riemann--Hurwitz theorem. The correspondence between domain-dependent almost complex structures and almost complex structures on $E_{\phi^{d_0},\phi^{d_1}}$ follows from the same considerations outlined in \S{\ref{multiplecoverfixedpointfloer}}.

    In more detail, any almost complex structure on $E_{\phi^{d_0},\phi^{d_1}}$ splits as in~\eqref{fibrationacssplitting}, giving us a family of almost complex structures on the fibers. Thus when we project a section of this bundle to $E_{\phi}$, we can use~\eqref{fibrationacssplitting} again to obtain a domain-dependent almost complex structure valued in $\mathcal{J}(T_{\phi},\Theta_{\phi})$ on the pair of pants.
\end{proof}

\section{Systems of adapted tubular neighborhoods}\label{adaptedtubularneighborhoodssection}
As explained in \S{\ref{mainsection}}, we use the well-known results of~\cite{MTZ} to organize our computations of Floer-theoretic invariants. The results allow us to construct a system of tubular neighborhoods for the strata of the central fiber of an snc model $\mathscr{X}\to\mathbb{D}$ for $\mathcal{X}^{\circ}\to\mathbb{D}^{\circ}$ (cf. Definition~\ref{Omega-regularization-definition} below.) We also make the elementary observation in Lemma~\ref{orbitsinstandardneighborhoods} that, using the regularized symplectic forms~\eqref{standardlocalformtubularnbhd} on these tubular neighborhoods, the symplectic monodromy~\eqref{symplecticmonodromyrep} can be made to agree with fiberwise rotations in these tubular neighborhoods. In this section we also explain how to perturb the symplectic monodromy to make it nondegenerate in a controlled way, and we define a local version of fixed point Floer cohomology. This section and the subsequent one follow~\cite{GP2}.

\subsection{Symplectic regularizations in snc models}\label{symplectic-regularization-section} Given a family $\mathcal{X}^{\circ}\to\mathbb{D}^{\circ}$ of Calabi--Yau manifolds of the type considered in \S{\ref{mainsection}}, fix a semistable degeneration $\pi_{\mathscr{X}}\colon\mathscr{X}\to\mathbb{D}$ with an expression~\eqref{KXexpression}. Let $\mathbf{D}\coloneqq\mathscr{X}_0 = \pi^{-1}_{\mathscr{X}}(0)$ denote the special fiber of this family. We write $\mathbf{D}$ as the union of its smooth irreducible components
\begin{align*}
    \mathbf{D}\coloneqq\bigcup_{i\in R}D_i
\end{align*}
where $R$ is a finite index set. 

Choose a relatively ample line bundle $\mathscr{E}\to\mathscr{X}$ whose restriction to $\mathcal{X}^{\circ}$ is
\begin{align*}
    \mathscr{E}\mid_{\mathcal{X}^{\circ}} = \mathcal{E} \,,
\end{align*}
the polarization~\eqref{polarizationassumption} defining the symplectic form on $\mathcal{X}^{\circ}$. This line bundle similarly determines a symplectic form $\Omega$ on $\mathscr{X}$, which restricts to the given symplectic form on $\mathcal{X}^{\circ}$. More precisely, using a relative version of the Kodaira embedding theorem (e.g.~\cite[29.38.4]{StacksProject}), some power of $\mathscr{E}$ determines an embedding of $\mathscr{X}$ into $\mathbb{CP}^N\times\mathbb{D}$ for some large $N>0$, and we can take $\Omega$ to be the pullback of the Fubini--Study form on $\mathbb{CP}^N$ plus a large multiple of the standard symplectic form on the disk. Note that by Proposition~\ref{gsresolutionmodel}, we can take $\mathscr{X}$ to be projective over $\mathbb{D}$.

In several places below, see Section \S{\ref{relfuksection}}, we will need to make use of stabilizing \textit{horizontal divisors} to achieve transversality for various moduli spaces of pseudoholomorphic curves using the method of~\cite{CieliebakMohnke}.
\begin{definition}
    A \textit{stabilizing divisor} for $(\mathscr{X},\mathbf{D})$ is a relatively ample divisor
    \begin{align*}
        \mathbf{E}\coloneqq\bigcup_{q\in Q}E_q\subset\mathscr{X}
    \end{align*}
    where $Q$ is a finite index set, such that the \textit{full divisor}
    \begin{align}\label{fulldivisors}
        \mathbf{V}\coloneqq\mathbf{D}\cup\mathbf{E}
    \end{align}
    is also a simple normal crossings divisor. Letting $S = R\sqcup Q$, we will sometimes write
    \begin{align}
        \mathbf{V}\coloneqq\bigcup_{i\in S}V_i = \bigcup_{i\in R}D_i\cup\bigcup_{q\in Q}E_q
    \end{align}
    as the union of its irreducible components. Furthermore, we demand that each $E_q$ is relatively ample and that $\pi\mid_{E_q}\colon E_q\to\mathbb{D}$ is a submersion for all $q\in Q$.
\end{definition}
In practice, we will always take the components $E_q$ of $\mathbf{E}$ to be of the form $s_q^{-1}(0)\subset\mathscr{X}$, where $s_q\in\Gamma(\mathscr{E})$ is a generic section of the relatively ample line bundle $\mathscr{E}\to\mathscr{X}$.

We will construct an $\Omega$-\textit{regularization} of $\mathbf{V}$ using the results of~\cite[Theorem 2.13]{MTZ}. Although we are primarily interested in using the regularization to determine the fixed points of some representative $\phi$ of the symplectic monodromy (cf.~\eqref{symplecticmonodromyrep}), which we could in principle achieve with a regularization of $\mathbf{D}$ alone, we also need this representative of $\phi$ to preserve each component of $\mathbf{E}$. This can be achieved by regularizing the full divisor $\mathbf{V}$.

\begin{definition}
    The \textit{divisorial stratification} of $\mathbf{V}$ has strata which are indexed by subets $I\subset S$. Define
\begin{align}
V_I\coloneqq\bigcap_{i\in I}V_i \,.
\end{align}
We refer to the open strata of the stratification as
\begin{align}
\overset{\circ}{V_I} = V_I\setminus\bigcup_{i\not\in I}V_i
\end{align}
for any nonempty $I$.
\end{definition}

There are standard local models for symplectic forms near a stratum $V_I$ with $I\subset S$. We will first describe these neighborhoods for an irreducible component $V_i$ of $\mathbf{D}$ following~\cite{GP2, MTZ, McLeanLogCanonical}. Let $\pi: N_{\mathscr{X}} V_i = N\to V_i$ denote the normal bundle of $V_i$ in $\mathscr{X}$, which is a complex line bundle. Call $(\rho,\nabla)$ a \textit{Hermitian structure} on $\pi\colon N\to V_i$ if $\rho$ is a Hermitian metric on $N$, and $\nabla$ is a $\rho$-compatible connection on $N$. Let $j$ denote the complex structure on $N$, and say that a $(\rho,\nabla)$ is \textit{compatible} with a symplectic structure $\Omega$ on $N$ if $\Omega(\cdot,j\cdot) = \operatorname{Re}\rho(\cdot,\cdot)$ as usual. The Hermitian structure induces a splitting
\begin{align*}
    TN\cong TN^v\oplus TN^h
\end{align*}
into vertical and horizontal components, where $TN^v = \ker(D\pi)$. There is a standard angular $1$-form $d\varphi\in\Gamma(T^*(N\setminus V_i)^v)$ on the vertical tangent space. We lift this to a connection $1$-form $\theta_e\in\Omega^1(N\setminus V_i)$ by requiring that for each point $p\in N\setminus V_i$,
\begin{align}
\theta_e\mid_{TN_p^v} &= d\varphi \\
\theta_e\mid_{TN_p^h} &= 0 \,.
\end{align}
Letting $\Omega_{V_i}$ denote the symplectic form on $V_i$ obtained by restricting $\Omega$ on $\mathscr{X}$, we can define a $2$-form
\begin{align}\label{standardneighborhoodsymplecticform}
\Omega_{(\rho,\nabla)}\coloneqq\pi^*\Omega_{V_i}+\frac{1}{2}d(\rho\theta_e)
\end{align}
which is symplectic near $V_i$.

If $V_I$ is a stratum of $\mathbf{V}$ of higher codimension, then given a collection of Hermitian line bundles $\lbrace N_i = (N_i,\rho_i,\nabla_i)\rbrace_{i\in I}$ where $N_i = N_{\mathscr{X}} V_i |_{V_I}$, we obtain connection $1$-forms $\lbrace\theta_{e,i}\rbrace_{i\in J}$, and we can define a $2$-form on $\bigoplus_{i\in I}N_i$ which is symplectic near $V_I$ by
\begin{align}\label{standardlocalformtubularnbhd}
    \Omega_{\lbrace(\rho_i,\nabla_i)\rbrace_{i\in I}}\coloneqq\pi^*\Omega_{V_I}+\frac{1}{2}\sum_{i\in I}\pi_{I,{i'}}^*d(\rho_i\theta_{e,{i'}})
\end{align}
where $\Omega_{V_I} = \Omega\mid_{V_I}$ and $\pi_{I,{i'}}\colon\bigoplus_{i\in I}N_i\to N_{i'}$ is the projection map.

A \textit{regularization} of $V_I$, as defined in~\cite[Definition 2.8]{MTZ}, is a tubular neighborhood $\psi_I\colon U_I\to\mathscr{X}$, where $U_I\subset N_{\mathscr{X}}V_I$ is a neighborhood of the zero section in the normal bundle to $V_I$, such that the normal component
\begin{align*}
    D\psi_I\mid_{V_I}\colon N_{NV_I}V_I = N_{\mathscr{X}}V_I\to N_{\mathscr{X}}V_I
\end{align*}
is the identity on $N_{\mathscr{X}}V_I$. Following~\cite[Definition 2.10]{MTZ}, we say that a \textit{system of regularizations} is a collection $\lbrace\psi_I\colon U_I\to\mathscr{X}\mid I\subset S\rbrace$ of regularizations, where $\psi_I$ is a regularization of $V_I$, such that
\begin{align}
    \psi_I((N_{V_I'}V_I)\cap U_I) = V_{I'}\cap\psi_I(U_I)
\end{align}
for all $I'\subset I\subset S$ (cf.~\cite[(2.14)]{GP2}.)

Given a system of regularizations, one can ask them to be suitably compatible with passing to tubular neighborhoods of different strata. To explain this, we need some preliminary notation. There is a canonical identification of total spaces (but not of bundles)
\begin{align*}
    N_{\mathscr{X}}V_I = \pi_{I;I'}^*(N_{V_I'}V_I)
\end{align*}
for each $I'\subset I$, where
\begin{align*}
    \pi_{I;I'}\colon N_{V_{I\setminus I'}}V_I\to V_I
\end{align*}
is the projection map. There are natural maps of normal bundles
\begin{align*}
    \mathfrak{D}\psi_{I;I'}\colon\pi^*_{I;I'}(N_{V_I'}V_I)\mid_{\psi^{-1}_I(V_I')}\to N_{\mathscr{X}}V_I'\mid_{V_I'\cap\psi_I(U_I)}
\end{align*}
defined in terms of $d\psi_I$ and other natural quotients and inclusions of tangent and normal bundles (see~\cite[(2.15)]{MTZ} or~\cite[(5-7)]{McLeanLogCanonical} for the precise definition.) The map $\mathfrak{D}\psi_{I;I'}$ identifies the normal bundle of $N_{V_{I\setminus I'}}V_I$ inside $N_{\mathscr{X}}V_I$ near the zero section with the normal bundle $N_{\mathscr{X}}V_{I'}$ near $V_I$ using the derivative of $\psi_I$. We say that a \textit{regularization} for $\mathbf{V}$ is a system of regularizations $\lbrace\psi_I\colon U_I\to \mathscr{X}\mid I\subset S\rbrace$ such that
\begin{align*}
    \mathfrak{D}\psi_{I;I'}(U_I) = U_{I'}\mid_{V_{I'}\cap\psi_I(U_I)}
\end{align*}
and
\begin{align*}
    \psi_I = \psi_{I'}\circ\mathfrak{D}\psi_{I;I'}\mid_{U_I}
\end{align*}
We can now describe the systems of tubular neighborhoods which we will need.

\begin{definition}[{\cite[Definition 2.12]{MTZ} and~\cite[Definition 2.2]{GP2}}]\label{Omega-regularization-definition}
    Let $\Omega$ denote the symplectic form on the total space $\mathscr{X}$. An $\Omega$-regularization\footnote{Some authors, e.g.~\cite{Pomerleano}, refer to $\Omega$-regularizations simply as regularizations, but we prefer to distiniguish between the symplectic and topological notions.}
    \begin{align*}
        \mathfrak{R} = (\psi_I,(\rho_{I,i},\nabla_{I,i})_{i\in I}))_{I\subset S}
    \end{align*}
    of $\mathbf{V}$ is a regularization $\lbrace\psi_I\colon U_I\to \mathscr{X}\mid I\subset S\rbrace$ of $\mathbf{V}$, together with a collection of Hermitian structures $\lbrace(\rho_{I,i},\nabla_{I,i})\rbrace_{i\in I}$ on the normal bundles
    \begin{align*}
        N_{\mathscr{X}}V_i\mid_{V_I} = N_{V_{I\setminus\lbrace i\rbrace}}V_I
    \end{align*}
    such that $\psi_I^*\Omega = \Omega_{\lbrace(\rho_{I,i},\nabla_{I,i})\rbrace_{i\in I}}$, and such that the maps $\mathfrak{D}\psi_{I;I'}$ are be products of Hermitian isomorphisms.
\end{definition}
\begin{theorem}[{\cite[Theorem 2.13]{MTZ}}]\label{MTZthm}
The symplectic form $\Omega_0 \coloneqq \Omega$ can be deformed through a family of symplectic forms $\lbrace\Omega_t\rbrace_{t\in[0,1]}$ with $[\Omega_t] = [\Omega]\in H^2(\mathscr{X})$ such that $\mathbf{D}$ admits an $\Omega_1$-regularization.
\end{theorem}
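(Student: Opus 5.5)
The plan follows the strategy of \cite{MTZ} (itself building on \cite{McLeanGrowth, SeiBiased}) in two stages. First we deform $\Omega$ so that the components of $\mathbf{D}$ meet $\Omega$-orthogonally. Then we construct the tubular neighborhoods and Hermitian structures inductively over strata, and patch them together using Moser's argument, keeping the cohomology class fixed throughout.

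\emph{Step 1: orthogonal intersections.} Since $\mathscr{X}$ is projective over $\mathbb{D}$ and $\Omega$ is Kähler, the components $D_i$ are complex submanifolds meeting transversally. Hence all intersections are positive, and every $D_I$ is a symplectic submanifold. Following \cite{McLeanGrowth}, we choose local holomorphic coordinates near each stratum in which $\mathbf{D}$ is a union of coordinate hyperplanes. We then interpolate $\Omega$ with the standard product Kähler form in these charts, using cutoff functions and adding $dd^c$ of small compactly supported functions. Tameness of the complex structure is an open condition and $\Omega$ is Kähler, so for small enough cutoff regions the interpolated forms remain symplectic. They also remain cohomologous to $\Omega$, because each change is by an exact form $d\alpha$. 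The result is a form $\Omega_{1/2}$ for which the $D_i$ are pairwise $\Omega_{1/2}$-orthogonal along their intersections.

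\emph{Step 2: regularizations by induction on codimension.} We proceed from the deepest strata (largest $|I|$, which includes the $0$-strata) to the shallowest. For a stratum $V_I$ we choose Hermitian structures $(\rho_{I,i},\nabla_{I,i})$ on $N_{V_{I\setminus\{i\}}}V_I$ extending those already fixed on deeper strata, and we form the model form \eqref{standardlocalformtubularnbhd}. By orthogonality, the symplectic neighborhood theorem applies to the split normal bundle and produces $\psi_I$ with $\psi_I^*\Omega = \Omega_{\{(\rho_{I,i},\nabla_{I,i})\}}$ near the zero section. This uses a relative Moser argument, since the two forms agree along $V_I$ together with their normal data. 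The main obstacle is compatibility: we need $\psi_I = \psi_{I'}\circ\mathfrak{D}\psi_{I;I'}$, with the $\mathfrak{D}\psi_{I;I'}$ being products of Hermitian isomorphisms. To arrange this, we define $\psi_{I'}$ near $V_I\cap V_{I'}$ to be determined by $\psi_I$ through $\mathfrak{D}\psi_{I;I'}$, and then extend it over the rest of $V_{I'}$. Because the model form on $N V_{I'}$ restricted to the image of $U_I$ coincides with the model form on $NV_I$ (this is where the product Hermitian structure is essential), the extension only needs a Moser isotopy that is relative to a closed set where the forms already agree. We would carry this out with the shrinking-neighborhoods bookkeeping of \cite[\S 2]{MTZ}, passing at each step to smaller open sets.

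\emph{Step 3: assembly.} The Moser isotopies in Step 2 change the form only by exact forms supported near $\mathbf{D}$. Concatenating the deformation of Step 1 with the deformations of Step 2 gives a path $\{\Omega_t\}_{t\in[0,1]}$ of symplectic forms, all in the class $[\Omega]$, such that $\mathbf{D}$ admits an $\Omega_1$-regularization. The same argument applies verbatim to the full divisor $\mathbf{V}$.
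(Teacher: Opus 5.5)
The paper gives no argument for this statement; it is imported from \cite{MTZ}. So your sketch has to stand on its own, and it breaks at the crux of Step 2.

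\textbf{The Moser step does not preserve the divisors.} A regularization must satisfy $\psi_I\bigl((N_{V_{I'}}V_I)\cap U_I\bigr)=V_{I'}\cap\psi_I(U_I)$. So your chart near $V_I$ has to carry each coordinate subbundle $W_j=\{v_j=0\}\subset\bigoplus_{i\in I}N_i$ into $V_j$, for every $j\in I$. This is already an issue at the deepest strata: at a $0$-stratum you need a Darboux chart in which all the $D_j$ are coordinate hyperplanes, before any compatibility between different $I$ arises.

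\begin{itemize}
\item Agreement of the two forms on $T(NV_I)|_{V_I}$ (``along $V_I$ together with their normal data'') only yields a Moser flow fixing the zero section. If $\omega_1-\omega_0=d\alpha$ and $\iota_{X_t}\omega_t=-\alpha$, then $X_t$ is tangent to $W_j$ if and only if $\alpha$ annihilates $(TW_j)^{\omega_t}$ along $W_j$. Nothing in your setup arranges that.
\item The natural way to get it is to have the two forms agree on $T(NV_I)$ along the whole of each $W_j$. Then the radial primitive vanishes on $\bigcup_j W_j$ and the flow fixes these subbundles pointwise.
\item That amounts to two conditions: $\psi_I|_{W_j}$ is already an $\Omega|_{V_j}$-regularization of $V_I$ inside $V_j$; and $d\psi_I$ in the $N_j$-fiber direction along $W_j$ maps area-preservingly onto the $\Omega$-orthogonal complement of $TV_j$.
\item Orthogonality along the intersections (Step 1) gives none of this away from $V_I$. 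Your deep-to-shallow induction controls $\Omega$ near deeper strata, not along the shallower $V_j\supset V_I$. The ``Moser isotopy relative to a closed set'' only handles matching with $\psi_I$ near $V_I\cap V_{I'}$, not preservation of the divisors.
\end{itemize}

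What is missing is a second induction. First solve the problem for the induced divisors $\{V_{I'\cup\{j\}}\}_j$ inside each stratum $V_{I'}$, that is, on the dimension of the ambient stratum. Then normalize the first-order normal data of every $V_j$, consistently on overlaps. This bookkeeping is the substantive part of \cite{MTZ}; your sketch replaces it with the ordinary symplectic neighborhood theorem, which does not respect the divisors.

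\textbf{Two smaller points.}

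\emph{Step 1.} Your justification via openness of tameness is wrong. Making the $D_i$ orthogonal changes $\Omega$ at points of $D_I$ by an amount that does not shrink with the support. For a cutoff at scale $\epsilon$ the term $d\chi\wedge\alpha$ is $O(1)$, since $|d\chi|\sim\epsilon^{-1}$ and $|\alpha|\sim\epsilon$; the same holds for the cross terms in $dd^c(\chi f)$ with $f=O(r^2)$. Adapted holomorphic coordinates also exist only near points, not along positive-dimensional strata. The step is repairable:
\begin{itemize}
\item At $x\in D_I$, set $N_i=T_xD_{I\setminus\{i\}}\cap(T_xD_I)^{\Omega}$. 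The form $\Omega|_{T_xD_I}\oplus\bigoplus_{i\in I}\Omega|_{N_i}$ is positive $(1,1)$ and makes the $D_i$ orthogonal, so a convex interpolation stays nondegenerate.
\item Realize the change as $d(\chi\beta)$ with $\beta|_{D_I}=0$, using a logarithmic cutoff so that $|d\chi\wedge\beta|=O(1/|\log\epsilon|)$.
\item Proceed inductively over strata, as in McLean's lemma.
\end{itemize}

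\emph{Step 3.} A Moser isotopy changes the chart, not the form, so Step 2 contributes no deformation of $\Omega$. You could instead let Step 2 deform $\Omega$, interpolating to the model form \eqref{standardlocalformtubularnbhd} with a primitive vanishing to second order along $V_I$. That route needs the same first-order agreement along the $W_j$, so that successive interpolations near different strata do not undo each other.
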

We will identify the tubular neighborhoods $U_I$ with their images in $\mathscr{X}$ without referring to the regularizations. By shrinking the neighborhoods $U_I$ if necessary, we can assume that $U_I = \bigcap_{i\in I}U_i$. For a disk $\mathbb{D}_{\epsilon}\subset\mathbb{D}$ of sufficiently small radius $\epsilon>0$, the restriction of $\mathscr{X}$ to $\mathbb{D}_{\epsilon}$ is contained in the union of regularized neighborhoods $U_I$ for the strata $D_I$ of the central fiber, i.e.,
\begin{align}\label{smallrestrictedfamily}
    \mathscr{X}_{\mathbb{D}_{\epsilon}}\subset\bigcup_{I\subset R} U_I.
\end{align}

Of course, if we just replace $\Omega$ with the regularized symplectic form $\Omega_1$, it may not determine a symplectic fibration in the sense of~\cite[Definition 7.1]{SeiThesis}, as the family of symplectic forms $(\mathcal{X}^{\circ},\Omega_1\mid_{(T\mathcal{X}^{\circ})^v})$ may not be locally trivial, where the vertical bundle $(T\mathcal{X}^{\circ})^v = \ker(d\pi)$ is defined with respect to the original projection $\pi\colon \mathcal{X}^{\circ}\to\mathbb{D}^{\circ}$. This would mean that we would not be able to parallel transport along curves in $\mathbb{D}^{\circ}$ (cf.~\cite[Lemma 7.2]{SeiThesis}.) To correct for this, we borrow a result from~\cite{McLeanLogCanonical}, which allows us to define a small perturbation of $\pi$ with respect to which $\Omega_1$ determines a symplectic fibration.

From a regularization $\psi_i\colon U_i\to\mathscr{X}$ for $D_i$, we construct, a line bundle
\[ \mathcal{O}_{\mathscr{X}}(D_i)\coloneqq(\pi^*_{N_{\mathscr{X}}D_i}N_{\mathscr{X}}D_i)\mid_{U_i}\sqcup((\mathscr{X}\setminus D_i)\times\mathbb{C})/\sim\]
where, for 
\begin{align*}
    (v,cv) &\in (\pi^*_{N_{\mathscr{X}}D_i}N_{\mathscr{X}}D_i)\mid_{U_i} \,, \\
    (\psi_i(v),c) &\in (\mathscr{X}\setminus D_i)\times\mathbb{C} \,,
\end{align*}
we set $(v,cv)\sim(\psi_i(v),c)$ for all $v\in N_{\mathscr{X}}D_i\setminus D_i$ and $c\in\mathbb{C}$. The projection map
\[ \pi_{\mathcal{O}_{\mathscr{X}}(D_i)}\colon \mathcal{O}_{\mathscr{X}}(D_i)\to\mathscr{X}\]
is defined by setting
\begin{align*}
    \pi_{\mathcal{O}_{\mathscr{X}}(D_i)}\mid_{\pi^*_{N_{\mathscr{X}}D_i}N_{\mathscr{X}}D_i}(v,w) &= \psi_i(v) \\
    \pi_{\mathcal{O}_{\mathscr{X}}(D_i)}\mid_{(\mathscr{X}\setminus D_i)\times\mathbb{C}}(x,c) &= x \,.
\end{align*}
There is a canonical section $\sigma_{D_i}\colon\mathscr{X}\to\mathcal{O}_{\mathscr{X}}(D_i)$ of this bundle given by
\begin{align*}
    \sigma_{D_i}(x)\coloneqq
    \begin{cases}
        (\psi_i^{-1}(x),\psi_i^{-1}(x)) & \text{ if }x\in\psi_i(U) \\
        (x,1) & \text{ if } x\in\mathscr{X}\setminus D_i \,.
    \end{cases}
\end{align*}
We also set $\mathcal{O}_{\mathscr{X}}(0) = \mathcal{O}_{\mathscr{X}}(\emptyset)$ to be the trivial line bundle $\mathscr{X}\times\mathbb{C}$. This construction generalizes the line bundle associated to a (Cartier) divisor, justifying the notation. Note that $\mathcal{O}_{\mathscr{X}}(D_i)$ depends on the choice of regularization.

Define the line bundle
\begin{align}\label{semistablelinebundletrivial}
    \mathcal{O}_{\mathscr{X}}(\sum_{i\in R} D_i)\coloneqq\bigotimes_{i\in R}\mathcal{O}_{\mathscr{X}}(D_i) \,.
\end{align}
This line bundle also comes with a canonical section
\begin{align*}
    \sigma\coloneqq\bigotimes_{i\in R}\sigma_{D_i} \,.
\end{align*}
The construction of $\mathcal{O}_{\mathscr{X}}(D_i)$ shows that we have a canonical identification
\begin{align}\label{structuresheafnormalDIidentity}
    \mathcal{O}_{\mathscr{X}}(D_i)\mid_{D_I} = N_{\mathscr{X}}D_i\mid_{D_{I}}
\end{align}
for all $i\in I\subset R$, and combining this with the canonical sections above gives us maps
\begin{align*}
    \Pi_{D_i;I}\colon N_{\mathscr{X}}D_I = \bigoplus_{j\in I}N_{\mathscr{X}}D_j\mid_{D_{I}} &\to\mathcal{O}_{\mathscr{X}}(D_i)\mid_{D_I}  \\
    (v_j)_{j\in I}\mapsto\begin{cases}
        v_i \text{ if }i\in I \\
        \sigma_{D_i}(x)\text{ if } i\not\in I
    \end{cases}
\end{align*}
where $(v_j)_{j\in I}\in N_{\mathscr{X}}D_I\mid_x$ for $x\in D_I$. Taking the tensor product of these maps, we get a map of disk bundles
\begin{align*}
    \Pi_{I}\coloneqq\bigotimes_{i\in R}\Pi_{D_i;I}\colon N_{\mathscr{X}}D_I\to\mathcal{O}_{\mathscr{X}}(\sum D_i)\mid_{V_i} \,.
\end{align*}

Since our divisor $\mathbf{D}$ is the special fiber of a semistable degeneration $\pi\colon\mathscr{X}\to\mathbb{D}$, it follows that the line bundle~\eqref{semistablelinebundletrivial} is trivial, with a trivialization denoted by
\begin{align}\label{semistabletrivialization}
    (\pi_{ \mathcal{O}_{\mathscr{X}}(\sum_{i\in S} D_i)},\tau)\colon  \mathcal{O}_{\mathscr{X}}(\sum_{i\in R} D_i)\to \mathscr{X}\times\mathbb{C} \,.
\end{align}
To simplify notation, we will just refer to this trivialization by its $\mathbb{C}$-component $\tau$. Note that the composition $\tau\circ\sigma$ is germ-equivalent to $\pi_{\mathscr{X}}\colon\mathscr{X}\to\mathbb{D}$ near the central fiber $\mathbf{D}$. The results of~\cite{McLeanLogCanonical} show that we can homotope this trivialization to one compatible with the regularization $\mathfrak{R}_{\mathbf{D}}$ obtained from Theorem~\ref{MTZthm}, from which we will obtain our regularized symplectic fibration.

For any $i\in R$, let 
\begin{align}\label{smallradiusregularizednbhds}
    U_{i,r}\subset U_i
\end{align}
be the disk bundle of radius $r>0$ in the normal bundle $U_i\to D_i$, with respect to the metric on $U_i$ induced by the Hermitian structure. Slightly abusing notation, we let
\begin{align}
    \rho_i\colon U_i\to\mathbb{R}
\end{align}
denote the function whose value is the norm-squared distance to the zero section. Also set
\begin{align}
    U_{I,r}\coloneqq\bigcap_{i\in I}U_{i,r} \,.
\end{align}
For a constant $C>0$, choose a smooth function $a\colon[0,\infty)\to[0,\infty)$ such that
\begin{itemize}
    \item[(i)] $a'(x)>0$ if $x\in [0, 3C/4)$;
    \item[(ii)] $a(x) = x$ if $x\leq C/4$, and;
    \item[(iii)] $a(x) = 1$ if $x\geq 3C/4$.
\end{itemize}
Let $U\subset\mathscr{X}$ be an open set. We say that a trivialization
\begin{align*}
        (\pi_{ \mathcal{O}_{\mathscr{X}}(\sum_{i\in S}D_i)},\tau_{reg})\colon  \mathcal{O}_{\mathscr{X}}(\sum_{i\in S}D_i)\to \mathscr{X}\times\mathbb{C} \,.
\end{align*}
of the line bundle determined by a regularization $\mathfrak{R}_{\mathbf{D}}$ of $\mathbf{D}$ is (radius $C$) compatible with the regularization along $U\cap D_I$ if
\begin{align}\label{compatiblewithregularizationdefinitingequation}
    \tau_{reg}(\sigma(x)) = \tau_{reg}(\Pi_{I}(\psi^{-1}_I(x)))\cdot\prod_{i\in I}\left(\frac{\sqrt{a(\rho_i(x))}}{\sqrt{\rho_i(x)}}\right)
\end{align}
for all $x\in U_{I,C}\cap(\psi_I(U_I)\mid_{(V_j\cap U)\setminus\cup_{i\in R\setminus I}U_{i,3C/4}})$, and if the norm of the map 
\begin{align*}
    \tau_{reg}\colon\bigotimes_{i\in I}N_{\mathscr{X}}D_i\mid_{x}\to\mathbb{C}
\end{align*}
restricted to the fiber of ${\bigotimes_{i\in I}N_{\mathscr{X}}D_i}$ over $x$ (using the identification~\eqref{structuresheafnormalDIidentity}) is $1$ for all $x\in D_I\cap U\setminus\bigcup_{i\in R\setminus I}U_{i, 3C/4}$. If the trivialization is (radius $C$) compatible with the regularization along $U\cap D_I$ for each $I\subset R$, we say that it is (radius $C$) \textit{compatible with the regularization} $\mathfrak{R}_{\mathbf{D}}$.
\begin{remark}
    Compatibility with $\mathfrak{R}_{\mathbf{D}}$ says that the canonical section $\sigma$ looks approximately like
    \begin{align*}
        (x_1,\ldots,x_{n+1})\mapsto \prod_{i = 1}^k \frac{a(|x_i|)}{|x_i|}x_i
    \end{align*}
    in a local chart where $D_I$ is cut out by $x_1,\ldots,x_k$ (cf.~\eqref{semistablelocaldescription}.)
\end{remark}
\begin{lemma}[{\cite[Lemma 5.10]{McLeanLogCanonical}}]\label{homotopyoftrivializations}
    Let $U\subset\mathscr{X}$ be a relatively compact open set. Then the trivialization $\tau$ of $\mathcal{O}_{\mathscr{X}}(\sum_i D_i)$ determined by the semistable degeneration $\mathscr{X}\to\mathbb{D}$ is homotopic to a trivialization $\tau_{reg}$ compatible with a given regularization as in Theorem~\ref{MTZthm} along $U$ through trivializations of $\mathcal{O}_{\mathscr{X}}(\sum_i D_i)$.
\end{lemma}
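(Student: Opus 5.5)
The plan is to work with ratios of trivializations rather than with trivializations themselves. Since $\mathcal{O}_{\mathscr{X}}(\sum_i D_i)$ is trivial, any other trivialization has the form $\tau_{reg}=g\cdot\tau$ for a smooth function $g\colon\mathscr{X}\to\mathbb{C}^*$. Two such trivializations are homotopic through trivializations exactly when $g$ is homotopic to the constant map $1$ through maps into $\mathbb{C}^*$. So it suffices to build $g$ on a neighborhood of $\overline{U}$ satisfying two conditions: (a) $g\cdot\tau$ satisfies \eqref{compatiblewithregularizationdefinitingequation} and the unit-norm condition along $U\cap D_I$ for every $I\subset R$; (b) $g=e^{h}$ for a globally defined smooth $h$. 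Condition (b) gives the homotopy $\tau_s=e^{sh}\tau$, $s\in[0,1]$. Outside a neighborhood of $\overline{U}$ we cut $h$ off to $0$, which is harmless since compatibility is only demanded along $U$. Relative compactness of $U$ is what lets us use finitely many adapted charts and uniform constants.

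To get the local candidate near a stratum $D_I$, consider the two nonvanishing functions
\[ x\mapsto \tau(\sigma(x)) \quad\text{and}\quad x\mapsto \tau(\Pi_I(\psi_I^{-1}(x)))\prod_{i\in I}\rho_i(x)^{-1/2}\,, \]
defined on $U_{I,C}\setminus\bigcup_{i\in I}D_i$. The regularization $\psi_I$ has normal derivative equal to the identity along $D_I$, and $\Pi_I$ is built from the canonical sections $\sigma_{D_i}$. Hence in an adapted chart both $\tau\circ\sigma$ and $\tau\circ\Pi_I\circ\psi_I^{-1}$ are the product $x_1\cdots x_k$ times a unit. Their ratio $q_I\coloneqq \tau(\Pi_I(\psi_I^{-1}(x)))/\tau(\sigma(x))$ therefore extends smoothly and nonvanishingly across $D_I$, and it restricts to the identity comparison \eqref{structuresheafnormalDIidentity} on $D_I$. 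After shrinking $C$, $q_I$ is $C^0$-close to a function pulled back from $D_I$. It thus has a well-defined logarithm on $U_{I,C}$ whenever its restriction to $D_I\cap\overline{U}$ does. I would get this logarithm by first rescaling $\tau$ along $D_I$ by a positive function so that the norm on $\bigotimes_{i\in I}N_{\mathscr{X}}D_i$ is $1$. The phase on $D_I$ is not fixed by the compatibility condition itself, only by the equation, so any continuous choice of branch on the connected strata works, provided we also fix the phase of $\tau$ on $\bigotimes_i N D_i|_{D_I}$ to match. The factor $\prod_i\sqrt{a(\rho_i)}/\sqrt{\rho_i}$ is positive real, so its logarithm is globally defined.

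To assemble these, I would induct on the strata by decreasing codimension, following McLean. Start with the deepest strata, where we set $h=\log q_I+\sum_{i\in I}\tfrac12\log(a(\rho_i)/\rho_i)$ on $U_{I,C}$ minus the shells $U_{i,3C/4}$ required by the definition. Then move to strata of one lower codimension: on the overlaps already handled, the compatibilities $\psi_I=\psi_{I'}\circ\mathfrak{D}\psi_{I;I'}$ and the Hermitian-product structure of $\mathfrak{D}\psi_{I;I'}$ should force the candidate for $I'$ to agree with the one already defined for $I$. This agreement lets us extend $h$ by a cutoff interpolation supported where neither compatibility equation is imposed, namely the annular regions between $C/4$ and $3C/4$ and away from $U$. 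Since we only ever interpolate between logarithms, the result remains of the form $e^h$.

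I expect the main obstacle to be exactly this consistency on overlaps. One has to check that the region where \eqref{compatiblewithregularizationdefinitingequation} is required for $I$ and for $I'\subset I$ produce the same $h$. This amounts to unwinding the definition of $\Pi_I$ versus $\Pi_{I'}\circ\mathfrak{D}\psi_{I;I'}$, together with the choice of radii, so that the excised sets $\bigcup_{i\notin I}U_{i,3C/4}$ leave room for the cutoffs. If the exact agreement fails, I would fall back on shrinking the radii stratum by stratum, as in the proof of Theorem~\ref{MTZthm}, so that the overlaps where both conditions are required become empty.
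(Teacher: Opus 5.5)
The paper gives no argument here beyond quoting McLean. Your reduction to finding $h$ with $\tau_{reg}=e^{h}\tau$ is sound, and so is your local analysis. With $g=e^{h}$, the $I$-equation \eqref{compatiblewithregularizationdefinitingequation} reads $g(x)=g(\pi_I(x))\,q_I(x)\prod_{i\in I}\sqrt{a(\rho_i)/\rho_i}$, and $q_I$ is smooth, nonvanishing and $\equiv 1$ on $D_I$, so it has a logarithm on $U_{I,C}$. (The homotopy clause is in fact automatic: $\mathscr{X}$ is simply connected, as the paper uses elsewhere, so every map $\mathscr{X}\to\mathbb{C}^*$ has a logarithm. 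The real content is the existence of one compatible trivialization.)

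The gap is in the assembly step, and what you say there is not correct. The $I$-equation is imposed on the \emph{whole} radius-$C$ disc bundle over $(D_I\cap U)\setminus\bigcup_{j\notin I}U_{j,3C/4}$. That includes the annuli between $C/4$ and $3C/4$ in the directions $i\in I$, where $a$ only modifies the radial factor; the excision concerns only $j\notin I$. Hence the regions attached to the various $I$ cover a full neighbourhood of $\mathbf{D}$ over $U$, and they overlap exactly in the shells $U_{j,C}\setminus U_{j,3C/4}$. This has three consequences:
\begin{itemize}
\item There is no annulus where ``neither equation is imposed'', so a cutoff interpolation of $h$ there destroys compatibility.
\item Shrinking radii cannot remove these overlaps, because they are built into the definition with a single constant $C$.
\item The phase on $D_I$ is not free. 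Letting $\rho_i\to0$ for $i\in I$ in the $(I\cup\{j\})$-equation shows that $g|_{D_I}$ is forced on $D_I\cap U_{I\cup\{j\},C}$ by $g|_{D_{I\cup\{j\}}}$, and $g$ on $U_{I,C}$ is in turn forced by $g|_{D_I}$.
\end{itemize}

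The missing idea is that the only genuine freedom is the value of $h|_{D_I}$ on the part of $D_I$ away from the radius-$C$ neighbourhoods of deeper strata. So the downward induction has to be run on the restrictions $h|_{D_I}$, as follows.
\begin{itemize}
\item On $D_I\cap U_{I\cup\{j\},C}$, take the forced value. Check that on the shell $U_{j,C}\setminus U_{j,3C/4}$ it already has the modulus demanded by the $I$-norm condition. This is exactly what $a\equiv1$ on $[3C/4,\infty)$ is for: the factor $1/\sqrt{\rho_j}$ cancels the size of $\sigma_{D_j}$, together with the maps $\mathfrak{D}\psi_{I;I'}$ being Hermitian.
\item Interpolate only $\operatorname{Im}h|_{D_I}$, in the base just outside those neighbourhoods, and fix $\operatorname{Re}h|_{D_I}$ by the norm condition.
\item Define $h$ on $U_{I,C}$ by the $I$-equation.
\end{itemize}
Agreement with the deeper definitions on $U_{I,C}\cap U_{J,C}$ for $I\subset J$ then reduces to three facts: $\pi_J\circ\pi_I=\pi_J$; $\rho_j\circ\pi_I=\rho_j$ for $j\in J\setminus I$; and the cocycle identity $q_J(x)=q_I(x)\,q_J(\pi_I(x))$. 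All three follow from $\psi_J=\psi_I\circ\mathfrak{D}\psi_{J;I}$. Until the freedom is located correctly and these identities are verified, the proposal does not yet produce a compatible trivialization.
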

All of the fibers $\pi_{reg}\coloneqq\tau_{reg}\circ\sigma\colon\mathscr{X}\to\mathbb{C}$ over $t\in\mathbb{C}$ with $|t|\neq0$ sufficiently small will be symplectic manifolds by the proof of Lemma 5.32 in~\cite{McLeanLogCanonical}. By taking $\epsilon$ to be sufficiently small, we can also assume that $a\colon\mathbb{R}\to\mathbb{R}$ is the identity. Hence we define the \textit{regularized symplectic fibration}
\begin{align}
    \pi_{reg}\colon(\mathscr{X}_{\mathbb{D}_{\epsilon}},\Omega_1)\to\mathbb{D}_{\epsilon} \,.
\end{align}
The discussion on~\cite[p. 1011--1012]{McLeanLogCanonical} shows that $\pi_{reg}$ is a symplectic fiber bundle when restricted to $\mathbb{D}_{\epsilon}\setminus\lbrace 0\rbrace$, by the compatibility of the trivialization $\pi_{reg}$ with the $\Omega$-regularization $\mathfrak{R}$. This is a nearly regular symplectic fibration~\cite[Definition 2.6]{TZsmooth} whose only singular fiber is $\pi_{reg}^{-1}(0) = \pi^{-1}(0) = \mathbf{D}$.

In particular, letting $X_t\coloneqq\pi_{reg}^{-1}(t)$, the monodromy $\phi_{reg}\colon X_t\to X_t$, about a circle of radius $\epsilon' = |t| <\epsilon$ gives a well-defined symplectomorphism of a smooth fiber. Moreover, it follows from Moser's argument that there is an isomorphism of symplectic mapping tori
\begin{align*}
    T_{\phi}\cong T_{\phi_{reg}}
\end{align*}
determined by the homotopy of trivializations in Lemma~\ref{homotopyoftrivializations} (see the proofs of Lemmata 5.34 and 5.36 in~\cite{McLeanLogCanonical}.) This also leads to an identification
\begin{align}\label{identificationofregularpartwithmappingtorus}
    (\mathscr{X}_{\mathbb{D}_{\epsilon}},\Omega_1)\cong(E_{\phi_{reg}},\Omega_{\phi_{reg}})\mid_{(0,\log(\epsilon))\times S^1},
\end{align}
which we fix once and for all.

The next lemma lets us compute the fixed points of the symplectic monodromy $\phi_{reg}$, and is essentially the statement of~\cite[Lemma 2.4]{GP2} adapted to our setting. Consider the Ehresmann connection on $\mathscr{X}_{\mathbb{D}_{\epsilon}}$ defined using the regularized symplectic form $\Omega_1$, with respect to which we define the horizontal lift in $T\mathscr{X}_{\mathbb{D}_{\epsilon}}$ of the (clockwise) rotational vector field $-it\partial_t\in T\mathbb{D}^{\circ}_{\epsilon}$ on the punctured disk. We can extend this vector field, by zero, over the central fiber of $\mathscr{X}_{\mathbb{D}_{\epsilon}}$ to obtain a \textit{Hamiltonian} vector field $\xi^{reg;\epsilon}\in T\mathscr{X}_{\mathbb{D}_{\epsilon}}$, since $\mathscr{X}_{\mathbb{D}_{\epsilon}}$ is simply connected. Let
\begin{align}
    h_{\mathscr{X}}\colon \mathscr{X}_{\mathbb{D}_{\epsilon}}\to\mathbb{R}
\end{align}
denote the Hamiltonian associated to this vector field. Identifying $\phi_{reg}$ with the flow of a Hamiltonian vector field on $U_{\epsilon}$ will allow us to apply results in the (absolute) log Calabi--Yau setting, e.g. in~\cite{GP1, GP2, Pomerleano}, more easily.

For each $i\in R$, let $\pi_i\colon U_i\to D_i$ denote the bundle projection, and let $\pi_I\colon U_I\to D_I$ denote the iterated projection
\begin{align}
\pi_I\coloneqq\pi_{i_1}\circ\cdots\circ\pi_{i_{|I|}} \,.
\end{align}
This is a symplectic fibration with structure group $U(1)^{|I|}$ and whose fibers are (open subsets of) a product of standard disks.

\begin{lemma}\label{orbitsinstandardneighborhoods}
The flow of $\xi^{reg;\epsilon}$ restricts to rotation in the fibers of $\pi_I\colon U_I\to D_I$, for each stratum $D_I$. 
\end{lemma}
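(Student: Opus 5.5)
Our plan is to compute the horizontal lift of $-it\partial_t$ explicitly in each regularized neighborhood $U_I$, and to check that it is the generator of a diagonal circle action in the normal directions. In $U_I$ we will identify a point with $(p,(v_i)_{i\in I})$, where $p\in D_I$ and $v_i\in N_i=N_{\mathscr{X}}D_i|_{D_I}$. By Definition~\ref{Omega-regularization-definition} the symplectic form there is $\Omega_1=\pi^*\Omega_{D_I}+\frac12\sum_{i\in I}d(\rho_i\theta_{e,i})$. Since $\epsilon$ is small we may take $a=\id$, so the compatibility condition \eqref{compatiblewithregularizationdefinitingequation} gives $\pi_{reg}(p,v)=\tau_{reg}(\Pi_I(v))=\prod_{i\in I}v_i$. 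Here $\tau_{reg}$ is a unit-norm trivialization of $\bigotimes_{i\in I}N_i$, and we note that the factors $\sigma_{D_j}$ with $j\notin I$ are trivialized to $1$ away from $U_{j,3C/4}$. So $\pi_{reg}$ is the product of fiber coordinates twisted by a flat unitary identification. This is exactly the local model \eqref{semistablelocaldescription}, now in Hermitian bundle form.

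The first step is to write down the candidate vector field. In polar coordinates $v_i=r_ie^{i\varphi_i}$ (relative to $\nabla_i$), let $R_i$ be the horizontal lift of $\partial_{\varphi_i}$, which is the generator of fiberwise rotation in $N_i$. We take
\[
\xi=-\sum_{i\in I}c_i R_i,\qquad c_i=\frac{\rho_i^{-1}}{\sum_{j\in I}\rho_j^{-1}},
\]
which mirrors \eqref{horzlift}. Each $R_i$ is tangent to the fibers of $\pi_I$ and preserves $\rho_j$ for every $j$. Hence $\xi$ preserves the fibers and the functions $\rho_j$, and its flow rotates each $N_i$-factor by an angle depending only on $(\rho_j)_{j\in I}$.

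The next steps verify the two defining properties of the horizontal lift.

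\emph{Projection.} Since $\sum_i c_i=1$ and $d\pi_{reg}(R_i)=i\,\pi_{reg}$, we get $d\pi_{reg}(\xi)=-i\pi_{reg}$. This equals $-it\partial_t$.

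\emph{Horizontality.} We need $\Omega_1(\xi,w)=0$ for every $w\in\ker d\pi_{reg}$. Expanding $\frac12d(\rho_i\theta_{e,i})=\frac12 d\rho_i\wedge\theta_{e,i}+\frac12\rho_i\,d\theta_{e,i}$, we use three facts: $\iota_{R_i}\pi^*\Omega_{D_I}=0$; $\iota_{R_i}d\theta_{e,j}=0$, because curvature terms are basic; and $\theta_{e,j}(R_i)=\delta_{ij}$. These give $\iota_\xi\Omega_1=\frac12\sum_i c_i\,d\rho_i$, up to sign. Writing $\log|\pi_{reg}|=\frac12\sum_i\log\rho_i$, the vertical condition forces $\sum_i\rho_i^{-1}d\rho_i(w)=0$. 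With our choice $c_i\propto\rho_i^{-1}$, this is exactly the vanishing of $\iota_\xi\Omega_1(w)$.

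Since the horizontal lift is unique, $\xi^{reg;\epsilon}|_{U_I}=\xi$. Its flow is therefore fiberwise rotation, and it is compatible on overlaps because the maps $\mathfrak{D}\psi_{I;I'}$ are products of Hermitian isomorphisms. On $\mathbf D$ itself both sides vanish by definition.

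The main obstacle is the careful bookkeeping at the boundaries of the regions where compatibility holds. On the part of $U_I$ that meets $U_{j,3C/4}$ for some $j\notin I$, one must pass to the deeper stratum $I\cup\{j\}$ and apply the same argument there. We also have to justify that $\tau_{reg}$ is locally constant with respect to the connections $\nabla_i$, so that no extra term appears in $d\pi_{reg}$. We expect this to follow from the unit-norm condition together with the fact that the $\mathfrak{D}\psi$ maps are Hermitian; if it does not, we would shrink $\epsilon$ and absorb the horizontal error.
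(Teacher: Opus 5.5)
Your proof is correct, and it takes a different route from the paper's.

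\emph{The paper's route.} The paper argues indirectly. It starts from a total-space Hamiltonian $h_{\mathscr{X}}$ for $\xi^{reg;\epsilon}$. From the vanishing of $\Omega_1(\xi^{reg;\epsilon},\cdot)$ on $TX_t$ it concludes that $h_{\mathscr{X}}|_{U_I}$ depends only on the radial functions. It then invokes \cite[Lemma 2.4(1)]{GP2} (the $\Omega_1$-orthogonal of a fiber of $\pi_i$ lies in a level set of $\rho_i$) to deduce that $\xi^{reg;\epsilon}$ is tangent to the fibers of $\pi_I$.

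\emph{Your route.} You write down the candidate $-\sum_i c_iR_i$ read off from \eqref{horzlift}. You verify $d\pi_{reg}(\xi)=-i\pi_{reg}$ and $\iota_\xi\Omega_1=(\sum_j\rho_j^{-1})^{-1}\,d\log|\pi_{reg}|$, and conclude by uniqueness of the horizontal lift.

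\emph{What yours buys.} First, it gives the exact angular velocities $c_i=\rho_i^{-1}/\sum_j\rho_j^{-1}$, i.e.\ \eqref{horzlift} on the nose in regularized coordinates. That is what is really needed later, both to identify the orbit sets $\mathcal{F}_{\mathbf{v}}$ with level sets of the $\rho_i$ and in the proof of Proposition~\ref{closedopenisomorphism}. Second, it never uses that $\xi^{reg;\epsilon}$ is Hamiltonian on the total space. This is a genuine advantage: your own formula shows that $\iota_\xi\Omega_1$ is not closed once $|I|\geq 2$. For $|I|=2$ its differential is $-\frac{\rho_1-\rho_2}{2(\rho_1+\rho_2)^2}\,d\rho_1\wedge d\rho_2$. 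So the function $h_{\mathscr{X}}$ on which the paper's argument rests is not actually available, while your verification is unaffected. The paper's argument is shorter; yours is more explicit and rests on fewer premises.

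\emph{On your final paragraph.} The worry that $\tau_{reg}$ must be parallel for the $\nabla_i$ is unfounded, for two reasons.
\begin{itemize}
\item The identity $d\pi_{reg}(R_i)=i\pi_{reg}$ uses only that $\tau_{reg}$ is $\mathbb{C}$-linear on the fibers of $\bigotimes_{i\in I}N_i$. The $R_i$ are $\pi_I$-vertical, so no derivative in the base direction enters.
\item Horizontality uses only $|\pi_{reg}|=\prod_i\rho_i^{1/2}$, which is exactly the unit-norm condition.
\end{itemize}
So drop the fallback of shrinking $\epsilon$ to absorb an error; it would in any case only give approximate rotation.

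Two smaller points. $R_i$ should be described as the $\pi_I$-vertical generator of the $U(1)$-action on $N_i$, not as a horizontal lift; that is how you actually use it. Your passage to the deeper stratum $I\cup\{j\}$ inside $U_{j,3C/4}$ is the right way to read the lemma. There the $R_j$-component moves the base point of $\pi_I$, so the flow is rotation in the fibers of $\pi_{I\cup\{j\}}$ rather than of $\pi_I$.
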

\begin{proof}
Fix a fiber $X_t = \pi_{reg}^{-1}(t)$ in $U_{\epsilon}$. At any point $x\in X_t$ contained in a regularized tubular neighborhood $U_i$, notice that the tangent space $T_x U_{\epsilon}$ can be written as the direct sum
\[ T_xU_{\epsilon} = T_x X_t\oplus\mathbb{C}\xi^{reg;\epsilon}_x\oplus\mathbb{C}\partial_{\rho_i}\]
where $\mathbb{C}\partial_{\rho_i}$ is the tangent vector in the fiber radial direction. The $1$-form $\Omega_1(\xi^{reg;\epsilon},\cdot)$ vanishes on the first two components of this decomposition. The vanishing of this form on $T_x X_t\oplus\mathbb{C}\xi_x$ implies that $h_{U_{\epsilon}}\mid_{U_I}\to\mathbb{R}$ is a function of the radial coordinates $(\rho_{i_1},\ldots,\rho_{i_{|I|}})$.

An elementary computation using the local forms~\eqref{standardneighborhoodsymplecticform} shows that the symplectic orthogonal to any fiber of $\pi_i\colon U_i\to D_i$ is contained a level set of the radial function $\rho_i$~\cite[Lemma 2.4(1)]{GP2}. The local expression for $\Omega_1(\xi^{reg;\epsilon},\cdot)$ above shows that it vanishes on any vector tangent to an intersection of level sets of $\rho_{i_1},\ldots,\rho_{i_{|I|}}$. Thus $\xi^{reg;\epsilon}$ must be orthogonal to the symplectic orthogonal of the tangent space of the fibers, meaning that $\xi^{reg;\epsilon}$ is tangent to the fibers of $\pi_i$. This implies that the flow of $\xi^{reg;\epsilon}$ rotates the disk factors of the fibers of the iterated fibrations $\pi_I$.
\end{proof}

The time-$1$ flow of $\xi^{reg;\epsilon}$ restricted to a general fiber $X_t$ of $U_{\epsilon}$ gives $\phi_{reg}$, which has the same fixed point Floer cohomology as the monodromy symplectomorphism $\phi$ of~\eqref{symplecticmonodromyrep}. The closed orbits of the flow can be thought of as multiple covers of horizontal sections of $T_{\phi_{reg}}$, as explained in \S{\ref{multiplecoverfixedpointfloer}}, and also as multiply covered circles in the fibers of $\pi_I\colon U_I\to D_I$ by Lemma~\ref{orbitsinstandardneighborhoods}.

Since our regularization $\mathfrak{R}_{\mathbf{D}}$ of $\mathbf{D}$ came from a regularization $\mathfrak{R}$ of $\mathbf{D}\cup\mathbf{E}$, we obtain the following.
\begin{lemma}\label{divisorconstruction-1}
    The flow of $\xi^{reg;\epsilon}$ preserves each component $E_q$ of $\mathbf{E}$.
\end{lemma}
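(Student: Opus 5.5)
The plan is to run the argument of Lemma~\ref{orbitsinstandardneighborhoods} once more, this time using the fact that the components $E_q$ are themselves members of the regularized divisor $\mathbf{V}=\mathbf{D}\cup\mathbf{E}$. We may assume the $\Omega_1$-regularization $\mathfrak{R}$ provided by Theorem~\ref{MTZthm} is a regularization of all of $\mathbf{V}$. Its restriction $\mathfrak{R}_{\mathbf{D}}$ is what defines $\tau_{reg}$, and hence $\pi_{reg}$ and $\xi^{reg;\epsilon}$. The key is to show that $\xi^{reg;\epsilon}$ is tangent to $E_q$ at every point of $E_q\cap\mathscr{X}_{\mathbb{D}_{\epsilon}}$. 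Once we have that, the flow preserves the closed submanifold $E_q$, since integral curves starting on an invariant closed submanifold remain on it.

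Away from the central fiber, tangency amounts to a claim about the horizontal lift. Fix $x\in E_q\cap X_t$ with $t\neq0$. The horizontal lift $\xi_x$ is the unique vector that is $\Omega_1$-orthogonal to $T_xX_t$ and projects to $-it\partial_t$.

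First, we use the standing assumption that $\pi|_{E_q}$ is a submersion. This survives the perturbation to $\pi_{reg}$ for $\epsilon$ small, because the homotopy of trivializations in Lemma~\ref{homotopyoftrivializations} is $C^1$-small near $\mathbf{E}$. The map $\pi_{reg}|_{E_q}$ is therefore a symplectic fibration with fibers $E_q\cap X_t$, and it has its own horizontal lift $\eta_x\in T_xE_q$.

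Next, we show that $\eta_x=\xi_x$. It suffices to check that $T_xE_q\cap T_xX_t$ and the $\Omega_1$-normal directions to $E_q$ are both $\Omega_1$-orthogonal to $\eta_x$. This is exactly what the regularization gives us.

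We split into two cases according to where $x$ lies.

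\textbf{Case 1: $x$ in the tubular neighborhood $U_{\{q\}\cup I}$ of a stratum $V_{\{q\}\cup I}$ with $I\subset R$.} In the local model \eqref{standardlocalformtubularnbhd}, the $\Omega_1$-orthogonal complement of $E_q$ is the normal disk factor $N_q$. This factor is symplectically orthogonal to $E_q=\{v_q=0\}$. Because $\mathfrak{R}$ is a regularization, $\mathfrak{D}\psi_{I\cup\{q\};I}$ is a product of Hermitian isomorphisms. By \eqref{compatiblewithregularizationdefinitingequation}, $\pi_{reg}$ depends only on the $N_i$-coordinates for $i\in I$, and is independent of $v_q$. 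Consequently $N_q\subset T_xX_t$, so $\xi_x$ is $\Omega_1$-orthogonal to $N_q$. Since $\Omega_1$ splits as a product along $E_q$ and $N_q$, this forces $\xi_x\in T_xE_q$.

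\textbf{Case 2: $x$ not in any $U_I$.} Here $x$ lies over $|t|\ge\epsilon$, so it does not occur in $\mathscr{X}_{\mathbb{D}_{\epsilon}}$ by \eqref{smallrestrictedfamily}.

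At the central fiber the vector field is extended by zero, which is trivially tangent to $E_q$.

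I expect the main obstacle to be justifying that $\pi_{reg}$ is independent of the $E_q$-normal coordinate throughout $U_q$, not just near the strata where the compatibility condition is stated. The first thing I would try is to build the trivialization $\tau_{reg}$ in Lemma~\ref{homotopyoftrivializations} with the $E_q$-normal disk treated as a trivial parameter. McLean's construction is local in the $D_i$-normal coordinates, so it can be carried out $U(1)_q$-equivariantly using the product structure of $\mathfrak{R}$. The conclusion above then follows, and also gives that the flow commutes with the rotation in $N_q$.
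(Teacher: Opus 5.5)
Your overall strategy matches the paper's. At $x\in E_q\cap X_t$ you show that the $\Omega_1$-orthogonal complement $(T_xE_q)^{\perp}$ (the normal factor $N_q$ in the chart of the regularization of $\mathbf{V}$ around $V_{I\cup\{q\}}$) lies in $T_xX_t=\ker d\pi_{reg}$. That forces the horizontal lift $\xi^{reg;\epsilon}_x$ into $T_xE_q$, and \eqref{smallrestrictedfamily} guarantees every point of $E_q\cap\mathscr{X}_{\mathbb{D}_\epsilon}$ lies in such a chart.

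The gap is the step that gives $N_q\subset\ker d\pi_{reg}$. Formula \eqref{compatiblewithregularizationdefinitingequation}, together with the norm-one condition, only says that $\pi_{reg}(x)$ is obtained by applying a unit-norm trivialization of $\bigotimes_{i\in I}N_{\mathscr{X}}D_i$, at the base point $\pi_I(x)\in D_I$, to the fibre coordinates. Moving $x$ in the $N_q$-direction moves this base point inside $D_I$. Nothing in the stated compatibility condition controls how the phase of $\tau_{reg}$ varies there. So independence of $v_q$ does not follow from the formula. The obstacle you flag at the end is real, it arises already where \eqref{compatiblewithregularizationdefinitingequation} holds, and your proposed equivariant construction is only sketched.

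The paper never analyses $\tau_{reg}$ in the $E_q$-normal direction. It uses Lemma~\ref{orbitsinstandardneighborhoods}, proved just before: $\xi^{reg;\epsilon}$ is tangent to the fibres of $\pi_I\colon U_I\to D_I$. Because $\mathfrak{R}$ regularizes the full divisor $\mathbf{V}$, $E_q$ meets the strata $D_I$ $\Omega_1$-orthogonally, so the fibres of $\pi_I$ over points of $D_I\cap E_q$ lie inside $E_q$. Hence $\xi^{reg;\epsilon}_x\in T_xE_q$, which is equivalent to $(T_xE_q)^{\perp}\subset T_xX_t$. Replacing your independence claim with this citation closes the gap.

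If you prefer to argue directly, $U(1)_q$-invariance of $\pi_{reg}$ near $E_q$ is enough. A function invariant under a circle action that fixes $E_q$ and rotates $N_q$ has differential vanishing on $N_q$ along $E_q$. Full independence of $v_q$ is more than you need.

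Finally, you can drop the detour through the horizontal lift $\eta_x$ of $\pi_{reg}|_{E_q}$, along with the unproved claim that the homotopy of Lemma~\ref{homotopyoftrivializations} is $C^1$-small near $\mathbf{E}$. Once $N_q\subset\ker d\pi_{reg}$ at $x$, $d\pi_{reg}$ is automatically surjective on $T_xE_q$, since $T_x\mathscr{X}=T_xE_q\oplus N_q$.
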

\begin{proof}
    Recall that $\mathbb{D}_{\epsilon}$ was chosen so that $\mathscr{X}_{\mathbb{D}_{\epsilon}}$ is contained in the union of regularized neighborhoods $U_I$ (in $\mathfrak{R}$) of the strata $D_I$ of the central fiber. Thus for each $K\subset Q$ it follows that the regularized neighborhoods $U_q$ of $E_q$ are also contained in union of these neighborhoods. This, combined with compatibility of $\tau_{reg}$ with $\mathfrak{R}_{\mathbb{D}}$ tells us that
    \begin{align*}
        (T_x E_q)^{\perp}\subset T_x X_t
    \end{align*}
    for all $x\in E_q\cap X_t$. In more detail, this follows because, with respect to $\Omega_1$, the fibers of the horizontal lift of the tangent bundle to $\mathbb{D}_{\epsilon}$ are contained in the fibers of the vertical bundles to the projections $\pi_I\colon U_I\to D_I$. Thus the inclusion above follows by the orthogonality of the divisors $E_q$ to the strata $D_I$ with respect to $\Omega_1$. This implies that $\xi^{reg;\epsilon}_x$ is tangent to $E_q$ at $x\in E_q$, so its flow fixes $E_q$.
\end{proof}

\subsection{Isolating neighborhoods and Hamiltonian perturbations}
We will construct an explicit Hamiltonian perturbation $H_{\delta}\in C^{\infty}(T_{\phi_{reg}},\mathbb{R})$ following~\cite[\S{4.1}]{GP2} which makes the perturbed, as in~\ref{hamiltonianperturbationsmappingtori}, pair $(T_{\phi_{reg}},\Theta_{\phi_{reg}})$ nondegenerate. We emphasize that $\Theta_{\phi_{reg}}$ arises from the \textit{regularized} symplectic form $\Omega_1$. This perturbation is supported on a disjoint union of \textit{isolating neighborhoods} of Morse--Bott families of orbits of $h_{\mathscr{X}}$, which will also appear in the proof of Theorem~\ref{stanleyreisnerisom}. By choosing this perturbation in a controlled way, the resulting flow trajectories will still be contained in the fibers of $\pi_I\colon U_I\to D_I$. This uses a \textit{spinning} construction in Morse--Bott theory~\cite{KwonVanKoert}, we which will recall in some detail.

Any closed orbit contained in the tubular neighborhood $U_I$ can be written as a time-$1$ orbit of the vector field
\begin{align*}
    \sum_{i\in R}-2\pi v_i\partial_{\varphi_i}
\end{align*}
where $\mathbf{v}\in\mathbb{Z}^R_{\geq0}$ is an integer vector which has nonzero $i$th component $v_i$ if and only if $i\in I$. We restrict our attention to orbits corresponding to the fixed points of $\phi_{reg}\colon X_t\to X_t$, i.e. orbits in $U_I$ which project onto the circle of radius $\epsilon'$ centered at $0\in\mathbb{D}_{\epsilon}$.

Let
\begin{align}
    \mathcal{F}_{\mathbf{v}}
\end{align}
denote the corresponding orbit sets, and for any closed orbit $x_0$ of $\xi = \xi^{reg;\epsilon}$, define its \textit{multiplicity vector} to be the unique $\mathbf{v}\in\mathbb{Z}^R_{\geq0}$ such that $x_0$ is contained in $\mathcal{F}_{\mathbf{v}}$. These orbit sets are contained in the set of points in the level sets of $\rho_i$ in $U_I$, for some $i\in I$ corresponding to some constant denoted
\[\rho_{i,\mathbf{v}}\in\mathbb{R} \,.\]
They are also manifolds with corners, as the argument in Step 2 of the proof of Theorem 5.16 of~\cite{McLeanMinimalDiscrepancy} shows. The corner strata occur in some level set of $\rho_i$ corresponding to some fixed value denoted
\[\rho_{\mathbf{v},i}^c\in\mathbb{R}\]
where $i\not\in I$ (cf.~\cite[\S{4.1}]{GP2}.)

Choose isolating neighborhoods
\begin{align}\label{isolatingneighborhoods}
    U_{\mathbf{v}}
\end{align}
for $\mathcal{F}_{\mathbf{v}}$, by which we mean open neighborhoods of $\mathcal{F}_{\mathbf{v}}$ which do not contain any other orbit set. We can choose these neighborhoods so that $U_{\mathbf{v}}\cap U_{\mathbf{w}} = \emptyset$ whenever $\mathbf{v}\neq\mathbf{w}$~\cite{GP2}. For all $i\in R$, choose shrunken regularized neighborhoods $U_{i,r}$ as in~\eqref{smallradiusregularizednbhds}. Let $D_I'\coloneqq D_I\setminus\bigcup_{i\not\in I}U_{i,r}$, and let $S_I'$ denote the induced $T^{|I|}$-bundle over $D_I'$.\footnote{The neighborhoods $U_{i,r}$ are chosen more carefully in~\cite{GP2}, and are used to define the isolating neighborhoods of the orbit sets. The discussion there takes place locally near strata of $\mathbf{D}$, and applies verbatim to our situation, so we will not repeat those details here.} Fix a Morse function $\hat{h}_I\colon S_I'\to\mathbb{R}$ which is a function of the distance functions $\rho_i$ near the corners and which points outward along the boundary.

The flow of $\xi$ generates an $S^1$-action on $\mathcal{F}_{\mathbf{v}}$ which extends to the isolating neighborhood $U_{\mathbf{v}}$, which is locally Hamiltonian, as noted in the proof of Lemma~\ref{orbitsinstandardneighborhoods}. In such an isolating neighborhood, the inverse $S^1$-action has the associated Hamiltonian $K_{\mathbf{v}}\colon U_{\mathbf{v}}\to\mathbb{R}$ given by $K_{\mathbf{v}} = \sum\pi v_i\rho_i$. Let $\Delta_t$ denote the time-$t$ flow of $X_{K_{\mathbf{v}}}$. Define a time-dependent function $h_I\coloneqq\hat{h}_I \circ\Delta_t(x)$, and let $h_{\mathbf{v}}$ denote its pullback to $U_{\mathbf{v}}$ under the projection map. Lastly, choose cutoff functions $\chi_{\mathbf{v}}$ which are identically $1$ on slightly smaller isolating neighborhoods $U_{\mathbf{v}}'\subset U_{\mathbf{v}}$, and vanish outside of $U_{\mathbf{v}}$.

The desired Hamiltonian perturbation\footnote{Unlike~\cite{GP1, GP2, Pomerleano}, we do not perturb our Hamiltonian near $\mathbf{D}$.} is
\begin{align}\label{explicithamiltonianperturbation}
    H_{\delta}\coloneqq\sum_{\mathbf{v}}\delta_{\mathbf{v}}\chi_{\mathbf{v}}h_{\mathbf{v}}
\end{align}
for sufficiently small constants $\delta_{\mathbf{v}}>0$. By~\cite[\S{4.1}]{GP1}, the data $\chi_{\mathbf{D}}$ and $\delta_{\mathbf{D}}$ can be chosen so that the flow of
\begin{align}\label{perturbedHamiltonianondegeneration}
    H_{\mathscr{X}}\coloneqq h_{\mathscr{X}}+H_{\delta}
\end{align}
fixes $\mathbf{D}$.
\begin{lemma}[cf. {\cite[Lemma 4.2]{GP2}}]\label{hamiltonianperturbationlemma}
Let $S^1_{\epsilon'}\subset\mathbb{D}^{\circ}_{\epsilon}$ be the circle of radius $\epsilon'<\epsilon$, and identify $U_{\epsilon}\mid_{S^1_{\epsilon'}}$ with the mapping torus $T_{\phi_{reg}}$. Consider the restriction $H^{\epsilon'}_{\delta}$ of $H_{\delta}$ to $U_{\epsilon}\mid_{S^1_{\epsilon'}}$. For sufficiently small $\delta_{\mathbf{v}}$, all horizontal (multi-)sections of the perturbed symplectic fiber bundle
\[(T_{\phi_{reg}},\Theta_{\phi_{reg},H^{\epsilon'}_{\delta}})\cong(T_{\phi_{reg}\circ\phi^1_{H^{\epsilon'}_{\delta}}},\Theta_{\phi_{reg}\circ\phi^1_{H^{\epsilon'}_{\delta}}})\]
arise from the manifolds $\mathcal{F}_{\mathbf{v}}$ as critical points of $h_{\mathbf{v}}$.
\end{lemma}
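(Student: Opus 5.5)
The plan follows the standard Morse--Bott perturbation argument of \cite[Lemma 4.2]{GP2}, adapted to the mapping-torus picture. The fixed points of $\phi_{reg}^d$ on $X_t$ are exactly the time-$d$ closed orbits of $\xi^{reg;\epsilon}$ lying over $S^1_{\epsilon'}$. By Lemma~\ref{orbitsinstandardneighborhoods}, these are the orbit sets $\mathcal{F}_{\mathbf{v}}$ with $\sum_i v_i=d$.

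I would first check that each $\mathcal{F}_{\mathbf{v}}$ is a Morse--Bott family. In $U_I$ the flow is fiberwise rotation with Hamiltonian depending only on $(\rho_i)_{i\in I}$. The orbit set is cut out by requiring the $I$-indexed rotation speeds to equal $v_i$ and by the constraint $\pi_{reg}=\epsilon'$. This gives a $T^{|I|}$-bundle over $D_I'$, intersected with $X_t$, hence a torus bundle $S_I'$ of rank $|I|-1$ with corners. Transversal nondegeneracy follows because the rotation speeds depend nondegenerately on the $\rho_i$ in the regularized model~\eqref{standardlocalformtubularnbhd}, with $a(x)=x$ near the strata. The isolating neighborhoods $U_{\mathbf{v}}$ of~\eqref{isolatingneighborhoods} are disjoint, and there are only finitely many of them for bounded $d$.

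Second, I would apply the spinning construction of \cite{KwonVanKoert}. On $U_{\mathbf{v}}'$, the perturbed return map $\phi_{reg}\circ\phi^1_{H_\delta}$ is, after conjugating by the inverse $S^1$-action $\Delta_t$ generated by $K_{\mathbf{v}}$, the time-one flow of $\delta_{\mathbf{v}}\hat h_I$ composed with the identity on $\mathcal{F}_{\mathbf{v}}$. The time-dependence $h_I=\hat h_I\circ\Delta_t$ exactly cancels the rotation. Therefore the perturbed fixed points in $U'_{\mathbf{v}}$ are the critical points of $\hat h_I$ restricted to the family, and they are nondegenerate because $\hat h_I$ is Morse and $\mathcal{F}_{\mathbf{v}}$ is Morse--Bott. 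The boundary and corner behavior is controlled by requiring $\hat h_I$ to be a function of the $\rho_i$ near the corners and outward-pointing. This rules out fixed points escaping through the corners.

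Third, I would show that no new fixed points appear outside $\bigcup_{\mathbf{v}}U'_{\mathbf{v}}$. Away from $\bigcup U_{\mathbf{v}}$ the map is unchanged, and it has no fixed points there by the isolating property. On the collar $U_{\mathbf{v}}\setminus U_{\mathbf{v}}'$, the unperturbed map has no fixed points, and by compactness it moves points a uniformly positive distance. Hence a $C^1$-small perturbation, meaning $\delta_{\mathbf{v}}$ small, creates none.

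For $d$-fold multisections, the same argument applies to $\phi_{reg}^d$, using the correspondence of \S\ref{multiplecoverfixedpointfloer}. A potential difficulty is that there are infinitely many $\mathbf{v}$, so the $\delta_{\mathbf{v}}$ must be chosen depending on $\mathbf{v}$. This is fine, because the statement is needed for each $d$ separately.

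The main obstacle I expect is the collar and corner analysis near $\rho_{\mathbf{v},i}^c$. There the family $\mathcal{F}_{\mathbf{v}}$ meets neighborhoods of other divisors, so one must verify both that the spinning cancellation still holds and that the outward-pointing condition on $\hat h_I$ prevents boundary fixed points. To handle this, I would follow the choice of $U_{i,r}$ in \cite[\S4.1]{GP2} verbatim.
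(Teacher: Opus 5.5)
Your proposal is correct and follows the same route as the paper. Compactness confines the perturbed fixed points to the shrunken isolating neighborhoods $U'_{\mathbf{v}}$ once $\delta_{\mathbf{v}}$ is small, the outward-pointing condition on $\hat h_I$ rules out fixed points near the corner strata $\rho_i=\rho^c_{\mathbf{v},i}$, and the identification with critical points is the standard (spinning) Morse--Bott argument. Your extra checks (transverse nondegeneracy of $\mathcal{F}_{\mathbf{v}}$, and letting $\delta_{\mathbf{v}}$ depend on $\mathbf{v}$ while arguing one $d$ at a time) just spell out what the paper leaves to ``standard facts in Morse--Bott theory.''
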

\begin{proof}
By compactness, these orbits must be contained in $U_{\mathbf{v}}'$ when $\delta_{\mathbf{v}}$ is chosen sufficiently small. There are no fixed points near the corner strata, where $\rho_i = \rho_{\mathbf{v},i}^c$, since $\hat{h}_I$ points outward along the strata. With the above understood, the lemma follows from standard facts in Morse--Bott theory.
\end{proof}

An important technical point for us is that by choosing the Hamiltonian perturbations of the form~\eqref{explicithamiltonianperturbation} carefully, we can assume that their flows preserve $\mathbf{E}$ and that their low degree orbits lie away from $\mathbf{E}$.
\begin{lemma}\label{divisorconstruction0}
    There exist Hamiltonian perturbations of the form~\eqref{explicithamiltonianperturbation} above such that
    \begin{itemize}
        \item[(a)] the Hamiltonian vector field associated to $H_{\mathscr{X}}$ preserves $E_q$ for all $q\in Q$, and;
        \item[(b)] all Hamiltonian orbits $\nu$ contained in any $E_q\cap\mathcal{X}^{\circ}$ have degree $\mu(\nu)\geq2$.
    \end{itemize}
\end{lemma}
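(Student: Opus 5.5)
For part (a), the plan is to treat the unperturbed piece $h_{\mathscr{X}}$ and the perturbation $H_{\delta}$ separately. The flow of $\xi^{reg;\epsilon}$ already preserves each $E_q$ by Lemma~\ref{divisorconstruction-1}, so it suffices to choose $H_{\delta}$ so that its Hamiltonian vector field is tangent to every $E_q$. We work with the $\Omega$-regularization $\mathfrak{R}$ of the full divisor $\mathbf{V}=\mathbf{D}\cup\mathbf{E}$. In a tubular neighborhood $U_{I\sqcup K}$ with $I\subset R$ and $K\subset Q$, the form $\Omega_1$ has the standard shape~\eqref{standardlocalformtubularnbhd}. Any function depending on the point of $V_{I\sqcup K}$ and on the radial coordinates $\rho_k$ for $k\in K$ generates a flow that rotates, and does not translate, the normal disks to the $E_q$. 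Such a flow therefore preserves each $E_q\cap U_{I\sqcup K}=\{\rho_q=0\}$.

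Concretely, we choose the Morse functions $\hat{h}_I\colon S_I'\to\mathbb{R}$ so that, near $E_q\cap S_I'$, each is a function of $\rho_q$ together with a function pulled back from $E_q\cap S_I'$ via the regularization projection. We also take the cutoffs $\chi_{\mathbf{v}}$ to be functions of the radial coordinates near $\mathbf{E}$; this is possible because the $\chi_{\mathbf{v}}$ only need to be $1$ on $U_{\mathbf{v}}'$. The spinning by $\Delta_t$ uses the $\mathbf{D}$-rotations. These commute with the $\mathbf{E}$-rotations because $\mathfrak{D}\psi_{I;I'}$ is a product of Hermitian isomorphisms, so this form is preserved. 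Then $X_{H_\delta}$ is tangent to $E_q$, which proves (a).

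For part (b), we choose $\hat h_I$ near $E_q$ to be of the form $f_q+c\rho_q$ with $c>0$, where $f_q$ is Morse on $E_q\cap S_I'$. Then $\hat h_I$ increases in the normal direction, so $E_q\cap S_I'$ consists of normally index-$0$ critical points of the ambient Morse function, and critical points on $E_q$ are also critical for $\hat h_I$. The degree of such an orbit $\nu$ is computed from the Morse--Bott grading formula, following~\cite[\S{4}]{GP2}. This gives $\mu(\nu)$ as the Conley--Zehnder contribution of the family $\mathcal{F}_{\mathbf{v}}$ plus a Morse index term. The Conley--Zehnder contribution is measured with respect to the Calabi--Yau form of Remark~\ref{CYformremark}, so it equals $2\sum_i a_i v_i$, with the coefficients $a_i$ from~\eqref{KXexpression}.

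The extra contribution from $E_q$ comes from the normal rotation of $N E_q$. The winding in that direction is not zero, since along $\nu$ the divisor $E_q$ has some rotation in the normal plane: it is a relatively ample section meeting the strata transversally. The plan is to arrange, by choice of sign of the linear term $c\rho_q$, that the normal direction to $E_q$ contributes $+2$ to the Conley--Zehnder index. This is the analogue of the fact, standard for stabilizing divisors in~\cite{CieliebakMohnke} and~\cite{PerutzSheridan}, that orbits inside the divisor acquire index shifts from the normal bundle. Concretely, one uses a small positive time-dependent rotation of the normal disk to $E_q$ near $E_q$; this is compatible with (a) because it is a function of $\rho_q$. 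Combined with $a_i\ge0$, this yields $\mu(\nu)\ge 2$.

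I expect the main obstacle to be (b): making the normal index shift precise with the right sign. That requires checking the grading convention for the linearized flow in the $\rho_q$-direction, and confirming that the twist can be added without creating new orbits or disturbing nondegeneracy from Lemma~\ref{hamiltonianperturbationlemma}. My first attempt would be to choose the twist small compared with the spectral gap of the isolating neighborhoods $U_{\mathbf{v}}$.
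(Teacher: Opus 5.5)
Your argument for (a) is fine and is essentially the paper's: the flow of $h_{\mathscr{X}}$ preserves each $E_q$ by Lemma~\ref{divisorconstruction-1}, and near $E_q$ you take $\hat h_I$ and $\chi_{\mathbf{v}}$ to depend only on $\rho_q$ and on data pulled back from $E_q$. The paper just cites the proof of Lemma 4.5 of \cite{Pomerleano} for this.

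Part (b), however, has a genuine gap, with a wrong mechanism and the wrong sign.

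\textbf{The mechanism.} The monodromy flow does not rotate $NE_q$ at all. In the chart $U_{I\sqcup\{q\}}$ of the regularization of $\mathbf{V}$, the coordinate normal to $E_q$ is a coordinate on the base $D_I$ of $\pi_I$. By Lemma~\ref{orbitsinstandardneighborhoods}, the flow of $\xi^{reg;\epsilon}$ only rotates the fibres of $\pi_I$. So the linearized return map is the identity on the normal plane to $E_q$, and relative ampleness of $E_q$ plays no role. The entire contribution of those two directions to $\mu(\nu)$ is the Morse index of $\hat h_I$ in those directions. With the grading used in the paper (following \cite{GP2}), $\hat h_I$ points outward, each family contributes its absolute cohomology, and degree-$0$ generators come from minima. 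This gives $\mu(\nu)=2\sum_i a_iv_i+\ind_{\hat h_I}(p)$.

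\textbf{The sign.} With that formula, your concrete choice $\hat h_I=f_q+c\rho_q$ with $c>0$ makes $E_q$ a normal local minimum. A minimum of $f_q$ on $E_q\cap S_I'$ over a good stratum is then a minimum of $\hat h_I$. It yields an orbit inside $E_q$ of degree $0$, which is exactly what (b) forbids. Your later sentence about choosing ``the sign of the linear term'' to get $+2$ contradicts the choice you just made, and no computation backs it.

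\textbf{The fix.} Take the opposite sign: near $E_q$ set $\hat h_I=f_q-c\rho_q$ with $c>0$ small, where $f_q$ is Morse on $E_q\cap S_I'$ and pulled back by the regularization projection.
\begin{itemize}
\item It is still a function of $\rho_q$ and base data, and the spinning $\Delta_t$ only involves $\mathbf{D}$-rotations, so (a) is untouched.
\item Since $d\rho_q\neq 0$ off $E_q$ and $df_q$ has no normal component, there are no critical points near $E_q$ off $E_q$.
\item Every critical point on $E_q$ has normal Hessian $-2c\,\mathrm{Id}$, hence Morse index $\ind(f_q)+2$.
\end{itemize}
Therefore every orbit in $E_q\cap\mathcal{X}^{\circ}$ satisfies $\mu(\nu)\ge 2\sum_i a_iv_i+2\ge 2$, using $a_i\ge 0$.

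No separate ``twist'' is needed, so your concerns about creating new orbits or losing nondegeneracy disappear. This is just a particular choice of the Morse function in~\eqref{explicithamiltonianperturbation}, to which Lemma~\ref{hamiltonianperturbationlemma} applies unchanged.
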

\begin{proof}
    This follows more or less immediately from the proof of Lemma 4.5 in~\cite{Pomerleano}, and amounts to choosing $h_{\mathbf{v}}$ and $\chi_{\mathbf{v}}$, so that their Hamiltonian flows preserve each $E_q$. We have seen that $h_{\mathscr{X}}$ already has this property by Lemma~\ref{divisorconstruction-1}. To see that this can be done so that (b) holds, we note that the relevant perturbations are carried out in $\mathcal{F}_{\mathbf{v}}\cap\mathbf{V}$, i.e. near the orbit sets, so the argument in \textit{loc. cit.} applies with no changes.
\end{proof}

\subsection{Completing the regularized symplectic fibration}
To make the geometric setup discussed so far suitable for counting pseudoholomorphic sections (cf. \S{\ref{fixedpointfloersection}}), we introduce families
\begin{align}\label{familiesoverC}
    \mathscr{X}_{\epsilon'}\to\mathbb{C}
\end{align}
whose fiber over the origin is $\mathbf{D}$ and whose restriction to $\mathbb{C}^*$ is a symplectic fiber bundle isomorphic to 
\begin{align}
    \mathscr{X}_{\epsilon'}\mid_{\mathbb{C}^*}\cong E_{\phi_{reg}\circ\phi^1_{H^{\epsilon'}_{\delta}}} 
\end{align}
where $\phi_{reg}$ denoes the monodromy about a circle of radius $\epsilon>0$.

As in Lemma~\ref{hamiltonianperturbationlemma}, we consider the restriction $\phi_{reg}$ to a circle of radius $\epsilon'$ in the disk $\mathbb{D}^{\circ}_{\epsilon}$, which is isomorphic to the mapping torus
\begin{align}\label{fullymodifiedmappingtorus}
    (T_{\phi_{reg}\circ\phi^1_{H^{\epsilon'}_{\delta}}},\Theta_{\phi_{reg}\circ\phi^1_{H^{\epsilon'}_{\delta}}}) \,.
\end{align}
Observe that divisors $E_q$, for $q\in Q$, restrict to horizontal divisors in this mapping torus, and so these give rise to stabilizing divisors in $E_{\phi_{reg}\circ\phi^1_{H^{\epsilon'}_{\delta}}}$ by taking their products with the $\mathbb{R}$-factor. Similar remarks apply to the regularized neighborhoods $U_I\cap U_{\epsilon}$ and to the isolating neighborhoods $U_{\mathbf{v}}$~\eqref{isolatingneighborhoods}.

\begin{definition}\label{familiesoverCdefinition}
    We form the families $\mathscr{X}_{\epsilon'}\to\mathbb{C}$ as in~\eqref{familiesoverC} by attaching a positive end
    \[T_{\phi_{reg}\circ\phi^1_{H^{\epsilon'}_{\delta}}}\times[\log(\epsilon'),\infty) \]
    of~\eqref{fullymodifiedmappingtorus} to $\overline{U}_{\epsilon'}$, the family obtained by restricting $U_{\epsilon}\to\mathbb{D}_{\epsilon}$ to the closed disk of radius $\epsilon'<\epsilon$ along their common boundaries, via~\eqref{identificationofregularpartwithmappingtorus}. We refer to this cylindrical end as the \textit{shell} in $\mathscr{X}_{\epsilon'}$.
\end{definition}
It is now obvious that the stabilizing divisors $\mathbf{E}$, the regularized neighborhoods $U_{I}$, and the isolating neighborhoods $U_{\mathbf{v}}$ all extend naturally to $\mathscr{X}_{\epsilon}$ by stabilization with the $\mathbb{R}$-factor. Moreover, the divisors $E_q$ and the neighborhoods $U_I$ all project submersively onto $\mathbb{C}$.

We will exclusively work with the families of Definition~\ref{familiesoverCdefinition} for the rest of this section and all of the next section, and thus we impose the following working assumption.
\begin{assumption}\label{completedfamilyassumption}
    Going forward, $\mathscr{X}$ will always mean one of the families $\mathscr{X}_{\epsilon'}$, unless explicitly stated otherwise. Similarly, $\phi$ will refer to $\phi_{reg}$ composed with a Hamiltonian perturbation of the form~\eqref{explicithamiltonianperturbation}.
\end{assumption}

\subsection{Enhanced fixed point Floer cohomology}\label{enhancedfixedpointfloersection}  Having chosen an extension $\mathscr{X}$ of $\mathcal{X}^{\circ}$ over $\mathbb{D}$, a Hamiltonian perturbation~\eqref{explicithamiltonianperturbation} for the horizontal lift of the rotational vector field, and a system of isolating neighborhoods~\eqref{isolatingneighborhoods} for the orbits of this perturbed flow, we can construct an an enhanced version of fixed point Floer cohomology which is an invariant of the pair $(\mathscr{X},\mathbf{D})$ (cf.~\cite[\S{4.3.1}]{Pomerleano}.) This version of fixed point Floer cohomology will be linear over the ring $\Lambda_{\mathscr{X}}$ which is defined as follows (cf.~\cite[\S{4}]{HoferSalamonNovikov}).

Consider the group
\begin{align*}
    G = \Bbbk[\pi_2(\mathscr{X})/\pi_2(\mathscr{X})_{tors}]
\end{align*}
where $\pi_2(\mathscr{X};\mathbb{Z})_{tors}$ is the torsion subgroup of $\pi_2(\mathscr{X})$. Let $\Lambda_{\mathscr{X}}$ be the $\Bbbk$-module consisting of all functions
\begin{align*}
    G &\to \Bbbk \\
    A &\mapsto \lambda_A
\end{align*}
such that the set $\lbrace A\in G\mid\lambda_A\neq 0\text{ and }\Omega(A)<c\rbrace$ is finite for every constant $c\in\mathbb{R}$. The product on $\Lambda_{\mathscr{X}}$ is the convolution product
\begin{align*}
    (\lambda\ast\lambda')_A = \sum_{B\in G}\lambda_B\lambda'_{B^{-1}A} \,.
\end{align*}

Observe that any orbit $\nu\in\mathcal{H}(T_{\phi^d},\Theta_{\phi^d})$ is contractible in $\mathscr{X}$, and therefore admits capping disks (see e.g.~\cite[\S{5}]{HoferSalamonNovikov}.) We say that two capping disks $u_1$ and $u_2$ for $\nu$ are \textit{equivalent} if
\begin{align}\label{cappingdiskequivalence}
u_1\#\overline{u_2} \in \pi_2(\mathscr{X})_{tors}
\end{align}
where $\overline{u_2}$ denotes $u_2$ with its reversed orientation and $u_1\#\overline{u_2}$ is the (topological) $2$-sphere obtained by gluing the disks along their common boundary. We say that a \textit{capped orbit} $\tilde{\nu}$ is a pair of an orbit $\nu\in\mathcal{H}(T_{\phi^d},\Theta_{\phi^d})$ and an equivalence class of capping disks $[u_{\nu}]$ for $\nu$, and let $[\nu,u_{\nu}]$ denote an equivalence class of capped orbits.

We define the enhanced Floer cochain groups
\begin{align*}
    CF^*_{\mathscr{X}}(X_t,\phi^d)\coloneqq\bigoplus_{\tilde{\nu}}o^{\Bbbk}_{\tilde{\nu}}
\end{align*}
where the sum is over all capped orbits, and here $o^{\Bbbk}_{\tilde{\nu}}$ denotes a copy of the orientation line $o_{\nu}$ of $\nu$ over $\Bbbk$. This naturally has the structure of a $\Lambda_{\mathscr{X}}$-module determined by taking connected sums with spheres in $\mathscr{X}$. By Lemma~\ref{orbitsinstandardneighborhoods}, any horizontal section $\nu$ has a canonical capping disk $F_{\nu}$, corresponding to a disk in a fiber of $\pi_I\colon U_I\to D_I$, which allows us to identify this cochain group with the free $\Lambda_{\mathscr{X}}$-module
\begin{align}\label{freemoduleenhancedfloer}
    CF^*_{\mathscr{X}}(X_t,\phi^d) = \bigoplus_{\nu\in\mathcal{H}(T_{\phi^d},\Theta_{\phi^d})} o^{\Lambda_{\mathscr{X}}}_{[\nu,F_{\nu}]}
\end{align}
where this time $o^{\Lambda_{\mathscr{X}}}_{[\nu,F_{\nu}]}$ is a copy of the orientation line associated to $\nu$ over $\Lambda_{\mathscr{X}}$.

The Floer differential in this context is defined to be
\begin{align}\label{enhancedfixedpoinfloerdifferential}
    d_{\mathscr{X}}\coloneqq\sum_{\tilde{\nu}_{-}}\sum_{\substack{B\in\pi_2(\nu_{-},\nu_{+}) \\ \ind(B) = 1}}\sum_{u\in\mathcal{M}(E_{\phi},J;B)}\sigma_{u}
\end{align}
where the first sum is over all equivalence class of capped orbits $\tilde{\nu}_{-} = [\nu_{-},u_{\nu_{-}}]$, and the isomorphism
\begin{align*}
    \sigma_u\colon o^{\Bbbk}_{[{\nu_{-}},u_{\nu_{-}}]}\to o^{\Bbbk}_{[\nu_{+},u_{\nu_{+}}]}
\end{align*}
is a copy of the isomorphism of orienation lines $o_{\nu_{-}}\to o_{\nu_{+}}$, where the capping disk $u_{\nu_{+}} = u_{\nu_{-}}\# B$ is in the equivalence class determined by the boundary sum of $u_{\nu_{-}}$ with $B$.
\begin{definition}
    We call $(CF^*_{\mathscr{X}}(X_t,\phi^d),d_{\mathscr{X}})$ the \textit{enhanced fixed point Floer cochain complex}, and its homology the \textit{enhanced fixed point Floer cohomology.}
\end{definition}
The pair of pants product~\eqref{pantsproducthchainlevel} admits a natural extension to the enhanced fixed point Floer cochain complex. Note that since the symplectic form $\Omega$ on $\mathscr{X}$ represents an integral cohomology class, there is a natural homomorphism
\begin{align*}
    \Lambda_{\mathscr{X}}\to\Bbbk(\!(T)\!)
\end{align*}
so we can change the coefficients of the enhanced Floer cochain complex to $\Bbbk(\!(T)\!)$. Moreover, using~\eqref{freemoduleenhancedfloer}, one easily sees that the differential and product on the fixed point Floer cochain complex can be defined over the smaller ring $\Bbbk[\![T]\!]$, and we let
\begin{align}\label{powerseriesfixedpointfloercochains}
    CF^*_{\Bbbk[\![T]\!]}(X_t,\phi^d;\Bbbk[\![T]\!])
\end{align}
denote the version of the fixed point Floer cochain complex defined over $\Bbbk[\![T]\!]$. Of course, the fixed point Floer \textit{cohomology} is only a Hamiltonian isotopy over the universal Novikov field $\Lambda$. We can change the coefficients of~\eqref{powerseriesfixedpointfloercochains} to $\Bbbk$, using the natural homomorphism $\Bbbk[\![T]\!]\to\Bbbk$ and by taking the leading term of the differential.
\begin{definition} Denote by
\begin{align}\label{lowenergyfixedpointfloercochains}
    CF^*_{low}(X_t,\phi^d;\Bbbk)\coloneqq  CF^*_{\Bbbk[\![T]\!]}(X_t,\phi^d;\Bbbk[\![T]\!])\otimes_{\Bbbk[\![T]\!]}\Bbbk
\end{align}
the enhanced Floer cochain complex over $\Bbbk$, and define the \textit{low energy fixed point Floer cohomology}
\begin{align}~\label{lowenergyfixedpointfloercohomology}
    HF^*_{low}(X_t,\phi^d;\Bbbk)
\end{align}
to be the cohomology of this complex.
\end{definition}
The differential on~\eqref{lowenergyfixedpointfloercochains} is just the $T^0$-term of the differential on~\eqref{powerseriesfixedpointfloercochains}. We can describe this differential in geometric terms, modifying~\eqref{enhancedfixedpoinfloerdifferential}, to be differential that only counts sections in homology classes $B\in\pi_2(\nu_{-},\nu_{+})$ with the property that
\begin{align}\label{lowenergycylinderclassdef}
    [-F_{\nu_{-}}]\# B = [-F_{\nu_{+}}]
\end{align}
i.e. $B$ respects canonical capping disks (with reversed orientations.) There is a product on~\eqref{lowenergyfixedpointfloercohomology} which can be characterized as a count of pairs of pants in similarly-defined homotopy classes.

Changing the coefficients of the enhanced fixed point Floer cohomology groups to $\Lambda$ recovers the fixed point Floer cohomology groups of Definition~\ref{fixedpointfloercohomologydefinition}. There is a natural chain isomorphism over $\Lambda$ given by
\begin{align*}
    CF^*_{\mathscr{X}}(X_t,\phi^d)\otimes_{\Lambda_{\mathscr{X}}}\Lambda &\to CF^*(X_t,\phi^d)
\end{align*}
coming from maps of orienation lines
\begin{align*}
    o_{[\nu,u_{\nu}]}^{\Bbbk}\to o_{\nu}
\end{align*}
given by multiplication by $T^{\omega(u_{\nu})}$, where $\omega(u_{\nu})$ is the symplectic area of the capping disk.

Consider the cochain complex~\eqref{powerseriesfixedpointfloercochains} over $\Bbbk[\![T]\!]$ with the $(T)$-adic filtration
\[\lbrace F^q CF^p_{\mathscr{X}}(X_t,\phi^d;\Bbbk[\![T]\!])\rbrace_{q\in\mathbb{Z}}\,.\]
The results of~\cite[\S{6.3}]{FOOOI} show that there is a spectral sequence $\lbrace E^{*,*}_r\rbrace$ such that
\begin{itemize}
    \item[(i)] The $E_2$-page of the spectral sequence is determined by the low-energy fixed point Floer cohomology via
    \begin{align*}
        E^{p,q}_2 = HF^p_{low}(X_t,\phi^d;\Bbbk)\otimes(T^{q}\Bbbk[\![T]\!]/T^{q+1}\Bbbk[\![T]\!])
    \end{align*}

    \item[(ii)] There is a filtration $\mathfrak{F}^*HF^*_{\mathscr{X}}(X_t,\phi^d;\Bbbk[\![T]\!])$ on the fixed point Floer cohomology over $\Bbbk[\![T]\!]$ such that
    \begin{align*}
        E^{p,q}_{\infty} \cong \frac{\mathfrak{F}^{q}HF^p(X_t,\phi^d;\Bbbk[\![T]\!])}{\mathfrak{F}^{q+1}HF^p(X_t,\phi^d;\Bbbk[\![T]\!])} \,.
    \end{align*}
\end{itemize}

\section{The low-energy log PSS isomorphism}\label{logPSSmaps-section}
This section is devoted to the proof of Theorem~\ref{stanleyreisnerisom}, which largely proceeds by adapting the work of Ganatra and Pomerleano in~\cite{GP1, GP2, Pomerleano}.

\subsection{Log cohomology and Stanley--Reisner rings}
The natural domain of the log PSS map, the log cohomology ring of~\cite{GP1, GP2}, is slightly larger than the Stanley--Reisner ring $\SR(\mathcal{X}^{\circ})$. Since we are at present only interested in the Stanley--Reisner ring, our presentation of the log PSS moduli spaces simplify compared to that in those sources (cf.~\cite{Pomerleano}.) We begin by describing the Stanley--Reisner rings from this perspective.

For any subset $I\subset I'\subset R$, there is an inclusion $D_{I'}\subset D_I$, which induces a restriction map on cohomology, which we denote by
\begin{align*}
    \res_{I;I'}\colon H^0(D_I)\to H^0(D_{I'}) \,.
\end{align*}
Suppose that we have fixed formal variables $\theta_i$, for $i\in R$, and for any (multiplicity) vector $\mathbf{v}\in\mathbb{Z}_{\geq0}^R$, we set
\begin{align*}
    \theta^{\mathbf{v}} = \prod_{i\in R}\theta_i^{v_i} \,.
\end{align*}
For any $I\subset R$, define the multiplicity vector $\mathbf{v}_I$ to have components $v_i = 1$ if $i\in I$ and $v_i = 0$ otherwise. Define the graded Stanley--Reisner rings to have underlying $\Bbbk$-vector spaces
\begin{align}\label{gradedsrring}
    \SR^*(\mathscr{X})\coloneqq\bigoplus_{I\subset R}\theta^{\mathbf{v}_I}H^0(D_I)[\theta_i\mid i\in I].
\end{align}
We remind the reader that we can arrange that all strata are connected by blowing up $\mathscr{X}$, so each of these summands are one-dimensional. The multiplication on $\SR^*(\mathscr{X})$ is induced by the cup product, in the sense that if $I' = I_1\cup I_2$ and $\alpha_i\in H^0(D_{I_i})$ for $i = 1,2$, then 
\begin{align}
    \alpha_1\theta^{\mathbf{v}_1}\star_{\SR}\alpha_2\theta^{\mathbf{v}_2}\coloneqq(\res_{I_1;I'}\alpha_1\cup \res_{I_2;I'}\alpha_2)\theta^{\mathbf{v}_1+\mathbf{v}_2} \,.
\end{align}
To define the grading, we recall the expression for $K_{\mathscr{X}}+\mathbf{D}$ from Proposition~\ref{gsresolutionmodel}, and set
\begin{align}\label{stanleyreisnergradingconvention}
\deg(\alpha\theta^{\mathbf{v}})\coloneqq 2\sum_{i\in R}a_iv_i \,.
\end{align}
It follows that the degree $0$ part of $\SR^*(\mathscr{X})$ is just the Stanley--Reisner ring $\SR(\mathcal{X}^{\circ})$ discussed in \S{\ref{mainsection}}.\footnote{Our grading convention is consistent with the grading on log cohomology given in~\cite[(3.15)]{GP2}.} Let $S_0\subset R$ denote the set of indices $i\in R$ such that $a_i = 0$. In other words, $S_0$ is the set of \textit{good} strata of $\mathbf{D}$.

\subsection{Low energy computations for fixed point Floer cohomology}
Before beginning our discussion of the log PSS map, we will first show that its domain and codomain are \textit{abstractly} isomorphic as vector spaces. This allows for some simplifications to the proof that the log PSS map is an isomorphism. We find it convenient to work with the model of Floer cohomology studied in~\cite{DostoglouSalamon, DostoglouSalamonBook} for this purpose. Recall that there is a tautological correspondence between horizontal sections of the mapping torus $T_{\phi^d}$ and fixed points of $\phi^d$. There is, similarly, a correspondence between elements of the moduli spaces $\mathcal{M}(E_{\phi^d},J;B)$, for any $B\in\pi_2(\nu^{-},\nu^{+})$, and Floer cylinders counted in the construction of fixed point Floer cohomology in~\cite[\S{2}]{DostoglouSalamon}, which essentially follows from Gromov's trick~\cite[Ch. 8]{MS2}. We will use this to compute $HF^0_{low}(X_t,\phi^d;\Bbbk)$ using the well-known argument~\cite[Lemma 7.1]{HoferSalamonNovikov}.
\begin{lemma}\label{PSS-domain-codomain-abstract-iso}
    There is an abstract isomorphism of $\Bbbk$-modules
    \begin{align*}
        \bigoplus_{d=0}^{\infty}HF^0_{low}(X_t,\phi^d;\Bbbk) \cong \bigoplus_{\mathbf{v}\in\mathbb{Z}_{\geq0}^{S_0}} H^0(U_{\mathbf{v}_0})\,.
    \end{align*}
\end{lemma}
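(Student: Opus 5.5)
We plan to compute $HF^*_{low}(X_t,\phi^d;\Bbbk)$ one multiplicity vector at a time, using the Morse--Bott structure of the orbit sets from Lemma~\ref{orbitsinstandardneighborhoods} and Lemma~\ref{hamiltonianperturbationlemma}. By Lemma~\ref{hamiltonianperturbationlemma}, after the perturbation~\eqref{explicithamiltonianperturbation}, every generator of $CF^*(X_t,\phi^d)$ is a critical point of some $h_{\mathbf{v}}$ on an orbit set $\mathcal{F}_{\mathbf{v}}$. Here $\mathbf{v}$ ranges over multiplicity vectors whose total multiplicity, i.e. the winding about the origin of $\mathbb{D}$, equals $d$. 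Summing over $d\geq0$ therefore amounts to summing over all $\mathbf{v}\in\mathbb{Z}^R_{\geq0}$. The first step is to show that the low-energy differential counts only cylinders satisfying~\eqref{lowenergycylinderclassdef}. Such cylinders have energy, measured against the canonical caps $F_\nu$, equal to the difference of the values of $\delta_{\mathbf{v}}h_{\mathbf{v}}$. Since the caps lie in fibers of $\pi_I$ and the action is determined by the radii $\rho_{i,\mathbf{v}}$, orbits in distinct orbit sets have action gaps bounded below independently of $\delta$. For $\delta_{\mathbf{v}}$ small, zero-energy (relative to caps) cylinders between different $\mathcal{F}_{\mathbf{v}}$ are then excluded by an energy/monotonicity argument in the isolating neighborhoods $U_{\mathbf{v}}$. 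This splits the low-energy complex as a direct sum $\bigoplus_{\mathbf{v}} CF^*_{low,\mathbf{v}}$.

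The second step identifies each summand with Morse cohomology. Following~\cite[Lemma 7.1]{HoferSalamonNovikov} and the Dostoglou--Salamon model, the cylinders contained in $U_{\mathbf{v}}$ with $\delta_{\mathbf{v}}\to0$ are $t$-independent after the spinning of~\cite{KwonVanKoert}. They therefore correspond to gradient flow lines of $\hat h_I$ on $S_I'$. Because $\hat h_I$ points outward along the boundary and corners, this yields $H^*(CF_{low,\mathbf{v}})\cong H^{*-\mu_{\mathbf{v}}}(S_I')$, up to the local-system twist from the orientation lines. For the $T^{|I|}$-bundle $S_I'$ over $D_I'$, which is homotopy equivalent to $\mathcal{F}_{\mathbf{v}}$ and hence to $U_{\mathbf{v}}$, we must also check that this twist is trivial. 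We expect it to be trivial because the rotation action is orientation preserving and the caps are canonical.

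The third step is the grading computation. Using the Calabi--Yau form of Remark~\ref{CYformremark}, whose divisor is $-\mathbf{D}$ with $K_{\mathscr{X}}+\mathbf{D}\equiv\sum a_iD_i$, the Conley--Zehnder shift of $\mathcal{F}_{\mathbf{v}}$ is $\mu_{\mathbf{v}}=2\sum_i a_iv_i$, matching~\eqref{stanleyreisnergradingconvention}, and the Morse--Bott contributions lie in degrees $\mu_{\mathbf{v}}+[0,\dim]$. Hence degree $0$ receives contributions only when $a_i=0$ for all $i$ with $v_i\neq0$, that is, when $\mathbf{v}\in\mathbb{Z}^{S_0}_{\geq0}$, and then only from $H^0(U_{\mathbf{v}})$. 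This gives the stated isomorphism.

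We expect the main obstacle to be the first step: ruling out low-energy cylinders that connect different orbit sets, or that leave the isolating neighborhoods and contribute $T^0$ terms. Our first approach would be the action-filtration argument of~\cite[\S4]{GP2}, comparing actions computed with the canonical caps $F_\nu$. As a fallback, we would use positivity of intersections with the components $D_i$, as in the log PSS setup, to show that such cylinders must have positive $\Omega$-area.
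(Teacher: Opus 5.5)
Your Steps 2 and 3 follow the paper's own argument. The grading shift $2\sum_i a_iv_i$ restricts degree $0$ to good multiplicity vectors. Cylinders inside an isolating neighborhood are identified with Morse trajectories via the rescaling argument of Hofer--Salamon's Lemma 7.1. Orientations are handled as in Kwon--van Koert; your local-system remark is essential rather than cosmetic, since a nontrivial rank-one local system would kill $H^0$, and this is where the paper cites their Lemma 8.7.

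The step that fails as written is Step 1. The restriction of the (regularized) symplectic form to the hypersurface over $S^1_{\epsilon'}$ has the horizontal lift as its kernel, so it is the mapping torus form. Hence a cylinder in a class satisfying \eqref{lowenergycylinderclassdef} has energy equal, up to $O(\delta)$, to the difference of the $\Omega$-areas of the two canonical caps. An action gap between $\mathcal{F}_{\mathbf{v}}$ and $\mathcal{F}_{\mathbf{w}}$ therefore does not exclude cap-respecting cylinders between them. It only pins their energy to the gap, which is positive in one direction and hence allowed, so neither smallness of energy nor monotonicity applies. There is also no uniform gap in general. In the model \eqref{horzlift} at a $0$-stratum, the orbit of multiplicity $\mathbf{v}$ lies on $|x_j|^2=\lambda/v_j$ with $\lambda^{n+1}=|t|^2\prod_k v_k$. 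Its fiber cap therefore has area $\pi(n+1)\lambda$, which is unchanged by permuting the entries of $\mathbf{v}$.

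What separates the orbit sets is topological, and your fallback should be phrased this way. A Floer cylinder lives in $T_{\phi}\cong\mathscr{X}|_{S^1_{\epsilon'}}$ (or over $\mathbb{C}^*$), so it is disjoint from $\mathbf{D}$. The fiber cap $F_{\nu}$, by contrast, meets $D_i$ with multiplicity $v_i$. Consequently the two caps $[-F_{\nu_-}]\#B$ and $[-F_{\nu_+}]$ of $\nu_+$ have intersection numbers with $D_i$ determined by $v_{-,i}$ and $v_{+,i}$ respectively. The equivalence \eqref{cappingdiskequivalence} identifies caps only up to torsion spheres, and these pair to zero with $[D_i]$. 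So condition \eqref{lowenergycylinderclassdef} cannot hold at all unless $\mathbf{v}_-=\mathbf{v}_+$; the issue is not positive area.

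The low-energy complex thus splits over $\mathbf{v}$ for free. Within a fixed $\mathbf{v}$ the energy is $O(\delta)$, as you note. The Hofer--Salamon compactness argument then confines the cylinders to $U_{\mathbf{v}}$ and makes them Morse trajectories, which is how the paper proceeds (it leaves the splitting implicit). With this replacement for Step 1, your argument goes through.
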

\begin{proof}
    First recall that a standard local computation shows that the orbits $\nu\in\mathcal{H}(T_{\phi^d},\Theta_{\phi^d})$ of degree zero are all contained in the neighborhoods $U_I$ where $I\subset S_0$ corresponds to a good stratum $D_I$ of $\mathbf{D}$. Before perturbing using~\eqref{explicithamiltonianperturbation}, these orbits correspond to the degree $0$ cohomology classes of the Morse--Bott families of orbits in these neighborhoods (cf.~\eqref{stanleyreisnergradingconvention}.) We will show that after the perturbation, the lowest energy cylinders correspond to Morse trajectories of Morse functions on the isolating neighborhoods $U_{\mathbf{v}}$.

    For any fixed $d\geq0$, the fixed points of $\phi = \phi_{reg}$ form Morse--Bott submanifolds of $X_t$. The Hamiltonian perturbations~\eqref{explicithamiltonianperturbation} correspond to time-dependent Hamiltonian perturbations $\mathbb{R}\times X_t\to\mathbb{R}$. Fix some $H_{\delta}$ and let $H_m\coloneqq\frac{1}{m}H_{\delta}$. Given any sequence $u_m$ of $t$-dependent Floer cylinders with respect to $H_m$, a compactness argument detailed in the proof of Lemma 7.1 in~\cite{HoferSalamonNovikov} shows that one can construct a sequence $v_m$ converging to a $t$-independent Floer cylinder $v_{\infty}$. Moreover by comparing Fredholm theories, it follows that for sufficiently large $m$ the $v_m$ coincide with $v_{\infty}$, and it follows from their construction that the $u_q$ are $t$-independent as well. Thus by taking $m$ sufficiently large we obtain the expected correspondence between Morse trajectories and low energy Floer cylinders with respect to a perturbation of the form~\eqref{explicithamiltonianperturbation}. This correspondence also preserves orientation theories by~\cite[Lemma 8.7]{KwonVanKoert}, which implies the desired isomorphism.
\end{proof}

\subsection{Log PSS moduli spaces} For the rest of this section, we will replace the completed family $\mathscr{X}\to\mathbb{C}$ (see Assumption~\ref{completedfamilyassumption}) with its base change $\mathscr{X}\to\Sigma$, where $\Sigma\coloneqq\mathbb{CP}^1\setminus\lbrace 0\rbrace$, under the map $\mathbb{C} \to \Sigma, z \mapsto 1/z$. We think of $\Sigma$ as having a distinguished marked point at $w_{in}\coloneqq\infty$, and the special fiber of $\mathscr{X}$ lies over this point after the base change.

We equip $\Sigma$ with a negative cylindrical end
\begin{align}\label{logPSScylindricalend}
    \epsilon_{in}\colon\mathbb{R}\times S^1&\to\Sigma\\
(s,t)&\mapsto\exp(2\pi(s+it))
\end{align}
We will also think of $\Sigma$ as the base of the family $\mathscr{X}\to\Sigma$ (cf. Assumption~\ref{completedfamilyassumption}.) Following~\cite{MTZ, Tehrani, Pomerleano}, we need to consider almost complex structures which are of a standard form near $\mathbf{D}$.
\begin{definition}\label{almostkahleracsdefinition}
    Let $AK(\mathscr{X},\mathbf{D})$ denote the space of almost complex structures on $\mathscr{X}$ such that:
    \begin{itemize}
        \item[(i)] each component $D_i$ of $\mathbf{D}$ is $J$-holomorphic;
        \item[(ii)] for any $i\in R$ and $p\in D_i$, the image of the Nijenhuis tensor defined using $J$ on $T_p\mathscr{X}$ is contained in $T_p D_i$;
        \item[(iii)] $J$ respects the decomposition
        \begin{align*}
            T_p U_I\cong\pi^*_ITD_I\oplus\pi_I^* ND_I
        \end{align*}
        and;
        \item[(iv)] with respect to this decomposition, $J$ is a direct sum of almost complexes structures whose $\pi_I^*TD_I$-component is pulled back from $D_I$ and whose second component, which is an almost complex structure on
        \begin{align*}
            \pi_I^* ND_I \cong \pi_I^*(\bigoplus_{i\in I}ND_i)
        \end{align*}
        agrees with the direct sum of pullbacks of almost complex structures on $ND_i\mid_{D_I}$ induced by the regularization.
    \end{itemize}
    We say that an almost complex structure $J\in AK(\mathscr{X},\mathbf{D})$ is \textit{standard} near $\mathbf{D}$.
\end{definition}
The domain-dependent almost complex structures which we use to define $\PSS_{\log}$ are given as follows.
\begin{definition}\label{logPSSdomaindependentacs}
    Let $\mathcal{J}_{\Sigma}(\mathscr{X},\mathbf{D})$ denote the space of domain-dependent almost complex structures $J_{\Sigma}$ on $\Sigma$ such that
    \begin{itemize}
        \item[(i)] for all $z\in\Sigma$, the full divisor $\mathbf{V} = \mathbf{D}\cup\mathbf{E}$ is $J_{\Sigma,z}$-holomorphic;
        \item[(ii)] for each $J_{\Sigma,z}$, the projection map $\pi_{\mathscr{X}}\colon\mathscr{X}\to\Sigma$ is $(J_{\Sigma,z},j)$-holomorphic, where $j$ is the standard almost complex structure on $\Sigma$, and $\Omega(\cdot,J_{\Sigma,z}\cdot)$ is symmetric on $T\mathscr{X}$ and positive-definite on the vertical bundle $T\mathscr{X}^v = \ker(D\pi_{\mathscr{X}})$;
        \item[(iii)] in a neighborhood of $w_{in}$, the domain-dependent almost complex structure $J_{\Sigma}$ is constant, with value $J_0\in AK(\mathscr{X},\mathbf{D})$, and;
        \item[(iv)] when restricted to the region of the cylindrical end~\eqref{logPSScylindricalend} where $s<\!<0$, the almost complex structures $J_{\Sigma,z}$ depend only on the $t$--coordinate, and their restrictions to the shell region of $\mathscr{X} = \mathscr{X}_{\epsilon'}$ lie in
        \begin{align}\label{fibractionacswithstabilizingdivisor}
        \mathcal{J}(T_{\phi},\Theta_{\phi};\mathbf{E})
        \end{align}
        the subspace of $\mathcal{J}(T_{\phi},\Theta_{\phi})$ consisting of all almost complex structures for which all components $E_q$ of $\mathbf{E}$ are almost complex submanifolds.
    \end{itemize}
\end{definition}
We need to restrict to domain-dependent almost complex structures that are standard near the marked point $w_{in}$ in order to apply the compactness arguments of~\cite[Proposition 3.15]{Tehrani}. The (log) PSS solutions which the log PSS map counts are defined to be pseudoholomorphic multisections with respect to some almost complex structure in $\mathcal{J}_{\Sigma}(\mathscr{X},\mathbf{D})$.
\begin{definition}
    Fix $J_{\Sigma}\in\mathcal{J}_{\Sigma}(\mathscr{X},\mathbf{D})$. We say that a(n ordinary) PSS solution is a map $u\colon\Sigma\to\mathscr{X}$ such that
\begin{itemize}
    \item[$\bullet$] $u$ is $J_{\Sigma}$-holomorphic, and;
    \item[$\bullet$] $u$ converges to a horizontal section $\nu$ in $\mathcal{H}(T_{\phi^d},\Theta_{\phi^d})$ under the correspondence discussed in Section \ref{multiplecoverfixedpointfloer}, meaning that
    \[
        \lim_{s\to-\infty}u(\epsilon(s,t)) = \nu \,.
    \]
\end{itemize}
\end{definition}

\begin{definition}
    For any capped orbit $\tilde{\nu} = (\nu,[u_{\nu}])$, let $\mathcal{M}(\tilde{\nu})$ denote the moduli space of PSS solutions $u\colon\Sigma\to\mathscr{X}$ asymptotic to $\nu$ and such that $-u$ is equivalent to $u_{\nu}$ as a capping disk. Define
    \begin{align*}
        \mathcal{M}(\nu)\coloneqq\bigcup_{[u_{\nu}]}\mathcal{M}(\nu,u_{\nu})
    \end{align*}
\end{definition}
We come now to the definition of a log PSS solution.
\begin{definition}
    A \textit{(low energy) log PSS solution} with multiplicity $\mathbf{v}$ is an element $u\in\mathcal{M}(\nu)$ such that
    \begin{itemize}
        \item[$\bullet$] $u(z)\not\in\mathbf{D}$ for $z\neq w_{in}$, and;
        \item[$\bullet$] the intersection multiplicity of $u$ with $D_i$ at $w_{in}$ is $v_i$.
    \end{itemize}
   We denote the moduli space of (low energy) log PSS solutions by $\mathcal{M}(\mathbf{v},\nu)$.
\end{definition}
By the Riemann--Hurwitz theorem, the composition $\pi_{\mathscr{X}}\circ u\colon\Sigma\to\Sigma$ is necessarily a $d$-fold branched cover with a single branch point at $w_{in}$.

The moduli spaces $\mathcal{M}(\mathbf{v},\nu)$ have virtual dimension
\begin{align}
    \operatorname{vdim}\mathcal{M}(\mathbf{v},\nu) = \mu(\nu)
\end{align}
by~\cite[Lemma 4.12]{GP1}. Following~\cite{Tehrani, Pomerleano}, we will show that the log PSS moduli spaces admit \textit{log compactifications} $\overline{\mathcal{M}}(\mathbf{v},\nu)$ with the property that all compactifying strata $\overline{\mathcal{M}}(\mathbf{v},\nu)\setminus\mathcal{M}(\mathbf{v},\nu)$ have virtual codimension two or higher. We will regularize the compactified log PSS moduli spaces using the stabilizing divisor $\mathbf{E}$~\cite{CieliebakMohnke}, and once this is done it will follow that we can rule out sphere bubbling, including sphere bubbles contained in the divisor $\mathbf{D}$, in all log PSS moduli spaces of virtual dimension at most $1$.

As usual, the nodal curves appearing in this compactification are modeled on trees $\underline{\Gamma}$. We denote by $\Ver {\underline{\Gamma}}$ the set of vertices of $\underline{\Gamma}$ and by $\Edge(\underline{\Gamma})$ the set of \textit{internal} edges. For any vertex $v\in\Ver\underline{\Gamma}$, let $\Edge(v)$ denote the set of internal edges adjacent to $v$. Similarly let $\Edge^{\to}(\underline{\Gamma})$ (resp. $\Edge^{\to}(v)$) denote the set of oriented internal edges of $\Gamma$ (resp. the set of oriented internal edges adjacent to $v$.) Given a pair of adjacent vertices $v$ and $v'$, the oriented edge from $v$ to $v'$ is written as $e_{v,v'}$. Finally, the unbounded edges of a tree $\Gamma$ are called \textit{leaves}.
\begin{definition}
    Define an $\ell$-\textit{marked} tree $\underline{\Gamma}^{(\ell)}$ to be a rooted tree, with root $v_{root}$, a distinguished leaf $\lambda_{in}$, and a collection of additional leaves $\lambda_i$ labeled by elements of $\lbrace 2,\ldots,\ell+1\rbrace$ such that for all vertices other than the root we have that
\begin{align*}
    |\Edge(v)|+|\Leaf(v)|\geq 3
\end{align*}
where $\Leaf(v)$ denotes the set of leaves adjacent to $v\in\Ver\underline{\Gamma}$.
\end{definition}

The trees underlying the domains of a log PSS solution with sphere bubbles carry two additional pieces of data. Let $\mathcal{P}(S)$ denote the power set of $S = R\cup Q$. First, given an $r$-marked tree $\underline{\Gamma}^{(\ell)}$, a \textit{stabilized depth function} is a pair of functions
\begin{align*}
    I^{v}\colon\Ver(\underline{\Gamma}^{(\ell)})\to\mathcal{P}(S) \\
    I^e\colon\Edge(\underline{\Gamma}^{(\ell)})\to\mathcal{P}(S)
\end{align*}
such that $I^{v_{root}} = \emptyset$ and for any edge $e$ with endpoints $v$ and $v'$ we have that $I^v\cup I^{v'} = I^e$. Second, a \textit{contact function} is a map
\begin{align*}
    \cont(-)\colon\Edge^{\to}(\underline{\Gamma}^{(\ell)})\to\mathbb{Z}^S
\end{align*}
such that for any pair of adjacent vertices $v$ and $v'$ we have that
\begin{align*}
    \cont(e_{v,v'}) = -\cont(e_{v',v})
\end{align*}
and that the support of $\cont(e_{v,v'})$ is contained in $I^e$, where $e$ denotes the edge underlying the oriented edge $e_{v,v'}$.

\begin{definition}
    A \textit{log PSS tree} $\Gamma = (\Gamma^{(\ell)},I^{\bullet},\cont(-))$ consists of
    \begin{itemize}
        \item[(i)] an $\ell$-marked tree $\underline{\Gamma}^{(\ell)}$ such that the complement of the subtree consisting of $v_{root}$ and $\Leaf(v_{root})$ is connected;
        \item[(ii)] a depth function $I^v$ and $I^e$, and;
        \item[(iii)] a contact function $\cont(-)$.
    \end{itemize}
    We call the trees obtained by deleting $v_{root}$ and all of its adjacent leaves as in (i) the \textit{bubble trees} of $\underline{\Gamma}^{(\ell)}$.
\end{definition}

Next we describe the spaces of domains and perturbation data underlying bubbled log PSS solutions. Suppose that $C$ is a nodal Riemann surface (with marked points) obtained by attaching trees of sphere bubbles to $\Sigma$. Such a curve will be modeled on some $\Gamma^{(\ell)}$, with $v_{root}$ corresponding to the $\Sigma$-component. Let $\overline{C}$ denote the domain obtained by compactifying $\Sigma$ to $\mathbb{CP}^1$. For any integer $\ell\geq1$, consider the Deligne--Mumford moduli space $\overline{\mathcal{R}}_{\ell+2}$ of stable genus zero curves with $\ell+2$ marked points, which comes with a universal curve
\begin{align}
    \overline{\mathcal{S}}_{\ell+2}\to\overline{\mathcal{R}}_{\ell+2}
\end{align}
with sections determined by the marked points. Fix a domain-dependent almost complex structure $J_{\Sigma}\in\mathcal{J}_{\Sigma}(\mathscr{X},\mathbf{D})$ (see Definition~\ref{logPSSdomaindependentacs}) which is constant with value $J_0\in AK(\mathscr{X},\mathbf{D})$ in some small open neighborhood $U_{\Sigma}$ of $w_{in}$.
\begin{definition}[\cite{RuanTian}]
    A \textit{(Ruan--Tian) perturbation datum} is a one-form
    \begin{align}
        \upsilon\in C^{\infty}(\overline{\mathcal{S}}_{\ell+2}\times\mathscr{X},\Omega^{0,1}_{\overline{\mathcal{S}}_{\ell+2}/\overline{\mathcal{R}}_{\ell+2}}\otimes_{\mathbb{C}} T\mathscr{X})
    \end{align}
    supported away from all nodes and marked points, where $T\mathscr{X}$ is given the almost complex structure $J_0$.
\end{definition}
The existence of coherent systems of perturbation data, in a suitable sense, follows from standard arguments~\cite[Lemma 9.5]{SeiBook}.

For any capped orbit $\tilde{\nu} = (\nu,[u_{\nu}])$ whose underlying orbit $\nu$ is contained in $\mathscr{X}\setminus\mathbf{E}$, define
\begin{align*}
    \ell(\tilde\nu)\coloneqq\sum_{q\in Q}[u_{\nu}]\cdot E_q \,.
\end{align*}

\begin{definition}[{\cite[Definition 4.13]{Pomerleano}}]
    Let $\tilde{\nu} = (\nu,[u_{\nu}])$ be a capped orbit with $\nu\subset\mathscr{X}\setminus\mathbf{E}$ and $\ell = \ell(\tilde\nu)\geq 1$, and let $\underline{\Gamma}^{(\ell(\tilde\nu))}$ be a marked tree.
    A \textit{(perturbed) bubbled PSS solution} based on the rooted tree $\underline{\Gamma}^{(\ell(\tilde\nu))}$ and asymptotic to $\nu\in\mathcal{H}(T_{\phi^d},\Theta_{\phi^d})$ consists of a domain $C$ modeled on $\underline{\Gamma}^{(\ell(\tilde\nu))}$, meaning that $v_{root}$ is assigned a copy of $\Sigma$, together with the following.
    \begin{itemize}
        \item[(i)] A holomorphic map $\tau\colon\overline{C}\to\overline{\mathcal{S}}_{\ell+2}$ onto a fiber of the universal family which is an isomorphism.
        \item[(ii)] A map $u\colon C\to\mathscr{X}$ which satisfies 
        \begin{align*}
            (Du)^{0,1}-(\tau,u)^*\upsilon = 0 \,.
        \end{align*}
        Letting $C_{v}$ denote the domain of the curve $u_v\colon C_v\to\mathscr{X}$ associated to $v\in\Ver(\underline{\Gamma}^{(\ell)})$, and $z_{e_{v,v'}}$ denote the marked point of $C_{v}$ associated to the oriented edge $e_{v,v'}$ (and $z_{e_{v',v}}$ denote the corresponding marked point of $C_{v'}$), we demand that
        \begin{align*}
            u_v(z_{e_{v,v'}}) = u_{v'}(z_{e_{v',v}}) \,.
        \end{align*}
        Furthermore, each tree of sphere bubbles corresponding to a bubble tree of $\underline{\Gamma}^{(\ell)}$ is required to map to a single fiber of $\mathscr{X}\to\mathbb{C}$.
    \end{itemize}
    We also require that the marked point corresponding to the edge closest to the leaf $\lambda_{in}$ is the distinguished marked point $w_{in}$.
\end{definition}
There is an obvious notion of an isomorphism of bubbled PSS solutions, and we say that a bubbled PSS solution, denoted $(C,u) = (C_v,u_v)_{v\in\Ver(\underline{\Gamma}^{(\ell)})}$, is \textit{stable} if its automorphism group is finite. Let
\begin{align}
    \mathcal{M}(\Gamma^{(\ell)},\nu)
\end{align}
denote the moduli space of stable bubbled PSS solutions asymptotic to $\nu\in\mathcal{H}(T_{\phi^d},\Theta_{\phi^d})$. If we are given a bubbled PSS solution, we can associate a new function $I^{\bullet}_{(C_v,u_v)_{v\in\Ver(\underline{\Gamma}^{(\ell)})}}$ to it as follows. We say that $u_v\colon C_v\to\mathscr{X}$ has \textit{depth} $I_{u_v}\subset R$ if $u_v(C_v)\subset D_{I_{u_v}}$ and $u_v(C_v)\not\subset D_j$ for all $j\not\in I_{u_v}$. We define
\begin{itemize}
    \item[$\bullet$] $I^v_{(C,u)}\coloneqq I_{u_v}$, and;
    \item[$\bullet$] for any edge $e\in\Edge(\underline{\Gamma}^{(\ell)})$ with endpoints denoted by $v$ and $v'$, define $I^e_{(C,u)}$ to be the depth of the point $u_v(z_{e_{v,v'}})$, meaning the subset corresponding to the deepest stratum containing this point.
\end{itemize}
This is not a depth function in general, because we only have that $I^{v}_{(C,u)}\cup I^{v'}_{(C,u)}\subset I^e_{(C,u)}$, but we will restrict ourselves to bubbled PSS solutions for which this assignment is a depth function.

Suppose that $(C_v,u_v)$ is a component of $(C,u)$ corresponding to a vertex in the bubble tree attached to $\lbrace\infty\rbrace$ in $\Sigma = \mathbb{CP}^1\setminus\lbrace 0\rbrace$. This implies that $u_v(C_v)$ is contained in some irreducible components $D_i$ of $\mathbf{D}$. Following~\cite{Tehrani2}, we can enhance the moduli space of bubbled stable PSS solutions with additional data allowing us to define the order function~\eqref{order-function} below for components $u_v\colon C_v\to\mathscr{X}$ with $D_i$. Note that if $u_v(C_v)\subset D_i$, then the operator $D_{u_v}(\overline{\partial}-\upsilon)$ descends to an operator
\begin{align*}
    D_{u_v}^{ND_i}(\overline{\partial}-\upsilon)\colon\Gamma(C_v,u_v^*ND_i)\to\Gamma(\Sigma,\Omega^{0,1}_{C_v}\otimes_{\mathbb{C}}u_v^*ND_i)
\end{align*}
Since the almost complex structure $J_0$ is integrable in the normal direction (see Definition~\ref{almostkahleracsdefinition}(ii)), this operator gives $u_v^*ND_i$ the structure of a holomorphic line bundle by~\cite[Lemma 3.5]{Tehrani2}. Thus we can form the space $\Gamma_{mero}(C_v,u_v^*ND_i)$ of meromorphic sections of this line bundle. The special form the perturbation datum $v$ as in Definition \ref{defn:rt-perturbation} ensures that the space of meromorphic sections defined using $D_{u_v}^{ND_i}(\overline{\partial}-\upsilon)$ are invariant under the $\mathbb{C}^*$-scaling. Let $[\zeta]\in\Gamma_{mero}(C_v,u_v^*ND_i)/\mathbb{C}^*$ denote an equivalence class of meromorphic sections modulo rescaling.
\begin{definition}
    A \textit{log curve} $(C,u,[\zeta])$ consists of a Riemann surface $C$ and  a pseudoholomorphic map $u\colon C\to\mathscr{X}$ with an sequence of equivalence classes of meromorphic sections
    \begin{align*}
        [\zeta] = ([\zeta_i])_{i\in I_u}\in\prod_{i\in I_u}\Gamma_{mero}(\Sigma,u^*ND_i)/\mathbb{C}^*.
    \end{align*}
\end{definition}
For any log curve, there is an order function
\begin{align}\label{order-function}
    \ord\colon C &\to\mathbb{Z}^R \\
    z &\mapsto(\ord_{u,i}(z))_{i\in R} \nonumber
\end{align}
where $\ord_{u,i}(z)$ is the order of any zero or pole of $\zeta_i$ at $z$ (for any choice of lift of $[\zeta_i]$) if $i\in I_u$, and where $\ord_{u,i}(z)$ is the standard order of contact with $D_i$ if $i\not\in I_u$.

We also need to restrict to perturbation data which are compatible with the regularization $\mathfrak{R}$, in the sense described in~\cite[Definition 3.7]{Tehrani2}. We follow the exposition in~\cite{Pomerleano}. Recall from~\cite[(2.11)]{Tehrani2} that there is a \textit{logarithmic tangent bundle} $T\mathscr{X}(-\log\mathbf{D})$ defined using the regularization $\mathfrak{R}$, which comes with a natural map
\begin{align}\label{logtangentbundletotangentbundle}
    T\mathscr{X}(-\log\mathbf{D})\to T\mathscr{X}
\end{align}
which is an isomorphism away from $\mathbf{D}$. If $\pi_I\colon U_I\to D_I$ is a regularizing neighborhood, where $I\subset R$, then over $U_I^{\circ}\coloneqq\pi_I^{-1}(D_I\setminus\bigcup_{j\not\in I}D_j)$ there is a decomposition
\begin{align}\label{logtangentbundlelocaldecomposition}
    T\mathscr{X}(-\log\mathbf{D})\mid_{U_I^{\circ}}\cong\pi_I^*(TD_I)\oplus\mathbb{C}^I
\end{align}

\begin{definition}[{\cite[Definition 4.15]{Pomerleano}}]\label{defn:rt-perturbation}
    A Ruan--Tian perturbation datum $\upsilon$ is compatible with the regularization $\mathfrak{R}$ if it lifts to a section
    \begin{align*}
        \upsilon_{\log}\in C^{\infty}(\overline{\mathcal{S}}_{\ell+2}\times\mathscr{X},\Omega^{0,1}_{\overline{\mathcal{S}}_{\ell+2}}\otimes_{\mathbb{C}} T\mathscr{X}(-\log\mathbf{D})) \,.
    \end{align*}
    and if, with respect to the decomposition~\eqref{logtangentbundlelocaldecomposition}, $\upsilon$ looks like
    \begin{align*}
        \upsilon = \pi_I^*\upsilon_I\oplus\pi_I^*\upsilon_F
    \end{align*}
    where in the horizontal component
    \[
        \upsilon_I\in C^{\infty}(\overline{\mathcal{S}}_{\ell+2}\times D_I,\Omega^{0,1}_{\overline{\mathcal{S}}_{\ell+2}}\otimes_{\mathbb{C}}TD_I)
    \]
    is a $1$-form on $D_I$, and on the vertical component
    \[
          \upsilon_F\in C^{\infty}(\overline{\mathcal{S}}_{\ell+2}\times D_I,\Omega^{0,1}_{\overline{\mathcal{S}}_{\ell+2}}\otimes_{\mathbb{C}}\mathbb{C}^I)
    \]
    is a sum of $1$-forms in the factors of $\mathbb{C}^I$.
\end{definition}
In the language of~\cite[(3.18) and Definition 3.7]{Tehrani2}, the Ruan--Tian perturbation datum $\upsilon$ is said to be associated to the \textit{log perturbation datum} $\upsilon_{\log}$.

Let $\mathbf{v}_{E_q}\in\mathbb{Z}^S$ denote the vector with value $1$ at the index $q\in Q$ and value $0$ at all other indices in $S = R\cup Q$. For a fixed orbit $\nu\in\mathcal{H}(T_{\phi^d},\Theta_{\phi^d})$, each class $[u_{\nu}]$ has a well-defined intersection number $[u_{\nu}]\cdot E_q$ for each component of $\mathbf{E}$, and we define a \textit{labeling} $\mathbf{b}\colon\lbrace 2,\ldots,\ell+1\rbrace\to Q$ to be a function such that $|\mathbf{b}^{-1}(q)| = [u_{\nu}]\cdot E_q$ for all $q\in Q$.
\begin{definition}
    Let $\Gamma = (\underline{\Gamma}^{(\ell)},I^{\bullet},\cont(-))$ be a log PSS tree. A \textit{pre-log (stable) map of mulitplicity $\mathbf{v}$} modeled on $\underline{\Gamma}^{(\ell)}$ and asymptotic to $\nu\in\mathcal{H}(T_{\phi^d},\Theta_{\phi^d})$ consists of the data
    \[(C_v,u_v,[\zeta]_v)_{v\in\Ver(\underline{\Gamma}^{(\ell)})}\]
    defined as follows.
    \begin{itemize}
        \item[(i)] $(C_v,u_v)_{v\in\Ver(\underline{\Gamma}^{(\ell)})}$ define a stable PSS solution $C\to\mathscr{X}$ asymptotic to $\nu$ and modeled on $\underline{\Gamma}^{(\ell)}$ such that the function $I^{\bullet}_{(C_v,u_v)}$ (induced by the depth of the $u_v$'s) is equal to the given depth function $I^{\bullet}$.

        \item[(ii)] For each non-root vertex $v\in\Ver(\underline{\Gamma}^{(\ell)})\setminus\lbrace v_{root}\rbrace$, the data $(C_v,u_v,[\zeta]_v)$ defines a log curve such that the associated order functions $(\ord_v)_{v\in\Ver\underline{\Gamma}^{(\ell)}}$ satisfying the following constraints.
        \begin{itemize}
            \item[(a)] $\ord_v$ is non-vanishing only at the marked points corresponding to leaves and to edges $e_{v,v'}\in\Edge^{\to}(\underline{\Gamma}^{(\ell)})$;

            \item[(b)] we have that $\ord_v(w_{in}) = \mathbf{v}$, where $w_{in}$ denotes the marked point corresponding to $\ell_{in}$, and that for any vertex $v$,
            \begin{align*}
                \ord_v(z_e) = \cont(e_{v,v'})
            \end{align*}
            and;
            \item[(c)] at the point $w_j$ corresponding to the leaf $\ell_j$, where $i\in\lbrace 2,\ldots,\ell+1\rbrace$, we have that $u(w_i)\in E_{\mathbf{b}(i)}$ and $\ord_u(w_i) = \mathbf{v}_{E_{\mathbf{b}(i)}}$.
        \end{itemize}
    \end{itemize}
    Denote by
    \begin{align*}
        \mathcal{M}_{\mathbf{E}}^{\plog}(\mathbf{v},\Gamma,\tilde{\nu},\mathbf{b})
    \end{align*}
    the moduli space of pre-log stable maps of multiplcity $\mathbf{v}$ modeled on $\Gamma$, and with stabilizing data $\mathbf{b}$, which for a given capped orbit $\tilde{\nu} = (\nu,[u_{\nu}])$ is asymptotic to $\nu$ and is equivalent, as a capping disk, to $u_{\nu}$.
\end{definition}
\begin{remark}
    To match this with the language of~\cite{PerutzSheridan} which we use below, we remark that the labeling used to define the $\mathbf{E}$-marked moduli spaces determines a \textit{canonical tangency data}~\cite[(19)]{PerutzSheridan}. Though more general tangency data can be considered, when we restrict to moduli spaces of virtual dimension $\leq 1$, only those moduli spaces defined with canonical tangency data are nonempty. It is unclear whether we can follow the methods of~\cite{PerutzSheridan} when defining the log-PSS maps to, say, make our perturbations constant on sphere bubble components or define the log-PSS classes directly over fields of positive characteristic (cf.~\cite[Remark 4.23]{Pomerleano} and~\cite[\S{4.5}]{Tehrani2}.) 
\end{remark}
To define the version log PSS moduli spaces that we will need, we make use of the obstruction map $ob_{\Gamma}$ on the moduli space $\mathcal{M}_{\mathbf{E}}^{\plog}(\mathbf{v},\Gamma,\tilde{\nu},\mathbf{b})$, whose definition is identical to the one given in~\cite[(3.33)]{Pomerleano}, and which we will not repeat here.
\begin{definition}
    A(n $\mathbf{E}$-\textit{marked) log map} modeled on $\Gamma$ consists of a pre-log map
    \[
    (C_v,u_v,[\zeta]_v)_{v\in\Ver(\underline{\Gamma}^{(\ell)})}\in \mathcal{M}_{\mathbf{E}}^{\plog}(\mathbf{v},\Gamma,\tilde{\nu},\mathbf{b})
    \]
    with vanishing obstruction $ob_{\Gamma}(u)$, and for which there exist functions
    \begin{align*}
        \cont_{v}\colon\Ver(\underline{\Gamma}^{(\ell(\tilde\nu))})&\to\mathbb{R}^S \\
        \lambda_e\colon\Edge(\underline{\Gamma}^{(\ell(\tilde\nu))})&\to\mathbb{R}_{>0}
    \end{align*}
    such that
    \begin{itemize}
        \item[$\bullet$] $\cont_v\in \mathbb{R}_{>0}^{I^v}\times\lbrace 0\rbrace^{S\setminus I^v}$, and;
        \item[$\bullet$] for each pair of vertices $v$ and $v'$ connected by an oriented edge $e_{v,v'}$, we have that
        \begin{align*}
            \cont_v-\cont_{v'} = \lambda_e(e_{v,v'})\cont(e_{v,v'}) \,.
        \end{align*}
    \end{itemize}
    The moduli spaces of $\mathbf{E}$-marked log stable maps are denoted
    \begin{align}\label{markedlogmodulispacesgraphs}
        \mathcal{M}_{\mathbf{E}}(\mathbf{v},\Gamma,\tilde{\nu},\mathbf{b})
    \end{align}
    and when $\Gamma$ has a single vertex we set
    \begin{align*}
        \mathcal{M}_{\mathbf{E}}(\mathbf{v},\tilde{\nu},\mathbf{b})\coloneqq\mathcal{M}_{\mathbf{E}}(\mathbf{v},\Gamma,\tilde{\nu},\mathbf{b}) \,.
    \end{align*}
    Finally, we let
    \begin{align}\label{markedlogmodulispaces}
        \mathcal{M}_{\mathbf{E}}(\mathbf{v},\tilde{\nu}) \coloneqq\bigsqcup_{\substack{\mathbf{b}\colon\lbrace 2,\ldots,\ell+1\rbrace\to Q \\ \ell\geq1}}\mathcal{M}_{\mathbf{E}}(\mathbf{v},\tilde{\nu},\mathbf{b})
    \end{align}
    denote the union of moduli spaces ranging over all labellings.
    
\end{definition}
Having defined the moduli spaces of log stable maps as such, general dimension counting arguments~\cite[Propositions 3.18 and 3.21]{Pomerleano} show that the only moduli spaces~\eqref{markedlogmodulispaces} of virtual dimension $\mu(\nu) = 0$ are only nonempty when $\Gamma$ is a point, and all other such moduli spaces have virtual dimension at most $\mu(\nu)-2$. Using the stabilizing divisor $\mathbf{E}$, we can show that the moduli spaces of marked log maps of virtual dimension at most $1$ are transversely cut out.
\begin{lemma}
    For generic $(J_{\Sigma},\upsilon)$, the moduli spaces~\eqref{markedlogmodulispacesgraphs} are empty when $\deg(\nu)\leq 1$ and the tree underlying $\Gamma$ has more than one vertex.
\end{lemma}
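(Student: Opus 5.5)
The plan is to follow Pomerleano's argument (the analogue of \cite[Lemma 4.16 and Propositions 3.18, 3.21]{Pomerleano}) and combine three ingredients: stratum-by-stratum transversality, a dimension count, and Cieliebak--Mohnke stabilization. Fix a log PSS tree $\Gamma$ with more than one vertex. Every non-root vertex $v$ lies in a bubble tree, so $u_v$ maps into a single fiber of $\mathscr{X}\to\Sigma$. Since bubble trees are attached at $w_{in}$, or map to fibers over points of $\mathbb{C}^*$, there are two cases for each bubble component.

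In the first case, the component maps into the central fiber $\mathbf{D}$ with some nonempty depth $I^v$. In the second case, it is a sphere in a smooth fiber $X_t$. The second case is disposed of as in \S{\ref{fixedpointfloersection}}. Every sphere in a Calabi--Yau fiber has Chern number zero. The stabilizing divisor $\mathbf{E}$ is relatively ample, so any nonconstant sphere meets $\mathbf{E}$ positively. Each sphere component therefore carries at least one of the $\mathbf{E}$-marked points $w_j$, which gives stability and lets the domain-dependent perturbation $\upsilon$ vary freely on it. For generic $\upsilon$, the usual argument then shows that the locus swept by such bubbles has codimension at least two, which exceeds the available dimension when $\mu(\nu)\le1$.

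For components in $\mathbf{D}$ I will use the log structure. The perturbation data are compatible with $\mathfrak{R}$ (Definition~\ref{defn:rt-perturbation}), and $J_0\in AK(\mathscr{X},\mathbf{D})$ is standard near $\mathbf{D}$. Under these hypotheses, Farajzadeh-Tehrani's results~\cite{Tehrani2} show that the linearized operator on each component splits into a horizontal part on $D_{I^v}$ and a normal part governed by the holomorphic line bundles $u_v^*ND_i$. Both parts are surjective for generic $\upsilon_{I}$ and $\upsilon_F$. Consequently each stratum $\mathcal{M}_{\mathbf{E}}(\mathbf{v},\Gamma,\tilde\nu,\mathbf{b})$ with vanishing obstruction $ob_\Gamma$ is cut out transversely, and has dimension equal to its virtual dimension.

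The dimension count from \cite[Propositions 3.18 and 3.21]{Pomerleano}, quoted just before the statement, gives virtual dimension at most $\mu(\nu)-2$ whenever $\Gamma$ has more than one vertex. The inputs to that count are the grading convention~\eqref{stanleyreisnergradingconvention} with $a_i\ge0$ from Proposition~\ref{gsresolutionmodel}, and the fact that each edge costs matching and tropical-balancing conditions, recorded by $\cont_v$ and $\lambda_e$. When $\deg(\nu)\le1$ this is negative, so a transversely cut out moduli space is empty.

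I expect the main obstacle to be achieving genericity for bubble components that are multiply covered, or that lie inside a deep stratum $D_I$ with constant horizontal part. On such components a perturbation depending only on the sphere's domain may fail to be generic. Here I would first try to exploit the $\mathbf{E}$-marked points. Since $\mathbf{E}$ is transverse to all $D_I$ and relatively ample, every non-constant-in-$D_I$ component carries marked points, and the data $\upsilon$ are pulled back from $\overline{\mathcal{S}}_{\ell+2}$, so they distinguish the covers. Components that are constant in the $D_I$ direction but nontrivial in the normal directions are handled entirely by the normal (meromorphic section) data, whose transversality is automatic in genus zero by the line-bundle degree argument of \cite[\S{4}]{Tehrani2}.
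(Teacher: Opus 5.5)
Your proposal is correct and follows the same route as the paper, which omits the argument and defers to the proof of \cite[Lemma 4.17]{Pomerleano}. In both, stabilization by $\mathbf{E}$ with regularization-compatible Ruan--Tian perturbations makes each stratum transverse, and the codimension-two count of \cite[Propositions 3.18 and 3.21]{Pomerleano} then forces emptiness when $\deg(\nu)\le 1$; two phrasings should be corrected: the log-normal part of the linearized operator is surjective automatically in genus zero (it is $\bar\partial$ on the degree-zero twist of $u_v^*ND_i$ by the prescribed orders) rather than by genericity of $\upsilon_F$, and a single $\mathbf{E}$-marked point does not by itself stabilize a sphere bubble with only one node.
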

The proof of this is lemma similar to the proof the analogous result given in~\cite{Pomerleano} (see the proof of Lemma 4.17 and the references therein) and we omit it. We end this subsection by stating the analogue of~\cite[Lemma 4.18]{Pomerleano}. To help clarify the statement of this lemma, we introduce a small modification to the definition of the moduli spaces $\mathcal{M}(E_{\phi^d};B)$ for $B\in\pi_2(\nu^{-},\nu^{+})$. Namely, we note that any such class $B$ has a well-defined intersection number with $E_q$ for any $q\in Q$, and all of these intersection numbers are nonnegative whenever the corresponding moduli space is nonempty. Thus if we consider the cylinder with a sequence of stabilizing marked points, we can formulate the notion of a labeling $\mathbf{b}$, and define $\mathbf{E}$-marked versions of the moduli spaces of cylinders
\begin{align*}
    \mathcal{M}_{\mathbf{E}}(E_{\phi^d};B;\mathbf{b})
\end{align*}
in the obvious way. In this context, these moduli spaces are isomorphic to the original moduli spaces $\mathcal{M}(E_{\phi^d};B)$ for any choice of labeling (compare with the proof of Lemma 4.10 in~\cite{Pomerleano}.)

\begin{lemma}\label{logPSScompactness}
    Let $\tilde{\nu} = (\nu,[u_{\nu}])$ be a capped orbit such that $[u_{\nu}]\cdot\mathbf{D} = \mathbf{v}$. Then
    \begin{itemize}
        \item[(i)] The moduli spaces~\eqref{markedlogmodulispaces} are compact when $\deg(\nu) = 0$.

        \item[(ii)] When $\deg(\nu) = 1$, the moduli spaces~\eqref{markedlogmodulispaces} admit Gromov compactifications denoted
        \begin{align}\label{markedlogmodulispacesgromovcompact}
            \overline{\mathcal{M}}_{\mathbf{E}}(\mathbf{v},\tilde{\nu})
        \end{align}
        with boundary strata
    
        \begin{align*}
            \partial\overline{\mathcal{M}}_{\mathbf{E}}(\mathbf{v},\tilde{\nu})\coloneq\bigsqcup_{(\nu',B)}\bigsqcup_{(P,\mathbf{b}_0,\mathbf{b}_1)}\mathcal{M}_{\mathbf{E}}(\mathbf{v},\tilde{\nu}',\mathbf{b}_0)\times\mathcal{M}_{\mathbf{E}}(E_{\phi^d};B;\mathbf{b}_1)
        \end{align*}
        
        where
        \begin{itemize}
            \item[$\bullet$] $\nu'\in\mathcal{H}(T_{\phi^d},\Theta_{\phi^d})$ is an orbit with $\mu(\nu') = 0$;
            \item[$\bullet$] $B\in\pi_2(\nu',\nu)$ is a homotopy class of sections with positive limit $\nu$ and negative limit $\nu'$;
            
            \item[$\bullet$] $P$ is a partition
            \begin{align*}
                \lbrace 2,\ldots,\ell(\tilde\nu)+1\rbrace = P_0\cup P_1
            \end{align*}
            and;
            \item[$\bullet$] $\mathbf{b}_i\colon P_i\to Q$ are labelings for the respective moduli spaces.
            
        \end{itemize}
     \end{itemize}
\end{lemma}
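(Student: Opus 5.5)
We follow the Gromov compactness argument for log PSS moduli spaces in Pomerleano's setting (the proof of~\cite[Lemma 4.18]{Pomerleano}), itself based on Farajzadeh-Tehrani's log compactness~\cite[Proposition 3.15]{Tehrani}. Take a sequence $u_m\in\mathcal{M}_{\mathbf{E}}(\mathbf{v},\tilde{\nu})$. Energy is fixed by the capping class $[u_{\nu}]$ and the asymptotic orbit, so it is uniformly bounded. In the region of $\mathscr{X}$ over a neighborhood of $w_{in}$, where $J_{\Sigma}$ equals the fixed $J_0\in AK(\mathscr{X},\mathbf{D})$ and the Ruan--Tian perturbations are compatible with $\mathfrak{R}$ (Definition~\ref{defn:rt-perturbation}), we apply~\cite[Proposition 3.15]{Tehrani} and~\cite{Tehrani2}. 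This gives a subsequence converging to a log map modeled on some log PSS tree $\Gamma$, with sphere bubbles possibly sinking into strata of $\mathbf{D}$, carrying meromorphic normal-section data $[\zeta]$ and satisfying the tropical balancing conditions. In the cylindrical end~\eqref{logPSScylindricalend} we instead use the compactness for sections of $E_{\phi^d}$ (\cite[Theorem 12.7]{SeiThesis}, via Lemma~\ref{pantsinthedegenerationlemma} and the multisection picture of \S\ref{multiplecoverfixedpointfloer}). Here the only new phenomenon is breaking off Floer cylinders $\mathcal{M}(E_{\phi^d};B)$ at the negative end. Fiberwise sphere bubbles in the shell have Chern number zero, and the stabilizing marked points on $\mathbf{E}$ together with the labelings $\mathbf{b}$ distribute among the pieces; this is what produces the partition $P=P_0\cup P_1$.

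Next we rule out everything except the stated strata by dimension counting. The intersection multiplicities with $\mathbf{E}$ are fixed by $[u_{\nu}]$, and the perturbations are generic. By the preceding lemma, together with~\cite[Propositions 3.18, 3.21]{Pomerleano}, any limit whose tree has more than one vertex lies in a stratum of virtual dimension at most $\mu(\nu)-2<0$, so such strata are empty when $\deg(\nu)\le 1$. Sphere bubbles inside the shell are excluded for the same reason, since Chern-zero spheres sweep codimension $\geq 2$ and the stabilizing divisor makes them regular. If $\deg(\nu)=0$, cylinder breaking would require an index $\le -1$ piece on one side, which is impossible by regularity. This gives (i).

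If $\deg(\nu)=1$, the only codimension-one degeneration is breaking along a single orbit $\nu'$ with $\mu(\nu')=0$. This yields a rigid log PSS solution in $\mathcal{M}_{\mathbf{E}}(\mathbf{v},\tilde\nu',\mathbf{b}_0)$ glued to an index-one cylinder in class $B\in\pi_2(\nu',\nu)$. The multiplicity $\mathbf{v}$ at $w_{in}$ is preserved, because the cylinder lies away from $\mathbf{D}$. Standard gluing, as in~\cite[Theorem 12.5]{SeiThesis}, then shows that these strata form the boundary, giving (ii).

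The main obstacle is the interface between the two compactness regimes. Tehrani's log compactness is stated for fixed almost complex structures that are standard near $\mathbf{D}$, while our curves are multisections of a fibration whose monodromy is only defined for the regularized $\pi_{reg}$. We plan to handle this by using condition (iii) of Definition~\ref{logPSSdomaindependentacs} to localize: bubbling into $\mathbf{D}$ can only occur over $w_{in}$, where $J_{\Sigma}\equiv J_0$. We also need the projection $\pi_{\mathscr{X}}$ to be $J$-holomorphic, which forces each bubble tree into a single fiber, so the limit's base projection is a branched cover totally ramified at $w_{in}$ (Riemann--Hurwitz).
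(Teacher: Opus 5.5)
Your proposal is correct and follows essentially the same route as the paper: Gromov compactness for ordinary PSS solutions, with Farajzadeh-Tehrani's rescaling construction supplying the meromorphic normal-section data on components that sink into $\mathbf{D}$ (this is exactly why $J_{\Sigma}$ must be constant in $AK(\mathscr{X},\mathbf{D})$ near $w_{in}$), followed by the dimension count from the preceding lemma and Pomerleano, which excludes all sphere bubbles and multi-vertex trees in virtual dimension at most $1$. The only remaining degeneration is breaking along a degree-zero orbit $\nu'$ away from $\mathbf{D}$ at the cylindrical end, which gives both (i) and the boundary description in (ii), just as in the paper.
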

\begin{proof}
    First note that a sequence of log PSS solutions cannot limit to a curve broken along a Hamiltonian oribt of~\eqref{perturbedHamiltonianondegeneration} in our situation, since $H_{\mathscr{X}}$ vanishes along $\mathbf{D}$.

    The moduli spaces~\eqref{markedlogmodulispacesgromovcompact} are topologized as subspaces of the Gromov compactification of ordinary PSS solutions. Given a sequence of log PSS solutions, its limit may contain a new component which sinks into a component $D_i$ of $\mathbf{D}$, and this component needs to inherit an equivalence class of meromorphic sections in $\Gamma_{mero}(C_v,u_v^*ND_i)/\mathbb{C}^*$. Such meromorphic sections can be produced using the rescaling construction of~\cite[\S{3.2}]{Tehrani}, and this is the only place in our discussion where it is necessary to use domain-dependent almost complex structures taking values in $AK(\mathscr{X},\mathbf{D})$ near $w_{in}$ (since it implicitly uses the comparison of rescaling and gluing parameters of~\cite[Proposition 3.15]{Tehrani}.) This rescaling construction shows that if we are given any sequence of stable log PSS solutions, there is a subsequence which converges as a sequence of log PSS solutions in the same sense described in~\cite[Definition 3.25]{Pomerleano}.

    Since we are considering moduli spaces of virtual dimension at most $1$, the dimension-count above shows that there are no sphere bubbles for generic choices of $J_{\Sigma}$ (be they contained in $\mathbf{D}$ or otherwise.) In (ii), the boundary strata correspond to cylinder breakings along orbits away from $\mathbf{D}$ as expected.
\end{proof}

\subsection{The log PSS map} For the next definition, we restrict to fields $\Bbbk$ of characteristic $0$. For any $\mathbf{v}\in\mathbb{Z}_{\geq0}^{S_0}$, define
\begin{align*}
    \PSS^{\log}(\theta^{\mathbf{v}})\in CF^0_{\mathscr{X}}(X_t,\phi^d)
\end{align*}
to be the element
\begin{align}\label{logPSSclasses}
    \PSS^{\log}(\theta^{\mathbf{v}}) = \sum_{\substack{\tilde{\nu} = [\nu,u_{\nu}] \\ \mu(\nu) = 0}}\sum_{u\in\mathcal{M}_{\mathbf{E}}(\mathbf{v},\tilde{\nu})} \frac{1}{\ell(\tilde\nu)!}T^{\Omega(u)}\sigma_u
\end{align}
where the sum is over all equivalence classes $\tilde{\nu}$ of capped orbits with underlying orbit $\nu$ of degree zero, $\Omega(u)\coloneqq\int_{\Sigma}u^*\Omega$, and $\sigma_u\in o_{\tilde{\nu}}$ is an element of the orientation line defined using standard gluing and orientation results~\cite[Ch. 11]{SeiBook}.
\begin{lemma}
    The elements~\eqref{logPSSclasses} determined a cohomology classes in $HF^0_{\mathscr{X}}(X_t,\phi^d)$ for each $\mathbf{v}\in\mathbb{Z}^{S_0}_{\geq0}$.
\end{lemma}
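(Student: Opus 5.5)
To show that $\PSS^{\log}(\theta^{\mathbf{v}})$ is a cocycle, I would follow the standard argument of~\cite[Lemma 4.19]{Pomerleano} and~\cite{GP1}: compute $d_{\mathscr{X}}\PSS^{\log}(\theta^{\mathbf{v}})$ as a signed count of boundary points of compact one-dimensional moduli spaces. Concretely, for each capped orbit $\tilde{\nu}=(\nu,[u_\nu])$ with $\mu(\nu)=1$, the coefficient of $o_{\tilde{\nu}}$ in $d_{\mathscr{X}}\PSS^{\log}(\theta^{\mathbf{v}})$ is
\begin{align*}
\sum_{(\tilde{\nu}',B)}\ \sum_{u'\in\mathcal{M}_{\mathbf{E}}(\mathbf{v},\tilde{\nu}')}\ \sum_{w\in\mathcal{M}(E_{\phi^d};B)}\frac{1}{\ell(\tilde{\nu}')!}\,\sigma_w\circ\sigma_{u'} \,,
\end{align*}
where $\tilde{\nu}'$ ranges over degree-zero capped orbits with $[u_{\nu'}]\#B=[u_\nu]$. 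The task is to identify this with the count of $\partial\overline{\mathcal{M}}_{\mathbf{E}}(\mathbf{v},\tilde{\nu})$.

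The first step is to use transversality for generic $(J_\Sigma,\upsilon)$ together with the lemma that moduli spaces modeled on trees with more than one vertex are empty in degree $\le 1$. This makes $\mathcal{M}_{\mathbf{E}}(\mathbf{v},\tilde{\nu})$ a smooth one-dimensional manifold for $\deg(\nu)=1$. Lemma~\ref{logPSScompactness}(ii) then says its Gromov compactification has boundary consisting exactly of broken configurations: a degree-zero log PSS solution glued to an index-one Floer cylinder, with the $\mathbf{E}$-marked points distributed according to a partition $P=P_0\cup P_1$. No sphere bubbles occur, whether in $\mathbf{D}$ or elsewhere. No breaking along orbits inside $\mathbf{D}$ occurs either, since $H_{\mathscr{X}}$ vanishes there.

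The second step is a standard gluing theorem (as in~\cite[Ch. 11]{SeiBook}, adapted to sections as in~\cite[Theorem 12.5]{SeiThesis}). It shows that every such broken configuration is the limit of a unique end of the one-dimensional moduli space, with compatible orientations. The signed count of boundary points of a compact $1$-manifold is zero, so summing gives zero.

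The third step is the bookkeeping of the combinatorial factors. Energy is additive under gluing, $\Omega(u)=\Omega(u')+\lambda(B)$, and the capping classes compose correctly, so the $T$-exponents and capped orbits match. For the markings, $\ell(\tilde{\nu})=\ell(\tilde{\nu}')+B\cdot\mathbf{E}$. The number of partitions $P$ and labellings with $|P_1|=B\cdot\mathbf{E}$ is $\binom{\ell(\tilde\nu)}{|P_1|}$. The cylinder moduli spaces with $|P_1|$ labelled markings are $|P_1|!$-to-one over the unmarked ones, up to the identification with $\mathcal{M}(E_{\phi^d};B)$ noted before Lemma~\ref{logPSScompactness}. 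These factors combine so that $\frac{1}{\ell(\tilde\nu)!}\#\partial\overline{\mathcal{M}}_{\mathbf{E}}(\mathbf{v},\tilde{\nu})$ equals exactly the coefficient of $o_{\tilde{\nu}}$ in $d_{\mathscr{X}}\PSS^{\log}(\theta^{\mathbf{v}})$. This is why characteristic zero is required. Hence $d_{\mathscr{X}}\PSS^{\log}(\theta^{\mathbf{v}})=0$.

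I expect the main obstacle to be the compactness input, namely that no limit components sink into $\mathbf{D}$ in the one-dimensional spaces. That is already packaged in Lemma~\ref{logPSScompactness} via Tehrani's rescaling construction and the codimension-two estimate. Given that lemma, the remaining delicate point is the gluing and orientation compatibility and the factorial count of stabilizing marked points. For these I would follow~\cite[Lemmas 4.10 and 4.19]{Pomerleano} essentially verbatim.
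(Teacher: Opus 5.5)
Your proposal is correct and matches the paper's argument. The paper says only that the cocycle property is immediate from Lemma~\ref{logPSScompactness}: the boundary of the one-dimensional compactified log PSS moduli spaces consists of degree-zero log PSS solutions glued to index-one Floer cylinders. Your write-up unpacks that one line (gluing, orientations, additivity of energy, and the $1/\ell(\tilde\nu)!$ bookkeeping of the stabilizing markings that forces characteristic zero), all of which is standard.
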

\begin{proof}
    This is immediate from Lemma~\ref{logPSScompactness}.
\end{proof}

For the rest of this subsection and all of the next subsection, we allow our ground field $\Bbbk$ to be of arbitrary characteristic once again. The \textit{low energy log PSS map}
\begin{align}\label{lowenergylogPSSmap}
    \PSS^{\log}_{low}\colon \SR(\mathcal{X}^{\circ})\to \bigoplus_{d=0}^{\infty}HF^0_{low}(X_t,\phi^d;\Bbbk)
\end{align}
is defined as in~\eqref{logPSSclasses}, except that we only count log PSS solutions in the class $\tilde{\nu} = [\nu,-F_{\nu}]$, where $-F_{\nu}$ is the canonical (fiber) capping disk with reversed orientation. More precisely its values at the chain level are given by
\begin{align}
    \PSS^{\log}_{low}(\theta^{\mathbf{v}}) = \sum_{\mu(\nu) = 0}\sum_{u\in\mathcal{M}_{\mathbf{E}}(\mathbf{v},[\nu,-F_{\nu}])} \sigma_u \in CF^0_{\mathscr{X}}(X_t,\phi^d;\Bbbk)
\end{align}
Again, this is a chain map by Gromov compactness applied to the one-dimensional moduli spaces of low energy log PSS solutions. We note that our restriction to low energy log PSS solutions means that the pseudoholomorphic cylinders appearing as components in the relevant broken log PSS solutions inhabiting the boundaries of these moduli spaces are necessarily cylinders counted in the differential on~\eqref{lowenergyfixedpointfloercochains}.

\subsection{Low energy log PSS is a ring homomorphism}
We currently lack the tools to show that~\eqref{logPSSclasses} defines a \textit{ring} homomorphism, since this would presumably require a symplectic construction of the log Gromov--Witten invariants of~\cite{GSintrinsicmirrors}. In particular, one would need to make sense of negative contact orders in symplectic terms. Fortunately, we can show that the low energy log PSS map~\eqref{lowenergylogPSSmap} is a ring map by adapting the arguments of~\cite[\S{3.4}]{GP2}. Since we are only working with degree zero Floer cohomology groups, this essentially reduces to a standard result about gluing constant sphere bubbles.

For some small $b>0$ and any $q\in(0,b]$, let $\Sigma_a$ denote $\mathbb{CP}^1\setminus\lbrace 0\rbrace$, with a negative cylindrical end~\eqref{logPSScylindricalend} as before, with two marked points $w_{in} = \infty$ and $w_{a} = -1/a$. We can equip each of these domains with suitable domain-dependent almost complex structures $J_{\Sigma_a}$ which we take to be standard, i.e. constant with value in $AK(\mathscr{X},\mathbf{D})$, near both marked points $w_{in}$ and $w_a$.
\begin{definition}
    Define $\mathcal{M}(\mathbf{v}_1,\mathbf{v}_2;\nu)$ to be the moduli space of pairs
    \begin{align*}
        \lbrace(a,u)\mid a\in(0,b] \text{ and }u\colon\Sigma_a\to\mathscr{X}\rbrace
    \end{align*}
    where $u$ is pseudoholomorphic with respect to $J_{\Sigma_a}$, and which is asymptotic to $\nu\in\mathcal{H}(T_{\phi^d},\Theta_{\phi^d})$, and has tangency conditions at $(w_{in},w_a)$ specified by
    \begin{itemize}
        \item[$\bullet$] $u(z)\not\in\mathbf{D}$ unless $z = w_{in}$ or $z = w_a$, and;
        \item[$\bullet$] $u(w_{in})$ intersects $\mathbf{D}$ with multiplicity $\mathbf{v}_1\in\mathbb{Z}^{S_0}_{\geq0}$ and $u(w_a)$ intersect $\mathbf{D}$ with multiplicity $\mathbf{v}_2\in\mathbb{Z}^{S_0}_{\geq0}$.
    \end{itemize}
    Furthermore, we only consider solutions $u$ such that $u\in[-F_{\nu}]$ as capping disks.
\end{definition}
By our restrictions on $\mathbf{v}_1,\mathbf{v}_2\in\mathbb{Z}^{S_0}_{\geq0}$, the moduli spaces $\mathcal{M}(\mathbf{v}_1,\mathbf{v}_2;\nu)$ have virtual dimension $\mu(\nu)+1$. We can define the moduli spaces
\[ \mathcal{M}_b(\mathbf{v}_1,\mathbf{v}_2;\nu)\]
which are the restrictions of the moduli spaces above to domains $\Sigma_b$, where $a = b$. When $\mu(\nu)\leq 1$, these moduli spaces are smooth manifolds of the expected dimension for generic choices of $J_{\Sigma_a}$, and they are also compact when $\mu(\nu) = 0$.
\begin{lemma}\label{lemma:pss-low-energy-ring-1}
    The operation
    \begin{align*}
        \SR(\mathcal{X}^{\circ})\otimes_{\Bbbk}\SR(\mathcal{X}^{\circ}) &\to \bigoplus_{d=0}^{\infty} HF^0_{low}(X_t,\phi^d;\Bbbk)
    \end{align*}
    defined at the chain level by
    \begin{align}\label{planewithtwopointsoperation}
        \theta^{\mathbf{v}_1}\otimes\theta^{\mathbf{v}_2} &\mapsto \sum_{\nu\in\mathcal{H}(T_{\phi^d},\Theta_{\phi^d})}\sum_{u\in\mathcal{M}_b(\mathbf{v}_1,\mathbf{v}_2;\nu)}\sigma_u
    \end{align}
    coincides with
    \begin{align*}
            \PSS^{\log}_{low}(\theta^{\mathbf{v}_1})\star_{low}\PSS^{\log}_{low}(\theta^{\mathbf{v_2}}) \,.
    \end{align*}
\end{lemma}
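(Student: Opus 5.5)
The plan is to run the standard neck-stretching/degeneration argument of \cite[\S{3.4}]{GP2}, using the one-parameter family $\{\Sigma_a\}_{a\in(0,b]}$ as a cobordism. I would consider the parametrized moduli spaces $\mathcal{M}(\mathbf{v}_1,\mathbf{v}_2;\nu)$ with $\mu(\nu)=0$. These are one-dimensional, and I would describe all ends of their Gromov compactification.

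At $a=b$ the end is $\mathcal{M}_b(\mathbf{v}_1,\mathbf{v}_2;\nu)$, whose signed count is the chain-level operation~\eqref{planewithtwopointsoperation}. Interior breakings happen along orbits away from $\mathbf{D}$: a low-energy log-type curve of index $0$ glued to a Floer cylinder of index $1$, or the reverse. These contribute terms of the form $d_{low}\circ K + K\circ d$, where $K$ counts the index $-1$ parametrized curves. Since the inputs $\theta^{\mathbf{v}_i}$ are cycles with zero differential on the Stanley--Reisner side, these terms only change the output by a coboundary. The key point is the end $a\to 0$. Here the marked point $w_a=-1/a$ escapes down the cylindrical end, and after reparametrization the domain degenerates into a pair of pants with two negative ends. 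One end carries a plane with the tangency condition $\mathbf{v}_1$ at $w_{in}$, and the other carries a plane with tangency condition $\mathbf{v}_2$. I would choose the almost complex structures $J_{\Sigma_a}$ coherently with the gluing: for small $a$ they agree with the glued structure built from the data defining $\PSS^{\log}_{low}$ and the pair-of-pants product $\star_{low}$ of Lemma~\ref{pantsinthedegenerationlemma}. Standard gluing then identifies this end with $\coprod \mathcal{M}_{\mathbf{E}}(\mathbf{v}_1,[\nu_1,-F_{\nu_1}])\times\mathcal{M}_{\mathbf{E}}(\mathbf{v}_2,[\nu_2,-F_{\nu_2}])\times\mathcal{M}(\text{pants};\nu_1,\nu_2;\nu)$ with $\mu(\nu_i)=0$. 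Its signed count is the chain-level representative of $\PSS^{\log}_{low}(\theta^{\mathbf{v}_1})\star_{low}\PSS^{\log}_{low}(\theta^{\mathbf{v}_2})$. Additivity of the capping classes shows that the low-energy restriction $[-F_{\nu}]$ is compatible on both sides: the fiber capping disk of $\nu$ splits as the sum of those of $\nu_1,\nu_2$ plus the pants class. So no other energy levels appear.

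The remaining task is to rule out every other end. I would use the dimension counts of \cite[Propositions 3.18, 3.21]{Pomerleano}, together with the stabilizing divisor $\mathbf{E}$, to exclude sphere bubbles, both inside $\mathbf{D}$ and outside it, for generic $J$ in index $\le1$. This is as in Lemma~\ref{logPSScompactness}, where the log compactification strata have codimension $\ge 2$. No breaking can occur at orbits inside $\mathbf{D}$ because $H_{\mathscr{X}}$ vanishes there. Unlike the setting of \cite{GP2}, we need no separate analysis of $w_a$ colliding with $w_{in}$, since $a$ ranges over $(0,b]$ and $w_a=-1/a\to\infty$ only as $a\to0$. Instead I would check that no component can escape into $\mathbf{D}$ near the collision point with a new contact order. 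Because $\mathbf{v}_1,\mathbf{v}_2\in\mathbb{Z}^{S_0}_{\ge0}$ only involve good components (grading $0$), any such configuration has virtual codimension at least $2$.

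I expect the main obstacle to be the gluing step at $a\to0$, together with the compactness that accompanies it. Specifically, I need to show that the limiting configuration really decomposes into two log PSS planes plus a pants section, with the tangency data correctly distributed: multiplicity $\mathbf{v}_1$ at $w_{in}$ on one plane and $\mathbf{v}_2$ on the other. Here the rescaling of \cite[\S{3.2}]{Tehrani} may be needed if the planes sink into $\mathbf{D}$. I would first try to prevent any sinking by keeping $J$ standard (valued in $AK(\mathscr{X},\mathbf{D})$) near both marked points uniformly in $a$. I would then use positivity of intersections with the $J$-holomorphic components $D_i$ to show that the total contact $\mathbf{v}_1+\mathbf{v}_2$ is conserved and split as stated.
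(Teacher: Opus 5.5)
There is a genuine gap: you have the direction of the degeneration in the family $\{\Sigma_a\}_{a\in(0,b]}$ backwards. In the paper's setup $\Sigma_a=\mathbb{CP}^1\setminus\{0\}$. The cylindrical end $\epsilon_{in}(s,t)=\exp(2\pi(s+it))$ sits at the puncture $0$ (as $s\to-\infty$), and $w_{in}=\infty$ is the marked point lying over the special fiber $\mathbf{D}$. Write $w_a=-1/a=\epsilon_{in}(s,1/2)$ with $s=-(\log a)/(2\pi)$. Then $a\to 0$ sends $s\to+\infty$, so $w_a$ moves \emph{away} from the cylindrical end and collides with $w_{in}$.

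The limit is a (constant, in the low energy class) sphere bubble at $w_{in}$ carrying both tangency points, attached to a single low energy log PSS plane with contact $\mathbf{v}_1+\mathbf{v}_2$. Its count is $\PSS^{\log}_{low}(\theta^{\mathbf{v}_1}\star_{\SR}\theta^{\mathbf{v}_2})$, and this is exactly how the paper proves the \emph{next} result, Lemma~\ref{lemma:pss-low-energy-ring-2}. A pair of pants with two planes attached never arises as a limit of $\mathcal{M}(\mathbf{v}_1,\mathbf{v}_2;\nu)$ over $(0,b]$. Your remark that no analysis of $w_a$ colliding with $w_{in}$ is needed is therefore the reverse of the truth: $w_a\to\infty=w_{in}$ is the only degeneration of the domain in this family. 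Run as written, your cobordism would identify the operation at $a=b$ with the Stanley--Reisner product followed by $\PSS^{\log}_{low}$, not with $\PSS^{\log}_{low}(\theta^{\mathbf{v}_1})\star_{low}\PSS^{\log}_{low}(\theta^{\mathbf{v}_2})$.

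The paper instead treats this statement as the standard TQFT gluing argument of \cite[Lemma 3.26]{GP2}, which uses the opposite degeneration. There the two marked points are separated relative to the Floer data, i.e.\ $w_a$ is pushed toward the puncture $0$, down the cylindrical end, rather than toward $w_{in}$. Concretely, you glue the two low energy log PSS planes (with contacts $\mathbf{v}_1$ and $\mathbf{v}_2$) into the negative ends of the pair of pants, realized in $E_{\phi}$ with domain-dependent almost complex structures as in Lemma~\ref{pantsinthedegenerationlemma}, along a long neck. For long necks, gluing and compactness identify the count with the chain-level product. A one-parameter family of data on the three-pointed domain, connecting the glued data to the data on $\Sigma_b$, then gives the equality in cohomology.

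The rest of your toolkit carries over unchanged to that family: additivity of the fiber capping classes, keeping $J$ standard near both marked points, excluding sphere bubbles by codimension, and ruling out breaking at orbits in $\mathbf{D}$. Also, in degree $0$ the interior breaking terms you mention are in fact empty, since there are no orbits of degree $-1$.
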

\begin{proof}
    This is a standard TQFT argument proved the same way as~\cite[Lemma 3.26]{GP2}.
\end{proof}

Next, we will show that:
\begin{lemma}[cf. {\cite[Lemma 3.29]{GP2}}]\label{lemma:pss-low-energy-ring-2}
    The operation~\eqref{planewithtwopointsoperation} agrees with
\begin{align}\label{PSSfollowedbySR}
    \PSS^{\log}_{low}(\theta^{\mathbf{v}_1}\star_{\SR}\theta^{\mathbf{v}_2})
\end{align}
cohomologically, where $\star_{low}$ denotes the low energy pair of pants product.
\end{lemma}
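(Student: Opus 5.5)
The plan is to degenerate the parameter $a\in(0,b]$ of the one-parameter family $\mathcal{M}(\mathbf{v}_1,\mathbf{v}_2;\nu)$ to $a\to 0$, as in the proof of~\cite[Lemma 3.29]{GP2}. The family interpolates between the domain $\Sigma_b$, which by Lemma~\ref{lemma:pss-low-energy-ring-1} computes $\PSS^{\log}_{low}(\theta^{\mathbf{v}_1})\star_{low}\PSS^{\log}_{low}(\theta^{\mathbf{v}_2})$, and the limit in which the two marked points $w_{in}=\infty$ and $w_a=-1/a$ collide. For $\mu(\nu)=0$, the space $\mathcal{M}(\mathbf{v}_1,\mathbf{v}_2;\nu)$ is a one-dimensional manifold for generic $J_{\Sigma_a}$. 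Its Gromov compactification has three kinds of ends:
\begin{itemize}
\item[(a)] $a=b$, which gives $\mathcal{M}_b(\mathbf{v}_1,\mathbf{v}_2;\nu)$;
\item[(b)] Floer breaking along an index $-1$ or index $1$ orbit, which contributes a chain homotopy term $d\circ K$ (or vanishes, since the inputs are cocycles);
\item[(c)] the limit $a\to 0$.
\end{itemize}
No breaking occurs along orbits near $\mathbf{D}$, because $H_{\mathscr{X}}$ vanishes there, exactly as in the proof of Lemma~\ref{logPSScompactness}. Counting boundary points of the compactified one-dimensional space with $\mu(\nu)=0$ (together with the zero-dimensional spaces with $\mu(\nu)=-1$ used to define a homotopy $K$) therefore reduces the lemma to identifying the $a\to 0$ end with $\PSS^{\log}_{low}(\theta^{\mathbf{v}_1+\mathbf{v}_2})$.

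As $a\to 0$, the limit curve is a nodal curve consisting of a principal component $\Sigma$, asymptotic to $\nu$, attached at $w_{in}$ to a sphere bubble carrying the two marked points $w_{in},w_a$. By the low energy restriction (the class $[-F_\nu]$) and the fact that both inputs lie in $\mathbb{Z}^{S_0}_{\geq0}$, energy considerations force the bubble to be constant. This constant bubble sits at a point of $D_{I_1}\cap D_{I_2}=D_{I_1\cup I_2}$ if the supports are compatible; otherwise the bubble must lie in a stratum that is empty, so the end is empty and the product $\theta^{\mathbf{v}_1}\star_{\SR}\theta^{\mathbf{v}_2}$ vanishes, matching the Stanley--Reisner relation. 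The principal component is then a log PSS solution with contact order $\mathbf{v}_1+\mathbf{v}_2$ at $w_{in}$. Strictly, one would need the log compactness of Lemma~\ref{logPSScompactness}: the constant bubble lies in $\mathbf{D}$ and carries meromorphic-section data, and the dimension counts of~\cite[Propositions 3.18 and 3.21]{Pomerleano} rule out all configurations other than this one (nonconstant bubbles, bubbles inside $\mathbf{D}$ with nonconstant log data, extra $\mathbf{E}$-marked bubbles) in virtual dimension $\le 1$. This uses the standard form $J_0\in AK(\mathscr{X},\mathbf{D})$ near both $w_{in}$ and $w_a$.

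The main obstacle is the converse gluing statement. Every rigid log PSS solution $u\in\mathcal{M}_{\mathbf{E}}(\mathbf{v}_1+\mathbf{v}_2,[\nu,-F_\nu])$ must glue uniquely to a constant sphere carrying the two points, producing exactly one end of $\mathcal{M}(\mathbf{v}_1,\mathbf{v}_2;\nu)$ near $a=0$, and the orientations must agree. I would handle this in the local model from the regularization. Near $w_{in}$ the map is $J_0$-holomorphic and, in an adapted chart, looks like $z\mapsto (c_iz^{v_{1,i}+v_{2,i}})_{i}$ in the normal directions to $D_{I_1\cup I_2}$. Splitting the zero of order $\mathbf{v}_1+\mathbf{v}_2$ into zeros of order $\mathbf{v}_1$ at $w_{in}$ and order $\mathbf{v}_2$ at $w_a$ corresponds to the deformation $z^{\mathbf{v}_1}(z-a')^{\mathbf{v}_2}$, which is unique up to the reparametrization fixed by the position of $w_a$. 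Because $J_0$ is integrable in the normal directions (Definition~\ref{almostkahleracsdefinition}), this deformation can be written down explicitly, making the gluing a finite-dimensional computation rather than a full analytic gluing theorem. Orientations then agree, since the glued parameter is a complex coordinate; I would follow~\cite[Lemma 3.29]{GP2}, with orientation lines compared via~\cite[Ch. 11]{SeiBook}. Granting this, the boundary count identifies~\eqref{planewithtwopointsoperation} with~\eqref{PSSfollowedbySR} up to the homotopy term, which proves the lemma.
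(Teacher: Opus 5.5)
Your proposal follows the same route as the paper: view the $\mu(\nu)=0$ family $\mathcal{M}(\mathbf{v}_1,\mathbf{v}_2;\nu)$ as a one-manifold over $a\in(0,b]$, identify its $a\to 0$ end with a constant sphere bubble carrying $w_{in}$ and $w_a$ attached to a low energy log PSS solution of multiplicity $\mathbf{v}_1+\mathbf{v}_2$ (this end is empty when $D_{I_1\cup I_2}=\emptyset$), and conclude by counting boundary points. The one claim to temper is that integrability of $J_0$ in the normal directions turns the converse gluing into an explicit finite-dimensional computation: the factorization $z^{\mathbf{v}_1}(z-a')^{\mathbf{v}_2}$ is an exact solution only near $w_{in}$, where $J_{\Sigma_a}$ is standard, whereas away from $w_{in}$ the almost complex structure is generic and non-split and the Hamiltonian perturbation on the cylindrical end is not equivariant, so you still need the standard analytic gluing of a constant bubble, which is exactly what the paper invokes via \cite[Ch.~10]{MS2}.
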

\begin{proof}
    We consider the $1$-dimensional moduli spaces $\mathcal{M}(\mathbf{v}_1,\mathbf{v}_2;\nu)$, with $\mu(\nu) = 0$, as smooth manifolds fibered over $(0,b]$, corresponding to a degeneration of the domains as $a\to 0$. We can complete this moduli space over $a\to 0$, as $w_a = -1/a$ approaches $w_{in} = \infty$, by gluing in a constant sphere bubble~\cite[Ch. 10]{MS2}. Thus by counting elements in these boundary strata, we see that~\eqref{PSSfollowedbySR} coincides with~\eqref{planewithtwopointsoperation} in cohomology.
\end{proof}

\subsection{The log SSP map}
Following~\cite{PSS}, we will construct a (one-sided) inverse to the (low energy) log PSS map. This is also achieved by counting pseudoholomorphic planes with a tangency condition. In this section, let $\Sigma^{\vee}$ denote $\mathbb{C}$, and consider the family $\mathscr{X} = \mathscr{X}_{\epsilon'}\to\Sigma^{\vee}$. Similarly to $\Sigma$, the domain $\Sigma^{\vee}$ comes with a distinguished marked point $w_{out} = 0$ as well as a \textit{positive} cylindrical end
\begin{align}
    \epsilon_{out}\colon \mathbb{R}\times S^1\to\Sigma^{\vee} = \mathbb{R}\times S^1\cup\lbrace 0\rbrace\subset\mathbb{CP}^1
\end{align}
defined in the obvious way (cf.~\eqref{logPSScylindricalend}.) We can also define a space $J_{\Sigma^{\vee}}(\mathscr{X},\mathbf{D})$ of domain-dependent almost complex structures on $\Sigma^{\vee}$ analogously to Definition~\ref{logPSSdomaindependentacs}.

\begin{definition}
    Let $\tilde{\nu} = (\nu,[-F_{\nu}])$ be a capped orbit, with $\nu\in\mathcal{H}(T_{\phi^d},\Theta_{\phi^d})$, such that $[-F_{\nu}]\cdot\mathbf{D} = \mathbf{v}\in\mathbb{Z}_{\geq0}^{S_0}$. Then we define the \textit{low energy} log SSP moduli space
    \begin{align*}
        \mathcal{M}^{\SSP}(\tilde{\nu},\mathbf{v})
    \end{align*}
    to be the moduli space of $J_{\Sigma^{\vee}}$-holomorphic maps, for given $J_{\Sigma^{\vee}}$, which are asymptotic to $\nu$ at the positive end and such that
    \begin{itemize}
        \item[$\bullet$] $u(z)\not\in\mathbf{D}$ for $z\neq w_{out}$, and;
        \item[$\bullet$] the intersection multiplicity of $u$ with $D_i$ at $w_{in}$ is $v_i$.
    \end{itemize}
\end{definition}
Again, by the Riemann--Hurwitz theorem, all log SSP solutions are $d$-fold multisections of $\mathscr{X}\to\Sigma^{\vee}$ with a single branch point at $w_{out}$. Using the standard conventions for the Maslov index, these moduli spaces have virtual dimension
\begin{align*}
    \operatorname{vdim}\mathcal{M}^{\SSP}(\tilde{\nu},\mathbf{v}) = 2n+2-\mu(\nu)
\end{align*}
where we recall that $\mathscr{X}$ is of complex dimension $n+1$ and that $w_{out}$ is constrained to lie in a stratum $D_I$ of $\mathbf{D}$ determined by $\mathbf{v}$. If we restrict to $\nu$ such that $\mu(\nu) = 0$, then the moduli spaces $\mathcal{M}^{\SSP}(\tilde{\nu},\mathbf{v})$ are compact smooth manifolds of dimension $2n+2$ for generic $J_{\Sigma^{\vee}}$.\footnote{We note that strip-breaking is ruled out for degree reasons and sphere bubbles are ruled out by our restriction to SSP solutions equivalent to fiber disks.} There is an evaluation map
\begin{align*}
    \ev_{out}\colon\mathcal{M}_{\SSP}(\tilde{\nu},\mathbf{v}) &\to D_I \\
    [u,w_{out}]&\mapsto u(w_{out})
\end{align*}

\begin{definition}
    The \textit{low energy log SSP map}
    \begin{align*}
        \SSP^{\log}_{low}\colon\bigoplus_{d=0}^{\infty} HF^0(X_t,\phi^d)\to\SR(\mathcal{X}^{\circ})
    \end{align*}
    is defined by
    \begin{align*}
        \SSP^{\log}_{low}([\nu,F_{\nu}])\coloneqq\sum_{\mathbf{v}\in S_0}PD((\ev_{out})_*[\mathcal{M}(\tilde{\nu},\mathbf{v})])
    \end{align*}
    where $PD$ denotes the Poincar{\'e} duality isomorphism.
\end{definition}

We will show that $\SSP^{\log}_{low}$ is a left inverse for $\PSS^{\log}_{low}$ using an argument reminiscent of~\cite{PSS}. To that end, we will need a confinement argument for low energy log PSS and log SSP solutions, and we will need to introduce suitable moduli spaces of pseudoholomorphic spheres. The first of these is achieved in the following lemma.
\begin{lemma}\label{confinementlemma}
    For any low energy log PSS solution $u\in\mathcal{M}(\mathbf{v},[\nu,-F_{\nu}])$, the image of $u$ is contained in $U_I$, where $I$ is the support of $\mathbf{v}\in\mathbb{Z}^{S_0}_{\geq0}$. The same is true for all low energy log SSP solutions as well.
\end{lemma}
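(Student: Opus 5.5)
The plan is to run an energy/action confinement argument in the spirit of the integrated maximum principle, using the standard local form of the regularized symplectic structure on the neighborhoods $U_I$ together with the fact that a low energy solution lies in the class of the (reversed) canonical fiber capping disk $-F_{\nu}$. I will treat log PSS solutions $u\in\mathcal{M}(\mathbf{v},[\nu,-F_{\nu}])$ in detail. The log SSP case is the same argument with the cylindrical end reversed and $w_{in}$ replaced by $w_{out}$.

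First, I compute the topological energy of $u$ in two ways. On one hand, since $u$ is homotopic rel asymptotics to $-F_{\nu}$, its $\Omega$-area equals that of $-F_{\nu}$. By Lemma~\ref{orbitsinstandardneighborhoods}, $F_{\nu}$ is a disk in a fiber of $\pi_I\colon U_I\to D_I$. Using the local form~\eqref{standardlocalformtubularnbhd}, its area is $\frac12\sum_{i\in I}v_i\rho_{i,\mathbf{v}}$ up to the small correction coming from $H_\delta$. On the other hand, I split this energy as $E(u)=E_{geo}(u)+\text{(Hamiltonian/curvature term)}$, where $E_{geo}\geq 0$ because $J_\Sigma$ is $\Omega$-tame on vertical directions (Definition~\ref{logPSSdomaindependentacs}(ii)). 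The key identity to extract is that the positivity of intersections of $u$ with each $D_j$, $j\notin I$, contributes a strictly positive amount $\geq c>0$ of area beyond the fiber-disk area. This uses that the total intersection with $D_j$ is fixed to be zero off $w_{in}$ by the definition of log PSS solutions, but a nonzero homological intersection would be forced if $u$ left $U_I$ toward $D_j$.

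Second, to get confinement rather than just an area comparison, I would use the projections $\rho_j$ for $j\in I$ and the functions $\rho_k$, $k\notin I$, near the boundary of $U_I$. On $U_I$ the symplectic form is $\pi_I^*\Omega_{D_I}+\tfrac12\sum d(\rho_i\theta_{e,i})$, so for the composite $w=\pi_I\circ u$ on $u^{-1}(U_I)$ the area of $u$ decomposes as the area of $w$ in $D_I$ plus fiberwise terms. Stokes on the fiberwise terms gives exactly the capping disk area, with the asymptotic boundary contributing $\sum v_i\rho_{i,\mathbf{v}}/2$ and the intersection at $w_{in}$ contributing the same. Hence the leftover $\int w^*\Omega_{D_I}$ plus the contribution of $u$ outside $U_I$ must vanish. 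Each term is nonnegative by tameness, with compatibility of $J_0\in AK(\mathscr{X},\mathbf{D})$ near $w_{in}$ and $J$ preserving the splitting in Definition~\ref{almostkahleracsdefinition}(iii),(iv). So $w$ is constant, and $u$ cannot exit $U_I$, since otherwise it would cross a level set of some $\rho_i$ with positive symplectic area transverse to the fibers. A cleaner alternative for this last step is to apply a maximum principle to $\rho_i\circ u$ on the region where $u$ meets $\partial U_I$: in the fibers $\rho_i$ is plurisubharmonic for the split $J$, so $\rho_i\circ u$ is subharmonic, and its boundary values on the cylindrical end are the constant $\rho_{i,\mathbf{v}}$ below the radius of $U_I$.

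The main obstacle is making this decomposition rigorous where $u$ passes near corners of $U_I$ (near other strata $D_J\supset D_I$ with $J\supsetneq I$) and where the Hamiltonian perturbation $H_\delta$ and the domain-dependence of $J_\Sigma$ spoil the exact product structure. There I would first choose $\delta_{\mathbf{v}}$ small and $J_\Sigma$ of split form on $U_I$ (which is permitted by Definition~\ref{logPSSdomaindependentacs}), so that the error terms are $O(\delta)$. I would then argue by the gap: any excursion outside $U_I$ costs area bounded below by a constant depending only on the regularization radii. This would contradict the exact area equality for small $\delta$.
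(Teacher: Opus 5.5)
There is a genuine gap: nothing in your argument makes the area of $u$ smaller than the cost of an excursion out of $U_I$. Your last paragraph has the right shape, and it is essentially the paper's argument. A point of $u(\Sigma)$ outside $U_I$ lies in some other $U_K$. Projecting the part of $u$ in $U_K$ to $D_K$ uses that $J$ is split there, so that $\Omega(u)$ bounds a multiple of the area of $\pi_K\circ u$, as in \cite[Lemma 4.10]{GP2}. Monotonicity in $D_K$ then gives a lower bound $C_K r_K^2$ that depends only on the regularization.

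The contradiction, however, needs $\Omega(u)<C_K r_K^2$, and shrinking $\delta_{\mathbf{v}}$ does not give this. Up to the $O(\delta)$ perturbation terms, $\Omega(u)$ is the topological quantity $\Omega(-F_{\nu})$. That is the fiber-disk area, a fixed multiple of $\sum_{i\in I}v_i\rho_{i,\mathbf{v}}$, a positive number independent of $\delta$. The missing observation, which is the heart of the paper's proof, is that this area tends to $0$ as the completion radius $\epsilon'\to 0$. The orbits lie in $X_t$ with $|t|=\epsilon'$, so the radii $\rho_{i,\mathbf{v}}$ shrink, while $C_K r_K^2$ does not change. So you must take $\epsilon'$ small; the lemma is really a statement for $\epsilon'$ sufficiently small.

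The exact identity you offer in place of this smallness is circular. You claim that $\int w^*\Omega_{D_I}$ plus the area of $u$ outside $U_I$ must vanish. But Stokes' theorem for $\frac12\sum_{i\in I}\rho_i\theta_{e,i}$ on $u^{-1}(U_I)$ also produces boundary terms along $u^{-1}(\partial U_I)$, which is exactly where an excursion would occur, and these terms are not controlled in sign. (Separately, $w_{in}$ contributes nothing: $\rho_i\theta_{e,i}$ extends smoothly by zero over $D_i$.)

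The maximum-principle variant also fails, for two reasons. First, the Levi form of $\rho_i$ for the split $J$ contains $\rho_i$ times the curvature of $\nabla_{I,i}$ on horizontal complex lines, which has no definite sign. Second, $\rho_i$ is only defined on $U_i$, so a maximum principle on $u^{-1}(U_i)$ does not stop $\rho_i\circ u$ from increasing monotonically out to $\partial U_i$.

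Finally, your first step is not valid. Away from $w_{in}$, $u$ maps into smooth fibers $X_t$, which are disjoint from every $D_j$. Leaving $U_I$ therefore forces no intersection with any $D_j$, $j\notin I$, and gives no area gap.
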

\begin{proof}
    We will only spell this out for log PSS solutions, as the other case is proved identically. The proof is based on an energy estimate for log PSS solutions combined with the well-known monotonicity lemma. Note that the by Lemma~\ref{divisorconstruction0} and Stokes' theorem, the symplectic area of $u\colon\Sigma\to\mathscr{X}$ for $u\in\mathcal{M}(\mathbf{v},[\nu,-F_{\nu}])$ is determined by the integral of a (locally defined) primitive of $\Omega$ along the boundary of $u$ at infinity. From this observation it follows that the symplectic area depends on the constant $\epsilon'>0$ chosen when completing $\mathscr{X}\mid_{\mathbb{D}_{\epsilon}}$ (see Assumption~\ref{completedfamilyassumption}.) Moreover, the symplectic area of any low energy log PSS solution approaches $0$ as $\epsilon'\to0$.

    Suppose to a contradiction that there exists some point $p\in\mathscr{X}_{\epsilon'}$ and some low energy log PSS solution $u\colon\Sigma\to\mathscr{X}_{\epsilon'}$ whose image contains $p$. Such a point would lie in some region $U_K$, where $K\subset R$ corresponds to a stratum of $\mathbf{D}$ other than $I$. Furthermore, the portion of $u$ lying in $U_K$ is the image of a pseudoholomorphic disk, and the composition of this disk with $\pi_K\colon U_K\to D_K$ is a holomorphic disk with image in $D_K$. It follows that we can find a ball $B_K(\pi_K(p))$ in $U_K$ centered at $\pi_K(p)$ of some positive radius $r_K'>0$, independent of the particular point $p$, which is disjoint from $U_I$. Monotonicity implies that the area of this disk is bounded below by some expression $C_Kr_K^2$, where $r_K$ is smaller than both $r_K'$ and the injectivity radius of $D_K$ (see~\cite[Lemma 4.8]{GP2} and the references therein.) Because almost complex structures which are standard at infinity are also split in the sense of~\cite[Definition 4.9]{GP2}, we can use the same argument as in~\cite[Lemma 4.10]{GP2} to show that the total symplectic area of $u$ is bounded below by a constant multiple of the symplectic area of $\pi_K(u)$. In particular we can make $\pi_K(u)$ smaller than the \textit{a priori} bound $C_Kr_K^2$, so no such $p$ can exist.
\end{proof}

With this understood, we return to the proof that $\PSS^{\log}_{low}$ is an injection. From the families $\mathscr{X}_{\epsilon'}\to\Sigma$ and $\mathscr{X}_{\epsilon'}\to\Sigma^{\vee}$, we can form a family
\begin{align}\label{degenerationoverp1}
    \mathscr{X}_{\mathbb{CP}^1}\to\mathbb{CP}^1
\end{align}
by a clutching construction. The regularizing neighborhoods $U_I$ for each $I\subset S_0$ can be glued together in $\mathscr{X}_{\mathbb{CP}^1}$, from which we obtain open subsets of the family over $\mathbb{CP}^1$ characterized as follows
\begin{lemma}
    For each $I\subset S_0$, there is an open subset $PD_I\subset\mathscr{X}_{\mathbb{CP}^1}$ admitting a natural identification
    \begin{align}\label{projectivizednbhd}
        PD_I\cong\mathbb{P}(N_{\mathscr{X}}D_I\oplus\mathcal{O}_{D_I}) \,.
    \end{align}
\end{lemma}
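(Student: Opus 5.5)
The plan is to build $PD_I$ by gluing two pieces of $\mathscr{X}_{\mathbb{CP}^1}$, using the regularization $\mathfrak{R}$ (Definition~\ref{Omega-regularization-definition}), the compatible trivialization $\tau_{reg}$ from Lemma~\ref{homotopyoftrivializations}, and the fiberwise rotation picture of Lemma~\ref{orbitsinstandardneighborhoods}.

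\textbf{The piece near $w_{in}$.} Over the half $\Sigma$ containing $w_{in}$, the regularized neighborhood $U_I$ is identified via $\psi_I$ with a disk bundle in
\[N_{\mathscr{X}}D_I = \bigoplus_{i\in I} N_{\mathscr{X}}D_i|_{D_I}.\]
For $I\subset S_0$ I would shrink $U_I$ to avoid the deeper strata, working over $D_I'$ as in the construction of the isolating neighborhoods. In these coordinates the projection is
\[\pi_{reg}\bigl((v_i)_{i\in I}\bigr)=\tau_{reg}\Bigl(\bigotimes_{i\in I} v_i\Bigr),\]
by \eqref{compatiblewithregularizationdefinitingequation}, where $a$ is the identity for $\epsilon$ small.

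\textbf{The piece over $\Sigma^{\vee}$.} Over the other half of the clutching, $\Sigma^{\vee}\cong\mathbb{C}$, the same neighborhood appears with its shell. The clutching map is $z\mapsto 1/z$ on the base, and the monodromy of the shell acts on $U_I$ by the rotations of Lemma~\ref{orbitsinstandardneighborhoods}. So over $\mathbb{C}^*$ the two charts are identified fiberwise by the fiberwise rotation of $N_{\mathscr{X}}D_I$.

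\textbf{Gluing and identification.} Unwinding the mapping-torus identification \eqref{identificationofregularpartwithmappingtorus}, I would check that the transition is, up to isotopy, the inversion $v\mapsto v/|v|^2$ in the normal directions, composed with the standard $\mathbb{C}^*$-scaling. This is exactly the transition function between the two standard affine charts of $\mathbb{P}(N_{\mathscr{X}}D_I\oplus\mathcal{O}_{D_I})$:
\begin{itemize}
\item the chart $N_{\mathscr{X}}D_I$, near the zero section $D_I\subset\mathbf{D}$ over $w_{in}$;
\item the chart near the section at infinity, which sits over $w_{out}=0$ in $\Sigma^{\vee}$.
\end{itemize}
Taking $PD_I$ to be the union of the two neighborhoods together with the glued shell region then gives the identification \eqref{projectivizednbhd}. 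The Hermitian structures $(\rho_i,\nabla_i)$ make the gluing smooth and $U(1)^{|I|}$-equivariant. Naturality follows because $\mathfrak{D}\psi_{I;I'}$ is a product of Hermitian isomorphisms, so the identifications for nested $I'\subset I$ are compatible.

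\textbf{Main obstacle.} The hard step is the third: showing that the shell's cylindrical extension matches the normal-bundle coordinates on both sides. The mapping torus was attached only via \eqref{identificationofregularpartwithmappingtorus}, and it is a priori a completion rather than a literal radial inversion. I would first try to choose the identification in Definition~\ref{familiesoverCdefinition} to be the radial flow in the $\rho_i$ coordinates inside each $U_I$, which extends the rotation-invariant structure. I would then argue that any two choices differ by an isotopy preserving the $T^{|I|}$-action, which is enough for a natural identification up to isotopy.
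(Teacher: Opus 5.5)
Your strategy is the one the paper has in mind. The paper gives no argument beyond the remark that the $U_I$ on the two halves of the clutching glue together. However, the step that is supposed to produce $\mathbb{P}(N_{\mathscr{X}}D_I\oplus\mathcal{O}_{D_I})$ does not work.

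\textbf{The chart change is wrong for $|I|\geq 2$.} Let $k=|I|\geq 2$ and $N=N_{\mathscr{X}}D_I$. Gluing two copies of the rank-$k$ bundle $N$ along $N\setminus 0$ by $v\mapsto\lambda v/|v|^2$ gives the fibrewise one-point compactification, an $S^{2k}$-bundle, not a $\mathbb{CP}^k$-bundle. Moreover, $\mathbb{P}(N\oplus\mathcal{O})$ has no second chart isomorphic to $N$: the complement of $\{[v:1]\}$ is the divisor $\mathbb{P}(N)$. So there is no ``section at infinity'' to put over the other pole.

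\textbf{The transition is incompatible with the projections, whatever the rank.} By \eqref{compatiblewithregularizationdefinitingequation} the projection is $\prod_{i\in I}v_i$ near each copy of $D_I$. Because the $\Sigma$-side is the base change by $z\mapsto 1/z$, its shell circle is oriented opposite to the $\Sigma^{\vee}$-side one. Hence the gluing $v'=\Psi(v)$ must satisfy $\arg\prod_i v_i'=c-\arg\prod_i v_i$. Equivalently, it must intertwine the two $U(1)^{|I|}$-actions through $\theta\mapsto-\theta$, i.e.\ conjugate the local monodromy to its inverse.

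None of your ingredients can do this. Fibrewise rotations (your second step), radial rescalings (your proposed fix for the shell) and $\lambda v/|v|^2$ all commute with the torus action.

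The compatible local gluing is $v_i'=c_i/v_i$, separately in each line factor $N_i$. Fibrewise this is the chart change of $(\mathbb{P}^1)^{|I|}$ (away from the indeterminacy of $\prod_i v_i$), not of $\mathbb{P}^{|I|}$. For instance, take the spheres $\zeta\mapsto(c_i\zeta^{m_i})_{i\in I}$ with tangency $\mathbf{v}=(m_i)$ at both poles, as needed in Proposition~\ref{lowenergylogPSSinjection}. In that model they reach the second copy of $D_I$ at $\zeta=\infty$, whereas in $\mathbb{P}(N\oplus\mathcal{O})$ they land in $\mathbb{P}(N)$.

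\textbf{Even for $|I|=1$ the identification holds only over $D_I'$.} So the statement you are aiming at must be weakened. Since $D_I$ is compact, so is $\mathbb{P}(N\oplus\mathcal{O}_{D_I})$, and a compact open subset of the connected space $\mathscr{X}_{\mathbb{CP}^1}$ is all of it.

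Concretely, take $q\in D_i$ near $D_i\cap D_j$. There $\pi=v_iv_j$, so the normal disc of radius $r$ over $q$ maps onto $\{|z|<r|v_j(q)|\}$. Once $|v_j(q)|<\epsilon'/r$, it never reaches the shell, and the glued fibre over $q$ is two disjoint discs, not a $\mathbb{P}^1$.

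There is also a normal bundle mismatch. Both copies of $D_i$ have normal bundle $N=N_{\mathscr{X}}D_i\cong\mathcal{O}_{D_i}(-\sum_{j\neq i}D_j)$ in $\mathscr{X}_{\mathbb{CP}^1}$. In contrast, the two sections of $\mathbb{P}(N\oplus\mathcal{O})$ have normal bundles $N$ and $N^{-1}$. And $N^{\otimes 2}$ is never trivial, because $\sum_{j\neq i}D_j|_{D_i}$ is a nonzero effective divisor on the projective variety $D_i$.

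What your gluing actually establishes, using $v'=c/v$ in the trivialization of $N|_{D_i'}$ provided by $\tau_{reg}$, is
\[
PD_i\cong D_i'\times\mathbb{P}^1\cong\mathbb{P}(N\oplus\mathcal{O})|_{D_i'} .
\]
The compatibility of the maps $\mathfrak{D}\psi_{I;I'}$ does not extend this across the deeper strata.
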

There are two natural sections of $\mathbb{P}(N_{\mathscr{X}}D_I\oplus\mathcal{O}_{D_I})$ which correspond to the copies of $D_I$ lying over the points $0,\infty\in\mathbb{CP}^1$. We observe that~\eqref{degenerationoverp1} also has sections (smooth) sections. One can obtain such a section by gluing together two sections of $\mathscr{X}_{\epsilon'}$ in the relative homology class of a fiber disk. Let $F_{\mathbb{CP}^1}\in H_2(\mathscr{X}_{\mathbb{CP}^1})$ denote the resulting homology class of sections. Note that this is homology class coming from a homology class in one of the \textit{fibers} of~\eqref{projectivizednbhd}. The proof of the following result is based on counting pseudoholomorphic spheres in $\mathscr{X}_{\mathbb{CP}^1}$ contained in the class $F_{\mathbb{P}}^1$ of~\eqref{projectivizednbhd}.

\begin{proposition}\label{lowenergylogPSSinjection}
    We have that
    \begin{align}\label{SSP-PSS-composition}
        \SSP^{\log}_{low}\circ\PSS^{\log}_{low}(\theta^{\mathbf{v}}) = \theta^{\mathbf{v}} \,.
    \end{align}
    Consequently $\PSS^{\log}_{low}$ is an injection.
\end{proposition}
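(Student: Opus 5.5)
The plan is a standard PSS/SSP gluing argument, following~\cite{PSS} and its log version in~\cite[\S{3}]{GP2}, adapted to the clutched family $\mathscr{X}_{\mathbb{CP}^1}\to\mathbb{CP}^1$ of~\eqref{degenerationoverp1}. Fix $\mathbf{v}\in\mathbb{Z}^{S_0}_{\geq0}$ with support $I$. By definition, the coefficient of $\theta^{\mathbf{w}}$ in $\SSP^{\log}_{low}\circ\PSS^{\log}_{low}(\theta^{\mathbf{v}})$ is a count of broken configurations. Each consists of a low energy log PSS plane with tangency $\mathbf{v}$ at $w_{in}$, asymptotic to a degree $0$ orbit $\nu$, followed by a low energy log SSP plane from $\nu$ with tangency $\mathbf{w}$ at $w_{out}$. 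Both planes lie in the class of $\pm F_{\nu}$.

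\textbf{Step 1: glue to spheres.} I will introduce the moduli space $\mathcal{M}_R(\mathbf{v},\mathbf{w})$ of $J_R$-holomorphic sections of $\mathscr{X}_{\mathbb{CP}^1}$ in the class $F_{\mathbb{CP}^1}$, with prescribed tangencies to the copies of $\mathbf{D}$ over $\infty$ and $0$. Here $J_R$ is a family of domain-dependent structures on the neck-stretched sphere obtained by gluing $\Sigma$ and $\Sigma^{\vee}$ along a cylinder of length $R$. This family is parametrized by $R\in[0,\infty)$, and I will evaluate it at the point over $0$. Its end as $R\to\infty$ is, by standard gluing and compactness, the broken PSS--SSP configuration. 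Lemma~\ref{logPSScompactness}, together with the degree $0$ restriction, rules out other breakings: intermediate orbits would need degree $<0$, or would not respect fiber cappings. Cobordism then identifies $\SSP^{\log}_{low}\circ\PSS^{\log}_{low}(\theta^{\mathbf{v}})$ with the pushforward under $\ev_0$ of $[\mathcal{M}_{R_0}(\mathbf{v},\mathbf{w})]$ for any fixed $R_0$.

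\textbf{Step 2: confinement and local computation.} By the area argument of Lemma~\ref{confinementlemma}, made uniform in $R$ (the energy is fixed by the fiber class and tends to $0$ with $\epsilon'$), every such sphere lies in the glued neighborhood $PD_I\cong\mathbb{P}(N_{\mathscr{X}}D_I\oplus\mathcal{O}_{D_I})$ of~\eqref{projectivizednbhd}. In particular $\mathbf{w}$ must have support $I$, and energy/intersection considerations force $\mathbf{w}=\mathbf{v}$. Near $\mathbf{D}$ the almost complex structures are split and standard (Definition~\ref{almostkahleracsdefinition}). So for a suitable integrable choice at $R_0=0$, the projection to $D_I$ of any solution is a constant sphere, because it is a holomorphic sphere of zero area in $D_I$. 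The solutions are therefore exactly the fiber curves: in each $\mathbb{P}^{|I|}$-type fiber, the rational curves of the appropriate multidegree with tangency orders $v_i$ at two fixed points. These curves are regular, and there is exactly one for each point $p\in D_I$, up to the reparametrizations already quotiented. Thus $\ev_0$ is a degree one diffeomorphism onto $D_I$, and its pushforward is the fundamental class. Under $PD$ this gives $1\in H^0(D_I)$, that is, $\theta^{\mathbf{v}}$. Injectivity of $\PSS^{\log}_{low}$ then follows immediately.

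\textbf{Main obstacle.} I expect the hardest step to be the local count in Step 2. One part is ensuring regularity of the multiply tangent fiber curves. The other is checking that the tangency condition of order $v_i$ at both ends admits a unique solution with sign $+1$, including the orientation comparison and the absence of $\frac{1}{\ell!}$-type symmetry factors. I would first handle the case $\mathbf{v}=\mathbf{v}_I$ by automatic transversality in the toric fiber. For general $\mathbf{v}$, I would reduce to the explicit curves $z\mapsto(c_iz^{v_i})_{i\in I}$ in the local model~\eqref{horzlift}. The cokernel of the normal operator splits into line bundles of degree $\geq -1$, so it vanishes.
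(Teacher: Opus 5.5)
Your route is the paper's route: glue low-energy log PSS and log SSP solutions into fiber-class spheres in $\mathscr{X}_{\mathbb{CP}^1}$, confine them to $PD_I$ by Lemma~\ref{confinementlemma}, and count. The paper simply asserts that ``the virtual count of such sections is $1$''. Your Step~2 tries to verify this explicitly, and that verification fails as soon as $|I|\geq 2$.

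The false claim is that there is exactly one fiber curve through each $p\in D_I$. In the local model, the curves $z\mapsto (c_i z^{v_i})_{i\in I}$ have the prescribed tangencies at $w_{out}$ and $w_{in}$ for every $(c_i)\in(\mathbb{C}^*)^{I}$. Quotienting by the $\mathbb{C}^*$ of reparametrizations fixing $0$ and $\infty$, which also normalizes the base map $\pi\circ u=(\prod_i c_i)z^{|\mathbf{v}|}$, leaves a $(\mathbb{C}^*)^{|I|-1}$-family over each $p$. Your own cokernel computation shows that every member of this family is regular. Already for $\mathbf{v}=(1,1)$, in your $\mathbb{P}^{2}$-fiber picture, the lines $z\mapsto[c_1z:c_2z:1]$ through the origin give a whole $\mathbb{P}^1$ of solutions. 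Hence $\mathcal{M}_{R_0}(\mathbf{v},\mathbf{v})$ has real dimension at least $\dim_{\mathbb{R}}D_I+2(|I|-1)$, and $\ev_0$ has positive-dimensional fibers. So $(\ev_0)_*[\mathcal{M}_{R_0}]$ lies in a degree above $\dim_{\mathbb{R}} D_I$ and cannot be $[D_I]$.

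The same count shows that Step~1 is not a cobordism as stated. The broken configurations you start from are glued along one nondegenerate orbit $\nu$, i.e.\ a single point of the Morse--Bott torus family $\mathcal{F}_{\mathbf{v}}$ selected by the non-split perturbation $H_\delta$. This pins the radial position $\rho_{\mathbf{v}}$ and the $T^{|I|-1}$-phase of the neck, which is exactly $2(|I|-1)$ real conditions. Those conditions disappear when you switch to an unperturbed, split, integrable model at $R_0$.

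What is missing is a way to carry this constraint through the deformation. One option is a Morse--Bott (cascade) formulation in which the sphere must pass through the chosen orbit at the neck. Another is log evaluation maps recording the phase of the leading coefficients at the tangency points, as in~\cite{GP2}. You would then check that the constraint cuts each $(\mathbb{C}^*)^{|I|-1}$-family down to one transverse solution of sign $+1$.

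For $|I|=1$ your argument essentially works. Even there, the fiber curve $z\mapsto cz^{v}$ is a $v$-fold cover with $\mathbb{Z}/v$ automorphisms. This has to be compensated by the choice of asymptotic marker at the multiply covered orbit, not by $\frac{1}{\ell!}$-type factors.
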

\begin{proof}
    Let $C$ denote the $2$-sphere with two distinguished marked points $w_{in}$ and $w_{out}$ at $0$ and $\infty$. Choose a domain-dependent almost complex structure $J_C$ on $C$ which, near the marked points, is constant with value in $AK(\mathscr{X},\mathbf{D})$. The space of almost complex structures from which we choose $J_C$ is defined completely analogously to Definition~\ref{logPSSdomaindependentacs}, except that we do not require the stabilizing divisor $\mathbf{E}$ to be an almost complex manifold.
    
    Consider the moduli space $\mathcal{M}_2(\mathbf{v}_0;dF_{\mathbb{CP}^1})$ of $J_C$-holomorphic spheres in the class $dF_{\mathbb{CP}^1}$, such that $w_{out}$ is tangent to the copy of $\mathbf{D}$ over $0\in\mathbb{CP}^1$ and $w_{in}$ is tangent to the copy of $\mathbf{D}$ over $\infty\in\mathbb{CP}^1$, with the same order of tangency $\mathbf{v}_0$ at both marked points. By choosing $J_C$ generically, the usual arguments~\cite[\S{3.4}]{MS2} show that these moduli spaces are smooth manifolds of the expected virtual dimension. From here a standard gluing argument (see e.g.~\cite[Ch. 10]{MS2} or~\cite[Lemma 3.26]{GP2}) and the confinement result (Lemma~\ref{confinementlemma}) above shows that by gluing in a cylinder interpolating between low energy log PSS and log SSP solutions with the same asymptotic condition, we can identify the left hand side of~\eqref{SSP-PSS-composition} with the natural map $\SR(\mathcal{X}^{\circ})\to\SR(\mathcal{X}^{\circ})$ determined by counting elements of $\mathcal{M}_2(\mathbf{v}_0;dF_{\mathbb{CP}^1})$. The virtual count of such sections is $1$, from which the result follows.
\end{proof}

\begin{lemma}\label{lowenergylogpssringiso}
    The low energy log-PSS map $\PSS^{\log}_{low}$ is a ring isomorphism.
\end{lemma}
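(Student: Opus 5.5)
The plan is to assemble three ingredients already in hand: multiplicativity, injectivity, and a dimension count. Lemmas~\ref{lemma:pss-low-energy-ring-1} and~\ref{lemma:pss-low-energy-ring-2} together give
\[
\PSS^{\log}_{low}(\theta^{\mathbf{v}_1})\star_{low}\PSS^{\log}_{low}(\theta^{\mathbf{v}_2}) = \PSS^{\log}_{low}(\theta^{\mathbf{v}_1}\star_{\SR}\theta^{\mathbf{v}_2})
\]
in cohomology. Before using this, I will check that $\PSS^{\log}_{low}(1)$ is the unit. For $\mathbf{v}=0$ the relevant solutions have $d=0$, and they are ordinary PSS planes for the identity, so the image is the standard unit. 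Hence $\PSS^{\log}_{low}$ is a unital ring homomorphism, and Proposition~\ref{lowenergylogPSSinjection} shows that it is injective. It therefore remains to prove surjectivity.

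For surjectivity I will argue graded piece by graded piece. Both sides carry a grading by the multiplicity vector $\mathbf{v}\in\mathbb{Z}^{S_0}_{\geq0}$, and the map preserves it. On the Stanley--Reisner side, $\theta^{\mathbf{v}}$ lives in the summand indexed by the support of $\mathbf{v}$. On the Floer side, a low energy log PSS solution asymptotic to $\nu$ in the class $-F_\nu$ has intersection vector $[-F_\nu]\cdot\mathbf{D}=\mathbf{v}$, so $\nu$ lies in the orbit set $\mathcal{F}_{\mathbf{v}}$. By Lemma~\ref{confinementlemma} these solutions are confined to $U_I$.

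The low energy differential preserves this decomposition. By~\eqref{lowenergycylinderclassdef}, counted cylinders respect canonical capping disks, which forces their ends to lie in the same orbit family. So $\bigoplus_d HF^0_{low}(X_t,\phi^d;\Bbbk)$ splits as a direct sum over $\mathbf{v}$ of the local groups coming from $U_{\mathbf{v}}$. Lemma~\ref{PSS-domain-codomain-abstract-iso} then identifies each summand with $H^0(U_{\mathbf{v}})$. Since each stratum $D_I$ is connected and the torus bundle $S_I'$ is connected, this space is one-dimensional, matching the one-dimensional span of $\theta^{\mathbf{v}}$.

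With this, $\PSS^{\log}_{low}$ restricts on each $\mathbf{v}$-graded piece to an injective linear map between one-dimensional $\Bbbk$-vector spaces, hence an isomorphism. Summing over $\mathbf{v}$ gives a bijective ring homomorphism.

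The main obstacle is checking that the abstract isomorphism of Lemma~\ref{PSS-domain-codomain-abstract-iso} really is compatible with the $\mathbf{v}$-grading. Concretely, I must show that the low energy differential has no components between different isolating neighborhoods $U_{\mathbf{v}}\neq U_{\mathbf{w}}$. If such components existed, the Floer side could be smaller in some grading than the count suggests, and the dimension argument would fail. My first approach is an energy argument: a cylinder between orbits in distinct families, in the class fixed by~\eqref{lowenergycylinderclassdef}, has area equal to the difference of canonical capping areas, and for small $\epsilon'$ and $\delta_{\mathbf{v}}$ this forces it into a single isolating neighborhood, by the same monotonicity argument as in Lemma~\ref{confinementlemma}. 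If that fails, I would fall back on comparing total dimensions degree by degree in $d$. Each $HF^0_{low}(X_t,\phi^d)$ is finite-dimensional, with the same dimension as $\SR(\mathcal{X}^{\circ})_d$ by Lemma~\ref{PSS-domain-codomain-abstract-iso}, and the map preserves $d$. So injectivity in each degree $d$ already forces surjectivity, without needing the finer $\mathbf{v}$-splitting.
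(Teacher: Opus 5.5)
Your proposal is correct and follows the same route as the paper, which combines Lemmas~\ref{lemma:pss-low-energy-ring-1} and~\ref{lemma:pss-low-energy-ring-2} for multiplicativity with the injectivity of Proposition~\ref{lowenergylogPSSinjection} and the dimension count of Lemma~\ref{PSS-domain-codomain-abstract-iso}. Your comparison of finite-dimensional pieces for each fixed $d$ makes explicit a finiteness point the paper leaves implicit. The finer $\mathbf{v}$-splitting already holds for the topological reason you first gave (low-energy cylinders miss $\mathbf{D}$ and preserve canonical cappings), so no energy estimate is needed, and unitality is automatic for a bijective multiplicative map.
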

\begin{proof}
    The injectivity of $\PSS^{\log}_{low}$ and the computations in Lemma~\ref{PSS-domain-codomain-abstract-iso} show that $\PSS^{\log}_{low}$ is an isomorphism of vector spaces, which can be further upgraded to an isomorphism of rings combining Lemma~\ref{lemma:pss-low-energy-ring-1} and Lemma~\ref{lemma:pss-low-energy-ring-2}. 
\end{proof}
To upgrade this to a proof of Theorem~\ref{stanleyreisnerisom}, we also need the following result, since the images of the low energy log-PSS map do not themselves define classes in fixed point Floer cohomology (instead they are the low energy terms of the log-PSS classes.)
\begin{proposition}\label{PSS-classes-survive}
    There is a natural isomorphism of $\Lambda$-modules
    \begin{align}\label{lowenergymoduleiso}
        HF^0_{low}(X_t,\phi^d;\Bbbk)\otimes\Lambda \cong HF^0(X_t,\phi^d)
    \end{align}
    for all $d\geq0$.
\end{proposition}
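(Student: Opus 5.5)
The plan is to run the energy spectral sequence from the end of \S{\ref{enhancedfixedpointfloersection}} on the $(T)$-adic filtration of $CF^*_{\Bbbk[\![T]\!]}(X_t,\phi^d;\Bbbk[\![T]\!])$, show that it degenerates in total degree $0$ at the $E_2$-page, and then invert $T$ and base-change to $\Lambda$. The $E_2$-page is $HF^*_{low}(X_t,\phi^d;\Bbbk)\otimes T^q\Bbbk[\![T]\!]/T^{q+1}$. Degeneration in degree $0$ needs two facts: no higher differential $d_r$ ($r\geq 2$) leaving $E_r^{0,*}$ is nonzero, and none entering it is nonzero.

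For the outgoing differentials, use the log PSS classes. By Lemma~\ref{lowenergylogpssringiso}, $\PSS^{\log}_{low}$ surjects onto $\bigoplus_d HF^0_{low}$. Every low-energy class therefore has the form $\PSS^{\log}_{low}(\theta^{\mathbf{v}})$. This class is the $T^0$-leading term of the chain $\PSS^{\log}(\theta^{\mathbf{v}})$ in \eqref{logPSSclasses}, after normalizing by $T^{-\Omega(u)}$ for the fiber-disk capping (or using the canonical cappings $F_\nu$ to identify capped orbits with the free module \eqref{freemoduleenhancedfloer}). That chain is a genuine cocycle in the full enhanced complex by the lemma following \eqref{logPSSclasses}. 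A permanent cycle lifting each $E_2^{0,*}$ class kills all outgoing $d_r$.

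One caveat: \eqref{logPSSclasses} was only defined in characteristic $0$ because of the $1/\ell(\tilde\nu)!$. In positive characteristic I would instead argue that the lift exists by the degree/grading argument below, or note that the low-energy stratum requires no stabilizing points. I expect this to be a point requiring care.

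For the incoming differentials, degree considerations should be decisive. An incoming differential would come from $E_r^{-1,*}$, so I need the vanishing of $HF^{-1}_{low}(X_t,\phi^d;\Bbbk)$. I will get this from the local Morse--Bott computation underlying Lemma~\ref{PSS-domain-codomain-abstract-iso}. After the perturbation \eqref{explicithamiltonianperturbation}, each orbit set $\mathcal{F}_{\mathbf{v}}$ contributes generators in degrees $2\sum a_iv_i + k$, where $k$ is the Morse index of $h_{\mathbf{v}}$ on a torus bundle over $D_I'$ with outward-pointing boundary behavior. Since $a_i\geq0$ by Proposition~\ref{gsresolutionmodel} and $k\geq0$, no generator has negative degree. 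Hence $CF^{-1}$ vanishes already at the chain level, over $\Bbbk[\![T]\!]$. This step is the crux, and I would check carefully that the Conley--Zehnder shifts for the fiberwise rotations match the grading convention \eqref{stanleyreisnergradingconvention}.

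With both facts in hand, $E_2^{0,*}=E_\infty^{0,*}$. Moreover $HF^0$ over $\Bbbk[\![T]\!]$ has a filtration whose associated graded is $HF^0_{low}\otimes\operatorname{gr}\Bbbk[\![T]\!]$. The chain complex is finitely generated and free, and the filtration is complete and exhaustive, so a lift of a $\Bbbk$-basis of $HF^0_{low}$ via the PSS cocycles generates a free $\Bbbk[\![T]\!]$-submodule. Torsion can only come from incoming differentials, which vanish. Thus $HF^0_{\Bbbk[\![T]\!]}\cong HF^0_{low}\otimes\Bbbk[\![T]\!]$. Tensoring with $\Lambda$, which is flat over $\Bbbk[\![T]\!]$, and using the chain isomorphism $CF_{\mathscr{X}}\otimes\Lambda\cong CF$ from \S{\ref{enhancedfixedpointfloersection}} gives \eqref{lowenergymoduleiso}. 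Naturality comes from the isomorphism being induced by the $\PSS^{\log}$ lifts, compatibly in $d$.
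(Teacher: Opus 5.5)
Your characteristic-zero argument is essentially the paper's. The cocycles $\PSS^{\log}(\theta^{\mathbf{v}})$ are permanent cycles lifting a basis of $HF^0_{low}$. The complex is supported in nonnegative degrees, so nothing in degree $0$ is exact. The energy spectral sequence plus flatness of $\Lambda$ then finishes the proof.

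The gap is positive characteristic. The proposition must cover it: the paper proves it for arbitrary $\Bbbk$, and the main theorem only excludes $\operatorname{char}\Bbbk\in\Bad(\mathcal{X}^{\circ})$. Neither of your suggested fixes works there.
\begin{itemize}
\item The grading argument only gives $CF^{-1}=0$, which handles the \emph{incoming} differentials. $CF^1$ is generally nonzero, coming from index-one critical points of the Morse functions on the torus bundles $S_I'$. So degree says nothing about the outgoing $d_r\colon E_r^{0,*}\to E_r^{1,*}$.
\item The observation that low-energy solutions carry no stabilizing points only shows that $\PSS^{\log}_{low}$ is defined integrally, i.e.\ that it produces the $E_2$-class. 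Claiming that this class lifts to an honest cocycle is precisely the vanishing of the outgoing $d_r$, so the argument is circular.
\end{itemize}
The higher-energy corrections in \eqref{logPSSclasses} are what make the lift a cocycle. They involve capped orbits with $\ell(\tilde\nu)\geq 1$, and hence the factor $1/\ell(\tilde\nu)!$.

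The missing idea is a reduction from characteristic zero by integrality, as in \cite[Corollary 4.22]{Pomerleano}.
\begin{itemize}
\item Over $\mathbb{Q}$ your argument proves more than you use. Choose each $\hat{h}_I$ with a single minimum, so that $CF^0_{low}=HF^0_{low}$. Then by completeness (Nakayama) the normalized PSS cocycles form a $\mathbb{Q}[\![T]\!]$-basis of $CF^0$. Hence the Floer differential vanishes on $CF^0$ at the chain level.
\item Unlike the log PSS map, the fixed point Floer differential is a signed count of rigid cylinders with no symmetrization over stabilizing points. Its matrix therefore has entries in $\mathbb{Z}[\![T]\!]$.
\item Vanishing over $\mathbb{Q}[\![T]\!]$ then forces vanishing over $\mathbb{Z}[\![T]\!]$, and hence over $\Bbbk[\![T]\!]$ for every $\Bbbk$.
\end{itemize}
Together with $CF^{-1}=0$, this gives $HF^0=CF^0$ over $\Bbbk[\![T]\!]$ and $HF^0_{low}=CF^0_{low}$ over $\Bbbk$. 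That yields \eqref{lowenergymoduleiso} in every characteristic without ever constructing PSS cocycles mod $p$.
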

\begin{proof}
    First assume that $\operatorname{char}\Bbbk = 0$. The log PSS cycles $\PSS^{\log}(\theta^{\mathbf{v}})$, which are non-exact for degree reasons, determine a canonical class in $HF^0(X_t,\phi^d)$ for each generator of $\SR(\mathcal{X}^{\circ})$. Proposition~\ref{lowenergylogPSSinjection} shows that each of these classes is nonzero, and this together with the energy spectral sequence proves the isomorphism.

    The argument above implies that the differential on the fixed-point Floer cochain complex vanishes in degree $0$ over a field of characteristic $0$. We can use this to prove the desired isomorphism over also fields similarly to~\cite[Corollary 4.22]{Pomerleano}. Since enhanced fixed point Floer cochain complex is supported in nonnegative degrees, and since the differential vanishes over $\mathbb{Q}[\![T]\!]$, it follows that the enhanced fixed point Floer differential also vanishes over $\mathbb{Z}[\![T]\!]$ as well. This implies the isomorphism~\eqref{lowenergymoduleiso} over any field $\Bbbk[\![T]\!]$.
\end{proof}

\begin{proof}[Proof of Theorem~\ref{stanleyreisnerisom}]
    The isomorphism~\eqref{stanleyreisnerisom} follows from Lemma~\ref{lowenergylogpssringiso} and Proposition~\ref{PSS-classes-survive}.
\end{proof}

\section{The relative Fukaya category and the twisted closed-open map}\label{relfuksection}
Continuing in the setup of \S{\ref{fixedpointfloersection}}, we will construct the twisted closed-open map by counting pseudoholomorphic sections of symplectic fiber bundles with Lagrangian boundary conditions in the sense of~\cite[p. 235]{SeiBook}. A similar construction for Riemann surfaces appeared in~\cite{JYZ}, and we will explain how to modify this construction in higher dimensions for the relative Fukaya category. A new technical issue in this setting is that we need to add marked points to our curves which are mapped to a system of divisors in the total space $\mathcal{X}^{\circ}$ to stabilize the domains of curves used to construct the relative Fukaya category, notably including the domains of disk bubbles. On that note, in order to incorporate bounding cochains in the definition of the twisted closed-open map, we must also contend with the fact that the construction of the relative Fukaya category of $X$, as written in~\cite{PerutzSheridan}, depends on a \textit{fixed} choice of compatible almost complex structure.\footnote{As stated in~\cite[\S{1.1}]{PerutzSheridan}, it is expected that the relative Fukaya category is invariant under deformations of the almost complex structure.} This means that when considering disk bubbles, we will need to keep track of how the almost complex structures on the fibers vary.

\subsection{Preliminaries on the relative Fukaya category} We will briefly recall some aspects of the relative Fukaya category following~\cite{PerutzSheridan}, explaining how our discussion fits into their framework. We do \textbf{not} attempt to give a self-contained overview of~\cite{PerutzSheridan}, but we will mention the choices involved in the construction of the relative Fukaya category which need to be accounted for in the construction of~\eqref{twistedclosedopenfirstmention}.

Recall from \S{\ref{mainsection}} that all closed symplectic manifolds $X$ we consider belong to a smooth family $\mathcal{X}^{\circ}\to\mathbb{D}^{\circ}$ of Calabi--Yau $n$-folds, equipped with a smooth family of integrable almost complex structures and a relatively ample line bundle $\mathcal{E}$, which determines a family of K{\"a}hler forms. Let $J_0$ denote the (integrable) almost complex structure on $X$. We take this to be the \textit{fixed} almost complex structure used to construct the relative Fukaya category. The relative Fukaya category is defined using a collection of $J_0$-holomorphic divisors $\mathbf{E}_t = \lbrace E_{q,t}\rbrace_{q\in Q}$, where $Q$ is an index set, called a \textit{system of divisors}  (see~\cite[Definition 1.1]{PerutzSheridan} for a precise definition.) Each of these divisors lies in the complement of a Liouville subdomain $W\subset X$. To simplify our exposition, we can assume that these divisors are obtained by restricting the stabilizing divisors used above in the construction of the log-PSS map (see Lemma~\ref{divisorconstruction-1}) but this is not essential for the proof.

We need to prove the existence of a suitable system of divisors (\cite[Definition 1.1]{PerutzSheridan}) for our purposes. As mentioned in \S{\ref{mainsection}}, these divisors will be constructed as small perturbations of a fixed holomorphic divisor $E_t\subset X_t$.

\begin{lemma}\label{divisorconstruction1}
Suppose that $L\subset X_t$ is a Lagrangian rational homology sphere. Then there is a smooth divisor $E_t\subset X_t$ which is disjoint from $L$ and such that $[E_t]\in H_{2n-2}(X_t;\mathbb{Z})$ is Poincar{\'e} dual to $N[\omega]$, for some positive integer $N$. There is also a system of divisors $\mathbf{E}_t\coloneqq\bigcup_{q\in Q} E_{q,t}$, where $Q$ is a finite index set, contained in an open neighborhood of $E_t$. Moreover, all of the components $E_{q,t}$ are preserved by $\phi\colon X_t\to X_t$.
\end{lemma}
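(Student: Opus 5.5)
The plan is to build $E_t$ from a high tensor power of the polarization, choosing the section so that it does not vanish on $L$. Since $L$ is a rational homology sphere, we have $H^2(L;\mathbb{Q})=0$ for $n\geq 3$. The case $n\le 2$ needs a separate remark: for $n=2$ the sphere $L$ still has $H^1=0$, and $H^2$ is irrelevant because $L$ is Lagrangian. Hence $[\omega]|_L=0$ rationally, so $\mathcal{E}_t^{\otimes N}|_L$ is topologically trivial after passing to a multiple.

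Because $L$ is Lagrangian, the restriction of the Hermitian line bundle $\mathcal{E}_t^{\otimes N}$ to $L$ carries a flat connection. Its holonomy lies in $\mathrm{Hom}(H_1(L),U(1))$, which is finite because $H_1(L;\mathbb{Q})=0$. Enlarging $N$ by the order of the holonomy group, we may assume the connection on $\mathcal{E}_t^{\otimes N}|_L$ is trivial. We then invoke the Auroux--Donaldson asymptotically holomorphic construction, or in the K\"ahler setting a peak-section argument in the style of Bergman kernel concentration along a Lagrangian. This produces, for $N\gg0$, holomorphic sections $s$ of $\mathcal{E}_t^{\otimes N}$ with $|s|$ bounded below on $L$ and transverse to zero. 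Concretely, we sum Gaussian peak sections centered on a fine net of points of $L$, with phases matched by the trivializing flat frame; the triviality of the flat frame is exactly what prevents cancellation. Genericity (Bertini) then gives smoothness of $E_t=s^{-1}(0)$, which is Poincar\'e dual to $N[\omega]$ and disjoint from $L$.

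For the system of divisors, we take finitely many further generic sections $s_q$ that are $C^0$-close to $s$, so that each $E_{q,t}=s_q^{-1}(0)$ is smooth, lies in a small neighborhood of $E_t$, and misses $L$ and the Liouville domain $W$ given by a sublevel set of $-\log|s|$. Choosing these sections generically makes the union $\bigcup_q E_{q,t}$ simple normal crossings. We then check the remaining axioms of~\cite[Definition 1.1]{PerutzSheridan}: ampleness, the exactness of $X_t\setminus\mathbf{E}_t$ containing $L$, and the Calabi--Yau and grading conditions. All of these follow as in~\cite{SheridanSmith}, since each $E_{q,t}$ is the zero locus of a section of an ample bundle and a neighborhood of $L$ is exact.

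The main obstacle is the $\phi$-invariance of each component. We propose to realize each $E_{q,t}$ as the restriction to $X_t$ of a horizontal relatively ample divisor $E_q\subset\mathscr{X}$, namely the stabilizing divisor, whose sections $s_q$ extend over $\mathscr{X}$ as generic sections of $\mathscr{E}^{\otimes N}$. We regularize the full divisor $\mathbf{V}=\mathbf{D}\cup\mathbf{E}$ via Theorem~\ref{MTZthm}, and Lemma~\ref{divisorconstruction-1} together with Lemma~\ref{divisorconstruction0} then gives that the regularized monodromy $\phi$ preserves each $E_q$. The delicate point is to keep disjointness from $L$ under the regularizing isotopy and to keep the divisors in the compatible position near $\mathbf{D}$. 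We would handle this by choosing the sections first, then regularizing, and then replacing $L$ by its image under the Moser isotopy. Disjointness is an open condition, so it survives small deformations, and the isotopy can be taken small near $L$ because the regularization is supported near $\mathbf{V}$.
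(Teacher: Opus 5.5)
Your proposal follows essentially the same route as the paper's proof. You get a smooth divisor in $N[\omega]$ avoiding $L$ from the Auroux--Gayet--Mohsen/Guedj construction, which you unpack through the finite holonomy of the flat connection on $\mathcal{E}_t^{\otimes N}|_L$ and phase-matched peak sections rather than citing \cite{AGM-divisor, guedj-divisor}. You get the system of divisors from small generic perturbations of that section, as in \cite[Lemma 3.8]{SheridanVersality}. You get $\phi$-invariance by extending the sections to horizontal divisors $E_q\subset\mathscr{X}$ and regularizing $\mathbf{V}=\mathbf{D}\cup\mathbf{E}$ so that Lemma~\ref{divisorconstruction-1} applies.

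Two minor cautions. First, the finite-holonomy step needs $n\geq 2$; for $n=1$ the claim is trivial because divisors are points. Second, your argument that the regularizing isotopy is small near $L$ ``because the regularization is supported near $\mathbf{V}$'' does not work as stated: for $|t|$ in the regularized disk, all of $X_t$ lies in the neighborhoods $U_I$ of $\mathbf{D}$. Disjointness is better obtained by transporting $L$ with an identification of fibers that preserves each $E_q$, a point the paper's proof also passes over.
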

\begin{proof}
By~\cite[Theorem 0.2]{guedj-divisor} or \cite[Theorem 2, Remark 1(b)]{AGM-divisor}, the line bundle $\mathcal{E}_t^{\otimes N}$ on $X_t$ contains a holomorphic section $s_t\in\Gamma(\mathcal{E}_t)$ such that $E_t\coloneqq s^{-1}(0)$ is a smooth holomorphic submanifold of $X_t$ disjoint from $L$. In particular, $E_t$ has the desired homology class. By appealing to~\cite[Lemma 3.8]{SheridanVersality}, we can choose a family of sections $s_{t,q}\in\Gamma(\mathcal{E}_t)$, for $q\in Q$ whose zero loci $V_{t,q}\coloneqq s_{t,q}^{-1}(0)$ form a system of divisors on $X_t$. Since these divisors are constructed as small perturbations of $s_t$, using Bertini's theorem, we can take the divisors $E_{q,t}$ to be disjoint from $L$.

We can extend each of these sections to sections $s_q\in\Gamma(\mathscr{E})$, where $\mathscr{X}$ denotes an snc model for $\mathcal{X}^{\circ}$, and consider the horizontal divisors $E_q\coloneqq s_q^{-1}(0)\subset\mathscr{X}$. Recall that $\phi$ is obtained from the flow of a Hamiltonian vector field on $\mathscr{X}$, and by Lemma~\ref{divisorconstruction1}, we can isotope isotope this vector field, hence modifying $\phi$ by an isotopy, so that it fixes the divisors $E_q$. From this, it follows that $\phi$ fixes the divisors $E_{q,t}$.
\end{proof}
With this choice of system of divisors, $\phi$ acts on the relative Fukaya category $\Fuk(X_t,\mathbf{E}_t)$, and the Lagrangian submanifolds $\phi^d(L)$, for all $d\in\mathbb{Z}$, become objects of the relative Fukaya category. More generally, the objects of the relative Fukaya category are supported on embedded exact Lagrangian submanifolds of $W\subset X$ equipped with spin structures (and orientations.) To obtain a $\mathbb{Z}$-graded category, we will also assume that all Lagrangian submanifolds under consideration have vanishing Maslov class and are equipped with a grading~\cite{SeiGraded}.

The construction of the relative Fukaya category follows the general method of~\cite{SeiBook}, combined with the use of stabilizing divisors as in~\cite{CieliebakMohnke} to achieve transversality. One begins by considering a compactified Deligne--Mumford moduli space $\overline{\mathcal{R}}_{k,\ell}$, for $k+2\ell\geq2$, of stable disks equipped with $k+1$ boundary marked points $\zeta_0,\ldots,\zeta_k$ (in order) and $\ell$ interior marked points $z_1,\ldots,z_{\ell}$, which we will also call \textit{stabilizing marked points}. This a smooth manifold with corners that comes with a universal family $\overline{\mathcal{S}}_{k,\ell} \to \overline{\mathcal{R}}_{k,\ell}$. For any point $r\in\overline{\mathcal{R}}_{k,\ell}$, let $\Sigma_r$ denote the corresponding fiber of the universal family, and let $\Sigma_r^{\circ}$ denote the complement of the boundary marked points and boundary nodes.

A parametrized cylindrical end for one of the stabilizing marked points is a holomorphic embedding
\begin{align}\label{cylindricalenddef}
\epsilon_i\colon\mathbb{D}\to\Sigma^{\circ}_r
\end{align}
such that $\epsilon_i(0) = z_i$. We will consider parametrized cylindrical modulo the natural $S^1$-action on the disk. A strip-like end for $\zeta_{i}$ is a proper holomorphic embedding
\begin{align}\label{striplikeenddef}
\epsilon_{i}\colon Z^{\pm}\to\Sigma^{\circ}_r
\end{align}
where $Z^+\coloneqq\mathbb{R}_{\pm}\times[0,1]$. The domain of a strip-like end is $Z^{-}$ if $i = 0$, and $Z^{+}$ otherwise. We also require that $\epsilon_{i}$ sends the boundary intervals $\mathbb{R}_{\pm}\times\lbrace 0\rbrace$ and $\mathbb{R}_{\pm}\times\lbrace 1\rbrace$ to $\partial\Sigma^{\circ}$, and that it limits to $\zeta_{i}$ as $s$ approaches $\pm\infty$, depending on whether or not $i = 0$, where $s$ is the coordinate on $Z^{\pm}$. Following~\cite[(9g)]{SeiBook}, one can make a \textit{consistent, universal} choice of strip-like and cylindrical ends for $\Sigma_r^{\circ}$. The symmetric group $S_{\ell}$ acts $\overline{\mathcal{S}}_{k,\ell}$ by permuting interior marked points, and we require the choice of cylindrical ends to be $S_{\ell}$-equivariant.

For each pair of exact (Spin, Maslov zero) Lagrangian submanifolds $(L_0,L_1)$ in the Liouville domain $W$, we make a choice of Floer data which is used to write Floer's equation for strips with boundary on $(L_0,L_1)$. Let $\mathcal{H}(X_t)\subset C^{\infty}(X_t,\mathbb{R})$ denote the subspace of smooth functions supported on $W$. Choose, for each $(L_0,L_1)$, a smooth function $H_{01}\colon[0,1]\to\mathcal{H}(X_t)$ such that the time-$1$ Hamiltonian flow $X_{H_{01}(t)}$, when applied to $L_0$, makes it transverse to $L_1$. The set of Hamiltonian chords $\mathcal{C}(L_0,L_1)$ from $L_0$ to $L_1$ correspond to intersection points between this perturbation of $L_0$ and $L_1$, and so it is finite.

The other component of a Floer datum is a smooth function on the interval valued in a certain space of smooth sections of $\End TX$. One defines~\cite[\S{3.3}]{PerutzSheridan}  the spaces
\begin{align*}
\mathcal{Y}(X_t,\mathbf{E}_t)&\coloneqq\lbrace Y\in C^{\infty}(\End TX_t)\mid J_0Y+YJ_0 = 0 \text{ and }Y(TE_{q,t})\subset TE_{q,t}\text{ for all }q\in Q\rbrace \\
\mathcal{Y}_*(X_t,\mathbf{E}_t)&\coloneqq\left\lbrace Y\in\mathcal{Y}(X_t,\mathbf{E}_t)\mid |\!|Y|\!|_{C^0}<\log\frac{3}{2}\right\rbrace \\
\mathcal{Y}_*^{\max}(X_t,\mathbf{E}_t)&\coloneqq\lbrace Y\in\mathcal{Y}_*(X_t,\mathbf{E}_t)\mid\operatorname{supp}(Y)\subset W\rbrace \,.
\end{align*}
We assign to each pair $(L_0,L_1)$ a smooth function $Y_{01}\colon[0,1]\to\mathcal{Y}_*^{\max}(X_t,\mathbf{E}_t)$, which gives rise to a path of almost complex structures via $J_{01}\coloneqq J_0\exp(Y_{01})$.

The Floer equation for Floer trajectores between chords $y_0,y_1\in\mathcal{C}(L_0,L_1)$ for strips $u\colon\mathbb{R}\times[0,1]\to W$ with boundary conditions $u(s,i)\in L_i$ for $i = 0,1$ and asymptotic conditions
\begin{align*}
    \lim_{s\to-\infty}u(s,\cdot) = y_1(\cdot) \\
    \lim_{s\to\infty}u(s,\cdot) = y_0(\cdot)
\end{align*}
is
\begin{align*}
    \partial_s u + J_{01}(t)(\partial_t u-X_{H_{01}(t)}\circ u) = 0 \,.
\end{align*}
For fixed choices of Hamiltonians $H_{01}$ associated to $(L_0,L_1)$, there is a comeager subset of choices of $Y_{01}$ for which the moduli space of Floer trajectories is regular.

For any (punctured) stable curve $\Sigma_r^{\circ}$ equipped with a tuple $\mathbf{L}$ of Lagrangian labels $L_0,\ldots,L_k$ (with $k\geq0$) for its boundary components, we need to choose a perturbation datum. Let $\tilde{\Sigma}_r^{\circ}$ denote the normalization of $\Sigma_r^{\circ}$. A \textit{perturbation datum} $P = (Y,K)$ consists of a pair
\begin{align}\label{perturbationdatum}
Y &\in C^{\infty}(\tilde{\Sigma}_r^{\circ},\mathcal{Y}_*(X_t,\mathbf{E}_t)) \,, \\ 
K &\in\Omega^1(\tilde{\Sigma}_r^{\circ},\mathcal{H})
\end{align}
subject to the constraints given in~\cite[Definition 5.4]{PerutzSheridan}. By~\cite[Lemma 5.6]{PerutzSheridan}, one can inductively make consistent universal choices of perturbation data (see Definition 5.5 of~\cite{PerutzSheridan}) for arbitrary integers $N = k+2\ell\geq2$. For fixed $k,\ell\in\mathbb{Z}$, let $\mathbf{P}$ be a choice of universal perturbation data for $\overline{\mathcal{S}}_{k,\ell}$.

Suppose we are given a Lagrangian labeling $\mathbf{L} = (L_0,\ldots,L_k)$ and Hamiltonian chords $\mathbf{y} = (y_0,\ldots,y_k)$ with $y_0\in\mathcal{C}(L_0,L_k)$ and $y_i\in\mathcal{C}(L_{i-1},L_i)$ for $i = 1,\ldots,k$. We will consider maps $u\colon\Sigma_r^{\circ}\to X_t$, with boundary conditions given by $\mathbf{L}$ and asymptotic conditions $\mathbf{y}$. Denote by $\pi_2(\mathbf{y})$ the set of homotopy classes of such maps. To each $A\in\pi_2(\mathbf{y})$, we can associate intersection numbers $A\cdot E_q$ for each $q\in Q$. For any map $u\colon\Sigma^{\circ}_r\to X_t$, we can record its intersection numbers with the divisors $E_q$ at a point $z$ in the domain of $u$ with a \textit{tangency vector} $\iota(u,z)\in \mathbb{Z}_{\geq0}^Q$.

We only need to consider classes $A$ for which all of these intersection numbers are nonnegative (cf.~\cite[Lemma 3.4]{PerutzSheridan}.) Given $A\in\pi_2(\mathbf{y})$ as such, a choice of \textit{tangency data} is a number $\ell\geq0$ together with 
\begin{align*}
    \mathbf{t}\colon\lbrace 1,\ldots,\ell\rbrace\to \mathbb{Z}_{\geq0}^Q
\end{align*}
such that $\sum_{i=1}^{\ell}\mathbf{t}(i)_q = A\cdot E_q$.

We can now pose the perturbed pseudoholomorphic curve equations for the domains $\Sigma^{\circ}_r$ equipped with the choices of perturbation data as above. Given a perturbation datum $P = (Y,K)$, which we emphasize is supported away from the system of divisors, there is an associated domain-dependent almost complex structure $J_z$ on $\Sigma_r^{\circ}$ determined by $Y$, and an inhomogeneous $1$-form $\eta$ on $\Sigma_r^{\circ}$ valued in Hamiltonian vector fields on $X_t$. Let $j$ denote the complex structure on $\Sigma_r^{\circ}$ and let $C_0,\ldots,C_k$ denote its boundary components.
\begin{definition}
Let $\mathcal{M}(\mathbf{y},A,\mathbf{t},\mathbf{P})$ denote the moduli space of stable disks $u\colon\Sigma_r^{\circ}\to X$ satisfying the domain-dependent Cauchy--Riemann equations
\begin{align*}
(Du-\eta)\circ j = J_z\circ(Du-\eta)
\end{align*}
(where $J_z$ is the value of the domain-dependent almost complex structure at $z\in\Sigma_r^{\circ}$) with boundary and asymptotic conditions
\begin{align*}
u(C_i)&\subset L_i \\
\lim_{s\to\pm\infty} u(\epsilon_i(s,\cdot)) &= y_i(\cdot)
\end{align*}
and such that $[u] = A\in\pi_2(\mathbf{y})$ and $\iota(u,z_i) = \mathbf{t}(i)$
for all $i = 1,\ldots,\ell$.
\end{definition}
For more explanation of the tangency conditions, see~\cite[Lemma 5.7]{PerutzSheridan}. We remind the reader that there are no sphere bubbles fully contained in any component of $\mathbf{E}_t$ (cf.~\cite[Definition 1.1]{PerutzSheridan} and~\cite[Definition 4.4]{Pomerleano}.)

If we set $\ell = A\cdot\mathbf{E}_t$ and let $\mathbf{q}\colon\lbrace 1,\ldots,\ell\rbrace\to Q$ be a function with $|\mathbf{q}^{-1}(q)| = A\cdot E_{q,t}$, we can define a \textit{canonical tangency datum} $\mathbf{t}^{\mathrm{can}}$ by setting $\mathbf{t}^{\mathrm{can}}(i)_q = 1$ if $\mathbf{q}(i) = q$ and $\mathbf{t}^{\mathrm{can}}(i)_q = 0$ otherwise. Set
\begin{align}\label{perutzsheridanmodulispaces}
\mathcal{M}(\mathbf{y},A,\mathbf{P}) = \mathcal{M}(\mathbf{y},A,\mathbf{t}^{\mathrm{can}},\mathbf{P}) \,.
\end{align}
This moduli space is independent of the choice of $\mathbf{q}$ used to define $\mathbf{t}^{\mathrm{can}}$, up to the action the symmetric group $S_{\ell}$. The subgroup $S_{\mathbf{q}}\subset S_{\ell}$ of permutations preserving $\mathbf{q}$ acts freely on $\mathcal{M}(\mathbf{y},A,\mathbf{P})$.

For generic choices of perturbation data, the Gromov compactifications of the moduli spaces~\eqref{perutzsheridanmodulispaces} of virtual dimension at most $1$ are regular, and they do not contain any curves with sphere bubbles attached~\cite[Corollary 5.10 and Lemma 5.16]{PerutzSheridan}. Thus they are compact oriented manifolds with boundary of the expected dimension. As usual, the only moduli spaces relevant to the definition of the relative Fukaya category are those of virtual dimension $\leq 1$. 

To each Hamolitonian chord $y \in \mathcal{C}(L_0,L_1)$, there is an associated orientation line $o_y$, which is a $1$-dimensional vector space over the Novikov field $\Lambda$. The Floer cochain spaces are defined be
\begin{align*}
CF^*(L_0,L_1)\coloneqq\bigoplus_{y\in\mathcal{C}(L_0,L_1)}o_y \,.
\end{align*}
Since $X$ is Calabi--Yau and $L_i$ both have vanishing Maslov class, these are $\mathbb{Z}$-graded vector spaces. Let $\lambda(A)$ denote the symplectic area, with respect to $\omega$, of $A\in\pi_2(\mathbf{y})$. The $A_{\infty}$ structure maps are given by
\begin{align}
&\mathfrak{m}_k\colon CF^*(L_0,L_1)\widehat\otimes_{\Lambda}\cdots\widehat\otimes_{\Lambda}CF^*(L_{k-1},L_k)\to CF^*(L_0,L_k)[2-k],\\
&\mathfrak{m}_k\coloneqq\sum_{u\in\mathcal{M}(\mathbf{y},A,\mathbf{P})/S_{\mathbf{q}}} T^{\lambda(A)}\sigma_u
\end{align}
where $\widehat\otimes_{\Lambda}$ denotes the completed tensor product, each $u\in\mathcal{M}(\mathbf{y},A,\mathbf{P})$ defines an isomorphism
\begin{align*}
\sigma_u\colon o_{y_1}\otimes\cdots\otimes o_{y_k}\to o_{y_0}
\end{align*}
and $\mathbf{y}$ and $A$ are chosen so that $\mathcal{M}(\mathbf{y},A,\mathbf{P})$ has virtual dimension $0$~\cite[Lemma 5.11]{PerutzSheridan}. The $A_{\infty}$-algebra structure on $CF^*(L)$ determined by these structure maps will \textit{a priori} have a nontrivial curvature term $\mathfrak{m}_0$, which we must correct for by choosing a bounding cochain for this $A_{\infty}$-algebra, called a bounding cochain of $L$ for short, when one exists~\cite[Definition 2.5]{PerutzSheridan}. We declare that the objects of the relative Fukaya category $\Fuk(X_t,\mathbf{E}_t)$ are exact Lagrangian submanifolds $L$ of $W\subset X_t$ with vanishing Maslov class equipped with Spin structures and bounding cochains. The hom-spaces are the Floer cochain spaces $CF^*(L_0,L_1)$, and the $A_{\infty}$ structure maps come from deforming the maps given above by bounding cochains (the formula for this deformation is given in e.g.~\cite[Equation (3.20)]{SeiBook} or~\cite[Definition 3.6.6]{FOOOI}.)

\subsection{Pseudholomorphic sections with Lagrangian boundary} Recall that in Assumption~\ref{completedfamilyassumption} we explained how to replace $\mathcal{X}^{\circ}$ with an isomorphic symplectic fiber bundle $E_{\phi}$ over the cylinder $C$ of the type considered in \S{\ref{fixedpointfloersection}}. Fix an almost complex structure $\mathbf{J}$ as in Definition~\ref{fibrationsacss} on $\mathcal{X}^{\circ}$ which restricts to $J_0$ on the fiber $X_t$.

Recall that we constructed a collection of stabilizing divisors $\mathbf{E} = \lbrace E_q\rbrace_{q\in Q}$ in an snc model $\mathscr{X}$ for $\mathcal{X}^{\circ}$ in Lemmata~\ref{divisorconstruction-1} and~\ref{divisorconstruction0}, from which we obtained the system of divisors $\mathbf{E}_t$ in $X_t$ used to define the relative Fukaya category in Lemma~\ref{divisorconstruction1}. In the other direction, the system of divisors $\mathbf{E}_t$ gives rise to systems of divisors $\mathbf{E}^d$ in $E_{\phi^d}$ for all $d\geq0$ by a mapping torus construction. Similarly $\mathbf{J}$ lifts to an almost complex structure $\mathbf{J}^d$ on $E_{\phi^d}$. It is these systems of divisors that we shall use to construct the twisted closed open map(s).

Consider a disk $\Sigma$ equipped with $k+1$ boundary marked points $\zeta_0,\ldots,\zeta_k$ (in order), stabilizing interior marked points $z_1,\ldots,z_{\ell}$, and a distinguished \textit{input (interior) marked point} $w_{in}$. Let $\Sigma^{\circ}$ denote the complement in $\Sigma$ of the input interior marked points and the boundary marked points. Also choose a parametrized strip-like ends
\begin{align*}
\epsilon_{in}\colon\mathbb{D}^{\circ}\to\Sigma^{\circ}
\end{align*}
for each input marked point $w_{in}$, which we require to be a proper holomorphic embedding for which $\lim_{z\to 0}\epsilon_{in}(z) = w_{in}$. We can consider symplectic fibrations $E\to\Sigma^{\circ}$ with fiber $X_t$ over such Riemann surfaces. The most important examples of such fibrations for us can be obtained by restricting the bundles $E_{\phi^d}$ over $\mathbb{D}^{\circ}$ to $\Sigma^{\circ}$, but we will spell out the following intrinsic description. If we identify the domain $\Sigma$ with the closed unit disk in $\mathbb{C}$ in such a way that $w_{in}$ is identified with $0\in\mathbb{C}$ and $\zeta_0$ is identified with $i\in\mathbb{C}$, then we can cut the domain along the straight line segment from $0$ to $i$, and form the desired fibration $E_{\phi^d,\Sigma^{\circ}}\to\Sigma^{\circ}$ as a mapping torus with monodromy $\phi^d\colon X_t\to X_t$, where we can assume without loss of generality that $t$ is a point on the boundary of $\Sigma^{\circ}$ close to $\zeta_0$. To simplify notation, we simply let $E$ denote one of these fiber bundles when the particular choice of $d\geq0$ is not important. This gives us a fibration over $\Sigma\setminus\lbrace\zeta_0,w_{out}\rbrace$, which we can then restrict to one over $\Sigma^{\circ}$. We will also let $\Omega_{\phi^d}$ denote the (fiberwise) symplectic form on $E_{\phi^d,\Sigma^{\circ}}$. There is a more precise notion of a \textit{fibration with cylindrical and strip-like ends} following~\cite[\S{3}]{JYZ} and~\cite[(17b)]{SeiBook}, which is what we will actually use to count pseudoholomorphic sections.

\begin{definition}\label{fibrationwithstriplikeendsdefinition}
A fibration $E_{\phi^d,\Sigma^{\circ}}$ as above with cylindrical and strip-like ends comes with an isomorphism between $\epsilon_{in}^*E$ and $E_{\phi^{d}}\mid_{\mathbb{R}_{<0}\times S^1}$ over the cylindrical end near the input puncture $w_{in}$, and a trivialization of $E$ over each strip-like end. We can also assume that $E$ is trivialized over the cylindrical ends for the stabilizing interior marked points.
\end{definition}

Fix an object $(L,b)$ of the relative Fukaya category $\Fuk(X_t,\mathbf{E}_t)$, where $b$ is a bounding cochain for the Lagrangian $L$. In the setting of Definition~\ref{fibrationwithstriplikeendsdefinition}, there is a natural Lagrangian boundary condition associated to $L$, following~\cite[p. 235]{SeiBook}.
\begin{definition}\label{tcoboundaryconditions}
    A \textit{Lagrangian boundary condition} $F$ for a symplectic fibration $E\to\Sigma^{\circ}$ is a collection of Lagrangian submanifolds in the fibers over $\partial\Sigma\setminus\lbrace\zeta_{out}\rbrace$, which restricts to $L\subset X_t$ over $t$, carried to each other by boundary parallel transport. In particular $F\subset E$ is a Lagrangian submanifold in $E$.
\end{definition}
It is useful to note that by deleting a neighborhood of the stabilizing divisors $\mathbf{E}^d$ restricted to the total spaces of $E_{\phi^d,\Sigma^{\circ}}$, we obtain a symplectic fibration over $\Sigma^{\circ}$ whose fibers are Liouville domains. One can then interpret a Lagrangian boundary condition as in Definition~\ref{tcoboundaryconditions} as a special case of the definition in \textit{loc. cit.}

We would like to choose a \textit{relative} perturbation datum for $\Sigma_r^{\circ}$ in the sense of~\cite[p. 236]{SeiBook}, which must also satisfy suitable analogues of~\cite[Definition 5.4]{PerutzSheridan}. We can define a space $\mathcal{Y}_*(E_{\phi^d,\Sigma^{\circ}},\mathbf{E}^d)$ of smooth sections of $\End TE_{\phi^d,\Sigma^{\circ}}$, where $\mathbf{E}^d$ is the system of divisors. Choosing any suitable element $Y$ of this space gives rise to an almost complex structure $\mathbf{J}_Y\coloneqq \mathbf{J}_d\exp(Y)$ on $E$. We can assume that $Y$ is chosen so that $\mathbf{J}_Y\in\mathcal{J}(T_{\phi^d},\Theta_{\phi^d})$. The other component of a perturbation datum is a $1$-form $K$ on $E = E_{\phi^d,\Sigma^{\circ}}$ which vanishes identically on the vertical tangent bundle $TE^v = \ker(D\pi)$ and for which $K|_F\equiv0\in\Omega^1(F)$. We will also require $K$ to vanish near all of the interior marked points. Finally, if $\ell = 0$, then $Y$ should lie in $\mathcal{Y}_*^{\max}(E_{\phi^d,\Sigma^{\circ}},\mathbf{E}^d)$. We make the elementary observation that the perturbation data of~\cite[Definition 5.4]{PerutzSheridan} can be thought of as relative perturbation data for a trivial bundle with strip-like ends over a disk with no interior input marked points.

We also need to equip $\Sigma^{\circ}$ asymptotic conditions at punctures and tangency data at the stabilizing marked points. Suppose we are given Hamiltonian chords $\mathbf{y} = (y_0,y_1,\ldots,y_k)$, where $y_i\in\mathcal{C}(L,L)$ for all $i = 1,\ldots,k$ and $y_0\in\mathcal{C}(L,\phi^{d}(L))$. Further suppose we are given a horizontal sections $\nu\in\mathcal{H}(T_{\phi^d},\Theta_{\phi^d})$. Let $\pi_2(\nu;\mathbf{y})$ denote the set of homotopy classes of sections of $E\to\Sigma^{\circ}$ which approach $\nu$ and $\mathbf{y} = (y_0,y_1,\ldots,y_k)$ at the input and boundary punctures. For any $B\in\pi_2(\nu;\mathbf{y})$, we can define the intersection number $B\cdot E_q$, and since we are primarily interested in pseudoholomorphic sections as before we only need to consider classes for which the intersection numbers, for all $q\in Q$, are nonnegative. If $u$ is a section representing the class $B$, we can form a tangency vector $\iota(u,z_i)\in\mathbb{Z}_{\geq0}^Q$ which records the intersection number of $u$ with $\mathbf{E}_q$ at the marked point $z_i$ for each $q\in Q$. Given this, we can define tangency data $\mathbf{t}\colon\lbrace 1,\ldots,\ell\rbrace\to\mathbb{Z}_{\geq0}^Q$, including canonical tangency data $\mathbf{t}^{\mathrm{can}}$, just as we defined tangency data for maps to $X$.

The definition of the relevant spaces of pseudoholomorphic sections is now a straightforward combination of the definitions appearing earlier in this section. The $1$-form $K$ appearing in the definition of a relative perturbation datum determines a section of the bundle $\Hom(\pi^*T\Sigma^{\circ},TE^v)$, defined as follows. For any $x\in E$, consider some $\xi\in T_t\Sigma^{\circ}$, where $\pi(x) = t$. We can lift $\xi$ to a tangent vector $\tilde\xi$ on the fiber $X_t=\pi^{-1}(t)$, and we obtain a function $K(\tilde\xi)$ on $X_t$ which is independent of the choice of the lift. We define $\eta(\xi)_x$ to be the value at $x$ of the associated Hamiltonian vector field on $X_t$.
\begin{definition}\label{tocsectiondefinition}
Given a relative perturbation datum $P = (Y,K)$, let $\mathcal{M}(B,\nu,\mathbf{y},\mathbf{t};P)$ denote the space of sections of $E$ which have a boundary condition as in Definition~\ref{tcoboundaryconditions}, and which are solutions to the perturbed Cauchy--Riemann equation
\begin{align*}
(Du-\eta)\circ j = J_z\circ(Du-\eta)
\end{align*}
(where $J_z$ denotes the value of the almost complex structure at $z\in\Sigma^{\circ})$ with negative limits $y_1,\ldots,y_k,$ and $\nu_1,\ldots,\nu_m$, and positive limit $y_0$. Additionally, we require that the local intersection numbers of $u$ with the divisors $\mathbf{E}_q$ at the points $z_i$ are given by $\iota(u,z_i)=\mathbf{t}(i)$ for all $i = 1,\ldots,\ell$. Denote by
\begin{align}\label{generaltcosections}
\mathcal{M}(B,\nu,\mathbf{y};P)= \mathcal{M}(B,\nu,\mathbf{y},\mathbf{t}^{\mathrm{can}};P)
\end{align}
the moduli space defined using canonical tangency data.
\end{definition}

\subsection{Transversality and compactness for spaces of sections} Similarly to~\cite[Definition 12.2]{SeiThesis}, we have a notion of a broken pseudoholomorphic section with sphere bubbles attached. On one hand, we allow ourselves to attach trees of pseudoholomorphic bubbles to disks appearing in Definititon~\ref{tocsectiondefinition}. More precisely, suppose we are given a collection of trees $\lbrace\Gamma_i\rbrace$ of unmarked pseudoholomorphic spheres, where each tree consists of pseudoholomorphic spheres in a \textit{fixed} fiber of $E\to\Sigma^{\circ}$. Let $\mathcal{M}_{\lbrace\Gamma_i\rbrace}(B,\nu,\mathbf{y},\mathbf{t};P)$ denote the moduli space of sections, as in Definition \ref{tocsectiondefinition}, with unmarked bubble trees attached. See~\cite[\S{5.6}]{PerutzSheridan} for a detailed discussion of how attachments are carried out. On the other hand, we allow breaking of sections both along boundary punctures and along the interior input punctures, meaning that a broken pseudoholomorphic disk with unmarked bubble trees can have components which look like elements of~\eqref{cylindersectionmodulispaces}.

We emphasize that such an object will typically \textit{not} be the Gromov limit of a family of pseudoholomorphic sections of Definition~\ref{tocsectiondefinition} (see the remark in~\cite[\S{5.6}]{PerutzSheridan}.) Such a Gromov limit will instead be a broken section with possibly with stabilizing marked points and tangency conditions on the sphere bubbles and cylindrical components. By our constancy and vanishing assumptions on $Y$ and $K$, a broken section in our sense can always be obtained from a Gromov limit by forgetting these extra stabilizing marked points and collapsing any unstable components. With this understood, we can define the \textit{Gromov compactification} $\overline{\mathcal{M}}(B,\nu,\mathbf{y},\mathbf{t};P)$ of $\mathcal{M}(B,\nu,\mathbf{y},\mathbf{t};P)$ to be the space of such broken sections with unmarked bubble trees attached. To describe the boundary strata of this compactification, we are implicitly using the forgetful maps of stabilizing marked points to identify the moduli spaces of Floer cylinders with interior marked points and tangency conditions with their standard unmarked versions.

Similarly, we can generalize the notion of a relative perturbation datum to the domain of such a broken curve in the obvious way, by insisting that $Y$ and $K$ are constant or vanish, respectively, on all sphere components. The disk components of a broken pseudoholomorphic section can be thought of as (normalizations of) fibers of the universal family $\overline{\mathcal{R}}_{k,\ell+m}$, so we can make universal choices of relative perturbation data~\cite[Definition 5.5]{PerutzSheridan} following the proof of~\cite[Lemma 5.6]{PerutzSheridan}. These universal choices of perturbation data should be consistent over the boundary strata of $\overline{\mathcal{R}}_{k,\ell+m}$ (cf. the \textbf{(Consistent on disks)} condition of~\cite[Definition 5.5]{PerutzSheridan}) and they should be equivariant with respect to the action of $S_{\ell}$ given by permuting the stabilizing marked points.

We can now construct relative perturbation data following~\cite{PerutzSheridan}, first for the moduli spaces $\mathcal{M}^*_{\lbrace\Gamma_i\rbrace}(B,\nu,\mathbf{y},\mathbf{t};P)$, by which we mean the subspace of $\mathcal{M}_{\lbrace\Gamma_i\rbrace}(B,\nu,\mathbf{y},\mathbf{t};P)$ with unmarked trees of \textit{simple} sphere bubbles attached.
\begin{lemma}
For fixed $(B,\nu,\mathbf{y},\mathbf{t})$, there is a comeager space of universal relative perturbation data $P_{k,\ell+m,F}$ for which $\mathcal{M}^*_{\lbrace\Gamma_i\rbrace}(B,\nu,\mathbf{y},\mathbf{t};P_{k,\ell+m,F})$ is regular, constructed by induction on $k+2(\ell+m)\geq2$. Moreover we can choose regular universal relative perturbation data from a comeager set.
\end{lemma}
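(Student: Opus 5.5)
The plan is to follow the inductive construction of \cite[Lemma 5.6]{PerutzSheridan}, adapted to the fibration setting of \cite[\S{(9k)} and \S{(17)}]{SeiBook} and to the section setting of \cite[\S{11}]{SeiThesis}. We induct on $N = k+2(\ell+m)\geq 2$.

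\textbf{Inductive step.} Suppose universal relative perturbation data have been chosen on all lower strata of $\overline{\mathcal{R}}_{k,\ell+m}$, compatibly with gluing and $S_\ell$-equivariantly. The \textbf{(Consistent on disks)} condition then fixes $P$ on a collar neighborhood of the boundary of $\overline{\mathcal{R}}_{k,\ell+m}$. Over the cylindrical end at $w_{in}$, the datum is fixed to be the Floer datum from Definition~\ref{fibrationsacss}, with values in $\mathcal{J}(T_{\phi^d},\Theta_{\phi^d};\mathbf{E}^d)$. Over the strip-like ends, it is fixed to be the chosen Floer data for $(L,L)$ and $(L,\phi^d(L))$. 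We are free to vary $P$ only in the complement of these regions. Since that complement is open, the variations have open support in the domain.

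\textbf{Surjectivity of the universal operator.} Form the universal moduli space of triples $(r,u,P)$, with $P$ ranging over a Floer $C^\varepsilon$-space of such extensions. The key claim is that the universal linearized operator is surjective at every simple section with simple sphere bubbles attached. For the disk/section component, the argument is standard. A section $u$ is automatically somewhere injective, since its projection to $\Sigma^\circ$ is the identity. Variations of $Y$ in $\mathcal{Y}_*(E_{\phi^d,\Sigma^{\circ}},\mathbf{E}^d)$, together with variations of $K$, then kill any cokernel element by the usual unique continuation argument. Two points need care here.
\begin{itemize}
\item[(a)] The constraint $Y(T\mathbf{E}^d)\subset T\mathbf{E}^d$ only restricts $Y$ along $\mathbf{E}^d$. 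Injective points of $u$ off $\mathbf{E}^d$ form an open dense set, because $u$ meets $\mathbf{E}^d$ in isolated points (the tangency data are finite).
\item[(b)] When $\ell=0$ we require $Y\in\mathcal{Y}^{\max}_*$, i.e.\ support in $W$. Any nonconstant section with $\ell=0$ has zero intersection with $\mathbf{E}^d$. By exactness of $F$ in the Liouville complement, together with the energy identity, such a section must pass through the region where $Y$ is allowed to vary. This is the same argument as \cite[Lemma 5.6]{PerutzSheridan}.
\end{itemize}

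\textbf{Sphere bubbles.} For sphere bubbles lying in fibers, $Y$ is constant on sphere components. Transversality for simple spheres and for the fiber-product evaluation constraints therefore follows from \cite[Lemma 5.16]{PerutzSheridan}, applied fiberwise using the $\Sigma^\circ$-dependence of $Y$, as in \cite[\S{11}]{SeiThesis}. The tangency conditions $\iota(u,z_i)=\mathbf{t}(i)$ are handled as in \cite[Lemma 5.7]{PerutzSheridan}: the jet evaluation map at $z_i$ in the normal direction to $E_q$ is a submersion for the universal problem, which uses the integrability of $J$ near $\mathbf{E}^d$.

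\textbf{Sard--Smale and the inductive choice.} The Sard--Smale theorem applied to the projection to the parameter space yields a comeager set of $P$. Intersecting over the countably many choices of $(B,\nu,\mathbf{y},\mathbf{t},\lbrace\Gamma_i\rbrace)$ keeps the set comeager. $S_\ell$-equivariance is imposed by working on the quotient, which is possible because $S_{\mathbf{q}}$ acts freely; alternatively one can average as in \cite{PerutzSheridan}. The final sentence of the lemma then follows because a countable intersection of comeager sets in a Baire space is comeager.

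\textbf{Main obstacle.} I expect the main difficulty to be the cylindrical end at $w_{in}$, where $P$ is frozen to the translation-invariant fixed point Floer datum. Sections that are entirely contained in that end are handled by the regularity of the Floer datum already chosen in Section~\ref{fixedpointfloersection}. Sections that are not entirely contained in the end must pass through the free region. The care needed is in ensuring that cokernel elements supported near the end vanish by unique continuation, and that the frozen datum is compatible with $\mathbf{E}^d$, which holds by Lemma~\ref{divisorconstruction0}.
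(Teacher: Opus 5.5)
Your proposal is correct and follows the same route as the paper. The paper's proof simply defers to the inductive Sard--Smale argument of \cite[Lemma 5.9 and Corollary 5.10]{PerutzSheridan} (with \cite{CieliebakMohnke}-style tangency conditions) and \cite[(9k)]{SeiBook}, remarking that passing from maps to sections adds no difficulty; your sketch makes that argument explicit, with one aside to drop. The tangency step does not need integrability of $J$ near $\mathbf{E}^d$, which the perturbations $Y$ would not preserve anyway. It only needs each component of $\mathbf{E}^d$ to be almost complex, because the similarity principle then makes contact orders well defined.
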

The proof of this lemma closely follows~\cite[Lemma 5.9 and Corollary 5.10]{PerutzSheridan}, which incorporates higher tangency conditions, following~\cite{CieliebakMohnke}, in the standard transversality arguments for disks~\cite[(9k)]{SeiBook}. Working with sections instead of maps to a closed symplectic manifold presents no additional difficulties.

Following ~\cite[\S 5.6, 5.7]{PerutzSheridan}, we can again rule out sphere bubbling for dimension reasons, which shows that the moduli spaces~\eqref{generaltcosections} have all of the properties needed to construct the twisted closed-open map.
\begin{lemma}
If $P$ is a regular relative perturbation datum and $\dim\mathcal{M}(B,\nu,\mathbf{y};P)\leq 1$, then the (broken) curves in $\overline{\mathcal{M}}(B,\nu,\mathbf{y};P)$ have no sphere bubbles.
\end{lemma}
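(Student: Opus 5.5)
The plan is to follow the dimension-counting argument of~\cite[\S{5.6}--5.7]{PerutzSheridan} (itself modeled on~\cite{CieliebakMohnke}), adapted to sections of the fibration $E\to\Sigma^{\circ}$. Suppose a broken curve in $\overline{\mathcal{M}}(B,\nu,\mathbf{y};P)$ carries nonempty bubble trees $\lbrace\Gamma_i\rbrace$. First we pass to the underlying simple configuration. Each sphere bubble is contained in a single fiber $X_{t'}$ of $E$, because the projection to $\Sigma^{\circ}$ is holomorphic and any holomorphic map from a sphere to the (open) base is constant. We replace every multiply covered sphere by its simple image, and we delete any sphere that repeats the image of another. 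The result is a configuration in some stratum $\mathcal{M}^*_{\lbrace\Gamma_i'\rbrace}(B',\nu,\mathbf{y},\mathbf{t}';P)$. By the preceding lemma this stratum is regular for the chosen universal data, so it has its expected dimension.

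Next we compute that expected dimension. Since $c_1(X_t)=0$, every sphere bubble has Chern number zero. The main component therefore has the same index as the original class, minus any positive contributions absorbed by the bubbles, of which there are none here. Each attached sphere imposes a node condition. Its image ranges over a family of $c_1=0$ simple spheres in a fiber, and that family has dimension $2n-6+2\cdot 2$ after accounting for reparametrizations and the fiber parameter. The node then costs codimension $2$ in the total space $E$ of real dimension $2n+2$. Counting this way, each bubble tree lowers the dimension by at least $2$.

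There is one further subtlety. The tangency conditions at the stabilizing points $z_i$ that land on bubbles must be transferred to the simple configuration. Following~\cite[Lemma 5.16]{PerutzSheridan}, we would argue that a nonconstant sphere meets each $E_q$ positively, because $\mathbf{E}$ is ample and no sphere lies entirely inside a component of $\mathbf{E}$. Consequently, forgetting these points and collapsing constant components only lowers the dimension further. Combining the two steps, a configuration with bubbles lives in a regular space of dimension at most $\dim\mathcal{M}(B,\nu,\mathbf{y};P)-2\leq -1$. Such a space is empty, and the claim follows.

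The main obstacle is the breaking along the interior puncture $w_{in}$, where the boundary strata contain cylinder components of the form~\eqref{cylindersectionmodulispaces}. Bubbles may sit on these cylinders, and the cylinders are governed by the $\mathbb{R}$-invariant almost complex structures of Definition~\ref{fibrationsacss}, not by the relative perturbation data. For this case we would invoke~\cite[Proposition 14.2]{SeiThesis}: for generic $\mathbf{J}$, the set of points lying on $c_1=0$ spheres has codimension at least $4$ in the mapping torus. This rules out bubbles on index $\leq1$ cylinders, and we would then apply the same count to each remaining disk component.
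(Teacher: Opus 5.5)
Your proposal is correct and takes essentially the same route as the paper, whose proof simply defers to the dimension count of~\cite[Lemma 5.16]{PerutzSheridan}. That count consists of three steps: reduce to the underlying simple configuration; use the regularity of the strata $\mathcal{M}^*_{\lbrace\Gamma_i\rbrace}(B,\nu,\mathbf{y},\mathbf{t};P)$ from the preceding lemma, which allows arbitrary tangency data $\mathbf{t}$ and so covers collapsed ghost components; and use $c_1(X_t)=0$ to see that each bubble tree costs at least two dimensions (the node point contributes $+2$, and hitting the codimension-$4$ set swept out by $c_1=0$ spheres contributes $-4$). Your separate treatment of bubbles on cylinder components via~\cite[Proposition 14.2]{SeiThesis} fills in a detail the paper leaves implicit.
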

The proof of this lemma is identical to the proof of~\cite[Lemma 5.16]{PerutzSheridan}.

\subsection{Constructing the twisted closed-open map}
We take the fixed point Fleor cochain groups $CF^*(X_t,\phi^d)$ to be defined over the Novikov field $\Lambda$. To simplify some technical matters relating to the existence of bounding cochains, we will assume that:
\begin{assumption}\label{strictactionassumption}
The perturbation and Floer data used to construct the full subcategory of $\Fuk(X_t,\mathbf{E}_t)$ whose objects are supported on $\lbrace\phi^d(L)\rbrace_{d\in\mathbb{Z}}$ are chosen to be equivariant with respect to the $\mathbb{Z}$-action generated by $\phi$. In particular each $\phi^d$ induces a strict $A_{\infty}$-isomorphism $CF^*(L)\to CF^*(\phi^d(L))$ for all $d\in\mathbb{Z}$.
\end{assumption}
We can guarantee this when $L$ is a tropical Lagrangian section, since we can see in a regularized neighborhood that it acts freely on the Lagrangians $\lbrace\phi^d(L)\rbrace_{d\in\mathbb{Z}}$. This assumption implies that if $b\in CF^*(L)$ is a bounding cochain, then its image $\phi^d(b)\in CF^*(\phi^d(L))$ is also a bounding cochain.

As a preliminary step toward defining the twisted closed-open map, we will define so-called \textit{twisted $\mathfrak{q}$-operators} by counting appropriate sections, which consist of map
\begin{align}\label{twistedqoperators}
\mathfrak{q}_k\colon CF^*(X_t,\phi^d)\otimes \bigotimes_{i=1}^k CF^*(L)\to CF^*(L,\phi^d(L))
\end{align}
where $d\geq0$ is fixed. These operations count pseudoholomorphic sections of the fibrations $E_{\phi^d,\Sigma^{\circ}}$, where $\Sigma^{\circ}$ has input interior puncture and $k+1$ boundary punctures, with the Lagrangian boundary condition of Definition~\ref{tcoboundaryconditions}.

Given $\nu\in\mathcal{H}(T_{\phi^d},\Theta_{\phi^d})$ and a sequence of Hamiltonian chords $\mathbf{y}$, let $B$ be class in $\pi_2(\nu,\mathbf{y})$. Let $\mathfrak{t}\colon\lbrace 1,\ldots,\ell\rbrace\to Q$, where $\ell$ depends on $B$, be a function used to define the canonical tangency datum, which we omit from the notations. Also let $\lambda(B)$ denote the energy of this class, which is given by the energy of any section $u\colon\Sigma^{\circ}\to E$ representing $B$. Then we define
\begin{align*}
\mathfrak{q}_k\coloneqq\sum_{u\in\mathcal{M}(\nu,\mathbf{y},A,\mathbf{P})/S_{\mathfrak{q}}}T^{\lambda(B)}\sigma_u
\end{align*}
where the sum is over all elements of all moduli spaces of virtual dimension $0$ with the given boundary conditions and each $u\in \mathcal{M}(\nu,\mathbf{y},B,\mathbf{P})$ again denotes an isomorphism of orientation lines
\begin{align*}
\sigma_u\colon o_{\nu}\otimes o_{y_1}\otimes\cdots\otimes o_{y_k}\to o_{y_0} \,.
\end{align*}

To define the twisted closed-open map, one would na{\"i}vely only consider disks with one boundary marked point and one input interior marked point, but this needs to be deformed by bounding cochains.
\begin{definition}
Let $(L,b)$ be an object of the relative Fukaya category $\Fuk(X_t,\mathbf{E}_t)$ satisfying Assumption~\ref{strictactionassumption}, where $b$ is a bounding cochain. Define the \textit{twisted closed-open map}
\begin{align*}
\mathcal{CO}_{\phi^d}\colon CF^*(X_t,\phi^d)\to CF^*(L,\phi^d(L))
\end{align*}
to be the sum
\begin{align*}
\mathcal{CO}_{\phi^d}(\nu)\coloneqq\sum_{k=0}^{\infty}\mathfrak{q}_k(\nu,b^{\otimes k})
\end{align*}
(where we have abused notation and let $\nu$ denote an element of the orientation line $o_{\nu}$.)
\end{definition}
This sum converges because bounding cochains have positive valuation in the Novikov field.
\begin{lemma}\label{tcochainmaplemma}
The twisted closed-open map $\mathcal{CO}_{\phi^d}$ is a chain map.
\end{lemma}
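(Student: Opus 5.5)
The plan is to analyze the boundary of the one-dimensional moduli spaces $\overline{\mathcal{M}}(B,\nu,\mathbf{y};P)$ of sections of $E_{\phi^d,\Sigma^{\circ}}$, where $\Sigma^{\circ}$ has one interior input puncture and $k+1$ boundary punctures, and to read off from them the twisted $\mathfrak{q}$-relations. For generic regular universal relative perturbation data these spaces are compact one-manifolds with boundary. Sphere bubbling is excluded by the last lemma. The boundary points are broken sections of three types:
\begin{itemize}
\item[(a)] Floer cylinder breaking at the interior puncture, where a component lies in $\mathcal{M}(E_{\phi^d},J;B')$. This contributes $\mathfrak{q}_k(d\nu,y_1,\ldots,y_k)$.
\item[(b)] Disk bubbling or strip breaking at a boundary puncture, where a consecutive block of inputs $y_{i+1},\ldots,y_{i+j}$ splits off onto a disk of $\Fuk(X_t,\mathbf{E}_t)$ with boundary on $L$, including constant-length $\mathfrak{m}_0$ bubbles. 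This contributes $\mathfrak{q}_{k-j+1}(\nu,\ldots,\mathfrak{m}_j(y_{i+1},\ldots,y_{i+j}),\ldots)$.
\item[(c)] Breaking at the output, where the interior puncture stays on a disk component and a polygon with inputs $\mathfrak{q}_{i}(\nu,y_1,\ldots,y_i)\in CF^*(L,\phi^d(L))$ and $y_{i+1},\ldots,y_k$, with the latter viewed in $CF^*(\phi^d(L))$ via $\phi^d$, splits off. This contributes $\mathfrak{m}_{k-i+1}(\mathfrak{q}_i(\nu,\ldots),\phi^d(y_{i+1}),\ldots,\phi^d(y_k))$.
\end{itemize}
The last type is where Assumption~\ref{strictactionassumption} enters. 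Boundary parallel transport carries $L$ to $\phi^d(L)$ across the cut, so the chords near $\zeta_0$ are naturally chords of $\phi^d(L)$. The equivariance of the Floer and perturbation data identifies this moduli space with the corresponding one for $\phi^d(L)$, and the orientation lines match.

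The consistency of the data over the strata of $\overline{\mathcal{R}}_{k,\ell+m}$ ensures that the perturbation data on each boundary component agree with those used to define $d$, the $\mathfrak{m}_j$, and the $\mathfrak{q}$. The stabilizing marked points distribute among the components according to the partition of tangency data, and dividing by $S_{\mathfrak{q}}$ matches the counts. Energy is additive, so the powers of $T$ agree. Standard gluing, following~\cite[(9l), (17)]{SeiBook} adapted to sections as in~\cite{JYZ}, together with the stabilizing-divisor framework of~\cite[\S{5}]{PerutzSheridan}, shows that each such broken configuration is exactly one end of the one-manifold. Counting boundary points with sign then yields the twisted $A_\infty$-module relation
\[ \sum \pm\,\mathfrak{m}(\mathfrak{q}(\nu,\ldots),\phi^d(\ldots)) + \sum\pm\,\mathfrak{q}(\nu,\ldots,\mathfrak{m}(\ldots),\ldots)+\mathfrak{q}(d\nu,\ldots)=0 .\]

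Next, substitute $y_i=b$ for all inputs and sum over $k$; this converges because $b$ has positive valuation. The middle sum becomes $\mathfrak{q}(\nu,b,\ldots,\mathfrak{m}(b,\ldots,b),\ldots,b)$, summed over $j$. This equals $\mathfrak{q}(\nu,\ldots,\mathfrak{m}^b_0,\ldots)$, which vanishes by the Maurer--Cartan equation for $b$ in the uncurved setting of the relative Fukaya category (or equals a multiple of the unit, which is killed by the usual unit argument). The first sum is $\mathfrak{m}_1^{b,\phi^d(b)}(\mathcal{CO}_{\phi^d}(\nu))$, where $\phi^d(b)$ is a bounding cochain by Assumption~\ref{strictactionassumption}; this deformed $\mathfrak{m}_1$ is the differential on $CF^*((L,b),(\phi^d(L),\phi^d(b)))$. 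The last sum is $\mathcal{CO}_{\phi^d}(d\nu)$. Together these give $\mathfrak{m}_1^{b}\circ\mathcal{CO}_{\phi^d}=\pm\,\mathcal{CO}_{\phi^d}\circ d$.

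The main obstacle is the compactness and gluing analysis at the interior puncture for sections of a bundle with monodromy $\phi^d$. The almost complex structures must lie in $\mathcal{J}(T_{\phi^d},\Theta_{\phi^d};\mathbf{E})$ on the cylindrical end while remaining $\mathcal{Y}_*$-perturbations of the fixed $J_0$ near the boundary, and one must check that no cylinder components carrying tangency to $\mathbf{E}^d$ create extra codimension-one strata. I would handle this as in the proof of Lemma~\ref{logPSScompactness}: use $\mathbf{E}$-marked cylinder moduli spaces, which are isomorphic to the unmarked ones, together with the forgetful identification described above.
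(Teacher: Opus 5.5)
Your argument is essentially the paper's proof. You study the ends of the one-dimensional moduli spaces of twisted sections with $b$ inserted at all boundary inputs. Cylinder breaking at $w_{in}$ gives $\mathcal{CO}_{\phi^d}\circ d$. Boundary breakings whose bubble does not contain $\zeta_0$ cancel by the Maurer--Cartan equation; the paper also lists bubbles labeled by $\phi^d(L)$, which cancel via the Maurer--Cartan equation for $\phi^d(b)$ using Assumption~\ref{strictactionassumption}. Breakings at the output give $\mathfrak{m}_1^{b,\phi^d(b)}\circ\mathcal{CO}_{\phi^d}$.

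One bookkeeping fix is needed in your type (c): the output polygon can also carry an initial block of inputs before the node. The correct terms are $\mathfrak{m}_{a+1+c}(y_1,\ldots,y_a,\mathfrak{q}_j(\nu,y_{a+1},\ldots,y_{a+j}),\phi^d(y_{a+j+1}),\ldots,\phi^d(y_k))$ with $a,c\geq 0$. Only with the $a>0$ terms included does substituting $y_i=b$ give the full two-sided differential $\mathfrak{m}_1^{b,\phi^d(b)}(x)=\sum_{a,c}\mathfrak{m}_{a+1+c}(b^{\otimes a},x,\phi^d(b)^{\otimes c})$ that you claim.
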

\begin{proof}
Consider the $1$-dimensional moduli spaces 
\begin{align}\label{1-dimensional-gromov-compactifications-1}
    \overline{\mathcal{M}}(\nu,\mathbf{y},B,\mathbf{P})/S_{\mathbf{q}}
\end{align}
where $\mathbf{y} = (y_0,y_1,\ldots,y_k)$ is chosen so that $y_i$, for $i = 1,\ldots,k$ are asymptotic to be degree $1$ Hamiltonian chords, which account for the bounding cochain insertion $b$. There are two types of boundary strata in these moduli spaces:
\begin{itemize}
\item[(i)] Products 
\[\mathcal{M}(E_{\phi},J;B'')\times {\mathcal{M}}(\nu^{+},\mathbf{y},B',\mathbf{P})/S_{\mathbf{q}'}\]
where $B'\in\pi_2(\nu^{+},\mathbf{y})$ and $B''\in\pi_2(\nu^{-},\nu^{+})$ are any classes such that $B$ is obtained by attaching (representatives of) $B'$ and $B''$ along input punctures, and the tangency data are determined by forgetting marked points on the cylindrical component, and;
\item[(ii)] Products
\[ 
{\mathcal{M}}(\nu,\mathbf{y}',B',\mathbf{P}')/S_{\mathbf{q}'}\times {\mathcal{M}}(\mathbf{y}'',B'',\mathbf{P}'')/S_{\mathbf{q}''}
\]
where $B'$ and $B''$ whose gluing along the appropriate puncture gives $B$ (and similarly for the tangency data, the Hamiltonian chords $\mathbf{y}$, and the perturbation data $\mathbf{P}$.)
\end{itemize}
If we take the inputs of all input boundary punctures to be copies of the bounding cochain $b$, then summing over all such boundary strata gives us a relation between compositions of various operators on Lagrangian and fixed point Floer cochain groups. The first type of boundary components contributes a term of the form $\mathcal{CO}\circ d(o_{\nu^{-}})$, where $d$ is the fixed point Floer differential.

The second type of boundary components can contribute expressions of three different types. Before we enumerate these, note that for topological reasons, in any Gromov limit of any family in~\eqref{1-dimensional-gromov-compactifications-1} will contain two broken sections, one of which contains the input interior puncture. Since the holonomy of the fibration about this punctured is $\phi^d$, the restriction of the fibrations to the other component will be trivial for topological reasons. Therefore on the component over which the fibration is trivial, either (a) all input boundary punctures will be asymptotic to chords in $\mathcal{C}(L,L)$, (b) all boundary punctures will be asymptotic to chords in $\mathcal{C}(\phi^d(L),\phi^d(L))$, or (c) this component will contain the output puncture. The sum all of the contributions of types (a) and (b) vanishes, by the Maurer--Cartan relations for $b$ and $\phi^d(b)$, respectively. On the other hand, the contributions of type (c) contribute a term $\mathfrak{m}_1^{b,\phi^d(b)}\circ\mathcal{CO}(o_{\nu^{-}})$, where $\mathfrak{m}_1^{b,\phi^d(b)}$ denotes the Floer differential on $CF^*(L,\phi^d(L))$ deformed by the bounding cochains $b$ and $\phi^d(b)$. Thus we obtain
\[ \mathcal{CO}\circ d(o_{\nu}) = \mathfrak{m}_1^{b,\phi^d(b)}\circ\mathcal{CO}(o_{\nu}) \,.\]
\end{proof}

For a Lagrangian $L$ satisfying Assumption~\ref{strictactionassumption}, we consider
\begin{align*}
\mathcal{CO}_+^*\coloneqq\bigoplus_{d=0}^{\infty}\mathcal{CO}_{\phi^d}^*\colon\bigoplus_{d=0}^{\infty}CF^*(X_t,\phi^d)\to\bigoplus_{d=0}^{\infty}CF^*(L,\phi^d(L))
\end{align*}
and the induced map $[\mathcal{CO}_+]$ on cohomology. To show that $[\mathcal{CO}_+]$ is a ring homomorphism, we consider moduli spaces with `geodesic constraints,' similarly to~\cite{GanThesis}. We begin by introducing auxiliary moduli spaces of domains $\mathcal{R}_{k,\ell+2}(r)$, where $r\in(0,1)$ consisting of disks $\Sigma$ with $k+1$ boundary punctures $\zeta_0,\zeta_1,\ldots,\zeta_k$ (in order), $\ell$ interior stabilizing marked points, and $2$ interior input marked points $w_1$ and $w_2$, such that up to an automorphism of the domain, the points $\zeta_0$, $w_1$, and $w_2$ lie at $-i$, $-r$, and $r\in\mathbb{C}$, respectively. Let $\Sigma^{\circ}$ denote the complement of the boundary marked points and input interior marked points. Also let $\mathcal{R}_{k,\ell+2;\perp}$ denote the union $\bigcup_{r\in(0,1)}\mathcal{R}_{k,\ell+2}(r)$.

We will construct a symplectic fibration $E_{\phi^{d_1},\phi^{d_2},\Sigma^{\circ}}\to\Sigma^{\circ}$ which has holonomy $\phi^{d_i}$ about the puncture at $w_i$ for $i = 1,2$. To that end, consider the line segments in $\Sigma$, parametrized as above, joining $w_i$ to $\zeta_0$, which are disjoint in the interior of the disk. Then, starting from the trivial fibration with fiber $X_t$ over the complement of these arcs, we can form the desired fibration over $\Sigma^{\circ}$ by gluing along the interiors of the two arcs. This is a mild generalization (to higher dimensions) of the construction given in the proof of Theorem 4.7 of~\cite{JYZ}. We will also write $E = E_{\phi^{d_1},\phi^{d_2},\Sigma^{\circ}}$ when the domain and holonomy transformations $\phi^{d_i}$ are understood from context.

Suppose we are given a sequence of Hamiltonian chords $\mathbf{y} = (y_0,y_1,\ldots,y_k)$ and a pair of horizontal sections $\nu = (\nu^1,\nu^2)$, where $\nu^i\in\mathcal{H}(T_{\phi^{d_i}},\Theta_{\phi^{d_i}})$ for $i = 1,2$. Let $\pi_2(\nu,\mathbf{y})$ denote the set of homotopy classes of sections of a symplectic fiber bundle $E$ over the disk $\Sigma^{\circ}$ with two interior punctures and fiber $X$ as before.

After choosing universal and consistent perturbation data and restricting to the canonical tangency condition as above, we can form the moduli spaces
\begin{align}\label{geodesicmodulispaces}
\mathcal{M}_{\perp}(\nu,\mathbf{y},B,\mathbf{P})
\end{align}
of pseudoholomorphic sections of this fibration, where the domains come from disks in $\mathcal{R}_{k,\ell+2;\perp}$.

\begin{lemma}\label{tcoringhomlemma} The map
\[[\mathcal{CO}_+]\colon \bigoplus_{d=0}^{\infty}HF^*(X_t,\phi^d)\to\bigoplus_{d=0}^{\infty}HF^*(L,\phi^d(L))\]
is a ring homomorphism.
\end{lemma}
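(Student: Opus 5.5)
We follow the standard geodesic-constraint argument (as in~\cite{GanThesis} and the proof of Theorem 4.7 of~\cite{JYZ}). The product on the target is the $\phi$-twisted product $(a_1,a_2)\mapsto\mathfrak{m}_2^{b,\phi^{d_1}(b),\phi^{d_1+d_2}(b)}(a_1,\phi^{d_1}(a_2))$, which is the multiplication on $\mathcal{A}_{\phi}$. The plan is to define an auxiliary operation
\[\mathcal{CO}^{(2)}_{\perp}\colon CF^*(X_t,\phi^{d_1})\otimes CF^*(X_t,\phi^{d_2})\to CF^{*-1}(L,\phi^{d_1+d_2}(L))\]
by counting elements of the moduli spaces~\eqref{geodesicmodulispaces} of virtual dimension $0$. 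Every boundary input is the bounding cochain $b$ (or its translates), weighted by $T^{\lambda(B)}$ and divided by $|S_{\mathbf{q}}|$ as before. The main step is then to show that $\mathcal{CO}^{(2)}_{\perp}$ is a chain homotopy between $[\mathcal{CO}_+](x_1\star x_2)$ and $[\mathcal{CO}_+](x_1)\cdot[\mathcal{CO}_+](x_2)$.

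Concretely, we analyze the boundary of the $1$-dimensional moduli spaces $\overline{\mathcal{M}}_{\perp}(\nu,\mathbf{y},B,\mathbf{P})/S_{\mathbf{q}}$, using the transversality and no-sphere-bubbling results established above for sections over disks with interior punctures. Their boundary strata come in four types.
\begin{itemize}
\item[(i)] Cylinder breaking at $w_1$ or $w_2$ contributes $\mathcal{CO}^{(2)}_{\perp}\circ d$.
\item[(ii)] Disk breaking at a boundary puncture contributes $\mathfrak{m}_1^{b,\phi^{d_1+d_2}(b)}\circ\mathcal{CO}^{(2)}_{\perp}$. Breakings carrying only $b$-inputs cancel by the Maurer--Cartan equation, exactly as in Lemma~\ref{tcochainmaplemma}.
\item[(iii)] The end $r\to 0$: $w_1$ and $w_2$ collide, and the domain degenerates into a disk with one interior puncture, whose holonomy is $\phi^{d_1+d_2}$, glued to a sphere with two inputs and one output. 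Identifying the sphere part, with its pulled-back bundle, with $E_{\phi^{d_1},\phi^{d_2}}$ gives the pair of pants product and yields $\mathcal{CO}_{\phi^{d_1+d_2}}(x_1\star x_2)$.
\item[(iv)] The end $r\to 1$: $w_1$ and $w_2$ run to the boundary on either side of the geodesic through $\zeta_0$, and the disk splits into two disks, each with one interior puncture, joined at a boundary node to a disk carrying the output. This yields $\mathfrak{m}_2(\mathcal{CO}_{\phi^{d_1}}(x_1),\phi^{d_1}\mathcal{CO}_{\phi^{d_2}}(x_2))$, together with all possible $b$-insertions.
\end{itemize}

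Two points need care in case (iv). First, we must check that the fibration on the middle component is trivialized compatibly, so that the boundary condition over the second disk is $\phi^{d_1}$ of the standard one. Since the branch cuts run from $w_i$ to $\zeta_0$, the boundary arc between the two nodes sees $\phi^{d_1}(L)$. Second, we must ensure that the perturbation data on that component agree with the translated ones. This is where Assumption~\ref{strictactionassumption} enters: the $\mathbb{Z}$-equivariant choice of data identifies the second disk with the $\phi^{d_1}$-translate of a $\mathcal{CO}_{\phi^{d_2}}$-disk.

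Summing all four types gives
\[\mathcal{CO}(x_1\star x_2)-\mathfrak{m}_2^{b}\bigl(\mathcal{CO}(x_1),\phi^{d_1}\mathcal{CO}(x_2)\bigr)=\mathfrak{m}_1^{b}\circ\mathcal{CO}^{(2)}_{\perp}\pm\mathcal{CO}^{(2)}_{\perp}\circ d,\]
which is the claim in cohomology. The main obstacle is the consistency of the universal perturbation data over the compactified family $\overline{\mathcal{R}}_{k,\ell+2;\perp}$ at both ends. It must be compatible with gluing in the pants data at $r\to 0$. It must also be compatible with the $S_\ell$-equivariant stabilizing-marked-point data and with the equivariant data at $r\to1$, where the stabilizing points distribute among components according to the intersection numbers with $\mathbf{E}^d$. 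We would construct this data inductively, following~\cite[Lemma 5.6]{PerutzSheridan}, first prescribing it near the two ends as the glued data and then extending generically.
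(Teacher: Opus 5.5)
Your proposal is correct and is essentially the paper's argument: the same geodesic-constraint moduli spaces over $\mathcal{R}_{k,\ell+2;\perp}$. The $r\to 0$ end gives the closed-string pair of pants product followed by $\mathcal{CO}$, the $r\to 1$ end gives the open-string product of the two twisted closed-open images, and the remaining cylinder and disk breakings are handled as in Lemma~\ref{tcochainmaplemma}. You go somewhat further than the paper by writing down the degree $-1$ homotopy operator explicitly, and by flagging where Assumption~\ref{strictactionassumption} and the consistency of perturbation data at both ends are needed, both of which the paper leaves implicit.
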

\begin{proof}
We observe that the compactifications of the moduli spaces $\mathcal{R}_{k,\ell+2;\perp}$ admit submersions to $[0,1]$. These give rise to boundary strata in the Gromov compactifications of~\eqref{geodesicmodulispaces} (of dimension $1$), which are described as follows.
\begin{itemize}
\item[$\bullet$] In the degeneration $r\to 0$, elements of the corresponding boundary components of~\eqref{geodesicmodulispaces} consist of broken sections with one pair of pants component, with inputs $\nu^{d_1}$ and $\nu^{d_2}$, and one disk component with a single interior puncture. This corresponds to the product on the closed-string side.
\item[$\bullet$] In the degeneration $r\to1$, the elements of the boundary components of~\eqref{geodesicmodulispaces} consist of broken sections with three components. The first of these components is a disk with no interior punctures, and the other two components each contain one interior punctures $w_i$, and the induced fibration over this component has holonomy $\phi^{d_i}$ about $w_i$. This corresponds to the product on the open-string side after applying the twisted closed-open map $\mathcal{CO}_+$.
\end{itemize}
This follows the standard analysis of the boundaries of moduli spaces with a constraint of this form~\cite[Prop. 5.4]{GanThesis}, see in particular~\cite[Figures 15 and 16]{JYZ} for a description of these degenerations accounting for holonomy.

The other boundary components are similar to those already discussed in the proof of Lemma~\ref{tcochainmaplemma}; in particular these will either have cylinder or disk components. The cylinder components correspond to terms involving the differential on $CF^*(X_t,\phi^{d_i})$, and the disk components either correspond to possibly nontrivial terms involving the differential on $\bigoplus_{d=0}^{\infty}CF^*(X_t,\phi^d)\to\bigoplus_{d=0}^{\infty}CF^*(L,\phi^d(L))$ deformed by bounding cochains, or will contribute terms involving disk bubbles, which ultimately cancel each other.
\end{proof}

\begin{proof}[Proof of Proposition~\ref{closedopenisomorphism}]
Using the fact that $\pi$ (which we call denotes $\pi_{reg}$ per Assumption~\ref{completedfamilyassumption}) is compatible with the regularization, we can use the expression of~\eqref{compatiblewithregularizationdefinitingequation} to write $\pi$ in the form~\eqref{semistablelocaldescription} near $\mathbf{D}$. Given a tropical Lagrangian section $L\subset X_t$ in a smooth fiber of $\pi$ sufficiently close to $\mathbf{D}$, we know from Definition~\ref{tropicallagrangiandefinition} that $L$ is locally, in $X_t$, described as a product Lagrangian section of the natural fibration on $(\mathbb{C}^*)^{k-1}\times\mathbb{C}^{n-k+1}$. In the same local charts, the flow of the horizontal lift $\xi^{reg}$ (see Lemma~\ref{orbitsinstandardneighborhoods}) of the rotational vector field on $\mathbb{D}^{\circ}$ will look like a multiple of~\eqref{horzlift} by some nonvanishing real-valued function defined in terms of the Riemannian distance functions implicit in the Hermitian structures of the regularization. This assertion follows from the local expression~\eqref{standardlocalformtubularnbhd} for the regularized symplectic form.

After perturbing the flow of $\xi$ (see~\eqref{explicithamiltonianperturbation}), we see that $CF^0(L,\phi^d(L))$ is supported in degree $0$ when $d$ is nonnegative. It follows from the Morse--Bott analysis of the flow of $\xi^{reg}$ carried out above (see the proof of Lemma~\ref{PSS-domain-codomain-abstract-iso}) that the twisted closed-open map induces the claimed isomorphism.
\end{proof}

\part{Toric degenerations}

\section{Symplectic topology of toric degenerations}\label{toricdegeneration-review-section}
This section summarizes the properties of toric degenerations that we will use to study the Floer theory of their smooth fibers. Most of the material in this section is not new, though an explicit description of the positive real locus in a (positive real) fiber of a real toric degeneration along the lines of Theorem~\ref{lagrangianpositivereallocus} has not, to our knowledge, appeared in the literature.
\subsection{Complex analytic toric degenerations} To eventually study their symplectic topology, we interpret toric degenerations in the complex analytic category following~\cite{RuddatSiebert, Arguzreal}. We begin by describing the central fibers of toric degenerations.
\begin{definition}\label{toriclogcydefn}
A \textit{toric log Calabi--Yau space} is a reduced complex Gorenstein space $X_0$ of pure dimension $n$, whose irreducible components are all normal toric varieties, glued pairwise torically along their toric prime divisors. 
In this situation let $\nu\colon\widetilde{X}_0\to X_0$ denote the normalization.
\end{definition}
We say that a normal complex space $\mathcal{X}$ with a divisor $X_0$ is a \textit{toroidal pair} if $(\mathcal{X},X_0)$ is locally isomorphic to $(Y,D)$, where $Y$ is an affine toric variety and $D$ is the union of prime toric divisors in $Y$.
\begin{definition}\label{toricdegenerationdefn}
Let $\mathbb{D}$ denote a disk centered at the origin in $\mathbb{C}$, and let $\mathcal{X}$ be a proper normal complex space. A proper flat holomorphic map
\begin{equation*}
\pi\colon\mathcal{X}\to\mathbb{D}
\end{equation*}
is called a \textit{toric degeneration} if the central fiber $X_0\coloneqq\pi^{-1}(0)$ is a toric log Calabi--Yau, and if there is a closed analytic subset $\mathcal{Z}\subset\mathcal{X}$ of relative codimension $2$ such that 
\begin{itemize}
\item[(a)] $(\mathcal{X},X_0)$ is a toroidal pair locally near any point in $\mathcal{X}\setminus\mathcal{Z}$;
\item[(b)] $Z\coloneqq\mathcal{Z}\cap X_0$ is contained in the singular locus $(X_0)_{\sing}$ (i.e. the union of toric prime divisors in the irreducible components of $X_0$) and;
\item[(c)] $Z$ does not contain the image under $\nu$ of any toric stratum of $\widetilde{X}_0$.
\end{itemize}
The subset $Z$ is called the \textit{log singular locus} of the toric degeneration (cf. Remark~\ref{loglocalmodels}). As before, we require that the fibers $X_t\coloneqq\pi^{-1}(t)$ for $t\neq0$ are smooth closed Calabi--Yau manifolds in the strict sense.
\end{definition}
Using the polarizations~\eqref{polarizationassumption}, we can make sense of the smooth fibers of toric degenerations as symplectic manifolds. In \S{\ref{reallagrangianconstructionsection}}, we will consider toric degenerations $\mathcal{X}\to\mathbb{D}$ with real involutions, at which point (see Assumption~\ref{symplecticformchoice}) we will choose a fixed symplectic structure of this type for which the real involution becomes anti-symplectic.
\begin{remark}
The general fibers of a toric degeneration $\mathcal{X}$ are not required to be smooth in general, but this is a necessary assumption to define the Fukaya category using the currently available technology.

Condition (c) of Definition~\ref{toricdegenerationdefn} on the log singular locus is adapted from the original definition of toric degeneration~\cite[Definition 4.1]{GS1}, but this restriction can be weakened. The requirement that $Z$ merely avoids the $0$-dimensional toric strata of $\widetilde{X}_0$ is called \textit{faithfulness} in~\cite{Arguzreal}.
\end{remark}

The main examples of toric degenerations that we consider are Batyrev--Borisov complete intersections~\cite{GrossBB} and the canonical degenerations of~\cite{GSrealaffine}. In~\cite{GSrealaffine}, toric degenerations are constructed using polyhedral affine manifolds equipped with certain auxiliary structures.

\begin{definition}
Let $B$ be a topological manifold with $\dim_{\mathbb{R}} B = n$. Let $\mathscr{P}$ be a polyhedral decomposition for $B$, i.e. a decomposition of $B$ into integral polytopes for which the attaching maps of faces preserve the integral affine structure. Furthermore, we assume that no cell of $\mathscr{P}$ self-intersects in $B$. We say that such a pair $(B,\mathscr{P})$ is an integral polyhedral manifold. Let $\mathscr{P}_{\max}$ denote the set of $n$-dimensional polyhedra in $\mathscr{P}$.
\end{definition}
From a projective toric log Calabi--Yau $X_0$, one obtains an integral polyhedral manifold by gluing the moment polytopes of the components of $\widetilde{X}_0$ as follows. For each irreducible component $X_{i}$ of $X_0$, the restriction of the polarization $\mathcal{E}\mid_{X_i}$ is an ample line bundle. Let $\check{\sigma}_i\subset N_{\mathbb{R}}$ denote the Newton polytope of this line bundle. Then there is a bijective correspondence between the faces of $\check{\sigma}_i$ and the toric strata of $X_i$ which preserves inclusions. Thus for two irreducible components $X_i$ and $X_j$, the faces of $\check{\sigma}_i$ and $\check{\sigma}_j$ can be glued in a canonical way to give a polyhedral manifold $(B,\mathscr{P})$. See~\cite[Proposition 4.10]{GS1} for a proof that $B$ is indeed a topological manifold.
\begin{definition}
Given $X_0$, the polyhedral manifold $(B,\mathscr{P})$ obtained as above is called the \textit{intersection complex} of $X_0$.
\end{definition}

Conversely, given a polyhderal manifold $(B,\mathscr{P})$, and some additional \textit{gluing data} $s$, Gross and Siebert construct a projective log Calabi--Yau $X_0(B,\mathscr{P},s)$ with intersection complex $(B,\mathscr{P})$~\cite{GSrealaffine}. The construction of $X_0(B,\mathscr{P},s)$ makes reference to an integral affine structure defined on an open subset of $B$.
\begin{lemma}[{\cite[Proposition 1.27]{GS1}}]\label{singularaffinestructure}
Let $\Delta$ denote the subset of $B$ covered by the $(n-2)$-cells of the barycentric subdivision of $\mathscr{P}$ that do not contain a vertex of $\mathscr{P}$, and do not intersect the interior of any cell in $\mathscr{P}_{\max}$ or any $\mathscr{P}$. Then $B\setminus\Delta$ admits an integral affine structure, which agrees with the natural integral affine structures on the interiors of the cells of $\mathscr{P}_{\max}$.
\end{lemma}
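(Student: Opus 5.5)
The plan is to follow the construction of~\cite[Proposition 1.27]{GS1}: build an explicit atlas of integral affine charts on $B\setminus\Delta$ from two kinds of charts, then check that the transition maps are integral affine away from $\Delta$.

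The first kind of chart is on the interior of each maximal cell $\sigma\in\mathscr{P}_{\max}$. Here the affine structure is the one already carried by the integral polytope $\sigma$. Because the attaching maps of faces preserve the integral affine structure, these charts also give an integral affine structure on the relative interior of every lower-dimensional cell, viewed as a cell in its own right.

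The second kind of chart is a fan chart at each vertex $v$ of $\mathscr{P}$. Take the open star $W_v$ of $v$ in the barycentric subdivision. Each maximal cell $\sigma\ni v$ has a tangent wedge at $v$, and these wedges glue along common faces into a complete fan $\Sigma_v$ in $\mathbb{R}^n$. They glue consistently because the face maps are integral affine and no cell self-intersects, so $\Sigma_v$ is an honest complete fan. Completeness holds because $B$ is a topological manifold, so the link of $v$ is a sphere. We then map $W_v$ into $\mathbb{R}^n$ cellwise, sending $W_v\cap\sigma$ affinely onto a neighborhood of the origin in the corresponding cone of $\Sigma_v$.

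Next comes the compatibility check. On $W_v\cap\operatorname{int}\sigma$, the fan chart restricts to the affine identification of $\sigma$ with its tangent wedge, so it agrees with the cell chart. On $W_v\cap W_{v'}$ we need the fan charts at $v$ and $v'$ to differ by an integral affine map. This holds on each open maximal cell meeting both stars, since there both charts are the cell's own affine structure.

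Across codimension-one walls the two charts are generally related by different affine maps on the two sides. The difference is the monodromy that the singular locus $\Delta$ is designed to absorb. The main obstacle is the combinatorial claim that this only happens inside $\Delta$. Concretely, I would show that $W_v\cap W_{v'}$, minus the interiors of the maximal cells, consists of barycentric simplices whose vertices are barycenters of cells of $\mathscr{P}$ but not vertices of $\mathscr{P}$. Such a simplex has dimension at most $n-2$ after removing the pieces in cell interiors, so its points lie in $\Delta$ by the definition of $\Delta$. To avoid the problem, I would shrink each vertex star, replacing $W_v$ by the open star minus $\Delta$. On $(W_v\cap W_{v'})\setminus\Delta$, each connected piece then meets an open maximal cell, or an open codimension-one cell adjacent to a common vertex. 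On such a piece, the identity across the codimension-one wall is forced by the fan gluing at a common vertex, so the transition map is a single integral affine map. Finally, I would cover $B\setminus\Delta$ by the cell interiors together with the shrunken vertex stars, using that every point of a codimension-one cell not in $\Delta$ lies in some vertex star. This produces the required structure, and it agrees with the given one on the interiors of the maximal cells by construction.
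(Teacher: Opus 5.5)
There is a genuine gap at the step where you build $\Sigma_v$. The tangent wedges of the maximal cells at a vertex $v$ do not glue into a complete fan in any canonical way. The attaching maps identify only the affine structures of shared faces. So for adjacent maximal cells $\sigma,\sigma'\ni v$ with common facet $\rho$, nothing in $(B,\mathscr{P})$ picks out an isomorphism $\Lambda_{\sigma'}\cong\Lambda_\sigma$ extending the identification of $\Lambda_\rho$; two candidates differ by a transvection $w\mapsto w+\langle\check{n}_\rho,w\rangle u$ with $u\in\Lambda_\rho$.

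Consequently the fan is not determined: four unit squares meeting at $v$ glue to the fan of any Hirzebruch surface $\mathbb{F}_a$. Worse, the fan need not exist. Glue three copies of $\operatorname{conv}\{(0,0),(1,0),(1,2)\}$ cyclically around the origin, matching the primitive edges $[(0,0),(1,2)]$ and $[(0,0),(1,0)]$; every corner at $v$ then has index $2$. A complete fan with three cones of index $2$ would have primitive rays with $\det(u_i,u_{i+1})=2$ and a positive relation $\sum w_iu_i=0$. These force $u_0+u_1+u_2=0$. Normalizing $u_0=(1,0)$ and $u_1=(a,2)$, primitivity of $u_1$ and of $u_2=-(1+a,2)$ would require $a$ to be both odd and even. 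Any integral affine chart around $v$ agreeing with the cell structures would produce exactly such a fan, and $v\notin\Delta$, so the conclusion fails at $v$. The absence of self-intersections and the sphericity of the link do not rescue this step.

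What is missing is a hypothesis that the quoted statement leaves implicit but the construction in \cite{GS1} takes as input: a \emph{fan structure} $S_v$ at each vertex. This is a complete fan together with integral affine identifications of the tangent wedges of the cells $\sigma\ni v$ with its cones. For the intersection complex of a toric degeneration, it is supplied by the toric local model of $(\mathcal{X},X_0)$ at the corresponding zero-dimensional stratum, not by $(B,\mathscr{P})$.

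Given the $S_v$, your charts are the right ones, and the compatibility check is easier than you propose. A simplex of the barycentric subdivision corresponds to a strict chain $\tau_0\subsetneq\cdots\subsetneq\tau_k$, so it contains at most one vertex of $\mathscr{P}$. Hence the open stars $W_v$ are pairwise disjoint. The only overlaps are $W_v\cap\operatorname{Int}\sigma$, where the transition map is $S_v|_\sigma$, integral affine by definition; your discussion of $W_v\cap W_{v'}$ and the shrinking of stars can be dropped. For the covering of $B\setminus\Delta$: a point in the open simplex of such a chain lies in $W_{\tau_0}$ if $\dim\tau_0=0$, in $\operatorname{Int}\tau_k$ if $\tau_k\in\mathscr{P}_{\max}$, and in $\Delta$ otherwise.
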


Using so-called \textit{open gluing data} $s$, Gross and Siebert construct $X_0(B,\mathscr{P},s)$ as a scheme by gluing toric varieties along their toric boundaries. Roughly, open gluing data $s = (s_e)$, indexed by inclusions of faces $e\colon\eta\to\tau$ in $\mathscr{P}$ is a collection of piecewise multiplicative functions along $\tau$, valued in $\mathbb{C}^*$, subject to some additional constraints~\cite[Construction 1.17 and Definition 1.18]{GSrealaffine}. This gluing can be thought of as a type of clutching construction. As one might expect, certain (equivalence classes of) open gluing data can be recovered from a \v{C}ech $1$-cocycle, called \textit{lifted open gluing data}~\cite[Theorem 5.2]{GS1}. Here the \v{C}ech complex is defined with respect to a combinatorially defined open cover of $B$. 

The main results of~\cite{GSrealaffine} are proved under the assumption that the affine structure on $B\setminus\Delta$ as above is \textit{positive} and \textit{simple}. In this setting, one has a cohomological classification of the log Calabi--Yau spaces that can arise as the central fiber of a toric degeneration, determined by the open gluing data. Let $i\colon B\setminus\Delta\to B$ denote the inclusion, where $\Delta$ is the discriminant locus of Lemma~\ref{singularaffinestructure}, and let $\check{\Lambda}$ denote the local system of integral covectors on $B\setminus\Delta$.
\begin{theorem}[{\cite[Theorem 5.4]{GS1}}]\label{logCYclassification}
If $(B,\mathscr{P})$ is positive and simple, then there is a bijective correspondence between set of (not necessarily projective\footnote{Projectivity of a log Calabi--Yau space is controlled by an obstruction class associated to $s\in H^1(B,i_*\check{\Lambda}\otimes\mathbb{C}^*)$~\cite[Definition 5.14]{GHS}. Since we will always be working with a given projective toric degeneration, by ~\eqref{polarizationassumption}, we will not discuss these obstructions.}) log Calabi--Yau spaces with intersection complex $(B,\mathscr{P})$ up to isomorphisms preserving $B$ and the \v{C}ech cohomology group $H^1(B,i_*\check{\Lambda}\otimes\mathbb{C}^*)$.
\end{theorem}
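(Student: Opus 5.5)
The plan is to follow the strategy of Gross--Siebert: express every candidate central fiber as $X_0(B,\mathscr{P},s)$, and then identify the relevant moduli of gluing data with a \v{C}ech cohomology group. Simplicity will be used to cut down the natural coefficient sheaf to $i_*\check{\Lambda}\otimes\mathbb{C}^*$.

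\emph{Step 1: every such space arises from open gluing data.} Let $X_0$ be a log Calabi--Yau space with intersection complex $(B,\mathscr{P})$. Each irreducible component is the projective toric variety $X_\sigma$ of its polytope $\sigma\in\mathscr{P}_{\max}$, and components meet along the toric strata indexed by common faces. The gluing isomorphisms between these strata are torus-equivariant up to a twist by an element of the torus. Recording these twists for every inclusion $e\colon\eta\to\tau$ gives open gluing data $s=(s_e)$ in the sense of~\cite[Definition 1.18]{GSrealaffine}, and $X_0\cong X_0(B,\mathscr{P},s)$.

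\emph{Step 2: isomorphism classes as a first cohomology group.} Isomorphisms preserving $B$ act on $s$ by rescaling each component by a torus element. In the open cover $\{W_v\}$ of $B$ by open stars of vertices in the barycentric subdivision, this action is exactly a \v{C}ech coboundary with values in the sheaf $\mathcal{PM}$ of piecewise multiplicative functions. Passing to lifted open gluing data~\cite[Theorem 5.2]{GS1}, isomorphism classes of spaces $X_0(B,\mathscr{P},s)$ are identified with $\check{H}^1(\{W_v\},\mathcal{PM})$. I would check that this cover is acyclic for the sheaves involved, since each $W_v$ and each intersection is contractible and star-shaped in the affine structure, so that the \v{C}ech group computes sheaf cohomology.

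\emph{Step 3: imposing the log Calabi--Yau condition.} Being log Calabi--Yau requires a log structure that is log smooth away from a log singular locus $Z$ satisfying Definition~\ref{toricdegenerationdefn}(b),(c). Along each codimension-one cell $\rho$ this requires that the normal-bundle (``$\log$-chart'') data on the two sides be compatible. This gives a map of sheaves
\[
\mathcal{PM}\to\bigoplus_{\rho}\mathcal{N}_\rho\otimes\mathbb{C}^*,
\]
whose kernel, away from $\Delta$, is precisely the sheaf of multiplicative functions that are affine, namely $\check{\Lambda}\otimes\mathbb{C}^*$. I would then prove the key local statement: for $(B,\mathscr{P})$ positive and simple, the kernel near any point of $\Delta$ equals $i_*(\check{\Lambda}\otimes\mathbb{C}^*)$. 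Simplicity implies that monodromy around $\Delta$ is governed by elementary simplices, so a section of $\check{\Lambda}\otimes\mathbb{C}^*$ on a punctured neighborhood extends exactly when it satisfies the closedness conditions. Positivity guarantees that the local log structures with prescribed data exist (via the local models of~\cite{GS1}) and are unique up to the ambiguity already accounted for in Step 2. Combining this with the long exact sequence, and using that the compatibility condition cuts out exactly the ``closed'' gluing data, the classes realized by log Calabi--Yau spaces are identified bijectively with $H^1(B,i_*\check{\Lambda}\otimes\mathbb{C}^*)$.

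The hard part is the local analysis in Step 3 near $\Delta$. One must show both that every closed gluing datum admits a log structure log smooth off a $Z$ satisfying (b),(c), and that no additional data survive. My first approach would be to reduce, via simplicity, to a product of standard local models $\{xy=f\}$ transverse to the elementary simplices. In these models I would compute directly which torus twists admit compatible log charts; I expect this to be exactly the condition that the twist is the restriction of a section of $i_*\check{\Lambda}\otimes\mathbb{C}^*$.
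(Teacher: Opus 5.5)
The paper itself only quotes this result from \cite[Theorem 5.4]{GS1}, so the comparison here is with the Gross--Siebert argument. Against that, your Step 3 has a genuine gap.

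The theorem classifies log Calabi--Yau spaces, i.e.\ pairs $(X_0,\mathcal{M}_{X_0})$ over the standard log point. You instead treat the log structure as a \emph{condition} on the gluing data, and assert that it is unique up to the torus rescalings already divided out in Step 2. That assertion is false, and the moduli it discards are part of the answer.

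Look at your own long exact sequence for $0\to i_*\check{\Lambda}\otimes\mathbb{C}^*\to\mathcal{PM}\to\mathcal{Q}\to 0$, with $\mathcal{Q}$ your target sheaf. The kernel of $H^1(B,i_*\check{\Lambda}\otimes\mathbb{C}^*)\to H^1(B,\mathcal{PM})$ is $\operatorname{coker}\bigl(H^0(B,\mathcal{PM})\to H^0(B,\mathcal{Q})\bigr)$. This is exactly the log-structure data on a fixed underlying space modulo its automorphisms, so a bijection with a subset of the Step 2 classes cannot see it.

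The case $n=1$ already shows this. Take $B$ a circle cut into $k$ edges, so $X_0$ is a cycle of $k$ copies of $\mathbb{P}^1$. All such spaces are isomorphic, and $H^1(B,\mathcal{PM})=0$. But a log structure of semistable type is given by a nonzero smoothing parameter $u_i$ at each node (in fixed coordinates on the two branches). The torus of each component multiplies the parameters at its two nodes by $\lambda$ and $\lambda^{-1}$, so $\prod_i u_i\in\mathbb{C}^*$ survives. This $\mathbb{C}^*$ is $H^1(S^1,\mathbb{C}^*)$, and it comes entirely from the image of the connecting map.

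What you must classify are pairs: a $\mathcal{PM}$-valued \v{C}ech $1$-cocycle $s$, together with a $\mathcal{Q}$-valued $0$-cochain $f$ recording the log structure whose coboundary is the image of $s$, modulo the torus actions. That set is $\mathbb{H}^1$ of the two-term complex $[\mathcal{PM}\to\mathcal{Q}]$. It equals $H^1(B,i_*\check{\Lambda}\otimes\mathbb{C}^*)$ once you prove the sheaf map is surjective with that kernel. Gross--Siebert encode the same bookkeeping as lifted open gluing data, where the log charts convert piecewise multiplicative transition data directly into sections of $i_*\check{\Lambda}\otimes\mathbb{C}^*$.

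This also changes where the hypotheses enter. Simplicity is what makes the log-structure data a section of a sheaf like $\mathcal{Q}$ at all. Under simplicity, the functions defining the log structure along codimension-one strata $X_\rho$ have elementary simplices as Newton polytopes. They are therefore determined by their values at zero-dimensional strata, which are nonzero because $Z$ contains no toric stratum. Without simplicity, the remaining coefficients are extra moduli not seen by $H^1(B,i_*\check{\Lambda}\otimes\mathbb{C}^*)$. Positivity is what lets such functions exist at all. So the hard local analysis concerns these functions on the $X_\rho$, not only extension of sections across $\Delta$.

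Two smaller issues. Step 1 as written only yields closed gluing data, not open gluing data. In Step 2, contractibility of the stars does not give acyclicity for $i_*\check{\Lambda}\otimes\mathbb{C}^*$, which is not locally constant on stars meeting $\Delta$.
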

When $(B,\mathscr{P})$ is positive and simple, and $\partial B = \emptyset$, Ruddat and Siebert show that any of the toric degenerations constructed in~\cite{GSrealaffine} correspond to flat analytic families over the disk $\mathbb{D}\subset\mathbb{C}$~\cite[Theorem 4.4]{RuddatSiebert}.
\begin{remark}
The intersection complex $(B,\mathscr{P})$ of any toric degeneration $\mathcal{X}\to\mathbb{D}$ in the sense of Definition~\ref{toricdegenerationdefn} is necessarily positive. On the other hand, simplicity implies that the singularities of the total space of $\mathcal{X}$ are locally indecomposable in a suitable sense (cf.~\cite[Definition 2.10]{Arguzreal}). See~\cite[\S{2.2}]{GS1} for a thorough discussion of simple singularities in the algebraic category.

In the main body of this paper, we will always take a toric degeneration with positive simple intersection complex as given, without explicitly referring to the reconstruction algorithm of~\cite{GSrealaffine}. Consequently, we will not need any details about how these conditions are defined. Indeed, most of the properties of $\mathcal{X}\to\mathbb{D}$ that we need are controlled by the \v{C}ech cohomology class of Theorem~\ref{logCYclassification}.
\end{remark}
We conclude this subsection by describing local models for toric degenerations derived from the gluing data by way of a log structure on $X_0(B,\mathscr{P},s)$. We also review some basic definitions about log structures in preparation for our description of real loci in \S{\ref{reallagrangianconstructionsection}}.
\begin{definition}
A \textit{log structure} on a complex analytic space $X$ is a sheaf of monoids $\mathcal{M}_X$ on $X$ together with a homomorphism of sheaves of monoids $\alpha\colon\mathcal{M}_X\to\mathcal{O}_X$ such that
\[ \alpha\colon\alpha^{-1}(\mathcal{O}_X^{\times})\to\mathcal{O}_X^{\times} \]
is an isomorphism. Here $\mathcal{O}_X^{\times}$ denotes the sheaf of invertible elements of $\mathcal{O}_X$. We refer to a complex analytic space equipped with a log structure as a \textit{log analytic space}.
\end{definition}
One can associate the \textit{trivial log structure} to any complex analytic space by setting $\mathcal{M}_X = \mathcal{O}_X^{\times}$. The simplest nontrivial log analytic space is the \textit{standard log point} $\mathbb{C}^{\dagger}$, which has underlying space $\operatorname{Spec}\mathbb{C} = \pt$ and log structure $\alpha\colon\mathbb{C}^*\oplus\mathbb{N}\to\mathbb{C}$ given by
\begin{align*}
\alpha(x,q)\coloneqq\begin{cases}
x & \text{if }q = 0 \\
0 & \text{if }q\neq 0\,.
\end{cases}
\end{align*}

The most important log structures, for our purposes, are the \textit{divisorial log structures} associated to divisors $D\subset X$. Letting $j\colon X\setminus D\to X$ denote the inclusion, the divisorial log structure consists of the sheaf of monoids
\begin{equation}\label{divisoriallogstructure}
\mathcal{M}_{(X,D)}\coloneqq (j_*\mathcal{O}_{X\setminus D}^{\times})\cap\mathcal{O}_X
\end{equation}
and the inclusion $\mathcal{M}_{(X,D)}\hookrightarrow\mathcal{O}_X$.

\begin{remark}\label{loglocalmodels}
A toric degeneration $\pi\colon\mathcal{X}\to\mathbb{D}$, induces a morphism $\pi^{\dagger}\colon\mathcal{M}_{(\mathcal{X},X_0)}\to\mathcal{M}_{(\mathbb{D},\lbrace 0\rbrace)}$ of log analytic spaces. If $j\colon X_0\to\mathcal{X}$ denotes the inclusion, then the restriction \begin{align*}
\mathcal{M}_{X_0}\coloneqq j^{-1}\mathcal{M}_{(\mathcal{X},X_0)}
\end{align*}
determines a log structure on $X_0$. The morphism $\pi^{\dagger}$ restricts to a log morphism $\mathcal{M}_{X_0}\to\mathbb{C}^{\dagger}$, which is \textit{log smooth} away from $Z$ (see~\cite[Definition 3.10]{ModuliHandbook} for a definition of log smoothness.)
\end{remark}

From lifted open gluing data $s\in H^1(B,i_*\check{\Lambda}\otimes\mathbb{C}^*)$, Gross and Siebert define a log structure on the corresponding log Calabi--Yau space $X_0(B,\mathscr{P},s)$~\cite[\S{4}]{GS1}. If $X_0(B,\mathscr{P},s)$ is realized as the central fiber of a toric degeneration, then this log strcuture agrees with the divisorial log structure~\eqref{divisoriallogstructure}. The log structure determines a collection of so-called \textit{slab functions} $f_{\rho,x}\in\mathbb{C}[\Lambda_{\rho}]$ indexed by codimension $1$ faces $\rho$ of $\mathscr{P}$ and connected components $x$ of $\rho\setminus\Delta$~\cite[Definition 2.17]{GSrealaffine}. Near a general point in a codimension $1$ toric stratum of $X_0$, the total space of any simple toric degeneration $\mathcal{X}\to\mathbb{D}$ is locally modeled by the affine variety
\begin{align}\label{gslocalmodels}
\lbrace uv = t^{\kappa}f_{\rho,x}(z_1,\ldots,z_{n-1})\rbrace\subset\mathbb{C}^3_{(u,v,t)}\times(\mathbb{C}^*_z)^{n-1}
\end{align}
where $\kappa\in\mathbb{Z}_{>0}$ and $\rho\in\mathscr{P}$ is the $(n-1)$-cell corresponding to the given stratum~\cite[Theorem 3.22 and Proposition 4.20]{GS1}~,\cite[\S{2}]{GSinvitation}.

\subsection{Real toric degenerations and the positive real locus}\label{reallagrangianconstructionsection} Let $\mathcal{X}\to\mathbb{D}$ be a toric degeneration whose central fiber $X_0$ is a positive simple log Calabi--Yau space. It turns out that the field of definition of such a toric degeneration is controlled by the open gluing data used to construct the central fiber. Given a positive simple toric log Calabi--Yau space $X_0(B,\mathscr{P},s)$ with gluing data $s\in\check{H}^1(B,i_*\check{\Lambda}\otimes\mathbb{C}^*)$ defined over a subring $A\subset\mathbb{C}$, Gross--Siebert's reconstruction algorithm produces a toric degeneration defined over $A$~\cite[Theorem 5.2]{GSrealaffine}. In the special case $A = \mathbb{R}$, we will use a formulation of this result due to Arg\"{u}z.
\begin{lemma}[{\cite[Corollary 7.2 and Remark 7.4]{Arguzreal}}]\label{logrealstructures}
If $(B,\mathscr{P})$ is a positive simple polyhedral manifold defined by lifted open gluing data $s\in\check{H}^1(B,i_*\check{\Lambda}\otimes\mathbb{C}^*)$, then the log Calabi--Yau space $X_0(B,\mathscr{P},s)$ admits a real structure, which is compatible with the natural real structure on the standard log point and which restricts to the standard real structure on each toric irreducible component, if and only if $s$ lies in the image of
\begin{equation}\label{gluingdatainclusion}
\check{H}^1(B,i_*\check{\Lambda}\otimes\mathbb{R}^*) \to \check{H}^1(B,i_*\check{\Lambda}\otimes\mathbb{C}^*) \,.
\end{equation}
This real structure extends to the family $\mathcal{X}\to\mathrm{Spec}\,\mathbb{C}[\![ t ]\!]$ constructed in~\cite{GSrealaffine}.
\end{lemma}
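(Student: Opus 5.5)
Since this lemma is cited from Arg\"uz and the paper only reformulates it, my plan is to reconstruct the argument along the lines of the Gross--Siebert gluing construction rather than give a new proof. The approach is to translate "real structure on $X_0(B,\mathscr{P},s)$" into a cohomological condition on $s$ via the classification of Theorem~\ref{logCYclassification}. First, note that complex conjugation acts on $\check{H}^1(B,i_*\check{\Lambda}\otimes\mathbb{C}^*)$ through the coefficients $\mathbb{C}^*\to\mathbb{C}^*$, and that applying conjugation to all gluing data (and to the standard real structures on each toric component) produces an isomorphism $X_0(B,\mathscr{P},s)\cong\overline{X_0(B,\mathscr{P},\bar s)}$ that preserves $B$. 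Hence an antiholomorphic involution restricting to the standard real structure on each component, and compatible with the real structure on $\mathbb{C}^{\dagger}$, is the same thing as an isomorphism $X_0(B,\mathscr{P},s)\cong X_0(B,\mathscr{P},\bar s)$ preserving $B$. By the bijectivity in Theorem~\ref{logCYclassification}, such an isomorphism exists iff $[s]=[\bar s]$ in $\check{H}^1$.

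Second, I would show that $[s]=[\bar s]$ is equivalent to $s$ lying in the image of \eqref{gluingdatainclusion}. One direction is trivial. For the converse, use the polar decomposition $\mathbb{C}^*\cong\mathbb{R}_{>0}\times S^1$, which splits the coefficient sheaf as $i_*\check{\Lambda}\otimes\mathbb{R}_{>0}\oplus i_*\check{\Lambda}\otimes S^1$. The positive factor is fixed by conjugation and already lies in the real image. On the $S^1$ factor conjugation acts by inversion, so the condition becomes that $2[\theta]=0$ for the angular part $[\theta]$. I then need to lift such a class to $i_*\check{\Lambda}\otimes\{\pm1\}$. This is the main obstacle, since a class with $[\theta]=-[\theta]$ need not come from a $2$-torsion cocycle in general. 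I would first try the long exact sequence for $\{\pm1\}\to S^1\xrightarrow{z^2}S^1$ tensored with $i_*\check{\Lambda}$: a class with $2[\theta]=0$ lifts iff it is killed by the connecting map. I expect to use the torsion-freeness and simplicity hypotheses, or else to replace "isomorphism class" by the finer statement that the involution is realized at the level of cocycles, i.e.\ that the gluing data itself can be chosen conjugation-invariant up to coboundary. The second option is what Arg\"uz effectively proves, and it matches "restricts to the standard real structure on each component".

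Finally, for the extension to $\mathcal{X}\to\operatorname{Spec}\mathbb{C}[\![t]\!]$, I would observe that the reconstruction algorithm of \cite{GSrealaffine} builds the family order by order from $s$, using slab functions and wall-crossing automorphisms whose coefficients lie in the ring generated by the gluing data and rational numbers. If $s$ is real, every step is defined over $\mathbb{R}$, as in \cite[Theorem 5.2]{GSrealaffine} with $A=\mathbb{R}$. Conjugation of coefficients therefore gives a compatible real structure on each $k$-th order thickening, and these assemble into a real structure on the formal family.
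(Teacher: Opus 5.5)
The paper gives no proof of its own here; it cites Arg\"uz, whose argument works with a representative of the gluing data. In suitable toric coordinates, the standard conjugations on the components commute with the gluing maps and with the log structure exactly when that representative is real. Realness then propagates through the reconstruction algorithm of \cite{GSrealaffine}. Your last paragraph matches this, provided you run the algorithm with a real representative.

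Your first two paragraphs instead go through the bijection of Theorem~\ref{logCYclassification}, and there is a genuine gap. The identification in your first step is false as stated. An isomorphism $X_0(B,\mathscr{P},s)\to X_0(B,\mathscr{P},\bar s)$ preserving $B$ acts on each component $X_\sigma$ by a torus element $t_\sigma$. Composing with conjugation therefore gives $z\mapsto t_\sigma\cdot\bar z$ on $X_\sigma$, whose square is translation by $t_\sigma\bar t_\sigma=|t_\sigma|^2$. So it need not be an involution, and it is not the standard conjugation in the given coordinates. Hence $[s]=[\bar s]$ is only shown to be necessary. Your ``if'' direction, which runs $s$ real $\Rightarrow[s]=[\bar s]\Rightarrow$ real structure, is therefore unproved. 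It can be repaired: since $|s\bar s^{-1}|=1$, $|t|$ is a $0$-cocycle, so $t/|t|$ gives an involution, which becomes standard after reparametrizing each $X_\sigma$ by a unitary square root of $t_\sigma/|t_\sigma|$. But that repair is the cocycle-level argument, and with it the ``if'' direction is immediate anyway: choose a real representative and glue the componentwise conjugations.

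For the ``only if'' direction you rely on your second step, which you leave open, and you have misdiagnosed the obstacle. Write $\mathcal{F}_A$ for the coefficient sheaf with $A$ in place of $\mathbb{C}^*$. If $0\to\mathcal{F}_{\{\pm1\}}\to\mathcal{F}_{U(1)}\xrightarrow{2}\mathcal{F}_{U(1)}\to0$ is exact, then exactness of the long exact sequence at the middle term $\check H^1(B,\mathcal{F}_{U(1)})$ already says every class with $2[\theta]=0$ lifts. No condition on a connecting map enters.

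What must actually be checked is surjectivity of squaring on stalks at points of $\Delta$, where $i_*$ fails to be right exact. For $(i_*\check\Lambda)\otimes U(1)$ this is automatic, since the stalks of $i_*\check\Lambda$ are free. For $i_*(\check\Lambda\otimes U(1))$, however, the stalk is $\Hom((\Lambda_y)_G,U(1))$ with $G$ the local monodromy. There you need the coinvariants $(\Lambda_y)_G$ to have no $2$-torsion, which is where simplicity would have to be used.

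Either verify this, or argue directly as in the cited source. If $\iota$ is the standard conjugation on each component in some toric coordinates, then the gluing data written in those coordinates is fixed by conjugation, i.e.\ real. That gives the ``only if'' direction with no descent or divisibility question.
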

We emphasize that such a real structure induces the trivial involution on $B$. Consequently, the real structure on $\mathcal{X}\to\mathrm{Spec}\,\mathbb{C}[\![ t ]\!]$ corresponds to a real structure on the associated analytic degeneration.\footnote{It is explained in~\cite[Remark 7.4]{Arguzreal} that the smoothing algorithm of~\cite{GSrealaffine} is compatible with real structures. One can then extend the real structure to the corresponding analytic degeneration following~\cite[Theorem 4.4]{RuddatSiebert}.} Conversely, given a complex analytic toric degeneration defined over $\mathbb{R}$, it is not difficult to see that its central fiber admits a real structure of the type specified in~\cite[Corollary 7.2]{Arguzreal}, which, by \textit{loc. cit.}, implies that its central fiber has lifted open gluing data in the image of~\eqref{gluingdatainclusion}.

There is an involution on the total space of any analytic toric degeneration $\mathcal{X}\to\mathbb{D}$ defined over $\mathbb{R}$ given by complex conjugation. This involution fixes the fiber $X_t$ of $\mathcal{X}$ over any real point $t\in\mathbb{D}$, and hence determines an anti-holomorphic involution
\begin{align}\label{realinvolution}
    \iota\colon X_t\to X_t \,.
\end{align}
We will fix a K{\"a}hler form on $X_t$ with respect to which this involution is anti-symplectic.
\begin{lemma}\label{realembedding}
Suppose that $\mathcal{X}\to\mathbb{D}$ is a toric degeneration. Let $t$ be a positive real number, and suppose that the fiber $X_t$ over $t$ is equipped with an anti-holomorphic involution $\iota$. Consider the line bundle $\mathcal{E}_t$ on $X_t$ obtained by restricting the polarization $\mathcal{E}$ of~\eqref{polarizationassumption}. Then we can assume, possibly after replacing $\mathcal{E}$ with a positive power, that $\mathcal{E}$ determines an embedding of $X_t$ into some projective space under which $\iota$ coincides with the restriction of complex conjugation. In particular $X_t$ is defined over $\mathbb{R}$.
\end{lemma}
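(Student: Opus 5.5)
The plan is to argue via Hermitian metrics on a real line bundle together with a real version of the Kodaira embedding. We use that $\iota$ is the restriction of complex conjugation on the total space $\mathcal{X}$, which is defined over $\mathbb{R}$ by Lemma~\ref{logrealstructures}. The first step is to arrange that the polarization $\mathcal{E}$ is compatible with the real structure. Replace $\mathcal{E}$ by $\mathcal{E}\otimes\overline{\iota^*\mathcal{E}}$, the tensor product with its conjugate pullback under the anti-holomorphic involution of $\mathcal{X}$. This bundle is still relatively ample, since $\overline{\iota^*\mathcal{E}}$ is relatively ample: conjugation sends ample bundles to ample bundles. It also carries a canonical anti-linear lift $\tilde\iota$ of $\iota$ with $\tilde\iota^2=\id$. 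Restricting to the real fiber $X_t$ gives a real line bundle $(\mathcal{E}_t,\tilde\iota)$. If one prefers to stay with a power of the original $\mathcal{E}$, one can instead note that $\overline{\iota^*\mathcal{E}_t}\cong\mathcal{E}_t$ whenever the Chern class is $\iota$-anti-invariant and $H^1(X_t,\mathcal{O})=0$. The latter holds since $X_t$ is a strict Calabi--Yau, so line bundles are determined by $c_1$.

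The second step is to choose the lift $\tilde\iota$ to be an involution rather than merely satisfying $\tilde\iota^2=\lambda$. If $\tilde\iota^2=\lambda\in\mathbb{C}^*$, anti-linearity forces $\lambda\in\mathbb{R}^*$. Rescaling $\tilde\iota$ lets us reach $\lambda=\pm1$. If $\lambda=-1$, pass to $\mathcal{E}^{\otimes2}$, on which $\tilde\iota^{\otimes2}$ squares to $+1$; this is the "positive power" allowed in the statement. In the tensor-with-conjugate construction, $\lambda=1$ automatically.

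The third step is to embed. $\tilde\iota$ induces an anti-linear involution $\sigma$ on $V=H^0(X_t,\mathcal{E}_t^{\otimes m})$ via $s\mapsto\tilde\iota\circ s\circ\iota$. Its fixed space $V^\sigma$ is a real form, so $V=V^\sigma\otimes\mathbb{C}$. For $m$ large the bundle is very ample, and the Kodaira map $X_t\to\mathbb{P}(V^\vee)$ intertwines $\iota$ with the conjugation induced by $\sigma$. Choosing a basis of $V^\sigma$ makes that conjugation the standard complex conjugation. Hence $X_t$ is cut out by $\iota$-invariant equations, which is to say it is defined over $\mathbb{R}$.

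For the Kähler form, pull back Fubini--Study in the real basis. Conjugation on $\mathbb{P}^N$ is anti-symplectic for $\omega_{FS}$, so $\iota$ is anti-symplectic on $X_t$. The class of this form is $m\,c_1(\mathcal{E}_t)$, which is consistent with the polarization up to scaling.

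The main obstacle is ensuring compatibility with the given polarization, i.e. showing $\iota^*\overline{\mathcal{E}}\cong\mathcal{E}$ up to powers. If the tensoring trick changes the Kähler class, we would fall back on the cohomological argument: $\iota^*c_1(\mathcal{E}_t)=-c_1(\mathcal{E}_t)$. This holds because the real structure acts trivially on $B$ and $\mathcal{E}$ can be taken to come from a real polarization of the central fiber restricting to real ample bundles on the toric components.
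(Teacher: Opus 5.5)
Your argument is correct and is essentially the paper's. Both reduce to the same point: the anti-holomorphic involution induced on the projective space of sections may be of real or quaternionic type, and the quaternionic case is removed by squaring the bundle. The paper phrases this as an involution of $\mathbb{CP}^N$ fixed by a Segre-type re-embedding; you phrase it as normalizing $\tilde\iota^2=\pm1$ and using $\tilde\iota^{\otimes 2}$ on a real form of $H^0$. The compatibility $\overline{\iota^*\mathcal{E}_t}\cong\mathcal{E}_t$ that you isolate is also used tacitly by the paper, since it is exactly what makes $|s^{-1}(0)+\iota(s^{-1}(0))|$ the linear system of $\mathcal{E}_t^{\otimes 2}$. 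So the input both arguments need is your appeal to the reality of the polarization, which you only sketch, not the tensor-with-conjugate trick, which does not produce a power of $\mathcal{E}$.
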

\begin{proof}
Choose a generic section $s\in\Gamma(\mathcal{E}_t)$. If $s^{-1}(0)$ is not $\iota$-invariant, then we can consider the linear system $|s^{-1}(0)+\iota(s^{-1}(0))|$, which corresponds to $\mathcal{E}_t^{\otimes 2}$. In either case, $\mathcal{E}_t^{\otimes 2}$ determines a holomorphic embedding of $X_t$ into $\mathbb{CP}^N$ for some $N\in\mathbb{Z}$, as well as an involution on $\mathbb{CP}^N$ that extends $\iota$. If $N$ is even, this involution is necessarily conjugate to complex conjugation. Otherwise, if $N$ is odd and $N+1 = 2k$, this involution could be conjugate to
\begin{align*}
[z_1:\ldots:z_k:z_{k+1}:\ldots:z_{2k}]\mapsto[-\overline{z}_{k+1}:\ldots:-\overline{z}_{2k}:-\overline{z}_1:\ldots:-\overline{z}_k] \,.
\end{align*}
In this case, we can choose a Segre embedding, corresponding to a line bundle on $\mathbb{CP}^N$, of $\mathbb{CP}^N$ into an odd-dimensional projective space. Under this embedding, we can assume that $\iota$ extends to complex conjugation. This embedding corresponds to some power $\mathcal{E}_t$. 
\end{proof}

With this, we make a specific choice of K{\"a}hler forms associated to~\eqref{polarizationassumption}.
\begin{assumption}\label{symplecticformchoice}
Let $\mathcal{X}\to\mathbb{D}$ be a real toric degeneration. After replacing $\mathcal{E}$ with a positive power if necessary, we assume that the restriction $\mathcal{E}_t = \mathcal{E}\mid_{X_t}$ to a positive real fiber $X_t$ determines an embedding of $X_t$ into some projective space for which $\iota$ is the restriction of complex conjugation, per Lemma~\ref{realembedding}. We equip $X_t$ with the symplectic form given by pulling back the Fubini--Study form under this embedding.
\end{assumption}
When $X_t$ is equipped with such a symplectic form, the set of fixed points of $\iota$ is a -- possibly disconnected -- Lagrangian submanifold of $X_t$. We will mainly be interested in situations where this real locus has a distinguished connected component that is canonically homeomorphic to $B$, i.e. a \textit{positive real locus}. When the positive real locus is well-defined, the following lemma implies that it is a rational homology sphere.
\begin{lemma}
If $\mathcal{X}\to\mathbb{D}$ is a toric degeneration in the sense of Definition~\ref{toricdegenerationdefn}, then its intersection complex $B$ is a rational homology $n$-sphere.
\end{lemma}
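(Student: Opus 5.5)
Since $B$ is the intersection complex of the central fiber $X_0$, I will compute its rational cohomology through the structure sheaf of $X_0$ and then pass to the smooth fiber $X_t$, where the strict Calabi--Yau hypotheses are available. There are three steps.

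\emph{Step 1: $B$ is a closed orientable $n$-manifold.} By \cite[Proposition 4.10]{GS1}, $B$ is a topological manifold, and it is compact because $X_0$ is proper. For orientability, note that the integral affine structure on $B\setminus\Delta$ from Lemma~\ref{singularaffinestructure} has monodromy in $\mathrm{SL}_n(\mathbb{Z})$. This follows from the Gorenstein condition, equivalently from the toric local models~\eqref{gslocalmodels}, whose monodromy is unipotent. So $B\setminus\Delta$ is orientable. Since $\Delta$ has codimension $2$, the orientation extends over all of $B$. Poincaré duality then reduces the statement to showing $H^i(B;\mathbb{Q})=0$ for $0<i<n$.

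\emph{Step 2: identify $H^*(B;\mathbb{Q})$ with coherent cohomology of $X_0$.} The irreducible components of $X_0$ are toric varieties glued along toric strata, and $B$ is their dual-type complex: it has a cell $\check\sigma$ for each stratum, and each closed stratum $X_\tau$ is toric, so $H^{>0}(X_\tau,\mathcal{O})=0$ and $H^0=\mathbb{C}$. The standard resolution of $\mathcal{O}_{X_0}$ by $\bigoplus \mathcal{O}_{X_\tau}$ over strata, indexed as in the cellular cochain complex of $(B,\mathscr{P})$, then gives $H^i(X_0,\mathcal{O}_{X_0})\cong H^i(B;\mathbb{C})$. This is the same mechanism as \cite[Lemma 3.63]{KollarBook}, which the paper has already invoked; the cell-by-cell check of exactness uses that each $X_\tau$ is normal.

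\emph{Step 3: semicontinuity and the strict Calabi--Yau condition.} By assumption the fibers satisfy $h^{i,0}(X_t)=0$ for $0<i<n$, so by Hodge symmetry $h^i(X_t,\mathcal{O})=0$ in that range. I want to transfer this to $X_0$. Upper semicontinuity only bounds $h^i(X_0)$ from below, so the needed input is that the $h^i(X_t,\mathcal{O}_{X_t})$ are constant in the flat family $\mathcal{X}\to\mathbb{D}$. In log geometry this is Kato--Nakayama / Steenbrink-type base change for $R^i\pi_*\mathcal{O}_{\mathcal{X}}$ in toroidal degenerations: the special fiber has Du Bois, indeed semi-log-canonical, singularities, and Du Bois singularities give local freeness and base change for $R^i\pi_*\mathcal{O}$. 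Invoking this yields $H^i(B;\mathbb{C})=0$ for $0<i<n$, which completes the proof.

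I expect Step 3 to be the main obstacle: one must justify the Du Bois property of $X_0$, or cite a base-change result in \cite{GS1} or \cite{RuddatSiebert}, which requires care along the log singular locus $Z$. If that fails, the fallback is a direct topological comparison: use the tropicalization map $\Log_t\colon X_t\to B$ together with the known fact that $X_t$ retracts onto a fibration over $B$ with torus fibers, and extract the rational cohomology of $B$ as the lowest weight part $W_0H^*(X_t)$. Maximal unipotency, combined with $h^{i,0}=0$, forces $W_0H^i=0$ for $0<i<n$.
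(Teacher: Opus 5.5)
Your Steps 2 and 3 are the mechanism behind the paper's proof, which simply cites \cite[Proposition 2.37]{GS1} together with the hypothesis $h^{i,0}(X_t)=0$ for $0<i<n$. That mechanism is: compute $H^*(B;\mathbb{C})$ as $H^*(X_0,\mathcal{O}_{X_0})$ via the toric strata, then compare with the smooth fibre. Your Step 3 spells out the comparison that the citation packages, and your justification works. $X_0$ is Gorenstein and has normal crossings $uv=0$ in codimension one (set $t=0$ in \eqref{gslocalmodels}). Its normalization with conductor is a disjoint union of toric pairs. So $X_0$ is slc, hence Du Bois, and Du Bois's theorem gives local freeness and base change for every $R^i\pi_*\mathcal{O}_{\mathcal{X}}$. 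No special care is needed along $Z$, since being slc is intrinsic to $X_0$. A small correction to Step 2: the resolution $\bigoplus_\tau\mathcal{O}_{X_\tau}$ runs from components down to torus-fixed points. Its global sections therefore form the cellular cochain complex of the \emph{dual} cell decomposition of $B$, not of $\mathscr{P}$. This does not change the conclusion.

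Step 1 fails as written, and you rely on it to get $H^n(B;\mathbb{Q})\cong\mathbb{Q}$. Because $\Delta$ has codimension two, small loops linking $\Delta$ bound discs in $B$, so they are automatically orientation preserving. The orientation character of $B\setminus\Delta$ is pulled back from $\pi_1(B)$, and the local models \eqref{gslocalmodels} say nothing about $\pi_1(B)$. So unipotence of the local monodromy carries no information about orientability. Gorenstein-ness is also a local condition and cannot force a global orientation. For example, maximal degenerations of Enriques surfaces have snc, hence Gorenstein, central fibres whose dual complex is $\mathbb{RP}^2$. Consistently, $h^{0,1}=h^{0,2}=0$ there, and $\mathbb{RP}^2$ is a rational homology point. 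What makes $B$ orientable in your setting is the global fact that $K_{X_t}$ is trivial. The fix is to drop Step 1 and run Steps 2--3 in every degree. This gives $H^0(B;\mathbb{C})\cong H^0(X_t,\mathcal{O}_{X_t})\cong\mathbb{C}$ and $H^i(B;\mathbb{C})=0$ for $0<i<n$. It also gives $H^n(B;\mathbb{C})\cong H^n(X_t,\mathcal{O}_{X_t})\cong H^0(X_t,K_{X_t})^{\vee}\cong\mathbb{C}$ by Serre duality. Orientability of $B$ then follows as a consequence rather than being an input.
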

\begin{proof}
By~\cite[Proposition 2.37]{GS1} and our restriction on the Hodge numbers of smooth fibers in Definition~\ref{toricdegenerationdefn}, the intersection complex $B$ has the homology of $S^n$ over $\mathbb{Q}$.
\end{proof}
The positive real locus should be thought of, morally, as the base of an SYZ fibration on $X_t$, and we can make this intuition more precise at the topological level following~\cite{Arguzreal}.

To do this, we begin by describing a topological model for $X_t$, constructed from the log structure $\mathcal{M}_{X_0}$ on $X_0$. More precisely, this is a fiber of the \textit{Kato--Nakayama space}~\cite{KatoNakayama} of $(X_0,\mathcal{M}_{X_0})$.

Let $(X,\mathcal{M}_X)$ be a complex analytic space equipped with a log structure. The \textit{polar log point} is the space $\Pi^{\dagger} \coloneqq (\mathrm{Spec}\,\mathbb{C},\mathcal{M}_{\Pi})$, where the log structure $\mathcal{M}_{\Pi}$ is given by
\begin{align*}
\alpha_{\Pi}\colon\mathcal{M}_{\Pi,0} = \mathbb{R}_{\geq0}\times U(1) &\to \mathbb{C} \\
(r,e^{i\theta}) &\mapsto re^{i\theta} \,.
\end{align*}
We can describe the Kato--Nakayama space $X^{\log}\coloneqq(X,\mathcal{M}_{X})^{\log}$ as a set in terms of log morphisms from $\Pi^{\dagger}$ to $(X,\mathcal{M}_{X_0})$. For a definition of the topology on $X^{\log}$, see~\cite[\S{1.1}]{Arguzreal}. A log morphism $f\colon\Pi^{\dagger}\to(X,\mathcal{M}_{X})$ with $f(0) = x$ is specified by monoid homomorphism $f^{\flat} = (\rho,\theta)\colon\mathcal{M}_{X,x}\to\mathbb{R}_{\geq0}\times U(1)$ such that
\begin{align*}
\alpha_{\Pi}\circ f^{\flat} = \operatorname{ev}_x\circ\alpha_{X,x}
\end{align*}
where $\alpha_{X,x}$ is the stalk of the structure homomorphism $\mathcal{M}_{X}\to\mathcal{O}_{X}$ and $\operatorname{ev}_x\colon\mathcal{O}_{X,x} \to \mathbb{C}$ is given by evaluation of analytic functions at the point $x\in X$. This condition determines $\rho$ uniquely by $\rho(s)  = |(\alpha_{X,x}(s))(x)|$ for any $s\in\mathcal{M}_{X,x}$, so the choice of $f^{\flat}$ is controlled by a monoid homomorphism $\theta\colon\mathcal{M}_{X,x}\to U(1)$ such that
\begin{align*}
(\alpha_{X,x}(s))(x) = |(\alpha_{X,x}(s))(x)|\cdot\theta(s)
\end{align*}
or equivalently by a group homomorphism $\theta\colon\mathcal{M}_{X,x}^{\mathrm{gp}}\to U(1)$ with this property, where $\mathcal{M}_{X,x}^{\mathrm{gp}}$ is the Grothendieck group of $\mathcal{M}_{X,x}$. As a set, we define
\begin{align}\label{katonakayamaset}
X^{\log}\coloneqq\left\lbrace(x,\theta) \bigg\vert x\in X\,,\theta\in\coprod_{x\in X}\Hom(\mathcal{M}_{X,x}^{\mathrm{gp}},U(1)) \text{ and }\theta(h) = \frac{h(x)}{|h(x)|}\text{ for all }h\in\mathcal{O}^{\times}_{X,x}\right\rbrace \,.
\end{align}
From this description, it follows that any point $f\in X^{\log}$ has a phase valued in $U(1)$, and that there is a natural projection map $X^{\log}\to X$. For the trivial log structure on $X$, this map is easily seen to be the identity. The topology on $X^{\log}$ is defined so that this projection is a (continuous) proper map~\cite[p. 36]{ModuliHandbook}.
\begin{remark}
In the Kato--Nakayama space of a divisorial log structure is closely related to the real blowup. If $X$ is smooth and $D\subset X$ is a simple normal crossings divisor, then the Kato--Nakayama space of $\mathcal{M}_{(X,D)}$ is indeed a globalized version of the real blowup~\cite[\S{8.2}]{ModuliHandbook}.
\end{remark}
As before, let $X_0$ denote the central fiber of a toric degeneration. There is a map of Kato--Nakayama spaces
\begin{align*}
\pi^{\log}\colon X_0^{\log}\to(\mathbb{C}^{\dagger})^{\log} = U(1)
\end{align*}
induced by the log morphism $\mathcal{M}_{X_0}\to \mathbb{C}^{\dagger}$. For any $\xi\in U(1)$, define
\begin{align}
X_0^{\log}(\xi)\coloneqq(\pi^{\log})^{-1}(\xi) \,.
\end{align}
The relevance of Kato--Nakayama spaces to our discussion is explained by the following theorem.
\begin{theorem}[Arg\"{u}z {\cite[Theorem 3.1]{Arguzreal}}, Gross]\label{toricdegenerationtopology}
Consider a simple toric degeneration with strongly semi-simple singularities $\mathcal{X}\to\mathbb{D}$ over a disk of radius $r>0$. Then $\mathcal{X}\setminus X_0\to\mathbb{D}^{\circ}$ is isomorphic, as a topological fiber bundle over the punctured disk, to
\begin{align*}
X_0^{\log}\times(0,1)\to(\mathbb{C}^{\dagger})^{\log}\times(0,1) = U(1)\times(0,1) \,. 
\end{align*}
This isomorphism induces a homeomorphism of fibers
\begin{align}
X_t\cong X_0^{\log}(e^{i\theta})
\end{align}
where $t\in\mathbb{D}^{\circ}$ has phase $\theta$.
\end{theorem}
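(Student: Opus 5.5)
The plan is to realize a general fiber $X_t$ as a fiber of the Kato--Nakayama space of the central fiber, by combining Ruddat--Siebert's analytic local models with Nakayama--Ogus' log Ehresmann theorem. The first step is to check that away from the log singular locus $Z$ the log morphism $\mathcal{M}_{X_0}\to\mathbb{C}^{\dagger}$ of Remark~\ref{loglocalmodels} is log smooth. On that region the relative Kato--Nakayama map $(\mathcal{X},X_0)^{\log}\to(\mathbb{D},0)^{\log}$ is therefore a topological fiber bundle over the real blowup $[0,r)\times U(1)$. Its restriction over $\{0\}\times U(1)$ is $\pi^{\log}\colon X_0^{\log}\to U(1)$, and over the interior it is $\mathcal{X}\setminus X_0\to\mathbb{D}^{\circ}$. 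Contracting the radial direction $[0,r)$ gives a fiberwise homeomorphism between the fiber over $(s,e^{i\theta})$ with $s>0$ and $X_0^{\log}(e^{i\theta})$. Running this over $(0,1)$ then yields the product description $X_0^{\log}\times(0,1)$.

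The second step is to extend this across $Z$, using the hypothesis of strongly semi-simple singularities. Near a point of $Z$ the total space is modeled on \eqref{gslocalmodels}, or more generally on products and fibered sums of such models. Strong semi-simplicity should mean that, locally, the model splits as a product of a toric (log smooth) factor with hypersurface factors of the form $uv=t^{\kappa}f(z)$, where each $f$ has a smooth zero locus. For these I would write down the Kato--Nakayama space of the central fiber by hand: it is a bundle with phases $(\arg u,\arg v)$ subject to $\arg u+\arg v=\kappa\theta$, degenerating over $\{f=0\}$. I would then check directly that it is homeomorphic to the nearby fiber $\{uv=t^{\kappa}f\}$, compatibly with the phase map. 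This amounts to a local conic-structure (Milnor fibration) argument, using the fact that $f$ has no critical points on its zero set.

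The third step is gluing. The local homeomorphisms from steps one and two have to be made compatible on overlaps. I would arrange this by building both sides as quotients of a common real-oriented blowup along a controlled tube system around $Z$, in Thom--Mather style, and then using the isotopy lemma to patch the pieces together.

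I expect step two, and in particular the interaction with gluing, to be the main obstacle. Precisely at this point strong semi-simplicity must guarantee that the local Kato--Nakayama fibers are manifolds homeomorphic to the smooth nearby fibers. Without it, $X_0^{\log}$ can fail to be a manifold near $Z$.
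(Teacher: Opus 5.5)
Your global framework matches the sketch the paper gives; the paper itself only cites Arg\"{u}z and Gross. In that framework, $X_0^{\log}$ and $\mathcal{X}\setminus X_0$ are the restrictions of $\pi^{\log}\colon\mathcal{X}^{\log}\to\mathbb{D}^{\log}\cong[0,r)\times U(1)$ to the boundary circle and to the interior (the latter because the log structure is trivial off $X_0$). One then shows $\pi^{\log}$ is locally trivial and contracts the radial direction. Splitting the work into Nakayama--Ogus off $\mathcal{Z}$ and the strongly semi-simple local models near $\mathcal{Z}$ is also the right division of labor. The gap is in how you pass from local to global.

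In Step~1, Nakayama--Ogus should be applied to the total-space morphism $(\mathcal{X}\setminus\mathcal{Z},\mathcal{M}_{(\mathcal{X},X_0)})\to(\mathbb{D},\mathcal{M}_{(\mathbb{D},\{0\})})$, not the central-fiber one; it is exact and log smooth because $(\mathcal{X},X_0)$ is toroidal there. Even so, it only gives that $\pi^{\log}$ is a topological submersion on that region, i.e.\ it has local product charts in the source. The fiber-bundle conclusion needs properness, which fails on $\mathcal{X}\setminus\mathcal{Z}$.

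In Step~2 you then aim at the wrong local statement. A homeomorphism between a piece of $X_0^{\log}(e^{i\theta})$ and a piece of $X_t$ says nothing about how nearby fibers fit together, which is why you are forced into Step~3. There, independently chosen local homeomorphisms cannot be patched in general. The isotopy lemma would need a controlled stratification of $\mathcal{X}^{\log}$ near the preimage of $\mathcal{Z}$ that makes $\pi^{\log}$ a stratified submersion. You have not produced this, and it is essentially the whole content of the theorem.

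The fix is to have the local model check the family statement instead. Show that each point of $\mathcal{X}^{\log}$ over $\mathcal{Z}$ has a neighborhood homeomorphic, over an open $V\subset\mathbb{D}^{\log}$, to $F\times V$ with $F$ a manifold; this is where strong semi-simplicity is used. Then $\pi^{\log}$ is a topological submersion everywhere. It is also proper, since $\mathcal{X}\to\mathbb{D}$ and $\mathcal{X}^{\log}\to\mathcal{X}$ are proper and $\mathbb{D}^{\log}$ is Hausdorff. Siebenmann's theorem (a proper topological submersion with manifold fibers is a locally trivial bundle) then gives the bundle over $[0,r)\times U(1)$ with no patching at all, and your radial contraction finishes the proof.

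When you do the local computation, note that the divisorial log structure is not fine along $Z$. At a point of $Z$, the functions $u$ and $v$ are not invertible on $\mathcal{X}\setminus X_0$, since for $t\neq0$ they vanish along $\{u=f=0\}$ and $\{v=f=0\}$ respectively. So the ghost stalk drops to $\mathbb{N}\bar t$. The extra phase circle collapses over $Z$; on it the constraint is $\arg u+\arg v=\kappa\theta+\arg f(z)$, not $\kappa\theta$. This collapsing circle is the vanishing cycle of the conic $uv=t^{\kappa}f$. The submersion property must be verified for the topology on $X_0^{\log}$ defined for this non-coherent log structure.
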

\begin{remark}\label{stronglysemisimpleremark}
    Strongly semi-simple singularities are introduced in~\cite[Definition 2.10]{Arguzreal} as a strengthening of the notion of simple singularities. It is expected that this condition can be removed (see the discussion after the statement of Theorem 3.1 in \textit{op. cit.}.) Note also that, per this discussion, simple but not strongly (semi-)simple singularities only appear in relative dimension $>3$.
\end{remark}
Roughly, the strategy for proving this theorem is to compare $X_0^{\log}$ to the Kato--Nakayama space $(\mathcal{X},\mathcal{M}_{(\mathcal{X},X_0)})^{\log}$ of the divisorial log structure. The toric degeneration induces a continuous map
\begin{align*}
(\mathcal{X},\mathcal{M}_{(\mathcal{X},X_0)})^{\log}\to(\mathbb{D},\mathcal{M}_{(\mathbb{D},\lbrace 0\rbrace)})^{\log}
\end{align*}
and there is a homeomorphism 
\[(\mathbb{D},\mathcal{M}_{(\mathbb{D},\lbrace 0\rbrace)})^{\log}\cong\mathbb{R}_{\geq0}\times S^1\]
coming from a general relation between Kato--Nakayama spaces of divisorial log spaces and the real blowup along the divisor~\cite[\S{8.2}]{ModuliHandbook}.\footnote{In this context, the Kato--Nakayama space can be thought of as a topological model for the infinite root stack.} This gives us the following commutative diagram.
\begin{equation}\label{katonakayamadiagram}
  \begin{tikzcd}
    X_0^{\log} \arrow{d} \arrow{r} & (\mathcal{X},\mathcal{M}_{(\mathcal{X},X_0)})^{\log} \arrow{d}\arrow{r} & \mathcal{X}\arrow{d}{\pi} \\
    S^1 \arrow{r} & \mathbb{R}_{\geq0}\times S^1\arrow{r} & \mathbb{D}
  \end{tikzcd}
\end{equation}
The homeomorphism of Theorem~\ref{toricdegenerationtopology} is the composition of maps in the top row of~\eqref{katonakayamadiagram}, and one checks that this is a homeomorphism using the fact that the log structure on $\mathcal{X}$ is trivial away from $X_0$. Working with the Kato--Nakayama space, as opposed to $X_t$ directly, is helpful because it admits a topological torus fibration defined away from the log singular locus $Z$. Assuming projectivity, Ruddat and Siebert construct a so-called \textit{degenerate moment map}
\begin{align}
\mu\colon X_0\to B
\end{align}
in~\cite[Proposition 2.1]{RuddatSiebert}. The restriction of $\mu$ to each irreducible component of $X_0$ is a moment map with respect to \textit{some} $U(1)^n$-invariant K{\"a}hler form.\footnote{If the gluing data used to define $X_0$ is trivial, then the moment maps on each irreducible component are just the ones coming from the restriction of the polarization on $X_0$.} Projectivitiy of $X_0$ is used to construct a basis of theta sections for an ample line bundle on $X_0$ as in~\cite[\S{5.2}]{GHS}, which are used to define moment maps on each component of $X_0$, from which $\mu$ is constructed.

\begin{theorem}[{\cite[Theorem 4.7]{Arguzreal}}]
The composition
\begin{align}
\mu\circ\pi^{\log}\colon X_0^{\log}\to B
\end{align}
is a $T^{n+1}$-bundle over $B\setminus\mu(Z)$, where $Z\subset X_0$ denotes the log singular locus as before. This restricts to a $T^n$-bundle over $B\setminus\mu(Z)$ on the fiber $X_0^{\log}(\xi)$.
\end{theorem}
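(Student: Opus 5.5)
The plan is to work locally on $B\setminus\mu(Z)$ and then restrict from the total Kato--Nakayama space to a single fiber. Over a point $b$ in the interior of a maximal cell $\sigma\in\mathscr{P}_{\max}$, $\mu^{-1}(b)$ is a single $U(1)^n$-orbit in the open torus of the toric component $X_\sigma$. The log structure there is $\mathcal{O}^\times\oplus\mathbb{N}$, with the $\mathbb{N}$ generated by the pullback of $t$. By the set-theoretic description~\eqref{katonakayamaset}, a point of $X_0^{\log}$ over $x$ is a homomorphism $\theta\colon\mathcal{M}^{\mathrm{gp}}_{X_0,x}\cong\mathcal{O}^{\times}_{X_0,x}\oplus\mathbb{Z}\to U(1)$. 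This homomorphism is fixed on $\mathcal{O}^\times$ and free on the generator $t$, so the fiber of $X_0^{\log}\to X_0$ is a circle. Hence $(\mu\circ\pi^{\log})^{-1}(b)\cong T^n\times S^1$, with the $S^1$ factor being exactly the phase recorded by $\pi^{\log}$.

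Next I would treat points of $B\setminus\mu(Z)$ lying on lower-dimensional cells $\tau$. Near such a point, conditions (a)--(c) of Definition~\ref{toricdegenerationdefn} say $(\mathcal{X},X_0)$ is toroidal, so the log structure is locally that of an affine toric pair $(Y_{\check\tau},\partial Y)$ with monoid $P_x$ of rank $n+1$ and $P_x^{\mathrm{gp}}\cong\mathbb{Z}^{n+1}$. In local toric coordinates, Kato--Nakayama fibers over $x$ are $\Hom(P_x^{\mathrm{gp}},U(1))$-torsors cut out by the phase constraint on units. Combined with the toric moment map on each component (which $\mu$ restricts to), this gives local trivializations $(\mu\circ\pi^{\log})^{-1}(U)\cong U\times T^{n+1}$. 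Concretely, I would use the standard computation that the Kato--Nakayama space of an affine toric variety with toric boundary is $\mu_{\mathrm{toric}}^{-1}\times$ phases, namely $\mathrm{Cone}\times\Hom(P^{\mathrm{gp}},U(1))$, glued compatibly. The gluing data $s$ acts by translations in these tori, so the transition maps are fiberwise torus translations composed with integral linear maps. This makes the total space a $T^{n+1}$-bundle, in fact a principal bundle up to the affine monodromy of $\check\Lambda$.

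For the restriction to $X_0^{\log}(\xi)$, I would use the fact that $\pi^{\log}$ corresponds to evaluating $\theta$ on the image of the generator of $\mathbb{N}$ under $\pi^\flat$. This is a primitive element $\rho_x\in P_x^{\mathrm{gp}}$, primitive because the degeneration is reduced (so $t$ vanishes to order one along every component). The corresponding evaluation map $\Hom(\mathbb{Z}^{n+1},U(1))\to U(1)$ is therefore a surjective homomorphism with connected kernel $T^n$, and fixing the value $\xi$ yields a $T^n$-torsor fiberwise. Local triviality follows because the trivializations above are compatible with this evaluation.

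The main obstacle is continuity and local triviality across cells, that is, checking that the topology on $X_0^{\log}$ from~\cite[\S{1.1}]{Arguzreal} matches the product topology in these charts near the boundary strata. It is also where the hypothesis of staying away from $\mu(Z)$ enters, since there the monoids jump in rank and the toroidal description fails. I would handle this by reducing to the known local statement for toric log structures in~\cite{KatoNakayama} and~\cite[\S{8.2}]{ModuliHandbook}, using the toroidal charts.
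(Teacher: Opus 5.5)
The paper gives no argument for this statement beyond citing \cite{Arguzreal}, and your overall plan is the natural one. In a toric chart, $X_0^{\log}$ is $\{\phi\in\Hom(P,\mathbb{R}_{\geq 0}):\phi(\rho)=0\}\times\Hom(P^{\mathrm{gp}},U(1))$. Imposing $\theta(\rho)=\xi$ then cuts out a connected $T^n$ because $\rho$ is primitive, and primitivity does follow from reducedness once you use saturation of $P$. Your treatment of maximal cells and of the restriction to $X_0^{\log}(\xi)$ is fine. The gap is in the sentence that produces $(\mu\circ\pi^{\log})^{-1}(U)\cong U\times T^{n+1}$ from conditions (a)--(c), and that is where the content of the theorem sits.

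Toroidality is a pointwise condition. Take $b$ in the relative interior of a $k$-cell $\tau$ with $k>0$. Then $\mu^{-1}(b)$ is a compact $T^k$-orbit in $X_\tau$, which lies in no single toroidal chart. Conditions (a)--(c) also say nothing relating a chart's torus to the tori of the components $X_\sigma$ for which $\mu$ is a moment map.

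Moreover, at such $x$ the ghost monoid $\overline{\mathcal{M}}_{X_0,x}$ has rank $n+1-k$, not $n+1$. So pointwise charts only show that $(\mu\circ\pi^{\log})^{-1}(b)$ is a bundle of $T^{n+1-k}$-torsors over $T^k$. This bundle is twisted by the $\mathcal{O}^\times$-torsors of lifts of ghost generators restricted to the orbit, and toroidality does not make these trivial. For example, take $t=u\otimes v$ on the total space of $N\oplus N^{-1}\to(\mathbb{C}^*)^2$ with $c_1(N)\neq 0$; such $N$ exists analytically since $(\mathbb{C}^*)^2$ is Stein. This family is toroidal everywhere, yet over a real torus orbit in the double locus its Kato--Nakayama space is a Heisenberg nilmanifold times $S^1$, not $T^4$.

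What is missing is the global toric structure near a vertex $v$:
\begin{itemize}
\item The Gross--Siebert chart $\Spec\mathbb{C}[P_v]/(z^{\rho})$ contains all of $\mu^{-1}$ of the open star of $v$. It is compatible with the component tori up to the translations given by $s$.
\item Away from $Z$, its log structure differs from the toric one only through the slab functions $f_{\rho,x}$ of the codimension-one cells $\rho\ni v$, as in the local models $uv=t^{\kappa}f_{\rho,x}$.
\item These functions are invertible on $\mu^{-1}(U)$ precisely because $U\cap\mu(Z)=\emptyset$.
\end{itemize}
Correcting the transverse monomials by these units trivializes the torsors over entire orbits. Your cone formula then gives the trivialization, with $\{\phi:\phi(\rho)=0\}$ identified with the open star of $v$. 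This also removes what you called the main obstacle, continuity across cells, since all cells through $v$ appear in one chart. Note that the phases of the $f_{\rho,x}$, and not only the gluing data $s$, then enter the transition functions.
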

Since $(B,\mathscr{P})$ is simple, $B\setminus\mu(Z)$ deformation retracts onto
\begin{align*}
B'\coloneqq\bigcup_{\sigma\in\mathscr{P}_{\max}}\operatorname{Int}\sigma\cup\lbrace v\in\mathscr{P}\mid\dim(v) = 0\rbrace \,.
\end{align*}
By~\cite[Proposition 4.15]{Arguzreal}, the restriction of this $T^{n+1}$-bundle to $B'$ is classified by a class
\begin{align}\label{bundleextgroup}
\Ext^1(\Lambda,\underline{\mathbb{Z}}\oplus\underline{U(1)}) = H^1(B\setminus\mu(Z),\check{\Lambda})\oplus H^1(B\setminus\mu(Z),\check{\Lambda}\otimes U(1)) \,.
\end{align}
where $\underline{\mathbb{Z}}$ and $\underline{U(1)}$ denote the constant sheaves on $B\setminus\mu(Z)$. The second component of this extension class is
\begin{align*}
    \Arg(s)\in H^1(B\setminus\mu(Z),\check{\Lambda}\otimes U(1))
\end{align*}
the argument of the gluing data $s$ from Theorem~\ref{logCYclassification}. Since we have assumed that this gluing data is contained in the image of~\eqref{gluingdatainclusion}, it follows that $\Arg(s)$ is valued in $\lbrace\pm1\rbrace\subset U(1)$. For any point $x\in B'$, these classes give rise to a pair of group cohomology classes lying in
\begin{align}\label{bundleextgroupcohomology}
H^1(\pi_1(B',x),\Hom(\Lambda_x,\mathbb{Z}))\oplus H^1(\pi_1(B',x),\Hom(\Lambda_x,U(1))) \,.
\end{align}
By a standard interpretation of group cohomology, such a class induces a pair of crossed homomorphisms
\begin{align}\label{extensioncrossedhoms}
\lambda\colon\pi_1(B',x)&\to \Hom(\Lambda_x,\mathbb{Z}) \\
\theta\colon\pi_1(B',x)&\to \Hom(\Lambda_x,U(1)) \,. \nonumber
\end{align}

The next observation we will need is that a real involution on $(X_0,\mathcal{M}_{X_0})$ lifts to a real involution on the Kato--Nakayama space, which we can see from~\eqref{katonakayamaset}. In more detail, let $\iota_{X_0}$ denote the real structure on $(X_0,\mathcal{M}_{X_0})$ and consider the real involution $\iota_{\Pi^{\dagger}}$ on the polar log point given, as a map of spaces, by $(r,e^{i\theta})\mapsto (r,e^{-i\theta})$. The lift of $\iota_{X_0}$ to $X_0^{\log}$ is
\begin{align*}
\iota_{X_0}\colon\Hom(\Pi^{\dagger},\mathcal{M}_{X_0}) &\to \Hom(\Pi^{\dagger},\mathcal{M}_{X_0}) \\
f &\mapsto \iota_{X_0}\circ f\circ\iota_{\Pi^{\dagger}} \,.
\end{align*}
This involution is continuous with respect to the topology on $X_0^{\log}$. The Kato--Nakayama space thus has a real locus $(X^{\log}_0)_{\mathbb{R}}$, which Arg\"{u}z shows is a $2^{n+1}$-fold branched cover of $B$.
\begin{theorem}[{\cite[Theorem 7.5]{Arguzreal}}]\label{reallocuscovertheorem}
The restriction of $\mu\circ\pi^{\log}$ to $(X^{\log}_0)_{\mathbb{R}}$ is a surjection with finite fibers. Over $B\setminus\mu(Z)$, it is a covering map of degree $2^{n+1}$.
\end{theorem}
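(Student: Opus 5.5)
The plan is to work locally over $B\setminus\mu(Z)$, where by the preceding theorem $\mu\circ\pi^{\log}\colon X_0^{\log}\to B$ is a $T^{n+1}$-bundle. The lifted real involution on $X_0^{\log}$ covers the identity on $B$, since the real structure on $X_0$ induces the trivial involution on $B$. So I would first show that on each fiber $T^{n+1}$ over $x\in B\setminus\mu(Z)$ the involution acts as an affine involution of the torus whose linear part is inversion $\theta\mapsto\theta^{-1}$. To see this, use the description \eqref{katonakayamaset}: a point over $x$ is a homomorphism $\theta\colon\mathcal{M}^{\mathrm{gp}}_{X_0,x}\to U(1)$ compatible with phases of invertible functions. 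The involution sends $\theta$ to $\overline{\iota^*\theta}$. Since the real structure restricts to the standard one on each toric component, $\iota^*$ acts trivially on the lattice $\mathcal{M}^{\mathrm{gp}}_{X_0,x}/\mathcal{O}^\times$ of rank $n+1$ (the $n$ directions of $\Lambda_x$ plus the $\mathbb{N}$ direction from the log point). The fixed points in the fiber are therefore the homomorphisms with values in $\{\pm1\}$ on this lattice, after choosing a real base point. That gives exactly $2^{n+1}$ points, a torsor over $\Hom(\mathbb{Z}^{n+1},\{\pm1\})$.

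Second, I would check that these fixed points vary continuously and locally trivially, so that their union is a covering of degree $2^{n+1}$ over $B\setminus\mu(Z)$. Local triviality of the torus bundle, together with $\iota$ commuting with the bundle projection and acting fiberwise as an inversion twisted by a locally constant real class, gives equivariant local trivializations. Here one uses that the gluing data $s$ lies in the image of \eqref{gluingdatainclusion}, so $\Arg(s)\in\{\pm1\}$, and hence the $U(1)$-part $\theta$ of the crossed homomorphisms \eqref{extensioncrossedhoms} preserves the set of $\pm1$-valued points. Concretely, near each point I would choose a local real section, using the real points of the toric components with all coordinates positive, and write the fiberwise fixed set as that section translated by the $2^{n+1}$ two-torsion elements. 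This shows the restriction is a covering of degree $2^{n+1}$.

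Third, I would handle surjectivity and finiteness of fibers over all of $B$, including $\mu(Z)$ and the boundaries of cells. The map $X_0^{\log}\to X_0$ is proper and $\mu$ is proper, so the real locus, being closed, maps properly to $B$. Its image is closed and contains the dense set $B\setminus\mu(Z)$, hence is all of $B$. For finiteness over $\mu(Z)$, I would use the local models \eqref{gslocalmodels}: over a point of $Z$, the fiber of $X_0^{\log}\to X_0$ is a torus $\Hom(\mathcal{M}^{\mathrm{gp}}_x,U(1))$ of rank determined by the stalk. The real points of that torus form a finite $2$-torsion torsor. The real points of $X_0$ lying over $\mu^{-1}(b)$ form a finite set, namely the real points of a toric orbit intersected with a compact moment fiber.

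The main obstacle I expect is this last step. Over $Z$ the monoid $\mathcal{M}_{X_0,x}$ is not toric in the standard way; locally it looks like $uv=t^\kappa f_{\rho,x}$ with $f_{\rho,x}$ vanishing. One has to verify that the $\mathcal{M}^{\mathrm{gp}}$ stalk is still finitely generated and that the real condition cuts it down to a finite set. I would try the strongly semi-simple hypothesis here, reducing the computation to products of the simplest local models, where the stalks can be computed explicitly.
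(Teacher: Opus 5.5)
The paper does not prove this statement; it quotes it from Arg\"uz, so I am judging your proposal on its own terms. Your architecture is sound. Fixed points of an involution acting on torus fibers by $\theta\mapsto c\,\theta^{-1}$ form a torsor under the $2$-torsion subgroup. Local real sections make this a covering; that is a local property, so the appeal to $\Arg(s)$ and the crossed homomorphisms is unnecessary. Surjectivity follows from compactness of $X_0^{\log}$, closedness of the fixed locus, and density of $B\setminus\mu(Z)$.

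\textbf{The fiberwise count in your first step is wrong as stated.} You treat the fiber of $\mu\circ\pi^{\log}$ over $x\in B$ as the Kato--Nakayama fiber over a single point, and claim $\mathcal{M}^{\mathrm{gp}}_{X_0,x}/\mathcal{O}^\times$ has rank $n+1$. That holds only at $0$-dimensional strata. For $x\notin\mu(Z)$ in the interior of a $k$-cell $\tau$, $\mu^{-1}(x)$ is a compact $T^k$-orbit in $X_\tau$. At each of its points the ghost group has rank $n+1-k$. Over the interior of a maximal cell the ghost is just $\mathbb{N}\cdot t$, so your recipe gives $2$ real points there, not $2^{n+1}$.

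The correct count is $2^k$ real points on the orbit (the real structure on $X_\tau$ is standard) times $2^{n+1-k}$ real phases over each of them. Equivalently, work in a real toroidal chart $\Spec\mathbb{C}[P]$ of the total space. There $X_0^{\log}$ is the set of monoid homomorphisms $h\colon P\to\mathbb{R}_{\ge0}\times U(1)$ whose $\mathbb{R}_{\ge0}$-component vanishes on $t$, and the involution is $h\mapsto\bar h$. The whole fiber over $x$ is then a $\Hom(P^{\mathrm{gp}},U(1))$-torsor with $P^{\mathrm{gp}}$ of rank $n+1$. So the rank-$(n+1)$ lattice you want is $P^{\mathrm{gp}}\cong\Lambda_x\oplus\mathbb{Z}$, matching \eqref{reallocusfiber}, not a ghost stalk at one point. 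With this correction, your first two steps go through.

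\textbf{Finiteness over $\mu(Z)$ is easier than you expect.} It needs neither the local models nor strong semi-simplicity. Since $\mathcal{X}$ is normal, a germ at $y\in X_0$ that is invertible off $X_0$ is determined up to a unit by its divisor. That divisor is effective and supported on the local branches of $X_0$ at $y$. Hence $\overline{\mathcal{M}}_{X_0,y}$ embeds in some $\mathbb{N}^m$, and $\overline{\mathcal{M}}^{\mathrm{gp}}_{X_0,y}$ is free of finite rank at every $y$, including $y\in Z$.

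The Kato--Nakayama fiber over $y$ is a torsor under $\Hom(\overline{\mathcal{M}}^{\mathrm{gp}}_{X_0,y},U(1))$, not $\Hom(\mathcal{M}^{\mathrm{gp}}_{X_0,y},U(1))$ as you wrote. Since $\iota$ preserves every component of $X_0$, it acts trivially on this ghost group. So the real points of that fiber form a finite $\Hom(\overline{\mathcal{M}}^{\mathrm{gp}}_{X_0,y},\{\pm1\})$-torsor. Together with the finitely many real points on the orbit $\mu^{-1}(b)$, this gives finite fibers over all of $B$.
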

The monodromy of this cover of $B\setminus\mu(Z)$ is computed in~\cite[Remark 7.8]{Arguzreal}. By~\cite[Theorem 7.7]{Arguzreal}, the fiber over a point $x\in B'$ can be canonically identified with
\begin{align}\label{reallocusfiber}
\Hom(\Lambda_x,\mathbb{Z}/2\mathbb{Z})\oplus\Hom(\mathbb{Z},\mathbb{Z}/2\mathbb{Z})
\end{align}
where $\mathbb{Z}/2\mathbb{Z}$ is thought of as a the subgroup  $\lbrace\pm 1\rbrace\subset U(1)$. The monodromy operator of $\gamma\in\pi_1(B',x)$ acting on~\eqref{reallocusfiber} is
\begin{align}\label{reallocusmonodromy}
(\phi,\xi)\mapsto (\phi\circ T_{\gamma}+\xi\circ\lambda_{\gamma}+\theta_{\gamma})
\end{align}
where $T_{\gamma}$ denotes the monodromy action of $\gamma$ on $\Lambda_x$ and $\lambda_{\gamma}$ and $\theta_{\gamma}$ are the values of the extension classes~\eqref{extensioncrossedhoms} at $\gamma$.

The value of $\xi$ in~\eqref{reallocusfiber} determines whether the fiber of the real locus is contained in the fiber $X_0^{\log}(\pm 1)$ of the Kato--Nakayama space over $1$ or $-1$. These correspond to fibers of the toric degeneration over points on the positive or negative real axis, respectively. Examining~\eqref{reallocusmonodromy} shows that $\lambda_{\gamma}$ acts trivially on fibers of the real locus contained in $X_0^{\log}(1)$, corresponding to $\xi = 0\in\mathbb{Z}/2\mathbb{Z}$. On the other hand, if we assume that 
\begin{align*}
\Arg(s)\in\check{H}^1(B\setminus\mu(Z),\check{\Lambda}\otimes\mathbb{Z}/2\mathbb{Z})
\end{align*}
vanishes, then the contribution of $\theta_{\gamma}$ to~\eqref{reallocusmonodromy} vanishes. If $\phi\in\Hom(\Lambda_x,\mathbb{Z}/2\mathbb{Z})$ is the constant homomorphism with value $0$, it follows that the corresponding fiber of the real locus is fixed by the monodromy. This observation lets us describe a connected component of the real locus in $X_0^{\log}(1)$.
\begin{lemma}\label{topologicalpositivereallocus}
Suppose that $\Arg(s) = 0\in \check{H}^1(B,i_*\check{\Lambda}\otimes\mathbb{Z}/2\mathbb{Z})$, where $s\in\check{H}^1(B,i_*\check{\Lambda}\otimes\mathbb{R}^*)$ is the lifted gluing data associated to $X_0$. Then the real locus of $X_0^{\log}(1)$ contains a connected component homeomorphic to $B$.
\end{lemma}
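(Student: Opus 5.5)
The plan is to build the desired component as the $\xi=0$, $\phi=0$ branch of the real locus. We then check that this branch is a single-sheeted section over $B$, and finally extend it across $\mu(Z)$. Theorem~\ref{reallocuscovertheorem} says that over $B\setminus\mu(Z)$ the real locus $(X_0^{\log})_{\mathbb{R}}$ is a $2^{n+1}$-sheeted covering. By~\cite[Theorem 7.7]{Arguzreal} its fiber over $x\in B'$ is identified with~\eqref{reallocusfiber}. Restrict to the part lying in $X_0^{\log}(1)$, i.e.\ $\xi=0$; this is a $2^n$-sheeted cover of $B\setminus\mu(Z)$ with fibers $\Hom(\Lambda_x,\mathbb{Z}/2\mathbb{Z})$. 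With $\xi=0$, the monodromy formula~\eqref{reallocusmonodromy} reduces to $\phi\mapsto\phi\circ T_\gamma+\theta_\gamma$. The hypothesis $\Arg(s)=0$ makes the class in the second summand of~\eqref{bundleextgroupcohomology} vanish. So after a change of basepoint trivialization we can take $\theta_\gamma=0$ for all $\gamma\in\pi_1(B',x)$.

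It follows that the zero homomorphism $\phi=0$ is fixed by every monodromy operator, because $0\circ T_\gamma=0$. The corresponding sheet is therefore a connected component $S^\circ$ of the cover over $B'$ that maps homeomorphically onto $B'$. We then transport this along the deformation retraction of $B\setminus\mu(Z)$ onto $B'$. This gives a sheet over all of $B\setminus\mu(Z)$, using that a covering over a space is determined by its restriction to a deformation retract.

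Next we pass to the closure. Let $S$ be the closure of $S^\circ$ in $(X_0^{\log})_{\mathbb{R}}\cap X_0^{\log}(1)$. Since $\mu\circ\pi^{\log}$ is proper with finite fibers (Theorem~\ref{reallocuscovertheorem}), $S\to B$ is a closed surjection. The goal is to show it is injective over $\mu(Z)$, so that it is a continuous bijection from a compact space to a Hausdorff space and hence a homeomorphism. For injectivity I would argue locally: over a point of $\mu(Z)$ use the local models~\eqref{gslocalmodels}, together with the compatibility of the real structure with the standard real structure on each toric component (Lemma~\ref{logrealstructures}). In these charts the $\phi=0$ sheet is exactly the locus where all toric monomials and $u,v$ are positive real. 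That locus is the closure of the positive real points of $\{uv=f_{\rho,x}\}$, which is a single sheet over the moment image because the slab functions are positive on the positive real torus. This positivity of the slab functions is the one point where positivity of the real gluing data enters, and I would cite~\cite[\S 7]{Arguzreal} for it.

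Finally, to see that $S$ is a connected component of the full real locus, I would show it is open as well as closed. Over $B\setminus\mu(Z)$ openness holds because $S$ is a full sheet of a covering. Near $\mu(Z)$ it should come from the same local positive-real description, since different sign patterns stay separated there.

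The main obstacle is this analysis over the discriminant $\mu(Z)$: ruling out sheets with $\phi\neq0$ merging into $S$, and establishing injectivity there. I would try first to reduce it to the strongly semi-simple local models of~\cite{Arguzreal}, where the branching of the real locus is computed explicitly.
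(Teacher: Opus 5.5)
Your proposal is correct and takes the paper's route. The paper also restricts to the $\xi=0$ part and combines $\Arg(s)=0$ with the monodromy formula \eqref{reallocusmonodromy} to obtain a monodromy-invariant zero section of the covering of Theorem~\ref{reallocuscovertheorem} over $B\setminus\mu(Z)$; it then simply asserts that this section extends continuously over $B$. Your closure, injectivity and openness analysis over $\mu(Z)$, via the local models and positivity of the slab functions on the positive real torus, fills in detail the paper omits rather than departing from its argument.
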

\begin{proof}
Under the assumption on $\Arg(s)$, we can construct a $0$-section $B\setminus\mu(Z)\to X_{0,\mathbb{R}}^{\log}$ of the covering map of Theorem~\ref{reallocuscovertheorem}. This extends continuously over $B$.
\end{proof}
Finally, this component of the real locus corresponds to a distinguished Lagrangian submanfiold of a fiber $X_t$ of $\mathcal{X}\to\mathbb{D}$, over a positive real point.
\begin{theorem}\label{lagrangianpositivereallocus}
Let $\mathcal{X}\to\mathbb{D}$ be a strongly semi-simple toric degeneration defined over $\mathbb{R}$ with positive simple intersection complex $(B,\mathscr{P})$, and suppose that its central fiber $X_0$ corresponds, under Theorem~\ref{logCYclassification}, to a \v{C}ech cohomology class $s$ (which is necessarily contained in the image of~\eqref{gluingdatainclusion}) for which $\Arg(s) = 0$. Then $\mathcal{X}$ is defined over $\mathbb{R}$, and if $t\in\mathbb{D}$ is a positive real number, then the fixed-point set of $\iota\colon X_t\to X_t$, as defined in~\eqref{realinvolution}, contains a distinguished connected component homeomorphic to $B$. We call this Lagrangian submanifold $L_0\subset X_t$ the \textbf{positive real locus}. 
\end{theorem}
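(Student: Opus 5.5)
The plan is to transport the component produced by Lemma~\ref{topologicalpositivereallocus} from the Kato--Nakayama fiber into $X_t$, using the homeomorphism of Theorem~\ref{toricdegenerationtopology} in a form that intertwines the real structures.

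\emph{Real structure.} That $\mathcal{X}$ is defined over $\mathbb{R}$ follows from Lemma~\ref{logrealstructures}. Since $s$ lies in the image of~\eqref{gluingdatainclusion}, $X_0(B,\mathscr{P},s)$ carries a real structure compatible with the standard log point. This real structure extends to the formal family, and then to the analytic family by~\cite[Theorem 4.4]{RuddatSiebert}. Complex conjugation on $\mathcal{X}$ preserves the fibers over positive real $t$, giving the involution~\eqref{realinvolution}. By Assumption~\ref{symplecticformchoice} and Lemma~\ref{realembedding}, $\iota$ is the restriction of complex conjugation under a Fubini--Study embedding, so $\mathrm{Fix}(\iota)$ is a smooth (possibly disconnected) Lagrangian submanifold of $X_t$.

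\emph{Equivariance of the Arg\"uz homeomorphism.} The key step is to check that the homeomorphism $X_t\cong X_0^{\log}(1)$ of Theorem~\ref{toricdegenerationtopology} intertwines $\iota$ with the lifted involution on $X_0^{\log}$. That homeomorphism is the composite of the top row of~\eqref{katonakayamadiagram}, restricted to the fiber over a point of $\mathbb{R}_{>0}\times S^1$ with phase $1$. Each map in the row is functorial in the log space. Complex conjugation on $(\mathcal{X},\mathcal{M}_{(\mathcal{X},X_0)})$ is a real structure of log spaces, since the divisor $X_0$ is defined over $\mathbb{R}$. It induces $f\mapsto\iota\circ f\circ\iota_{\Pi^{\dagger}}$ on Kato--Nakayama spaces and fixes the phase $1$ in $(\mathbb{C}^{\dagger})^{\log}=U(1)$. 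The one point needing care is the identification of the interior of $(\mathcal{X},\mathcal{M})^{\log}$ with $X_0^{\log}\times(0,1)$, which Arg\"uz builds from a retraction or collar. I would check that this retraction can be chosen equivariant, for instance by averaging the relevant vector field or isotopy with its conjugate. This is legitimate because the construction uses only real-analytic local models~\eqref{gslocalmodels} with real slab functions, as $s$ is real. With equivariance in hand, the homeomorphism carries $\mathrm{Fix}(\iota)\subset X_t$ onto the real locus of $X_0^{\log}(1)$.

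\emph{Conclusion and main obstacle.} Lemma~\ref{topologicalpositivereallocus}, applied with $\Arg(s)=0$, provides a connected component of that real locus homeomorphic to $B$, namely the closure of the zero section of the cover in Theorem~\ref{reallocuscovertheorem}. Its preimage $L_0$ is a connected component of $\mathrm{Fix}(\iota)$, since homeomorphisms preserve components, and it is therefore a closed Lagrangian submanifold homeomorphic to $B$. It is distinguished because it is singled out canonically by $\phi=0$ and $\xi=0$ in~\eqref{reallocusfiber}. I expect the main obstacle to be the equivariance step, that is, verifying that Arg\"uz's topological identification can be made compatible with conjugation; everything else is formal.
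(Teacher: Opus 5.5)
Your proposal is correct and takes essentially the same route as the paper, whose proof transports the component from Lemma~\ref{topologicalpositivereallocus} into $X_t$ via the Kato--Nakayama homeomorphism of Theorem~\ref{toricdegenerationtopology}, on the grounds that the maps in~\eqref{katonakayamadiagram} respect the real involutions and their lifts. The equivariance of the collar or trivialization, which you single out as the main obstacle, is exactly the point the paper asserts without further justification, so your sketch is if anything more explicit there; note, though, that averaging with the conjugate makes sense for a vector field but not for a topological isotopy, so that step still relies on the trivialization arising from some smooth or flow-type construction.
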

\begin{proof}
We need to show that the real component described in Lemma~\ref{topologicalpositivereallocus} corresponds to a component of the real locus in $X_t$. This follows by observing that the maps in~\eqref{katonakayamadiagram} respect the real involutions on $\mathcal{X}$ and $X_0$, as well as their lifts to Kato--Nakayama spaces.
\end{proof}
\begin{definition}
    We say that a toric degeneration satisfying the hypotheses of Theorem~\ref{lagrangianpositivereallocus} has \textit{positive real gluing data}.
\end{definition}

\subsection{Fiberwise divisors} To construct the relative Fukaya category following~\cite{PerutzSheridan}, we need to specify a suitable system of divisors. We have already discussed how to identify a system of divisors in $X_t$ that is disjoint from the positive real locus by Lemma~\ref{divisorconstruction1}. Additionally, we need such divisor to be fixed by the real involution $\iota$ on $X_t$, since we will need to consider its action on the Fukaya category. This requires a more specialized construction of a divisor than the one given in Lemma~\ref{divisorconstruction1}.
\begin{lemma}\label{divisorconstruction2}
Let $X_t$ be a fiber of a toric degeneration, with positive real gluing data, over a positive real point $t\in\mathbb{D}$. Then there is a system of divisors $\mathbf{E}_t$ in $X_t$, which is contained in an open subset $U\subset X_t$ that is disjoint from the positive real locus $L_0$. Moreover, each $E_{q,t}$ in $\mathbf{E}_t$ is fixed by both $\phi$ and the real involution.
\end{lemma}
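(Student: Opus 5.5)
The plan is to make the construction of Lemma~\ref{divisorconstruction1} equivariant with respect to the real involution. First, by Assumption~\ref{symplecticformchoice} and Lemma~\ref{realembedding}, after replacing $\mathcal{E}$ by a positive power, $\mathcal{E}_t$ embeds $X_t$ into $\mathbb{CP}^N$ so that $\iota$ is the restriction of complex conjugation. Then $\iota$ acts antilinearly on $\Gamma(\mathcal{E}_t^{\otimes N'})$, and the real sections $\Gamma_{\mathbb{R}}$ form a real form of this space. Any real section $s$ has $\iota$-invariant zero locus. So the task is to find real sections, avoiding $L_0$, whose zero loci form a system of divisors in the sense of~\cite[Definition 1.1]{PerutzSheridan}.

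The first step is to get a single smooth real divisor $E_t$ disjoint from $L_0$. Here I would use the Donaldson/Auroux--Gayet--Mohsen type result cited in Lemma~\ref{divisorconstruction1}~\cite{AGM-divisor, guedj-divisor}, but in a version that works in the real setting. The cleanest route uses the fact that $L_0$ is a component of the real locus and is \emph{exact}: the connection $1$-form of the Hermitian line bundle restricted to $L_0$ is exact for a suitable power, since $L_0$ is a rational homology sphere and $H^1(L_0;\mathbb{Q}) = 0$. One then constructs peak sections concentrated along $L_0$ in the style of Auroux--Gayet--Mohsen and averages each section $s$ with $\overline{\iota^*s}$. Because $\iota$ fixes $L_0$ pointwise, the averaged section remains nonvanishing on $L_0$ as long as the original one was real-positive there; this condition is preserved under averaging, since the two terms agree on $L_0$.

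Next I would perturb to obtain the full system. I expect this to be the main obstacle. The real sections near $s$ form an open subset of $\Gamma_{\mathbb{R}}$, and a Bertini-type transversality statement should hold there: a generic real linear combination is transverse, because $\Gamma_{\mathbb{R}}\otimes\mathbb{C} = \Gamma$ and transversality is a Zariski-open condition that is nonempty on the full space, so it is nonempty on the totally real subspace $\Gamma_{\mathbb{R}}$. Choosing $s_{t,q}$ generic real perturbations of $s$, as in~\cite[Lemma 3.8]{SheridanVersality}, gives normal crossings among the $E_{q,t}$. Disjointness from $L_0$ is an open condition, so all the $E_{q,t}$ lie in a small neighborhood $U$ of $E_t$ that avoids $L_0$.

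Finally comes $\phi$-invariance. I would extend each real $s_{t,q}$ to a section $s_q$ of $\mathscr{E}$ over the snc model; this is possible because $\mathcal{X}$ is defined over $\mathbb{R}$, and the extension can be averaged under conjugation to keep it real. I would then regularize the full divisor $\mathbf{D}\cup\mathbf{E}$ via Theorem~\ref{MTZthm}. Applying Lemma~\ref{divisorconstruction-1} then gives that the regularized monodromy preserves each $E_q$, and hence each $E_{q,t}$. I would still need to check that the regularization can be chosen conjugation-equivariantly. I would try averaging the Hermitian data under the involution first, and if that fails I would settle for an isotopy of $\phi$, which does not affect the Floer-theoretic invariants.
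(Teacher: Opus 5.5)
Your proposal is correct. It agrees with the paper everywhere except in how the first real divisor missing $L_0$ is produced. The extension of the sections to $\mathscr{X}$, followed by the appeal to the regularization of $\mathbf{D}\cup\mathbf{E}$ (Lemma~\ref{divisorconstruction-1}) to make $\phi$ preserve each $E_q$, is exactly what the paper means by ``imitating'' Lemma~\ref{divisorconstruction1}.

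For that first step, the paper takes a real hyperplane of $\mathbb{CP}^N$ disjoint from $L_0\subset\mathbb{RP}^N$, justified only by the compactness of $L_0$, and then applies Bertini. This is more elementary, but compactness is not the right reason: a real projective line in $\mathbb{RP}^N$ is compact and meets every real hyperplane. The easy repair is to pass to $\mathcal{E}_t^{\otimes 2}$ and use the real section $\sum_i z_i^2$, which has no real zeros. Your route through the Auroux--Gayet--Mohsen/Guedj sections at a Bohr--Sommerfeld power avoids this issue, at the price of heavier input. Your Zariski-density argument (a nonempty Zariski-open subset of $\Gamma=\Gamma_{\mathbb{R}}\otimes\mathbb{C}$ meets every nonempty open subset of $\Gamma_{\mathbb{R}}$) also makes explicit the real Bertini step that the paper leaves implicit.

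Two small corrections. First, ``the two terms agree on $L_0$'' holds only approximately for such a section. What you actually need is that its phase along $L_0$, measured against a flat real frame of $\mathcal{E}_t^{\otimes k}|_{L_0}$, stays in $(-\pi/2,\pi/2)$. Such a frame exists precisely because trivial holonomy along $L_0$ also trivializes the real line bundle fixed by the lift of $\iota$. Alternatively, replace the average by $s\otimes\overline{\iota^*s}$, which is real and nonvanishing on $L_0$ with no phase control needed.

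Second, averaging the extension over conjugation and making the regularization $\iota$-equivariant are unnecessary here. The $\iota$-invariance in the statement is a property of the divisors $E_{q,t}\subset X_t$ alone, and the snc model $\mathscr{X}$ need not carry a real structure in the first place.
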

\begin{proof}
By Assumption~\ref{symplecticformchoice}, we can assume that $\mathcal{E}_t$ determines a holomorphic embedding of $X_t$ into $\mathbb{CP}^N$ for which $\iota$ is obtained by restricting complex conjugation on $\mathbb{CP}^N$. In particular $L_0$ is mapped to the real locus of $\mathbb{CP}^N$ under this embedding, and since $L_0$ is compact we can choose a real hyperplane in $\mathbb{CP}^N$ which avoids it. This corresponds to a section $s_t\in\Gamma(\mathcal{E}_t)$, and by Bertini's theorem we can assume that this hyperplane section $E_t\coloneqq s_t^{-1}(0)$ of $X_t$ is smooth. By taking $s_t^{-1}(0)$ to be a real hyperplane section, we can guarantee that it is preserved by $\iota$. We can then imitate the proof of Lemma~\ref{divisorconstruction1}, defining all components $E_{q,t}$ of the system of divisors to be real hyperplane sections, from which it follows that $E_{q,t}$ is fixed by both $\phi$ and $\iota$.
\end{proof}
\begin{corollary}\label{BBcorollary}
    The smooth fibers over positive real points of Batyrev--Borisov degenerations associated to a nef partition (see~\cite[Definition 2.1]{GrossBB}) have positive real gluing data.
\end{corollary}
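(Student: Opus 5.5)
We need to show that Batyrev--Borisov degenerations have positive real gluing data, i.e.\ that the hypotheses of Theorem~\ref{lagrangianpositivereallocus} hold. Concretely: the degeneration is defined over $\mathbb{R}$, and the lifted open gluing data $s$ of the central fiber lies in the image of~\eqref{gluingdatainclusion} with $\Arg(s)=0\in\check{H}^1(B,i_*\check{\Lambda}\otimes\mathbb{Z}/2\mathbb{Z})$.

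The plan is to use Gross's explicit construction~\cite{GrossBB}. There the Batyrev--Borisov degeneration is the family of complete intersections
\[
t\,f_j + \prod_{v} z_v = 0 \,, \qquad j = 1,\ldots,r \,,
\]
in a toric variety attached to the nef partition. The coefficients are integers, so the total space and the projection to $\mathbb{D}$ are defined over $\mathbb{Z}\subset\mathbb{R}$. Complex conjugation then gives the real structure compatible with the standard real structure on each toric component.

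The central fiber $X_0$ is the union of toric strata cut out by the monomial equations $\prod_v z_v = 0$. Its irreducible components are glued along toric divisors by the \emph{identity} maps inherited from the ambient toric variety. So $X_0$ is the log Calabi--Yau space with trivial open gluing data, $s=1$, and the only thing to verify is that no hidden sign enters.

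That check is carried out on the log structure. Since the ambient space is toric, the slab functions in the local models~\eqref{gslocalmodels} come from restricting $f_j$ to codimension-one strata. Their values at the vertex-type points, equivalently the normalization of their constant terms, are positive. I would check this by choosing $f_j$ with positive coefficients, which is a generic choice allowed in~\cite{GrossBB}; any generic choice is deformation-equivalent through real ones. Under this choice, the gluing class in $\check{H}^1(B,i_*\check{\Lambda}\otimes\mathbb{C}^*)$ is represented by a cocycle with values in $\mathbb{R}_{>0}$, hence lies in the image of~\eqref{gluingdatainclusion}, and its argument is trivial. Theorem~\ref{lagrangianpositivereallocus}, together with Lemma~\ref{divisorconstruction2} for the divisors, then applies at positive real $t$.

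The main obstacle is matching Gross's description with the Gross--Siebert classification in Theorem~\ref{logCYclassification}. Gluing data is only defined up to the equivalence described there. One also has to make sure that, after the MPCP resolution and the normalization conventions for slab functions, the sign ambiguity $\Arg(s)\in\{\pm1\}$ really vanishes rather than merely becoming a coboundary. My first attempt is to compute $\Arg(s)$ directly from the Kato--Nakayama real locus. The positive orthant $\{z_v>0\}$ of the ambient toric variety restricts, for $t>0$ and positive-coefficient $f_j$, to a component of the real locus of $X_t$ that projects homeomorphically onto $B$. By Lemma~\ref{topologicalpositivereallocus} and the monodromy formula~\eqref{reallocusmonodromy}, the existence of such a monodromy-invariant sheet forces $\theta_\gamma=0$, so $\Arg(s)=0$.
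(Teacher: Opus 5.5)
Your first observation, that the components of $X_0$ are toric strata of the ambient toric variety glued by the identity so the gluing data is trivial, is the paper's entire proof; the paper stops there. You are right that this does not by itself visibly control $\Arg(s)$. The class $s$ of Theorem~\ref{logCYclassification} also records the log structure, i.e.\ the slab functions, and their signs are not discussed in the paper. However, your sign check is backwards. With the family written as $t f_j + \prod_v z_v = 0$, with $t>0$ and $f_j$ having positive coefficients, the left-hand side is strictly positive on the positive orthant. So $X_t$ does not meet $\{z_v>0\}$ at all. Near a general point of a double locus, the local model~\eqref{gslocalmodels} reads $uv = -t\, f_j|_{\rho}\cdot(\text{positive monomial})$, so the slab functions are \emph{negative} at the vertex points. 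Both claims your verification rests on are therefore false as stated: positive constant terms, and a component of the real locus of $X_t$ inside the positive orthant projecting homeomorphically onto $B$. In fact, near a point of the positive part of a component of $X_0$, the real sheet of $X_t$ is a graph $z_v = -t f_j/(\prod_{w\neq v} z_w) < 0$, so it lies in an orthant where one of the coordinates in $\prod_v z_v$ is negative. The repair is to take the $f_j$ with negative coefficients, equivalently to write the family as $\prod_v z_v = t g_j$ with $g_j$ positive. Then the slab functions are positive at the vertices, the lifted gluing cocycle is $\mathbb{R}_{>0}$-valued, and $X_t\cap\{z_v>0\}$ is the desired component. Your Kato--Nakayama monodromy argument then does give $\Arg(s)=0$.

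Three smaller points. First, the total space is not defined over $\mathbb{Z}$ because ``the coefficients are integers'': the $f_j$ are generic sections. Reality, together with the sign normalization above, has to be imposed by choosing the $f_j$, and this is presumably how the corollary must be read. Second, ``any generic choice is deformation-equivalent through real ones'' does not help, since positive real gluing data is a property of a specific real degeneration. A vertex coefficient of $f_j$ cannot vanish without the log singular locus $Z$ meeting a $0$-stratum, which violates Definition~\ref{toricdegenerationdefn}(c). So the sign pattern at the vertices is locally constant on admissible real choices and cannot be changed by real deformation. Third, ``really vanishes rather than merely becoming a coboundary'' is not a distinction. $\Arg(s)=0$ in $\check{H}^1(B,i_*\check{\Lambda}\otimes\mathbb{Z}/2\mathbb{Z})$ means exactly that a representing cocycle is a coboundary, and that is all Lemma~\ref{topologicalpositivereallocus} uses.
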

\begin{proof}
    In fact such degenerations have trivial gluing data, so \textit{a fortiori} have positive real gluing data.
\end{proof}

\section{Homological mirror symmetry for toric degenerations}\label{toric-degeneration-hms-section} In this section, we check that the reasoning of \S{\ref{mainsection}} applies to certain toric degenerations with positive real gluing data by verifying Assumptions~\ref{commutativeassumption} and~\ref{lagrangianassumption}. In these special cases, we can use the real involution to show that Assumption~\ref{commutativeassumption} follows from Assumption~\ref{lagrangianassumption}. In other words, we are reduced to showing that positive real loci are tropical Lagrangian sections. The approach that we take to achieving this in the present work uses the existence of suitable resolutions of a toric degeneration $\mathcal{X}\to\mathbb{D}$.
\begin{assumption}\label{toricdegenerationresolutionassumption}
    In this section, we assume that all toric degenerations $\mathcal{X}\to\mathbb{D}$ of projective Calabi--Yau manifolds under consideration, in addition being smooth away from $0\in\mathbb{D}$ and having positive real gluing data, admit resolutions $\mathcal{X}'\to\mathcal{X}$ to log smooth and minimal log Calabi--Yau degenerations $\mathcal{X}'\to\mathbb{D}$ for which the dual intersection complex of $X_0'$ is a subdivision of the intersection complex of $\mathcal{X}$ (possibly with different integral affine structures.)
\end{assumption}
Under this assumption, it follows that the essential skeleton of $\mathcal{X}^{\circ}$ is a subdivision of the intersection complex. Some cases where such resolutions have already been shown to exist are as follows.
\begin{proposition}\label{resolutionassumption--verified}
    Suppose that $\mathcal{X}\to\mathbb{D}$ is a toric degeneration of projective Calabi--Yau varieties which is smooth away from the origin and has positive real gluing data. If either
    \begin{itemize}
        \item[(i)] $\mathcal{X}\to\mathbb{D}$ is a MPCP Batyrev--Borisov degeneration constructed in~\cite{GrossBB}, or;
        \item[(ii)] $\mathcal{X}\to\mathbb{D}$ is a toric degeneration of relative dimension $3$ satisfying Assumption 6.1 of~\cite{Goncharov}
    \end{itemize}
    then $\mathcal{X}\to\mathbb{D}$ satisfies Assumption~\ref{toricdegenerationresolutionassumption}.
\end{proposition}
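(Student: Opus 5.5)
The proof will be a reduction to resolutions already constructed in the literature, organised so that each case amounts to checking the three clauses of Assumption~\ref{toricdegenerationresolutionassumption}: (a) there is a resolution $\mathcal{X}'\to\mathcal{X}$ that is log smooth over $\mathbb{D}$; (b) $\mathcal{X}'\to\mathbb{D}$ is a minimal log Calabi--Yau degeneration, i.e.\ $K_{\mathcal{X}'}+X_0'\equiv 0$ relatively, so every component of $X_0'$ is good in the sense of Proposition~\ref{gsresolutionmodel}; and (c) the dual intersection complex of $X_0'$ refines $(B,\mathscr{P})$. The remaining hypotheses, smoothness away from $0$ and positive real gluing data, are not involved in constructing $\mathcal{X}'$; they are only carried along.

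In case (i) I will use Yamamoto's resolution~\cite{Yamamoto} of Gross's MPCP Batyrev--Borisov degenerations~\cite{GrossBB}. The total space is a complete intersection in a toric degeneration of the ambient MPCP toric variety, cut out torically by the nef partition. Yamamoto's resolution comes from a unimodular (regular) triangulation of the relevant polytope, compatible with the polyhedral decomposition $\mathscr{P}$, which induces a toric small resolution of the ambient degeneration. The steps, in order, are as follows. First, the restriction to $\mathcal{X}$ is log smooth because the triangulation is unimodular, and because the log singular locus $Z$ is resolved by the toric blowups away from the generic-fiber locus. Second, the resolution is crepant: it is small or toric crepant relative to a Gorenstein total space, so $K_{\mathcal{X}'}+X_0'\sim 0$ and all components are good. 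Third, the strata of $X_0'$ correspond to the simplices of the triangulation, so the dual intersection complex is exactly that triangulation, which is a subdivision of $\mathscr{P}$. The essential skeleton is then this subdivision, as claimed after the Assumption.

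Case (ii) follows the same pattern using Goncharov~\cite{Goncharov}. Under his Assumption 6.1, he constructs for a $3$-dimensional toric degeneration a projective small resolution of the (locally \eqref{gslocalmodels}-type) singularities of $\mathcal{X}$. It is obtained by resolving the local models $uv=t^{\kappa}f_{\rho,x}$ after a subdivision of $\mathscr{P}$ determined by the slab functions and kinks, and the result is a log smooth, relatively minimal snc model. I will quote that the resolution is small and hence crepant (giving (b)), and that its dual complex is the subdivision described in his construction (giving (c)).

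The main obstacle is clause (c) together with matching conventions. The cited works describe the resolved central fiber in their own terms, as a triangulation or a subdivided intersection complex, and I have to check that the combinatorial complex they give is the \emph{dual} intersection complex of $X_0'$. I also have to confirm that strata of $X_0'$ are connected, which may require further toric blowups that do not change the dual complex up to subdivision. My first approach will be a local check: a vertex of the triangulation corresponds to a component of $X_0'$, and a maximal simplex corresponds to a $0$-stratum. This uses the toric description of each local chart, and the log smoothness proved in (a) ensures the charts are the expected snc ones.
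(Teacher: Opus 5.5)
Your route is the paper's. Its proof consists only of two citations: the proof of Theorem~1.1 of \cite{Yamamoto} in case (i), whose statement already identifies the essential skeleton with the Gross--Siebert dual intersection complex, and the strongly admissible resolutions of \cite[Definition 6.11 and Proposition 6.15(3)]{Goncharov} in case (ii). The trouble is that the properties you propose to quote in their place are not correct in general. They are the ones your verification of (a) and (b) rests on, and (c) is left open.

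In case (ii) the resolution cannot be small. Wherever a kink $\kappa\geq 2$ occurs, the model $uv=t^{\kappa}f_{\rho,x}$ has, away from $Z$, a transversal $A_{\kappa-1}$ singularity along $u=v=t=0$. This is a cyclic quotient singularity, hence $\mathbb{Q}$-factorial, so every resolution has divisorial exceptional locus there. That locus lies over $t=0$, so it consists of new components of $X_0'$. Your own description already forces this: the new vertices of a subdivision of $\mathscr{P}$ dictated by the kinks \emph{are} new components, and no small resolution creates them. So ``small, hence crepant'' cannot be the argument for (b). What is needed is that the new components have discrepancy zero and appear with multiplicity one in the central fiber; for a toric refinement this means they come from lattice points at height one.

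In case (i), log smoothness should not be derived from unimodularity, and a crepant unimodular refinement need not exist. The cones of an MPCP fan are only elementary. Already in the hypersurface case with fibers of dimension $n\geq 3$, the total space $\mathcal{X}$ is locally the ambient MPCP toric variety near a $0$-stratum. That variety can have terminal Gorenstein singularities, which admit no crepant resolution. Assumption~\ref{toricdegenerationresolutionassumption} only asks for a log smooth (toroidal) minimal model, and that is all one can expect here.

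What you call the main obstacle, clause (c), is not checked in the paper by a local computation. It is exactly what the citations are invoked for: Yamamoto's identification of the essential skeleton with the dual intersection complex, and the properties built into Goncharov's strongly admissible resolutions, which the paper notes are stronger than the Assumption requires. The fix is to quote those statements directly. Do not reconstruct (a)--(c) from a description of the resolutions that is inaccurate at the two points your argument leans on.
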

To clarify, this result applies to \textit{any} MPCP Batyrev--Borisov degeneration with strongly semi-simple sigularities, by Corollary~\ref{BBcorollary}, and to all toric degenerations of Calabi--Yau $3$-folds satisfying both Goncharov's assumptions and ours.
\begin{proof}
    In case (i), this follows from the proof of Theorem 1.1 in~\cite{Yamamoto}. In fact that the statement of \textit{loc. cit.} mentions the identification of the essential skeleton with the Gross--Siebert dual intersection which is the relevant fact for our purposes. In case (ii), the resolutions we need are the \textit{strongly admissible resolutions} of~\cite[Definition 6.11 and Proposition 6.15(3)]{Goncharov}.
\end{proof}
\begin{remark}
    It is expected that Assumption~\ref{toricdegenerationresolutionassumption} should hold much more generally than in the cases described above. Goncharov~\cite[\S{6}]{Goncharov} sketches an argument that would prove the existence of strongly admissible resolutions of certain toric degenerations of higher relative dimension. Since strongly admissible resolutions satisfy many more properties than one needs for Assumption~\ref{toricdegenerationresolutionassumption}, the techniques developed in \textit{op. cit.} may allow one to verify this assumption more generally. On the other hand, Siebert has informed us that one should be able to construct suitable resolutions of toric degenerations of relative dimension $3$ using a procedure similar to the proof of~\cite[Theorem 1.1]{Yamamoto}.
\end{remark}
With these minimal models in hand, we can apply Proposition~\ref{gsresolutionmodel}, so as to have a semistable degeneration of the form considered elsewhere in this paper.
\begin{lemma}\label{lagrangianassumption--verified}
    Suppose that $\mathcal{X}\to\mathbb{D}$ is a toric degeneration subject to Assumption~\ref{toricdegenerationresolutionassumption}. Then the positive real locus of Theorem~\ref{lagrangianpositivereallocus} is a tropical Lagrangian section, and so Assumption~\ref{lagrangianassumption} holds.
\end{lemma}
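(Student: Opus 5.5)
We need to verify that the positive real locus $L_0\subset X_t$ satisfies conditions (i) and (ii) of Definition~\ref{tropicallagrangiandefinition}. Throughout we work with a semistable model $\mathscr{X}$ produced by applying Proposition~\ref{gsresolutionmodel} to the log smooth minimal model $\mathcal{X}'$ of Assumption~\ref{toricdegenerationresolutionassumption}. Since $\mathcal{X}$ is defined over $\mathbb{R}$, we choose this resolution, the polarization and the stabilizing divisors equivariantly for complex conjugation; the blowups of Proposition~\ref{gsresolutionmodel} can be taken along real centers. Then the good strata $D_I$, $I\subset S_0$, are defined over $\mathbb{R}$, and their dual complex is the essential skeleton $\Sk(\mathcal{X}^{\circ})$. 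By Assumption~\ref{toricdegenerationresolutionassumption} this is a subdivision of the intersection complex $B$.

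\emph{Condition (i).} Near a good stratum, take a real adapted chart $(x_1,\ldots,x_{n+1})$ in which $\pi=x_1\cdots x_k$ with real coefficients. Over $t>0$, the real locus of $X_t$ in this chart is the set of real solutions of $x_1\cdots x_k=t$. The positive real locus is the component on which $x_1,\ldots,x_k>0$ (and $x_{k+1},\ldots,x_{n+1}$ are real). To confirm this we identify it with the component of Lemma~\ref{topologicalpositivereallocus}. That component is the $\phi\equiv0$ section of the cover of Theorem~\ref{reallocuscovertheorem}, so every monomial of the log structure takes positive phase on it. After the real resolution, the coordinates $x_i$ are pulled back from such monomials times positive real units.

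In these coordinates $L_0$ is $\{\log|x_j|\text{ arbitrary},\ \arg x_j=0\}\times\mathbb{R}^{n-k+1}$. This is a product of sections of the fibration~\eqref{SYZfibrationnearlowerstrata}, provided the regularized symplectic form is compatible.

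This compatibility is the delicate point. Theorem~\ref{MTZthm} and Lemma~\ref{homotopyoftrivializations} deform $\Omega$ and $\pi$, and we must show they can be done $\iota$-equivariantly. The approach is to average the Hermitian structures and connections over $\mathbb{Z}/2$, which preserves the standard form~\eqref{standardlocalformtubularnbhd} because $\iota$ acts by conjugation on each normal factor. The Moser isotopies then also commute with $\iota$. After this, $L_0$ is carried to the fixed locus of the regularized anti-symplectic involution, and this fixed locus is exactly the product of real-positive sections described above. This equivariant regularization step is the one I expect to require the most care.

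\emph{Condition (ii).} In the chart above, $\Log_t$ restricted to $L_0$ is $(x_1,\ldots,x_k)\mapsto(\log x_j/\log t^{-1})$. This is a diffeomorphism from the positive part of $\{x_1\cdots x_k=t\}$ onto the interior of a neighborhood in $\sigma_I$, so it has exactly the required local form: projection to the first factor followed by identification with part of $\sigma_I$.

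Away from the good strata we cannot use these charts directly. Near exceptional (non-good) strata, we use that $L_0$ is contractible in the fibers of the retraction of $\Sk(\mathscr{X})$ onto $\Sk(\mathcal{X}^{\circ})$; alternatively we apply Theorem~\ref{toricdegenerationtopology} to identify $L_0$ with $B$. Combining the two, $\Log_t|_{L_0}$ is a continuous map $B\to\Sk(\mathcal{X}^{\circ})$ that is close to the identity of $B$ under the subdivision identification, and hence is a homotopy equivalence. The same identification gives $H^*(L_0;\mathbb{Z})=H^*(\Sk(\mathcal{X}^{\circ});\mathbb{Z})$.

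Finally, $L_0$ is a rational homology sphere by the lemma preceding Theorem~\ref{toricdegenerationtopology}. With (i) and (ii) established, $L_0$ is a tropical Lagrangian section, and Assumption~\ref{lagrangianassumption} holds.
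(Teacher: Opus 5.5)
Your proposal is a viable route, but it differs from the paper's in how condition (i) is obtained.

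\textbf{The paper's route.} The paper makes nothing equivariant. It uses Definition~\ref{toricdegenerationdefn}(c) to take the resolution trivial near the $0$-strata, so there $L_0$ is literally a piece of the positive real locus of $(\mathbb{C}^*)^n$. Near lower-dimensional good strata it moves $L_0$ into the local positive real locus by a fiberwise translation and a local biholomorphism. So what it actually shows is that a Hamiltonian isotope of $L_0$ satisfies (i).

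\textbf{Your route.} You make the snc model, polarization, regularization and $\pi_{reg}$ all $\iota$-equivariant. Then the Moser-transported $L_0$ is exactly the fixed locus of the regularized anti-symplectic involution, and you identify its sign component through positivity of log-monomial phases on the zero section of Lemma~\ref{topologicalpositivereallocus}. This gives an exact product description in every good chart, with no patching of local isotopies (which the paper leaves implicit). It also gives $\iota\circ\phi_{reg}\circ\iota=\phi_{reg}^{-1}$ on the nose, which is what Lemma~\ref{commutativeassumption--verified} later uses.

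\textbf{What it costs.} You need several inputs the paper does not.
\begin{enumerate}
\item Assumption~\ref{toricdegenerationresolutionassumption} does not say the resolution is real. You must either add this (it holds for the Yamamoto and Goncharov resolutions) or build a real dlt minimal model yourself, using model-independence of the essential skeleton.
\item You need equivariant versions of Theorem~\ref{MTZthm} and Lemma~\ref{homotopyoftrivializations}. Concretely, choose $\iota$-invariant Hermitian metrics, connections and tubular maps from the start, so that the standard form~\eqref{standardlocalformtubularnbhd} is automatically anti-invariant, and run the inductive constructions equivariantly.
\item You need $\iota$ to preserve each good component. Otherwise the fixed locus in two swapped normal factors is $\{v_j=\bar{v}_i\}$ rather than a positive product.
\item Your claim that adapted coordinates are monomials times positive units requires the model to be toroidal along $L_0$, i.e.\ that $L_0$ stays away from the log singular locus.
\end{enumerate}

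\textbf{Condition (ii).} Here you are at the same level as the paper. The remark about contractibility in the fibers of the retraction is not an argument. What is actually needed is that $\Log_t|_{L_0}$ followed by the retraction $\Sk(\mathscr{X})\to\Sk(\mathcal{X}^{\circ})$ is homotopic to the homeomorphism $L_0\cong B\cong\Sk(\mathcal{X}^{\circ})$. You, like the paper, assert this rather than prove it.
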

\begin{proof}
    Note that we can resolve $\mathcal{X}\to\mathbb{D}$ in such a way that the resolution is trivial near the $0$-strata of $X_0$, by Definition~\ref{toricdegenerationdefn}(c). Thus in the semistable charts near any $0$-stratum of the central fiber $\mathbf{D}$ of the resolution $\mathscr{X}$, the positive real locus $L_0$ coincides with a portion of the (ordinary) positive real locus in $(\mathbb{C}^*)^n$. In terms the toric degeneration, these regions correspond to regions near the big tori in the central fiber of $\mathcal{X}$.
    
    With this understood, it suffices to show that $L_0$ restricts to a Lagrangian section near the central strata of lower codimension. To that end, consider $L_0\cap U_I$ where $I\subset S_0$ corresponds to a stratum of positive codimension. Set $U_{I,t} \coloneqq U_I\cap X_t$. It is easy to see that the projection from $L_0\cap U_{I,t}$ onto the positive real locus in $U_{I,t}$ (by which we mean the product of the positive real locus in the $(\mathbb{C}^*)^{k-1}$-factor with the real locus in the $\mathbb{C}^{n-k+1}$-factor) is a surjection. So for any point $p\in L_0\cap U_{I,t}$, we move $p$ by a translation (which is also a Hamiltonian isotopy) so that it lies in the positive real locus of $U_{I,t}$. An elementary argument shows that by a biholomorphic map between neighborhoods of $p$ contained in $\mathbb{C}^n$, we can identify a neighborhood of $L_0$ around $p$ with the real locus. Thus after a Hamiltonian isotopy we can arrange that $L_0$ satisfies Definition~\ref{tropicallagrangiandefinition}(i). The second item, Definition~\ref{tropicallagrangiandefinition}(ii), follows immediately since we have identified the essential skeleton with $L_0$ topologically and described its projection onto the intersection complex through Assumption~\ref{toricdegenerationresolutionassumption}.
\end{proof}
To verify Assumption~\ref{commutativeassumption}, we use Proposition~\ref{closedopenisomorphism} to work with the Lagrangian Floer-theoretic model of $R_{\phi}$.
\begin{lemma}\label{commutativeassumption--verified}
    If $\mathcal{X}\to\mathbb{D}$ is a toric degeneration with positive real gluing data subject to Assumption~\ref{toricdegenerationresolutionassumption}, then the smooth part $\mathcal{X}^{\circ}\to\mathbb{D}^{\circ}$ satisfies Assumption~\ref{commutativeassumption}.
\end{lemma}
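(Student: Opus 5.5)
We will show that $R_{\phi}$ is commutative by moving the question to the Lagrangian side and there using the real involution $\iota$ of~\eqref{realinvolution}. Lemma~\ref{lagrangianassumption--verified} shows that the positive real locus $L_0$ is a tropical Lagrangian section. Proposition~\ref{closedopenisomorphism} then says that $[\mathcal{CO}_+^0]$ is an isomorphism
\[
\bigoplus_{d\geq0}HF^0(X_t,\phi^d)\xrightarrow{\ \cong\ }\bigoplus_{d\geq0}HF^0(L_0,\phi^d(L_0)),
\]
and Lemma~\ref{tcoringhomlemma} says it is a ring homomorphism. The right-hand side carries the algebra structure of $\mathcal{A}_{\phi}$, with product $a_1\cdot a_2=\mathfrak{m}_2(\phi^{\ell_2}(a_1),a_2)$. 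So it suffices to prove that this degree-zero product on $\bigoplus_{d\geq0}HF^0(L_0,\phi^d(L_0))$ is commutative.

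\textbf{Step 1: the involution commutes with $\phi$ and preserves the setup.} We choose $t$ on the positive real axis. Since the family is defined over $\mathbb{R}$, the monodromy around the unit circle satisfies $\iota\phi\iota=\phi^{-1}$ up to isotopy. For the regularized model we want this to hold on the nose. The plan is to pick the regularization, the trivialization $\tau_{reg}$ and the Hamiltonian perturbation~\eqref{explicithamiltonianperturbation} conjugation-equivariantly, by averaging the data under $\iota$ and then using Moser's argument. Lemma~\ref{divisorconstruction2} supplies a system of divisors $\mathbf{E}_t$ preserved by both $\iota$ and $\phi$. With these choices $\iota$ induces an anti-equivalence of $\Fuk(X_t,\mathbf{E}_t)$ in the sense of Fukaya--Oh--Ohta--Ono, sending $\phi^d(L_0)$ to $\phi^{-d}(L_0)$ because $\iota(L_0)=L_0$. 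On $\mathfrak{m}_2$ it reverses the order of the inputs, up to the standard sign $(-1)^{\deg a_1\deg a_2+\cdots}$.

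\textbf{Step 2: combine with the $\phi$-action.} For $a_i\in HF^0(L_0,\phi^{d_i}(L_0))$ we apply $\iota$ and then the strict $\phi$-action of Assumption~\ref{strictactionassumption}. The map $a\mapsto\phi^{d}\iota(a)$ sends $HF^0(L_0,\phi^d(L_0))$ to $HF^0(\phi^{d}(L_0)\text{ reversed})$, which after reindexing lies again in $HF^0(L_0,\phi^d(L_0))$. Let $\tau$ denote the resulting degreewise involution. Then $\tau$ is an anti-automorphism of the algebra $\bigoplus_d HF^0(L_0,\phi^d(L_0))$: we have $\tau(a_1a_2)=\pm\tau(a_2)\tau(a_1)$, and the sign is trivial in degree $0$ once the orientations and spin structures on $L_0$ are chosen $\iota$-compatibly.

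\textbf{Step 3: $\tau$ is the identity.} By Proposition~\ref{closedopenisomorphism} and the low-energy description behind Theorem~\ref{stanleyreisnerisom}, each $HF^0(L_0,\phi^d(L_0))$ has a basis of intersection points of $L_0$ and $\phi^d(L_0)$ lying in the positive real part of the charts $U_I$. These points are fixed by the combination of $\iota$ with the $\phi$-shift. The Stanley--Reisner monomial basis is defined over $\Bbbk$ with canonical orientations, so $\tau$ fixes every basis element. An anti-automorphism equal to the identity forces $a_1a_2=a_2a_1$, and the ring isomorphism then transports commutativity back to $R_{\phi}$.

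The main obstacle is Step 1 together with the sign in Step 3. One must arrange strict $\iota$-equivariance of the relative Fukaya data, in particular anti-equivariance of the almost complex structure and the Hamiltonian perturbations, while keeping them generic. One must also check that the orientation sign of the anti-symplectic involution on degree-zero generators is $+1$; for this we plan to use the relatively spin condition on $L_0$ as in Fukaya--Oh--Ohta--Ono, and to check the sign first in the local toric model $(\mathbb{C}^*)^n$.
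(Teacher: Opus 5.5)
Your proposal follows the paper's proof: transfer to $\bigoplus_{d\geq0}HF^0(L_0,\phi^d(L_0))$ via the closed--open isomorphism, use that the anti-symplectic involution $\iota$ reverses the order of the boundary labels of the triangles computing the product, and apply $\phi^{d_1+d_2}$ to land on the product in the opposite order. Your Step~3 (the anti-automorphism $\phi^d\circ\iota$ fixes the degree-zero generators, which lie on $L_0\subset\mathrm{Fix}(\iota)$ and are fixed points of $\phi^d$) makes explicit what the paper's ``bijection of disks'' leaves implicit. The paper proceeds differently in two places, and the second should be added to your write-up. First, it gets equivariant perturbation data from Seidel's device of adjoining objects $(\phi^d(L),\iota)$, so that $\iota$ acts freely on objects; this only needs $\iota\circ\phi^d(L)$ and $\phi^{-d}(L)$ to be Hamiltonian isotopic, and it sidesteps the equivariant-genericity problem you flag (e.g.\ for the pair $(L_0,\phi^d(L_0))$, which $\phi^d\circ\iota$ preserves). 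Second, since $[\mathcal{CO}_+^0]$ is defined over $\Lambda_{\Bbbk}$, the paper adds the descent of commutativity from $R_{\phi}\otimes_{\Bbbk[\![T]\!]}\Lambda_{\Bbbk}$ to $R_{\phi}$; this holds because $R_{\phi}$ is torsion-free over $\Bbbk[\![T]\!]$, the enhanced complexes being free and concentrated in nonnegative degrees.
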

\begin{proof}
    Let $L = L_0$. Consider the subcategory of $\Fuk(X_t,\mathbf{E}_t)$ whose objects are $\lbrace\phi^d(L)\rbrace_{d\in\mathbb{Z}}$. We claim that the anti-symplectic involution induces an involution
    \begin{align*}
        H^0(\Fuk(X_t,\mathbf{E}_t))\to H^0(\Fuk(X_t,\mathbf{E}_t))^{opp}
    \end{align*}
    which maps the cohomological (ordinary) category $H^0(\Fuk(X_t,\mathbf{E}_t))$ to its opposite category, and which acts via
    \begin{align*}
        \phi^d(L)\mapsto\phi^{-d}(L)
    \end{align*}
    on (isomorphism classes of) objects. The main point of this construction, taken from~\cite[(10b)]{SeiBook}, is that we need to augment $\Fuk(X_t,\mathbf{E}_t)$ by adjoining objects of the form $(\phi^d(L),\iota)$, for all $d$, and we declare that $\iota$ acts on this extended Fukaya category via
    \begin{align*}
        \iota\colon\phi^d(L)\mapsto(\iota\circ\phi^d(L),\iota)
    \end{align*}
    these additional objects allow us to choose perturbation data which is $\iota$-equivariant. We also use the fact that $\iota\circ\phi^d(L)$ and $\phi^{-d}(L)$ are always Hamiltonian isotopic, which is easy to see in the semistable charts $U_I\cap X_t$. Thus we obtain the desired action on objects at the cohomology level.

    Using this involution, we will show that
    \begin{align*}
        R_{\phi}\cong\bigoplus_{d=0}^{\infty}HF^0(L,\phi^d(L))
    \end{align*}
    is commutative. Given a pair of elements $a_i\in HF^0(L,\phi^{d_i}(L))$, any pseudoholomorphic disk contributing to the product $a_1\star a_2$ will have three boundary punctures and Lagrangian labeling $(L,\phi^{d_1}(L),\phi^{d_1+d_2}(L))$. Applying $\iota$ to this labeling gives the new Lagrangian labeling
    \[ ((\phi^{-d_1-d_2}(L),\iota),(\phi^{-d_1}(L),\iota),(L,\iota))\]
    where the order is determined by the fact that $\iota$ is anti-symplectic, and hence reverse the orientation of the disk. Applying $\phi^{d_1+d_2}$ to this boundary condition gives
    \[ ((L,\iota),(\phi^{d_2}(L),\iota),(\phi^{d_1+d_2}(L),\iota))\]
    which is isomorphic to
    \[ (L,\phi^{d_2}(L),\phi^{d_1+d_2}(L))\]
    in $H^0(\Fuk(X_t,\mathbf{E}_t))$. This gives a bijection of perturbed pseudoholomorphic disks contributing to the two products, which establishes commutativity of $R_{\phi}\otimes\Lambda_{\Bbbk}$, and it follows from standard facts in commutative algebra about localizations that $R_{\phi}$ is also commutative.
    \end{proof}

Putting all of this together, we conclude the proof of Theorem~\ref{toric-degeneration-hms-thm} from the introduction.
\begin{proof}[Proof of Theorem~\ref{toric-degeneration-hms-thm}]
    In this setting we have verified Assumption~\ref{commutativeassumption} (see Lemma~\ref{commutativeassumption--verified}) and Assumption~\ref{lagrangianassumption} (see Lemma~\ref{lagrangianassumption--verified}.) Thus the result follows from Theorem~\ref{mirrorinjectionthm}.
\end{proof}
The final details showing that the resolutions of Corollary~\ref{bb3corollary} satisfy Assumption~\ref{toricdegenerationresolutionassumption} are as follows.
\begin{proof}[Proof of Corollary~\ref{bb3corollary}]
In item (i), the technical MPCP condition shows that the Batyrev--Borisov degenerations constructed by Gross have simple intersection complexes~\cite{GrossBB}. In item (ii), simplicity is part of Assumption 6.1 in~\cite{Goncharov}. That both types of degeneration have resolutions of the type we need is the content of Proposition~\ref{resolutionassumption--verified}.
\end{proof}

The proof of Theorem~\ref{comparingmirrors} is a relatively easy application of the tailoring trick developed in~\cite{AbouzaidCoordinateRing}, given our description of the positive real locus above.
\begin{proof}[Proof of Theorem~\ref{comparingmirrors}]
    Let $\mathcal{X}\to\mathbb{D}$ be a Batyrev degeneration satisfying the hypotheses of the theorem. If we take a smooth fiber $X_t$ of $\mathcal{X}$ over a positive real point of $\mathbb{D}$ sufficiently close to its origin, then the positive real locus of Theorem~\ref{lagrangianpositivereallocus} is the positive real locus of Theorem~\ref{lagrangianpositivereallocus}. In particular, the positive real locus is one of the Lagrangian spheres described in~\cite[\S{4.3}]{GHHPS}.

    We will now use the tailoring method of~\cite{AbouzaidCoordinateRing} to compare the other Lagrangian spheres considered in~\cite{GHHPS} to the iterates of the positive real locus under the regularized monodromy $\phi\colon X_t\to X_t$. The compact Lagrangian submanifolds of an $(n+1)$-dimensional toric Fano variety considered in~\cite[\S{5}]{AbouzaidCoordinateRing} can be thought of as fiberwise translations of the positive real locus with respect to the (singular) moment fibration with iamge $\Delta$ cf.~\cite[(3-2)]{AbouzaidCoordinateRing}. We can deform the hypersurface $X_t$ as in~\cite[Proposition 4.2]{AbouzaidCoordinateRing} through real hypersurfaces. Let $M_{s,t}$ denote such a hypersurface. This uses the smoothness of $\Sigma^*$ as given in the theorem statement. 
    
    Such deformations can be chosen so that the resulting real hypersurface projects to an arbitrarily small neighborhood of a tropical hypersurface in $\Delta$. If we apply the ambient fiberwise translation symplectomorphism to such a hypersurface, it will be carried to another hypersurface in the same cohomology class. Thus we can deform this new hypersurface to the original one by a $C^1$-small isotopy with trivial flux, which we can then replace with a Hamiltonian isotopy. This determines a symplectic automorphism $\psi$ of $M_{s,t}$. Now by the description in~\cite{AbouzaidCoordinateRing}, $M_{s,t}$ will have the structure of a Lagrangian torus fibration over a(n arbitrarily large) subset of the smooth part of the tropical hypersurface. In this region, it is clear that $\phi$ and $\psi$ will agree with each other, and thus $\phi^d(L)$ and $\psi^d(L)$ will be $C^1$-close Lagrangian rational homology spheres. We can then use the same isotopy argument to show that these are equivalent objects of the Fukaya category.

    The proof of homological mirror symmetry given in~\cite{GHHPS} implies that
    \[\underline{\Proj}_{\Bbbk[\![T]\!]}HF^0(L,\psi^d(L))\]
    gives the Batyrev mirror space, which completes the proof of the theorem.
\end{proof} 

\part{Appendix}
\appendix
\section{A one-sided inverse functor}\label{onesidedappendix}
This appendix continues in the setting of \S{\ref{mainsection}}. The functor~\eqref{mirrorinjection} determines a dg-functor
\begin{align}\label{adjunctionfunctor}
\Mod\Fuk(X_t,\mathbf{E}_t)\to\Mod\Perf_{dg}(\check{\mathcal{X}})
\end{align}
of dg-categories of $A_{\infty}$-modules. Explicitly, one can restrict any module over $\Fuk(X_t,\mathbf{E}_t)$ to a module over the full subcategory with objects $\lbrace\mathcal{L}^{\otimes d}\rbrace_{d\in\mathbb{Z}}$, which is quasi-equivalent to $\Perf_{dg}(\check{\mathcal{X}})$. We will show that this gives rise to a quasi-inverse of~\eqref{mirrorinjection}. 

As recalled above, it is shown in~\cite[Theorem 4]{Orlov} that the line bundles $\mathcal{L},\ldots,\mathcal{L}^{\otimes(n+1)}$ split-generate $\Perf_{dg}(\check{\mathcal{X}})$. Define the object $\mathcal{G}\in\Perf_{dg}(\check{\mathcal{X}})$ as the direct sum $\mathcal{G}\coloneqq\mathcal{L}\oplus\cdots\oplus\mathcal{L}^{\otimes(n+1)}$ of these generators, and let $\mathcal{P}$ denote the endomorphism algebra $\mathcal{P}\coloneqq\Ext^*(\mathcal{G},\mathcal{G})$. From this the functor~\eqref{adjunctionfunctor} reduces to a functor
\begin{align}\label{adjunectionfunctor1}
\Mod\Fuk(X_t)\to\Mod\End\mathcal{P} \,.
\end{align}
Note that this functor needs not be homologically faithful. We will characterize the perfect modules of $\Fuk(X_t)$ inside $\Mod\End\mathcal{P}$.

Our starting point will be the following result of Schwede--Shipley.
\begin{theorem}[{\cite[Theorem 3.1.1]{SS03}}]\label{schwedeshiplythm}
Let $\mathcal{C}$ be a simplicial, cofibrantly generated, proper stable model category with a compact generator $\mathcal{P}$. Then there exists a chain of simplicial Quillen equivalences:
\begin{align}
\mathcal{C}\simeq_Q\Mod\End\mathcal{P} \,.
\end{align}
\end{theorem}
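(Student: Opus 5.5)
We would follow the strategy of Schwede--Shipley rather than reprove anything from scratch, since this is their Theorem~3.1.1 and it is only used here as a black box toward Proposition~\ref{finitelygeneratedprop}. The plan has three steps: build a spectral enrichment, define a Hom--tensor adjunction, and check that its derived unit and counit are equivalences. Throughout, $\mathcal{P}$ is taken to be a cofibrant--fibrant replacement of the given generator, and $\End\mathcal{P}$ means its endomorphism ring spectrum.

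The first step is to promote $\mathcal{C}$ to a model category enriched over symmetric spectra. A simplicial stable model category is only enriched over simplicial sets, so we pass to Hovey's category $Sp^{\Sigma}(\mathcal{C})$ of symmetric spectra internal to $\mathcal{C}$ (built from the simplicial suspension). Using properness and cofibrant generation, we would show that it carries a stable model structure and that the suspension-spectrum functor $\Sigma^{\infty}\colon\mathcal{C}\to Sp^{\Sigma}(\mathcal{C})$ is the left half of a simplicial Quillen equivalence; stability of $\mathcal{C}$ is exactly what makes the stabilization an equivalence. This step accounts for the ``chain'' of Quillen equivalences in the statement, and for why the hypotheses include simplicial, proper and cofibrantly generated.

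In $Sp^{\Sigma}(\mathcal{C})$ each pair of objects has a mapping spectrum $\Hom(X,Y)$. The second step is to set $\End\mathcal{P}=\Hom(\mathcal{P},\mathcal{P})$, which is a ring spectrum under composition. The functor $\Hom(\mathcal{P},-)$ is right Quillen from $Sp^{\Sigma}(\mathcal{C})$ to $\Mod\End\mathcal{P}$, with left adjoint $-\wedge_{\End\mathcal{P}}\mathcal{P}$. It preserves fibrations and trivial fibrations because $\mathcal{P}$ is cofibrant, by the enriched SM7 axiom.

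The third step is to check that the derived adjunction is an equivalence of homotopy categories. The derived unit is an isomorphism on the free module $\End\mathcal{P}$, essentially by definition. Both derived functors are exact. The derived right adjoint preserves arbitrary coproducts: on homotopy groups this is $[\mathcal{P},\bigoplus X_i]_*=\bigoplus[\mathcal{P},X_i]_*$, which is compactness of $\mathcal{P}$. Hence the objects on which the unit is an isomorphism form a localizing subcategory containing the free module, and so they form all of $\Ho(\Mod\End\mathcal{P})$. For the counit, the derived right adjoint detects isomorphisms, because $\mathcal{P}$ is a generator: $[\mathcal{P},X]_*=0$ forces $X\simeq 0$. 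Applying the triangle identities then shows the counit is an isomorphism.

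I expect the main obstacle to be the first step. It requires verifying that the stable model structure on $Sp^{\Sigma}(\mathcal{C})$ exists and that the mapping spectra are homotopically correct, meaning they compute $[\mathcal{P},X]_*$. This is where properness and cofibrant generation are genuinely used. By contrast, the localizing-subcategory argument in the third step is formal once that enrichment is in place.
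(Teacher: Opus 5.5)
Your outline is correct and follows the Schwede--Shipley argument. You stabilize via Hovey's $Sp^{\Sigma}(\mathcal{C})$ to get a spectral model category Quillen equivalent to $\mathcal{C}$, then show the spectral adjunction $-\wedge_{\End\mathcal{P}}\mathcal{P}\dashv\Hom(\mathcal{P},-)$ is a Quillen equivalence using the compactness/localizing-subcategory argument and conservativity; the paper gives no proof of its own and simply imports this result from \cite{SS03}. The one point you leave implicit is that $\End\mathcal{P}$ must be formed from a stably fibrant replacement of $\Sigma^{\infty}\mathcal{P}$ in $Sp^{\Sigma}(\mathcal{C})$, since a fibrant $\mathcal{P}\in\mathcal{C}$ does not give a fibrant suspension spectrum; this replacement is still a compact generator because $\Sigma^{\infty}$ is a Quillen equivalence.
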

Derived categories of quasi-coherent sheaves are discussed in Example 2.4(iv) of \emph{op. cit.} Note also that every cofibrantly generated, proper, stable model category is Quillen equivalent to a simplicial model category~\cite[p. 112]{SS03}. We will briefly review how Schwede--Shipley's theorem applies in our situation, following~\cite[\S{8.3}]{Toe07}.

Recalling that $\check{\mathcal{X}}$ is a quasi-compact separated scheme over $\Spec\Lambda_{\Bbbk}$, consider $\mathrm{QCoh}(\mathcal{\check{X}})$, the category of $\mathbb{U}$-small quasi-coherent sheaves on $\mathcal{\check{X}}$, where $\mathbb{U}$ is some universe. There is a cofibrantly generated model category $C(\mathrm{QCoh}(\mathcal{\check{X}}))$ of unbounded complexes of quasi-coherent sheaves on $\mathcal{\check{X}}$. Here the cofibrations are precisely the monomorphisms and the equivalences precisely are the quasi-isomorphisms. There is a natural $C(\Lambda_{\Bbbk})_{\mathbb{U}}$-enrichment of $C(\mathrm{QCoh}(X))$ which is a $C(\Lambda_{\Bbbk})_{\mathbb{U}}$-model category. For some universe $\mathbb{V}$ such that $\mathbb{U}\in\mathbb{V}$, one can construct (see~\cite[p. 629]{Toe07}) a $\mathbb{V}$-small dg-category $\mathrm{Int}(C(\mathrm{QCoh}(\check{\mathcal{X}}))$ denoted $L_{qcoh}(\check{\mathcal{X}})$ in~\cite{Toe07}. Finally, the homotopy category of $L_{qcoh}(\mathcal{\check{X}})$ is naturally equivalent to the unbounded derived category $D(\mathrm{QCoh}(\mathcal{\check{X}}))$ of quasi-coherent sheaves.\footnote{Since $\check{\mathcal{X}}$ is separated, not merely quasi-separated, there is a natural equivalence between $D(\mathrm{QCoh}(\mathcal{\check{X}}))$ and $D_{\mathrm{QCoh}}(\mathcal{\check{X}})$, the category of complexes with cohomology sheaves in $\mathrm{QCoh}(\check{\mathcal{X}})$.}

Theorem~\ref{schwedeshiplythm} implies that we have a Quillen equivalence
\begin{align*}
C(\mathrm{QCoh}(\mathcal{\check{X}}))\simeq\Mod\End\mathcal{P}.
\end{align*}
As explained in \textit{loc. cit.}, an object of $L_{qcoh}(\mathcal{\check{X}})$ is perfect (i.e. homotopically finitely-presented) in the sense of~\cite[\S{7}]{Toe07} if and only if it is a compact object of $D(\mathrm{QCoh}(\mathcal{\check{X}}))$, if and only if it is a perfect complex on $\mathcal{\check{X}}$. Also, by~\cite[Lemma 8.10]{Toe07}, we do not need to distinguish between left and right $A_{\infty}$-modules.

Using the Calabi--Yau structure on $\Fuk(X_t)$, we will show that
\begin{theorem}
Any element in the image of $\Perf\Fuk(X_t)$, under~\eqref{adjunectionfunctor1}, is homotopically finitely-presented and is, consequently, lies in $\Perf_{dg}(\check{\mathcal{X}})$.
\end{theorem}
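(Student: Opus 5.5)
The plan is to characterise perfect objects via Toën's criterion: under the Quillen equivalence $C(\mathrm{QCoh}(\check{\mathcal{X}}))\simeq\Mod\End\mathcal{P}$ of Theorem~\ref{schwedeshiplythm}, an object of $L_{qcoh}(\check{\mathcal{X}})$ is perfect exactly when it is compact. So for $K\in\Perf\Fuk(X_t,\mathbf{E}_t)$ it is enough to show that the restricted module $M_K\coloneqq\hom_{\Fuk}(\mathcal{G}_{\mathrm{Fuk}},K)$ is compact. Here $\mathcal{G}_{\mathrm{Fuk}}=\bigoplus_{d=1}^{n+1}\phi^d(L)$ is the image of $\mathcal{G}$ under~\eqref{mirrorinjection}, and we regard $M_K$ as a module over $\End\mathcal{P}\simeq\End(\mathcal{G}_{\mathrm{Fuk}})$. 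This identification is legitimate because Theorem~\ref{mirrorinjectionthm} makes the full subcategory on $\{\phi^d(L)\}$ quasi-equivalent to $\{\mathcal{L}^{\otimes d}\}$.

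The main tool is Toën's criterion for smooth and proper dg-algebras. By Orlov, $\End\mathcal{P}$ is Morita equivalent to $\Perf_{dg}(\check{\mathcal{X}})$, and since $\check{\mathcal{X}}$ is projective over the base, this algebra is proper. For a proper dg-algebra $A$, a module $M$ is pseudo-perfect if its underlying complex is perfect over the ground field, and Toën shows that pseudo-perfect implies perfect when $A$ is also smooth. Smoothness is not available here, so the argument has to avoid it.

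\begin{enumerate}
\item Finiteness. $M_K$ has finite-dimensional total cohomology, because $HF^*(\phi^d(L),K)$ is finite-dimensional for each of the finitely many $d=1,\ldots,n+1$. This uses properness of $\Fuk(X_t,\mathbf{E}_t)$: the hom-complexes are finite-dimensional over $\Lambda_{\Bbbk}$ once the Lagrangians are made transverse.
\item Dualising. The weak proper Calabi--Yau structure gives $HF^*(\phi^d(L),K)\cong HF^{n-*}(K,\phi^d(L))^\vee$. Hence $M_K$ is the $\Lambda_{\Bbbk}$-linear dual, shifted by $n$, of the left module $N_K=\hom(K,\mathcal{G}_{\mathrm{Fuk}})$. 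By Toën's Lemma 8.10 the left/right distinction is harmless.
\item Passing to sheaves. On the B-side, a module whose restriction to all $\mathcal{L}^{\otimes d}$ has finite total cohomology corresponds to an object $F$ of $D(\mathrm{QCoh})$ with $\mathbb{R}\Hom(\mathcal{L}^{\otimes d},F)$ finite-dimensional for all $d$. By Serre vanishing applied to the Čech/Koszul resolution of $\mathcal{O}$ in~\eqref{koszulsequence}, such an $F$ lies in $D^b\Coh$.
\item Using the dual as well. Steps 1–3 applied to $N_K$ give the dual object. The key claim is that an object $F$ of $D^b\Coh$ whose Grothendieck–Serre dual $\mathbb{R}\mathcal{H}om(F,\omega)$ is also in $D^b\Coh$ and Hom-finite against all line bundles is perfect. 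Here $\omega\cong\mathcal{O}$, since $\check{X}_0$ is Gorenstein Calabi--Yau by Lemma~\ref{slclemma}. The intended route is the Calabi--Yau property of the B-side: Serre duality on a proper Gorenstein scheme identifies the two dualities, and Hom-finiteness in both directions forces compactness.
\end{enumerate}

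I expect the last step to be the main obstacle. Step 3 alone only produces an object of $D^b\Coh$, which need not be perfect when $\check{\mathcal{X}}$ is singular. The saving feature should be that the modules come from the A-side, where the Calabi--Yau pairing is symmetric. Concretely, the first thing I would try is to show that $M_K$ is a retract of a finite iterated cone of shifts of $\mathcal{G}_{\mathrm{Fuk}}$ viewed as a $\Perf\Fuk$-object, by exhibiting $K$ itself through the coevaluation map. Failing that, I would check that $\mathbb{R}\Hom(F,-)$ commutes with arbitrary sums by dualising the finiteness of $N_K$. This is the point where Schwede--Shipley compactness is converted into perfection without appealing to smoothness.
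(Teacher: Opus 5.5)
There is a genuine gap, and it sits in Step 4. The key claim there is not merely unproven; it is false as soon as $\check{\mathcal{X}}$ is singular. On the proper Gorenstein scheme you are working with, $\mathbb{R}\mathcal{H}om(-,\omega)$ preserves $D^b\Coh$. Moreover, every $F\in D^b\Coh$ has finite-dimensional $\mathbb{R}\Hom(\mathcal{L}^{\otimes d},F)$ and $\mathbb{R}\Hom(F,\mathcal{L}^{\otimes d})$. So the hypotheses of your claim hold for \emph{every} bounded coherent complex, and the claim would give $D^b\Coh(\check{\mathcal{X}})=\Perf(\check{\mathcal{X}})$, i.e.\ regularity. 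The paper explicitly does not assume this.

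More fundamentally, Steps 1--2 only record Hom-finiteness and Calabi--Yau duality of $K$ against $\mathcal{G}_{\mathrm{Fuk}}$. Grothendieck--Serre duality with $\omega\cong\mathcal{O}$ already gives $\mathbb{R}\Hom(F,\mathcal{L}^{\otimes d}[n])\cong\mathbb{R}\Hom(\mathcal{L}^{\otimes d},F)^{\vee}$ for every $F\in D^b\Coh$. So no argument built only on these data can detect perfection.

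The symmetry of the A-side pairing does not rescue this. Take a crepant resolution $\pi\colon Y\to X$ of a singular Gorenstein Calabi--Yau variety (e.g.\ the minimal resolution of a nodal K3 surface). Then $\pi^*\colon\Perf(X)\hookrightarrow D^b\Coh(Y)$ is fully faithful into a smooth proper Calabi--Yau category. Yet the restricted module of a skyscraper $\mathcal{O}_y$ at a point of the exceptional locus is $\pi_*\mathcal{O}_y=\mathcal{O}_{\pi(y)}$, which is not perfect.

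Your fallbacks fail for the same reason:
\begin{itemize}
\item The coevaluation route amounts to split-generating $K$ by $\{\phi^d(L)\}$. That is strictly stronger than the statement, false in this model, and exactly the generation problem the paper cannot address without smoothness.
\item Dualising the finiteness of $N_K$ only yields Hom-finiteness against the generator again. It does not give commutation of $\mathbb{R}\Hom(F,-)$ with arbitrary sums.
\end{itemize}

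The paper's proof shares your first half: its part (i) obtains coherence of the image of $K$ from finite-dimensionality of $HF^*(K,\phi^d(L))$, which is what your Steps 1--3 achieve. What you are missing is its part (ii), which tests $K$ against \emph{arbitrary} finitely generated $R_{\phi}$-modules $M$, not only against the generator. It uses the Yoneda pairing $\Ext^*(K,M)\otimes\Ext^{n-*}(M,K)\to\Ext^n(K,K)$, asserted to be perfect on finite-rank free modules via Floer cohomology and the weak Calabi--Yau structure, and extends it along free resolutions. From this it concludes $\Ext^{j}(K,M)=0$ for $j>n$, so $K$ has finite projective dimension. Coherence plus finite projective dimension then gives perfection by the Stacks Project criteria. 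A bound of this kind on the projective (Tor) amplitude of $K$ is the ingredient your proposal never supplies.

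Be aware, though, that the model above shows the extension from free modules to all $M$ is not formal. For $K=M=\mathcal{O}_x$ at a singular point $x$, one has $\Ext^j(K,M)\neq0$ for every $j$, while $\Ext^{n-j}(M,K)=0$ for $j>n$. So on either route you need some input beyond Hom-finiteness and Calabi--Yau duality against the line bundles.
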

\begin{proof}
Recall that the homotopy category of $L_{qcoh}(\check{X}_0)$ is the derived category of graded $R_\phi$-modules, denoted $D(R_\phi)$. All modules arising in the course of the proof are assumed to be graded modules. By the above discussion and standard facts about perfect complexes~\cite[Lemma 15.76.3]{StacksProject}, it suffices to check that the image of any such object in $D(R_\phi)$ is (i) finitely-generated and (ii) has finite projective dimension. This uses the fact that $\Lambda_{\Bbbk}$ is a Noetherian ring (it is a field.) Let $K$ be an object of $\Perf\Fuk(X_t)$. We will abuse notation and also use $K$ to refer to the image of this object in $\Mod\End\mathcal{P}$ and in the derived category $D(R_{\phi})$ interchangeably.

For (i), note that the $\mathcal{P}$-module morphisms from $K$ to $\mathcal{P}$ are naturally identified with Floer cohomology groups $HF^*(K,L_i)$, for $i = 0,\ldots,d+1$. Since the objects of $\Fuk(X_t,D_t)$ are \textit{compact} Lagrangians, the space of such morphisms are finitely-generated as $\Bbbk$-vector spaces. From this it is easy to see that there is a morphism $\mathcal{P}^{\oplus N}\to K$ descends to a surjection in $\mathrm{QCoh}(\check{X}_0)$, for some $N>0$. This implies that the image of $K$ in $D(R_0)$ is finitely-generated. In particular, $K$ is a \textit{coherent} $R_{\phi}$-module.

For (ii), we consider the Yoneda product
\begin{align*}
\Ext_{R_{\phi}}^*(K,M)\otimes\Ext_{R_{\phi}}^{n-*}(M,K)\to\Ext_{R_{\phi}}^n(K,K)
\end{align*}
where $M$ is an arbitrary finitely-generated $R_{\phi}$-module (i.e. a coherent sheaf). Since $K$ is a compact Lagrangian, we can identify the codomain of this pairing with $HF^n(K,K) = \Lambda_{\Bbbk}$. Since $\Ext_{R_{\phi}}^*(K,-)$ and $\Ext_{R_{\phi}}^{n-*}(-,K)$ are both $\delta$-functors, the Yoneda product induces a natural transformation
\begin{align}\label{functoriso}
\Ext^*(K,-)\to\Ext^{n-*}(-,K)
\end{align}
which is in fact an isomorphism when restricted to finite rank free $R_{\phi}$-modules, which we see by interpreting these Ext groups as Floer cohomology groups, and using the weak Calabi--Yau structure. Since we can resolve a finitely-generated module $M$ by a possibly infinite complex of finite-rank free $R_{\phi}$-modules, we see that this natural transformation is in fact a natural isomorphism of functors on $D(R_{\phi})$. Using this isomorphism, it follows from~\cite[Definition 15.70.1 Lemma 15.70.2]{StacksProject} that $K$ has finite projective dimension, since our homological computations have shown that it has finite projective amplitude. In more detail, ~\eqref{functoriso} should be thought of as a Serre duality statement for $K$, which holds even though we have not shown that $\check{\mathcal{X}}$ is smooth. Because the right-hand side vanishes when $n-*<0$, which follows because $\Hom(M,M'[k]) = 0$ for any $k<0$ when $M$ and $M'$ are both coherent sheaves on $\check{\mathcal{X}}$, so the duality statement allows us to apply the results of~\cite[Definition 15.70.1 Lemma 15.70.2]{StacksProject}.
\end{proof}
\begin{corollary}
    The functor~\eqref{adjunectionfunctor1} determines a one-sided inverse of the functor described in Theorem~\ref{mirrorinjectionthm}.
\end{corollary}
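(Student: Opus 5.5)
The plan is to deduce the corollary formally from the preceding theorem, together with the description of \eqref{adjunectionfunctor1} as restriction along the mirror embedding of Theorem~\ref{mirrorinjectionthm}. Denote that embedding by $F\colon\Perf_{dg}(\check{\mathcal{X}})\otimes\Lambda_{\Bbbk}\to\Perf\Fuk(X_t,\mathbf{E}_t)$ and the restriction functor \eqref{adjunctionfunctor} by $G$. The preceding theorem shows that $G$ sends $\Perf\Fuk(X_t,\mathbf{E}_t)$ into the subcategory of perfect complexes inside $\Mod\End\mathcal{P}\simeq C(\mathrm{QCoh}(\check{\mathcal{X}}))$. This identification comes from Theorem~\ref{schwedeshiplythm} applied to the compact generator $\mathcal{G}$. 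So $G$ restricts to a functor $G\colon\Perf\Fuk(X_t,\mathbf{E}_t)\to\Perf_{dg}(\check{\mathcal{X}})$, and what remains is to show $G\circ F\simeq\id$.

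The first step is to compute $G\circ F$ on an object $\mathcal{F}$. By definition, $G(F(\mathcal{F}))$ is the Yoneda module $\mathcal{L}^{\otimes d}\mapsto CF^*(F(\mathcal{L}^{\otimes d}),F(\mathcal{F}))=CF^*(\phi^d(L),F(\mathcal{F}))$. Since $F$ is fully faithful by Theorem~\ref{mirrorinjectionthm}, this module is quasi-isomorphic, naturally in $\mathcal{F}$, to the module $\mathcal{L}^{\otimes d}\mapsto\Hom_{\Perf_{dg}}(\mathcal{L}^{\otimes d},\mathcal{F})$. That is, it is the restricted Yoneda module of $\mathcal{F}$ over the full subcategory on $\{\mathcal{L}^{\otimes d}\}$, or equivalently over $\End\mathcal{P}$. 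Under the Schwede--Shipley equivalence, this restricted Yoneda module corresponds exactly to $\mathcal{F}$ itself, because the equivalence is given by $\mathcal{F}\mapsto\Hom(\mathcal{G},\mathcal{F})$ and $\mathcal{G}$ is a compact generator by \cite[Theorem 4]{Orlov}. Hence $G\circ F\simeq\id$ on objects. Naturality is automatic, since the functors involved are an $A_\infty$-Yoneda embedding composed with restriction, and full faithfulness of $F$ supplies a natural quasi-isomorphism of bimodules, not merely objectwise isomorphisms.

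The main obstacle is bookkeeping about base rings. The equivalence of Theorem~\ref{schwedeshiplythm} is applied over $\Lambda_{\Bbbk}$, whereas $\check{\mathcal{X}}$ is defined over $\Bbbk[\![T]\!]$, and the preceding theorem's proof passes through graded $R_\phi$-modules. I would handle this by working throughout with the base change $\check{\mathcal{X}}_{\Lambda}$ and invoking the Orlov generation result there. The compatibility between graded $R_\phi\otimes\Lambda_{\Bbbk}$-modules and $\mathrm{QCoh}(\check{\mathcal{X}}_{\Lambda})$ is the standard Serre correspondence modulo torsion; it sends the Yoneda modules of $\mathcal{L}^{\otimes d}$ to the free modules $R_\phi(d)$ up to finite-dimensional torsion. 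Because $G$ is only shown to be well defined into $\Perf$ and is not claimed to be faithful, the conclusion is exactly a one-sided (left) inverse: $G\circ F\simeq\id$, with no assertion about $F\circ G$.
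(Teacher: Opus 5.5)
Your proposal is correct and follows the paper's argument. The paper gives no separate proof of this corollary: it treats it as immediate from the preceding theorem, which puts the image of $\Perf\Fuk(X_t,\mathbf{E}_t)$ in $\Perf_{dg}(\check{\mathcal{X}})$, together with the fact that the functor is restriction along the fully faithful embedding. Your computation of $G\circ F\simeq\id$, via full faithfulness of $F$ and the Schwede--Shipley identification $\mathcal{F}\mapsto\Hom(\mathcal{G},\mathcal{F})$, is exactly the step the paper leaves unwritten.
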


\begin{proof}[Proof of Proposition~\ref{finitelygeneratedprop}]
    This follows from a classical result due to Serre, which identifies the category of perfect complexes on $\check{\mathcal{X}}$ with a quotient of the category of \textit{finitely-generated} $R_{\phi}$-modules.
\end{proof}
\begin{remark}
This argument uses the fact that Floer cohomology groups are finite-dimensional in an essential way, and so does not admit a readily obvious generalization to the wrapped Fukaya category.
\end{remark}

\section{Singularities of the central fiber}\label{slcappendix}
This appendix contains the proof of Lemma~\ref{slclemma}. We will closely follow~\cite{Oldfield}, which was written for (Spec of) the Stanley--Reisner ring of a log Calabi--Yau pair $(X,\Delta)$.

Continuing with the notation of \S{\ref{semistablereductionsection}}, let $\mathcal{X}^{\circ}$ denote a maximally unipotent family of smooth closed Calabi--Yau $n$-folds (in the strict sense) over $\mathbb{D}^{\circ}$. Choose an snc model $\widetilde{\mathcal{X}}\to\mathbb{D}$ as in~\ref{gsresolutionmodel} with central fiber $\widetilde{X}_0$. Recall that we require $\widetilde{X}_0$ to be a reduced snc divisor. Furthermore, after possibly applying additional blowups so that all intersections between components of $\widetilde{X}_0$ are irreducible, we can assume that the skeleton $\Sk(\mathcal{X}^{\circ})$ is a simplicial complex.

Recall from~\cite[4.18]{KollarBook} that for any dlt pair $(X,\Delta)$ and any stratum $W$ of $(X,\Delta)$, there is a canonically defined \textit{different} of $\Delta$ on $W$, which is a rational divisor $\Diff_W^*$ on $X$ such that
\begin{align*}
(K_X+\Delta)\mid_W\equiv K_W+\Diff_W^*
\end{align*}
which allows us to consider the dlt log CY pair $(W,\Diff_W^*)$~\cite[4.19]{KollarBook}.

Recall that if $\Sigma$ is a simplicial complex (thought of as its set of faces) and $\sigma$ is a face of $\Sigma$, then the \textit{link} of $\sigma$ is
\begin{align*}
\mathrm{link}_{\Sigma}(\sigma)\coloneqq\lbrace\tau\in\Sigma\colon\sigma\cup\tau\in\Sigma\text{ but }\sigma\cap\tau = \emptyset\rbrace \,.
\end{align*}
\begin{lemma}
Let $W$ be a (good) stratum of $\widetilde{X}_0$, and denote by $\sigma_W$ the corresponding face in $\Sk(\mathcal{X}^{\circ})$. Then there is an identification of the dual complex of $\Diff_W^*$ with the link of $\sigma_W$ in $\Sk(\mathcal{X}^{\circ})$.
\end{lemma}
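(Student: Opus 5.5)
We work in the snc (hence dlt) situation: the good components $D_i$, $i\in S_0$, meet transversally, every intersection $D_I$ is irreducible, and adjunction is honest rather than merely numerical. The plan is to compute the different explicitly, then read off its dual complex.

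First we identify the relevant pair. Since $\widetilde{\mathcal{X}}\to\mathbb{D}$ is a semistable model and the fiber is linearly equivalent to zero, we have $K_{\widetilde{\mathcal{X}}}+\widetilde{X}_0\equiv\sum_{i\in R}a_iD_i$ by \eqref{KXexpression}. Restricting to a component $D_j$ with $a_j=0$ and applying adjunction gives
\[K_{D_j}+\sum_{i\neq j}D_i|_{D_j}\equiv\sum_i a_iD_i|_{D_j}.\]
The non-good components $D_i$ carry $a_i>0$, so they move to the right-hand side. We then pass to the log CY model: by the Gross--Siebert/Nicaise--Xu description of Proposition~\ref{gsresolutionmodel}, contracting the exceptional components produces the dlt minimal model $\mathcal{X}'$. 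In that model the good strata $W$ satisfy $(K_{\mathcal{X}'}+X_0')|_W\equiv K_W+\Diff_W^*\equiv0$. Iterating \cite[4.18]{KollarBook} along a chain of good components $D_{i_1}\supset D_{i_1i_2}\supset\cdots\supset W=D_I$, the different on $W$ is then supported exactly on the traces $D_j\cap W$ for $j\notin I$, each with coefficient $1$ in the snc/dlt locus. In the snc case this is just the iterated adjunction formula, which contributes no fractional coefficients.

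Next we compare strata. The strata of $(W,\Diff_W^*)$ are the nonempty intersections $W\cap D_J$ for $J\subset S_0\setminus I$, which is to say the strata $D_{I\cup J}$. These correspond to faces $\sigma_{I\cup J}=\sigma_W\cup\tau_J$ of $\Sk(\mathcal{X}^{\circ})$ with $\tau_J\cap\sigma_W=\emptyset$. This is precisely the definition of $\mathrm{link}(\sigma_W)$, and incidences are preserved. The irreducibility of all intersections makes the correspondence a bijection of simplicial complexes rather than a map of $\Delta$-complexes.

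The main obstacle is the step that discards the non-good components. A non-good $D_k$ meeting $W$ appears in the naive adjunction boundary but not in $\Diff_W^*$, and it is not a vertex of the link in the essential skeleton. To handle it, I would use that the different is computed on the dlt minimal model, where non-good components are contracted, via the crepant relation of Proposition~\ref{gsresolutionmodel}. I would then argue through \cite[Theorem 3.3.3]{NicaiseXu} and \cite[4.6.2]{MustataNicaise} that the dual complex of $\Diff_W^*$ is independent of the dlt model up to the identification, and that it equals the subcomplex of good strata. Should this model-independence be awkward, a fallback is to show directly that $D_k\cap W$ has coefficient $1-a_k<1$, or is contracted, so that it does not contribute a log canonical center.
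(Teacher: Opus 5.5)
Your proposal is correct and is essentially the paper's argument. Like the paper, you identify the boundary of $(W,\Diff_W^*)$ with the traces on $W$ of the good components meeting but not containing $W$. Its strata are then the $D_{I\cup J}$ with $J\subset S_0\setminus I$, i.e.\ the faces $\sigma_W\cup\tau_J$ with $\tau_J\cap\sigma_W=\emptyset$, which is exactly $\mathrm{link}_{\Sk(\mathcal{X}^{\circ})}(\sigma_W)$.

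Your extra step disposing of the non-good components addresses a point the paper's proof passes over silently. Your fallback already settles it, with no model-independence argument needed: by adjunction and \eqref{KXexpression} one has $K_W+\sum_{j\notin I}(1-a_j)D_j|_W\equiv 0$. So each non-good trace carries coefficient $1-a_j<1$ and contributes no vertex to the dual complex.
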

\begin{proof}
We can interpret $\Diff^*_W$ as the union of irreducible components of $\widetilde{X}_0$ that intersect $W$, but which do not contain $W$. Hence a $k$-cell in the dual complex of $\Diff_W^*$ corresponds to a $k+1$ good components of $\widetilde{X}_0$ not containing $W$ but that intersect inside $W$. This determines a correspondence between the cells in the dual complex of $\Diff_W^*$ and the cells in $\Sk(\mathcal{X}^{\circ})$ which are disjoint from $\sigma_W$, but whose union with $\sigma_W$ is also a cell. The latter collection of cells is $\mathrm{link}_{\Sk(\mathcal{X}^{\circ})}(\sigma_W)$.
\end{proof}

\begin{proof}[Proof of Lemma~\ref{slclemma}]
First we will show that $\check{X}_0\coloneqq\Proj\SR(\mathcal{X}^{\circ})$ is Cohen--Macaulay. The previous lemma allows us to identify the link $\mathrm{link}_{\Sk(\mathcal{X}^{\circ})}(\sigma_W)$ of any face $\sigma_W$ of $\Sk(\mathcal{X}^{\circ})$ with the dual complex of some log CY pair. By~\cite[Prop. 31]{KollarXu}, this has the rational homology of a $n$-sphere. Therefore the Cohen--Macaulay condition for $\check{X}_0$ follows from Reisner's criterion~\cite[Theorem 1]{ReisnersRef}.

To show that $\check{X}_0$ is slc, it suffices to show that its normalization with a suitable choice of boundary divisor is log-canonical. Since the normalization is just a disjoint union of copies of $\mathbb{P}^n$, and the conductor subscheme of the normalization is just the union of toric boundary divisors in each component, this follows immediately from the fact that toric pairs are log-canonical.

It remains to show that the canonical bundle on $\check{X}_0$ is trivial. Since we have already shown that $\check{X}_0$ is Cohen--Macaulay, it follows that its dualizing complex $\omega_{\check{X}_0}$ is a sheaf on $\check{X}_0$. Since $\Sk(\mathcal{X}^{\circ})$ is a pseudo-manifold, it follows that every $(n-1)$-dimensional toric boundary component of each component of the normalization of $\check{X}_0$ is contained in precisely two copies of $\mathbb{P}^n$. It suffices to construct a section of $\omega_{\check{X}_0}$ away from a codimension $2$ subset: the complement $U$ of the set of points contained in more than two irreducible components of $\check{X}_0$. Since $\check{X}_0$ is Cohen--Macaulay, it satisfies Serre's $S_2$ criterion, which allows us to extend such a section over this codimension $2$ subset.

A section of $\omega_{\check{X}_0}\mid_U$ corresponds to a collection of $n$-forms on each component of the normalization of $\check{X}_0$ with at worse poles along each coordinate hyperplane, so that when two components $U_1$ and $U_2$ intersect, the residues along $U_1\cap U_2$ sum to $0$. If we start with the standard toric volume form on each component, we need to show that a sign for the form on each component can be chosen so that the signs of any two intersecting components are opposite. This amounts to choosing an orientation for $\Sk(\mathcal{X}^{\circ})$, which is possible since it has the rational homology of $S^n$ by~\cite{KollarXu}.
\end{proof}

\bibliography{references}
\bibliographystyle{alpha}

\end{document}